\documentclass[10pt]{article}
\usepackage{amssymb,amsmath,amsfonts,amsthm,mathtools,color}

\usepackage{mathrsfs}

\usepackage[title,titletoc,header]{appendix}
\usepackage{graphicx}

\usepackage{paralist}

\usepackage{tikz}
\usetikzlibrary{arrows,positioning,shapes.geometric}

\usepackage{algorithm,algorithmic}

\usepackage[labelformat=simple]{subcaption}

\graphicspath{ {./figures/} }

\usepackage{indentfirst}
\usepackage{multicol}
\usepackage{booktabs}
\usepackage{url}
\usepackage[outdir=./]{epstopdf} %to enable the use of eps

\usepackage[shortlabels]{enumitem}
%\usepackage{enumerate}

% for hyperlink
\usepackage{hyperref}
\hypersetup{
    colorlinks=true, %set true if you want colored links
    linktoc=all,     %set to all if you want both sections and subsections linked
    linkcolor=blue,  %choose some color if you want links to stand out
}
\setlength\topmargin{-2cm} \setlength\textheight{230mm}
\setlength\oddsidemargin{0mm}
\setlength\evensidemargin\oddsidemargin \setlength\textwidth{163mm}
\setlength\baselineskip{18pt}

%\textheight=9.0in
%\textwidth=7.0in
%\hoffset=-1.0in
%\topmargin=-.5in
%%\columnsep=0.5cm
%\DeclareMathSizes{10}{10}{7}{5}

\numberwithin{equation}{section}

\newtheorem{Theorem}{Theorem}[section]
\newtheorem{Lemma}[Theorem]{Lemma}
\newtheorem{Proposition}[Theorem]{Proposition}
\newtheorem{Assumption}{H.\!\!}

\theoremstyle{definition}
\newtheorem{Definition}{Definition}[section]

\newtheorem{Example}{Example}[section]

\theoremstyle{remark}
\newtheorem{Remark}{Remark}[section]

 \def\p{\partial} \def\nb{\nonumber}
\def\to{\rightarrow}
 \def\ol{\overline}    
\def\Om{\Omega}  \def\om{\omega} %\def\I{ {\rm (I) } }
 
\newcommand{\q}{\quad}

\def\l{\label}  \def\f{\frac}  \def\fa{\forall}
\def\b{\beta}  \def\a{\alpha} 
 
\def\eps{\varepsilon}

 \def\t{\times}  
\def\ms{\medskip}

\def \la{\langle} \def\ra{\rangle}

\def\cA{\mathcal{A}}
\def\cB{\mathcal{B}}

\def\cE{\mathcal{E}}
\def\cF{\mathcal{F}}

\def\cH{\mathcal{H}}
\def\cI{\mathcal{I}}

\def\cK{\mathcal{K}}

\def\cO{\mathcal{O}}

\def\cR{\mathcal{R}}
\def\cS{\mathcal{S}}

\def\cV{\mathcal{V}}

\def\d{{\mathrm{d}}}

\def\bA{{\textbf{A}}}

\def\sD{\mathbb{D}}
\def\sE{{\mathbb{E}}}
\def\sF{{\mathbb{F}}}
\def\sI{{\mathbb{I}}}
\def\sN{{\mathbb{N}}}
\def\sP{\mathbb{P}}

\def\sR{{\mathbb R}}
\def\sS{{\mathbb{S}}}

\def\trans{\mathsf{T}}

\newcommand{\subW}{\mathrm{subW}}

\newcommand{\tr}{\textnormal{tr}}

\DeclareMathOperator*{\argmax}{arg\,max}
\DeclareMathOperator*{\argmin}{arg\,min}
\DeclareMathOperator*{\dom}{dom}
\DeclareMathOperator*{\esssup}{ess\,sup}

\newcommand{\lc}
{\mathrel{\raise2pt\hbox{${\mathop<\limits_{\raise1pt\hbox
{\mbox{$\sim$}}}}$}}}

\newcommand{\gc}
{\mathrel{\raise2pt\hbox{${\mathop>\limits_{\raise1pt\hbox{\mbox{$\sim$}}}}$}}}

\newcommand{\ec}
{\mathrel{\raise2pt\hbox{${\mathop=\limits_{\raise1pt\hbox{\mbox{$\sim$}}}}$}}}

\def\bb{\begin{equation}} \def\ee{\end{equation}}
\def\bbn{\begin{equation*}} \def\een{\end{equation*}}

\def\beqn{\begin{eqnarray}}  \def\eqn{\end{eqnarray}}

\def\beqnx{\begin{eqnarray*}} \def\eqnx{\end{eqnarray*}}

\def\bn{\begin{enumerate}} \def\en{\end{enumerate}}

\def\bd{\begin{description}} \def\ed{\end{description}}

\makeatletter

\makeatother

\begin{document}

\title{
Reinforcement learning
for linear-convex models with jumps
via stability analysis of  feedback controls
}
\author{
\and Xin Guo\thanks{Department of Industrial Engineering and Operations Research, University of California, Berkeley, USA. \textbf{Email}: \texttt{xinguo@berkeley.edu}}
\and Anran Hu\thanks{Department of Industrial Engineering and Operations Research, University of California, Berkeley, USA. \textbf{Email}: \texttt{anran\_hu@berkeley.edu}}
\and Yufei Zhang\thanks{Mathematical Institute, University of Oxford, UK. \textbf{Email}: \texttt{yufei.zhang@maths.ox.ac.uk}}
}
\date{}

\maketitle

%\tableofcontents

\noindent\textbf{Abstract.} 
We study 
 finite-time horizon
 continuous-time linear-convex  reinforcement learning
problems in an episodic setting. In this problem,
the unknown
 linear jump-diffusion process
 is controlled subject to
 nonsmooth
 convex  costs.
We show that
the associated linear-convex control problems
admit  Lipchitz continuous optimal feedback controls
and further prove the Lipschitz stability of the feedback controls,
i.e.,  the performance gap
between applying feedback controls 
   for an incorrect model
 and for the true model
  depends Lipschitz-continuously on the magnitude of perturbations in the model coefficients;
%   even with lower semicontinuous   costs;
 the proof relies on a stability analysis of the associated forward-backward stochastic differential equation.
 We then 
 propose a %novel 
  least-squares algorithm 
which 
 achieves  a regret of the order $\cO(\sqrt{N\ln N})$
on  linear-convex learning problems
with jumps,
where $N$ is the number of learning episodes; 
the analysis leverages 
the Lipschitz stability of  feedback controls
and concentration properties  of sub-Weibull random variables. 
{%\color{blue} 
Numerical experiment confirms  the convergence and the robustness of the proposed algorithm.}

\medskip
\noindent
\textbf{Key words.} 
  Continuous-time reinforcement learning,  
%episodic learning,
 linear-convex,  jump-diffusion, Lipschitz stability,
  least-squares estimation, sub-Weibull random variable 

\ms
\noindent
\textbf{AMS subject classifications.} 
93E35, 62G35, 93E24, 68Q32

%	93E35  	Stochastic learning and adaptive control
% 	62G35  	Nonparametric robustness
%	93E24  	Least squares and related methods for stochastic control systems
%	68Q32  	Computational learning theory 

\medskip

\section{Introduction}
Reinforcement learning (RL) seeks optimal strategies to control an unknown dynamical system 
by interacting with the random environment  through exploration and exploitation \cite{sutton2018reinforcement}.
This paper studies a reinforcement learning problem for controlled linear-convex models with  unknown drift parameters.
The controlled dynamics are  with possible jumps,  the objectives are  extended real-valued 
nonsmooth convex  functions, and the learning is in an episodic setting for a finite-time horizon.

\paragraph {Regret analysis of RL algorithm and stability of controls.} 
  RL algorithms are in general characterized by iterations of 
exploitation  and exploration 
(see e.g.~\cite{abbasi2011regret,mania2019certainty,basei2020logarithmic}). In the model-based approach, for instance,  the 
agent interacts with the environment via  policies 
based on the present estimation of 
the unknown model parameters,
and then
 incorporates  the responses of these interactions
to improve   their knowledge of the system. 
One of the main performance criteria for RL algorithm, called {\it regret}, is to 
measure its deviation from the optimality
 over the learning process. 
 
One key component in regret analysis is   the Lipschitz stability of feedback controls 
which quantifies the mismatch between the assumed and actual models,  or the stability of controls  with respect to model perturbations.  It is to analyze
 the precise derivation 
of a pre-computed feedback  control 
from   the optimal one, and is also  known as the robustness of control policies in the learning community
\cite{mania2019certainty, basei2020logarithmic,
 kara2020robustness}).

Despite the long history of stability of  controls in the control literature, its main focus in classical control theory 
has been restricted to the continuity of  value functions and optimal open-loop controls 
(see e.g.~\cite{ aldous1981weak,yong1999stochastic, backhoff2017sensitivity, bayraktar2020continuity,kara2020robustness}).
Studies of  high-order stability of controls such as the Lipschitz stability, has only attracted attention very recently, largely due to  its crucial importance in characterizing 
the precise %\textit{non-asymptotic} behaviour 
{%\color{blue}
\textit{regret order}} of learning algorithms
(see \cite{mania2019certainty,  
 basei2020logarithmic, reisinger2021regularity}).  
Analyzing Lipschitz stability of feedback control is technically more challenging. It requires analyzing  
the derivatives of the value function
in a suitable function space, as  optimal feedback controls are usually  characterized via  
 the derivatives of the 
 value function. 
 
  %whose stability  requires
% regularity estimates of the associated nonlinear 
%Hamilton-Jacobi-Bellman  partial differential equation .
Due to this  technical difficulty,  
most existing works on
%non-asymptotic performance guarantees 
{%\color{blue}
regret analysis} of RL algorithms 
concentrate on 
the linear-quadratic (LQ) control framework.
In this special setting, 
the  optimal feedback control
is  an affine function of state variables,
whose  coefficients satisfy  an associated 
algebraic or ordinary
Riccati equation.
Consequently, 
the Lipschitz stability of feedback controls is simplified by analyzing the robustness of the
Riccati equation  (see e.g.~\cite{abbasi2011regret,mania2019certainty,%basei2020linear
 basei2020logarithmic}). 
Unfortunately, 
these techniques  developed specifically for Riccati  equations in LQ-RL problems are clearly not 
applicable  for 
general RL problems (see 
e.g.~\cite{bensoussan1984stochastic, clason2016convex,exarchos2018stochastic,li2019sparse}).
In particular, optimal policies  are typically nonlinear 
in the state variable,  especially with the  inclusion of  entropy regularization 
for  the exploration strategy in the optimization objective
(see e.g.~\cite{wang2019exploration,guo2020entropy,vsivska2020gradient,reisinger2021regularity}).

\paragraph{Our work.}
This paper consists of three parts. 
 \begin{itemize}[leftmargin=*]
\item It first establishes the Lipschitz stability for   finite-time horizon linear-convex control problems,
%with respect to parameter perturbations in  the underlying model,
whose dynamics are linear jump-diffusion processes 
with controlled drifts and 
possibly degenerate additive noises, 
and  objectives are 
 extended real-valued 
 lower semicontinuous convex functions.
Such control problems include as special cases
LQ  problems with convex control constraints,
sparse and switching control of linear systems,
and entropy-regularized relaxed control problems
(see Examples \ref{example:sparse} and \ref{example:relax}).
It  shows that
these control problems
admit  Lipchitz continuous optimal feedback controls
with linear growth in the spatial variables
(Theorem \ref{thm:lc_fb}).
It further proves that the performance gap
between applying feedback controls 
   for an incorrect model
 and for the true model
  depends Lipschitz-continuously on the magnitude of perturbations in the model coefficients,
   even with lower semicontinuous   cost functions (Theorem \ref{thm:dynamics_stable}).
   The Lipschitz stability of feedback controls is 
  extended to entropy-regularized control problems with controlled diffusion in Proposition \ref{prop:stability_entropy}. 
%To the best of our knowledge, 
%this is the first paper 
%on the Lipschitz stability
%of feedback controls 
%for  
% general continuous-time stochastic control problems with
% linear state dynamics and 
%nonsmooth and unbounded cost functions, and with possible jumps.

\item
It then proposes a greedy least-squares (GLS) algorithm
for a class of continuous-time 
linear-convex RL problems 
in an episodic setting. 
At each iteration, 
the GLS algorithm estimates 
the unknown drift parameters 
by  a regularized least-squares estimator
based on  observed trajectories, 
and then designs a feedback control for  
 the estimated model.
% Based on the Lipschitz stability of LC control problems,
 It establishes that the  regret of  this GLS algorithm
 is sublinear, i.e., of the magnitude $\cO(\sqrt{N\ln N})$ 
 with $N$ being 
 the number of learning episodes,
provided that the    least-squares estimator 
satisfies a general concentration inequality
(Theorem \ref{thm:regret}). 
It  further characterizes the explicit concentration behaviour of 
the   least-squares estimator 
(and hence the precise regret bound of the GLS algorithm),
depending on  tail behaviours of the random jumps in the state dynamics
(Theorem \ref{cor:regret_jump}). 
In  the pure diffusion case, 
 a sharper 
 regret bound has been obtained 
% than   that in 
% \cite[Theorem 2]{%basei2020linear
% basei2021logarithmic}
  (Theorem \ref{cor:regret_diffusion}). 
%To the best of our knowledge,
%this is the first 
% sublinear regret bound
%for continuous-time RL problems 
%beyond the LQ setting
%or 
%for RL problems with random jumps.
\item {%\color{blue} 
It finally verifies the theoretical properties of the proposed GLS algorithm through numerical experiment on a  three-dimensional LQ RL problem. It shows  
the convergence of the
least-squares estimations to the true parameters as the number of episodes increases, as well as a sublinear 
regret as indicated in 
% the regret grows sub-linearly, which are consistent with 
 theoretical results. 
It also demonstrates the GLS algorithm
is robust 
with respect to  initializations.}

\end{itemize}

\paragraph{Our approaches and related works.}
%\ \hspace{-3.5mm}
{%\color{blue}
Optimal control of stochastic systems with parametric uncertainty has been  studied in the classical adaptive control literature (see \cite{duncan1999adaptive, sastry2011adaptive,
ioannou2012robust,aastrom2013adaptive}), where   stationary policy is constructed to minimize the long term average cost and where the \textit{asymptotic} stability and convergence of an adaptive control law
is analyzed when the time horizon goes to infinity.
However, research on  rate of convergence is virtually non-existent. 
The problem studied here is different. The main objective is 
 to construct
optimal (and time-dependent) policies for {\it finite-horizon}  problems, with the {%\color{blue}
{\it finite-sample regret}} analysis for the learning algorithm.
Compared with  the  classical adaptive control literature,
the {%\color{blue} 
regret analysis in this work, also known as the non-asymptotic
performance
analysis,} 
requires  novel  techniques, consisting of
  a precise performance estimate of  a greedy policy 
 (namely the Lipschitz stability of feedback controls)
and a finite-sample analysis of the  parameter estimation scheme.
}

Analyzing the Lipschitz stability of feedback controls 
in a continuous-time setting
requires quantifying 
the impact of parameter uncertainty on the derivatives of the value functions.
  \cite{reisinger2021regularity}
studies the so-called exit time problem and  the Lipschitz stability of regularized  relaxed controls of diffusion processes
via 
a  partial differential equation (PDE) approach,
which assumes that 
the diffusion coefficients are non-degenerate and 
 the state process takes values in a compact set. 
 In contrast, we consider (see
 Section \ref{sec:lipschitz_stable})
 unconstrained jump-diffusion process
 with unbounded drift 
 and (uncontrolled) degenerate noise, 
 and the cost functions are 
 nonsmooth and unbounded. 
 Consequently, the PDE approach requires to 
deal with 
a  {\it degenerate nonlocal}     PDE
with  %concave and 
non-Lipschitz nonlinearity, 
whose solution (i.e., the value function) 
is unbounded and may be nonsmooth  due to the lack of regularization from the
Laplacian operator.
Here the Lipschitz stability of feedback controls 
is established by analyzing the stability  of the associated coupled forward-backward stochastic differential equations (FBSDEs). This is possible by a) first exploiting  the linear-convex structure of the control problem, which enables
constructing 
%{an optimal}
a Lipschitz continuous 
feedback control 
via solutions of
 coupled   FBSDEs, and then b) by extending the stochastic maximum principle in \cite{tang1994necessary} 
to feedback controls with nonsmooth costs. 
To the best of our knowledge, this is the  
first time FBSDE has been used to study   stability of \textit{feedback} controls.

Analyzing the (finite-sample) accuracy
of the least-squares estimator for jump-diffusion models
involves integrations of 
the state and control processes with respect to Brownian motions and Poisson random measures.
Now, the nonlinearity of feedback controls renders it impossible to analyze the tail behaviour of these stochastic integrals as \cite{guo2020entropy}
does for LQ problems with analytical solutions;  
Additionally, the presence of random jumps implies that the state process is no longer sub-Gaussian,
and hence the   stochastic integrals in the least-squares estimator  no longer sub-exponential.
To overcome these difficulties,  a 
 convex concentration inequality is employed for SDEs with jumps
  \cite{ma2010transportation}, along with 
% which enables us to show that the state and %control processes are 
% sub-Gaussian for the pure diffusion case,
% and 
%  sub-exponential if
% the random jumps are  sub-exponential.
  Burkholder's inequality   
 and the Girsanov theorem
 to characterize  precisely the sub-Weibull behaviour of the required stochastic integrals 
 in terms of their Orlicz norms
 (Lemmas \ref{lemma:stochastic_integral_jump} and \ref{lemma:stochatic_integral_diffusion}).
 Leveraging recent developments in the
theory of sub-Weibull random variables,
 the precise parameter estimation error of the least-squares estimator is quantified 
 in terms of the sample size.
 
 {%\color{blue}
 It is worth pointing out that the  stability analysis of feedback controls can be extended (see Section \ref{sec:ctrl_diffusion})   
to entropy-regularized control problems with
controlled diffusion and without the linear-convex structure. Instead of the maximum principle for the linear-convex setting, regularity analysis of the associated fully-nonlinear parabolic PDEs may be  needed for nondegenerate noise
 with  regular (such as bounded and high-order differentiable) coefficients. (See the discussion after Proposition 
  \ref{prop:stability_entropy} for more details). 
}

\paragraph{Notation.}
%\textbf{Notation.}
%We end this section by introducing  some  spaces  used throughout  this paper.
For each 
$T>0$,
 filtered probability space
$(\Om,\cF,\sF=\{\cF_t\}_{t\in[0,T]},\sP)$  satisfying the usual condition
 and
  Euclidean space $(E,|\cdot|)$,
we introduce the following spaces: 
 % $L^p(0,T)$ 
 \begin{itemize}[leftmargin=*,noitemsep,topsep=0pt]
\item 
  $L^p(0,T;E)$, $p\in[ 2,\infty]$,
 is the space of  
  (Borel) 
  measurable functions $\phi:[0,T]\to E$ satisfying 
  $\|\phi\|_{L^p}=(\int_0^T|\phi_t|^p\,\d t)^{1/p}<\infty$
  if $p\in [2,\infty)$ and 
    $\|\phi\|_{L^\infty}=\esssup_{t\in[0,T]} |\phi_t|<\infty$
  if $p=\infty$;
  \item
$L^2(\Om;E)$ is the space of $E$-valued $\cF$-measurable random variables $X$ satisfying 
$\|X\|_{L^2}=\sE[|X|^2]^{1/2}<\infty$;
\item
 $\cS^2(t,T;E)$, $t \in [0,T]$,
  is the space of 
 $E$-valued
$\sF$-progressively  measurable c\`{a}dl\`{a}g 
processes
$Y: \Om\t [t,T]\to E$ %$Y: [t,T]\t \Om\to E$ 
satisfying $\|Y\|_{\cS^2}=\sE[\sup_{s\in [t,T]}|Y_s|^2]^{1/2}<\infty$;
\item
 $\cH^2(t,T;E)$, $t \in [0,T]$, is the space of 
   $E$-valued $\sF$-progressively measurable
 processes 
$Z: \Om\t [t,T]\to E$ 
 satisfying $\|Z\|_{\cH^2}=\sE[\int_t^T|Z_s|^2\,\d s]^{1/2}<\infty$;
 \item
$\cH^2_\nu(t,T;E)$, $t \in [0,T]$, is the space of 
   $E$-valued $\sF$-progressively measurable
 processes 
$M: \Om\t [t,T]\t\sR^p_{0}\to E$ 
 satisfying $\|M\|_{\cH^2_\nu}=\sE[\int_t^T\int_{\sR^p_0}|M_s(u)|^2\nu(\d u)\,\d s]^{1/2}<\infty$,
 where
 $\sR^p_0\coloneqq \sR^p\setminus\{0\}$ and 
  $\nu$ is a $\sigma$-finite measure on  $\sR^p_0$.
\end{itemize}
For notational simplicity, 
%\textcolor{red}{denote}
we denote
$\cS^2(E)=\cS^2(0,T;E)$, $\cH^2(E)=\cH^2(0,T;E)$ and 
$\cH^2_\nu(E)=\cH^2_\nu(0,T;E)$. 
We shall also  denote by $\la \cdot,\cdot\ra$
 the usual inner product in a given Euclidean space,
  by $|\cdot|$ the norm induced by $\la \cdot,\cdot\ra$,
 by $A^\trans$ the transpose of a matrix $A$,
 and     by $C\in [0,\infty)$ 
a generic constant, which   depends only on the constants appearing in the assumptions and may take a different value at each occurrence.

\section{Lipschitz stability of %optimal feedback controls
linear-convex control problems}
\l{sec:lipschitz_stable}

\subsection{Problem formulation with nonsmooth costs}
\l{sec:lc_ctrl}

 In this section, we introduce the   linear-convex control problems
 with nonsmooth costs.
 
Let 
 $T>0$ be a given terminal time
 and 
$(\Om,\cF,\sP)$ be  a complete probability space, in which
two mutually independent processes, a $d$-dimensional Brownian motion $W$ and a Poisson random
measure $N(\d t,\d u)$ with compensator $\nu(\d u)\d t$, are defined. 
We assume that $\nu$ is a $\sigma$-finite measure on
$\sR^p_0$ equipped with its Borel field 
$\cB(\sR^p_0)$ 
and satisfies $\int_{\sR^p_0}\min(1,|u|^2)\, \nu(\d u)<\infty$.
We denote by $\tilde{N}(\d t,\d u)=N(\d t,\d u)-\nu(\d u)\d t$ the  compensated process of $N$ 
and by $\sF=(\cF_t)_{t\in [0,T]}$ the  filtration
generated by $W$ and $N$ and augmented by the $\sP$-null sets.

 % We start by introducing the control problem of interest. 
For any given 
%$\theta=(A,B)\in\sR^{n\t (n+k)}$ be a given parameter,
%and 
  initial state $x_0\in \sR^n$,
 we consider the following 
minimization problem
\bb\l{eq:lc}
V(x_0)=\inf_{\a\in \cH^2(\sR^k)} J(\a;x_0),
\q \textnormal{with}\q 
J(\a;x_0)=\sE\left[\int_0^T f(t,X^{x_0,\a}_t,\a_t)\, \d t+g(X_T^{x_0,\a})\right],
\ee
where for each $\a\in \cH^2(\sR^k)$, the process $X^{x_0,\a}$ satisfies the following controlled dynamics: 
\bb\l{eq:lc_sde}
\d X_t =b(t,X_t,\a_t)\,\d t+\sigma(t)\, \d W_t
+
\int_{\sR^p_0}
\gamma(t,u)\, \tilde{N}(\d t,\d u)
, \q t\in [0,T],
\q X_0=x_0,
\ee
where
$b$, $\sigma$,
$\gamma$,
$f$ and $g$ are given functions satisfying the following conditions:

 \begin{Assumption}\l{assum:lc_ns}
$b:[0,T]\t\sR^n\t \sR^k\to \sR^n$,
$\sigma:[0,T]\to \sR^{n\t d}$,
%$\sigma: \sR^n\t \sR^n\t \sR^k\to \sR^{n\t d}$,
${\gamma}:[0,T]\t \sR^p_0\to \sR^{n}$,
$f:[0,T]\t \sR^n\t \sR^k\to\sR\cup \{\infty\}$, $g:\sR^n\to\sR$ 
are measurable functions
 such that for some  $L\ge 0$ and $\lambda>0$,

\begin{enumerate}[(1)]
\item \l{item:lc_linear}
there exist measurable functions 
$(b_0,b_1,b_2):[0,T]\to \sR^{n}\t \sR^{n\t n}\t \sR^{n\t k}$ 
such that
$b(t,x,a)=b_0(t)+b_1(t)x+b_2(t)a$
 for all $(t,x,a)\in [0,T]\t  \sR^n\t \sR^k$,
with
$
\|b_0\|_{L^2}+\|b_1\|_{L^\infty}+\|b_2\|_{L^\infty}+\|\sigma\|_{L^2}
+\big(\int_0^T\int_{\sR^p_0} |{\gamma}(t,u)|^2\,\nu(\d u)\d t\big)^{1/2}
\le L$.

\item\l{item:lc_g}
 $g$ is  convex and differentiable with
an $L$-Lipschitz derivative
such that 
$|\nabla g(0)|\le L$.

\item\l{item:f0R}
there exist functions 
 $f_0:[0,T]\t \sR^n\t \sR^k\to\sR$ 
and 
$\cR:  \sR^k\to \sR\cup \{\infty\}$
such that 
$$
f(t,x,a)={f}_0(t,x,a)+\cR( a), \q  \fa (t,x,a)\in [0,T]\t \sR^n\t \sR^k.
$$
For all $(t,x)\in [0,T]\t \sR^n$,
$f_0(t,x,\cdot)$ is convex in $\sR^k$,
$f_0(t,\cdot,\cdot)$ 
is 
differentiable in $\sR^n\t \sR^k$
with 
an $L$-Lipschitz derivative,
and 
$|f_0(t,0,0)|+|\p_{(x,a)}f_0(t,0,0)|\le L$.
Moreover, 
$\cR$ is proper, lower semicontinuous, and convex.
\footnotemark
\footnotetext{We say a function $\cR:\sR^k\to \sR\cup\{\infty\}$ is proper if it has a nonempty effective domain 
$\operatorname{dom} \cR\coloneqq \{a\in \sR^k \mid \cR(a)<\infty\}$.}

\item \l{item:lc_f}
for all 
$t\in [0,T]$, $(x,a),(x',a')\in \sR^n\t \sR^k$, and $\eta\in [0,1]$,
\bb\l{eq:strong_convex}
\eta f(t,x,a)+(1-\eta ) f(t,x',a')
\ge 
f(t,\eta x+(1-\eta) x', \eta a+(1-\eta) a')
+ \eta(1-\eta)\tfrac{\lambda}{2}|a-a'|^2.
\ee

\end{enumerate}
\end{Assumption}

\begin{Remark}\l{rmk:integrand}
Throughout this paper, 
let $\dom \cR=\{a\in \sR^k\mid \cR(a)<\infty\}$
be the effective domain of $\cR$ (or equivalently
the effective domain of $f$). 
Under 
 (H.\ref{assum:lc_ns}),
we can show  that 
both the function 
$f$ 
and its 
conjugate function
\bb\l{eq:conjugate}
[0,T]\t \sR^n\t \sR^k\ni (t,x,z)\mapsto f^*(t,x,z)\coloneqq \sup\{\la a,z\ra-f(t,x,a)\mid a\in \sR^k\}\in \sR\cup\{\infty\}
\ee
are 
normal convex integrands in the sense of
  \cite[Section 14]{rockafellar2009variational}
  and hence  measurable,
which are    crucial for the 
well-definedness of the control problem \eqref{eq:lc}
and the
characterization of optimal controls.
Furthermore,
the strong convexity condition (H.\ref{assum:lc_ns}\ref{item:lc_f}) enables us to establish the Lipschitz stability of 
feedback controls to \eqref{eq:lc},
which is essential for the analysis of learning algorithms.

Our analysis and results can be extended to  control problems with 
time-space dependent nonsmooth cost function
$\cR: [0,T]\t \sR^n\t \sR^k\to  \sR\cup \{\infty\}$ by assuming 
$\cR$ is a normal convex integrand and satisfies 
suitable subdifferentability conditions.
For notational simplicity and clarity, we choose to refrain from 
further generalization.
%providing this level of generality without the motivation from specific applications.

\end{Remark}

Note that (H.\ref{assum:lc_ns})
allows the diffusion coefficient $\sigma$ to be degenerate,
hence the stability results in {Section \ref{sec:robust_fb}}
apply to deterministic control problems. 
Moreover, (H.\ref{assum:lc_ns}) requires neither
 the effective domain $\dom \cR$  
to be closed 
nor 
the  function $\cR$ to be bounded or continuous on $\dom \cR$,
which is important for problems in engineering and machine learning,
as shown in the following examples.
 
\begin{Example}[Sparse and switching controls]
\l{example:sparse}
Let 
 $\bA\subset \sR^k$ be a nonempty closed convex set, 
$\delta_\bA$ be the indicator of $\bA$ satisfying 
$\delta_\bA(x)=0$ for $x\in \bA$ and 
$\delta_\bA(x)=\infty$ for $x\in \sR^k\setminus\bA$,
and 
$\ell:\sR^k\to \sR$ be a lower semicontinuous and convex function.  
Then $\cR\coloneqq \ell+\delta_\bA$ satisfies 
(H.\ref{assum:lc_ns}\ref{item:f0R}).
In particular, 
by setting $\ell\equiv 0$, we can consider the linear-convex control problems 
with smooth running costs and control constraints 
(see e.g.~\cite{bensoussan1984stochastic} and \cite[Theorem 5.2 on p.~137]{yong1999stochastic}),
which include   the most commonly used linear-quadratic models as special cases.

More importantly, it is well-known in optimal control literature 
(see e.g.~\cite{clason2016convex,exarchos2018stochastic,li2019sparse} and references therein)
 that, 
one can employ a nonsmooth function $\ell$ involving $L^1$-norm of controls 
to enhance the sparsity and  switching property of 
optimal controls,
which are practically  important for 
mimimum fuel problems 
and optimal device placement problems. 
Here by 
sparsity we  refer to the situation where  the whole vector $\a_t$ is
zero, while by switching control we refer to the phenomena where 
at most one  coordinate of $\a_t$ is non-zero at each $t$.

\end{Example}

\begin{Example}[Regularized relaxed  controls]
\l{example:relax}
Consider a  regularized control problem
arising from reinforcement learning
(see e.g.~\cite{wang2019exploration,guo2020entropy,vsivska2020gradient,reisinger2021regularity}),
 whose cost function $f$ is of the following form:
\bb\l{eq:f_relaxed}
 f(t,x,a)={f}_0(t,x)+\la {f}_1(t,x),a\ra +\rho D_ \mathfrak{f}(a || \mu)
\q  \fa (t,x,a)\in [0,T]\t \sR^n\t \sR^k,
\ee
where  $f_0:[0,T]\t \sR^n\to \sR,f_1:[0,T]\t \sR^n\to \sR^k$ are given functions,
$\rho>0 $ is a regularization parameter, 
and 
$D_ \mathfrak{f}(\cdot || \mu):\sR^k\to \sR\cup \{\infty\}$ is 
an $\mathfrak{f}$-divergence defined as follows. 
Let  $\Delta_k\coloneqq\{
a\in [0,1]^k\mid 
\sum_{i=1}^k a_i=1\}$,
  ${\mu}=(\mu_i)_{i=1}^k\in \Delta_k\cap (0,1)^k$, 
and $\mathfrak{f}:[0,\infty)\to \sR\cup \{\infty\}$ 
be a lower semicontinuous function 
 which  satisfies 
 $\mathfrak{f}(0)=\lim_{x\to 0} \mathfrak{f}(x)$, $\mathfrak{f}(1)=0$ and 
 $\mathfrak{f}$ is $\kappa_\mu$-strongly convex on $[0,\tfrac{1}{\min_i\mu_i}]$ with a constant $\kappa_\mu>0$.
Then, the   $\mathfrak{f}$-divergence $D_ \mathfrak{f}(\cdot || \mu):\sR^k\to \sR\cup \{\infty\}$ satisfies
$D_ \mathfrak{f}(a || \mu)=\infty$ for $a\not \in \Delta_k$ and 
$$
D_ \mathfrak{f}(a || \mu)\coloneqq 
\sum_{i=1}^k \mu_i \mathfrak{f}\big(\tfrac{a_i}{\mu_i}\big)\in\sR\cup\{\infty\}
\q \fa a\in \Delta_k.
$$ 
One can easily see from  $\mathfrak{f}(1)=0$ and the lower semicontinuity of $\mathfrak{f}$
that 
$D_ \mathfrak{f}(\cdot || \mu)$ is a proper, lower semicontinuous function with effective domain
$ \dom D_ \mathfrak{f}(\cdot || \mu)\subset \Delta_k$. 
Moreover, by  the $\kappa_\mu$-strong convexity of   $\mathfrak{f}$, 
we have for all $a,\tilde{a}\in \Delta_k$, $\eta\in [0,1]$ that 
\begin{align*}
&\eta D_ \mathfrak{f}(a || \mu)+(1-\eta) D_ \mathfrak{f}(\tilde{a} || \mu)
\\
&=\sum_{i=1}^k \mu_i \Big(\eta\mathfrak{f}\big(\tfrac{a_i}{\mu_i}\big)+(1-\eta)\mathfrak{f}\big(\tfrac{\tilde{a}_i}{\mu_i}\big)\Big)
\ge \sum_{i=1}^k \mu_i 
\Big(\mathfrak{f}\big(\tfrac{\eta a_i+(1-\eta) \tilde{a}_i}{\mu_i}\big)
+\eta(1-\eta)\frac{\kappa_\mu}{2}|\tfrac{a_i-\tilde{a}_i}{\mu_i}|^2
\Big)
\\
&\ge 
D_ \mathfrak{f}(\eta a+(1-\eta) \tilde{a} || \mu)
+\eta(1-\eta)\frac{\kappa_\mu}{2\max_i\mu_i}|{a-\tilde{a}}|^2,
\end{align*}
which implies the $\tfrac{\kappa_\mu}{\max_i\mu_i}$-strong convexity of $D_ \mathfrak{f}(\cdot || \mu)$ in $\sR^k$.
It is clear that for suitable choices of $f_0,f_1$, the function $f$ in \eqref{eq:f_relaxed} satisfies
 (H.\ref{assum:lc_ns}\ref{item:f0R}).

It is important to notice that an  $\mathfrak{f}$-divergence $D_ \mathfrak{f}(\cdot || \mu)$ 
is in general   non-differentiable and unbounded on its effective domain.
For example, 
one may consider 
the relative entropy (with $\mathfrak{f}(s)=s\log s$) and the squared Hellinger divergence
(with $\mathfrak{f}(s)=2(1-\sqrt{s})$),  which are
not  subdifferentiable at the boundary of  $\Delta_k$.
Moreover, 
the reverse relative entropy (with $\mathfrak{f}(s)=-\log s$)
and the Neyman's $\chi^2$ divergence (with $\mathfrak{f}(s)=\tfrac{1}{s}-1$)
are unbounded  near the boundary of  $\Delta_k$.

\end{Example}

\subsection{Construction of  optimal feedback controls}
In this section, 
we
apply the maximum principle to \eqref{eq:lc} 
and 
explicitly construct optimal feedback controls of \eqref{eq:lc} 
  based on the associated coupled
FBSDE.

The following proposition shows that 
 under (H.\ref{assum:lc_ns}),  
 the control problem \eqref{eq:lc}
 admits a unique  optimal open-loop control.

\begin{Proposition}
Suppose (H.\ref{assum:lc_ns}) holds and let $x_0\in \sR^n$. 
Then the cost functional $J(\a; x_0):\cH^2(\sR^k)\to \sR\cup \{ \infty\}$
is proper, lower semicontinuous, and $\lambda$-strongly convex.
Consequently, 
$J(\cdot; x_0)$ 
admits a unique minimizer  $\a^{x_0}$ in $\cH^2(\sR^k)$.
\end{Proposition}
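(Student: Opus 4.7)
My plan is to establish separately that $J(\cdot;x_0)$ is (i) $\lambda$-strongly convex, (ii) proper, and (iii) lower semicontinuous on the Hilbert space $\cH^2(\sR^k)$, and then to deduce existence and uniqueness of a minimizer from (i)--(iii) via a standard Cauchy-sequence argument.

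For (i), I will first observe that the drift $b$ is affine in $(x,a)$ by (H.\ref{assum:lc_ns}\ref{item:lc_linear}) and the diffusion/jump coefficients in \eqref{eq:lc_sde} are uncontrolled, so the solution map $\a\mapsto X^{x_0,\a}$ is affine from $\cH^2(\sR^k)$ into $\cS^2(\sR^n)$; in particular $X^{x_0,\eta\a+(1-\eta)\tilde\a}=\eta X^{x_0,\a}+(1-\eta) X^{x_0,\tilde\a}$. Applying the pointwise inequality \eqref{eq:strong_convex} with $(x,a)=(X^{x_0,\a}_t,\a_t)$, $(x',a')=(X^{x_0,\tilde\a}_t,\tilde\a_t)$, integrating over $[0,T]$ and taking expectation, then adding the convexity inequality for $g$ applied at $X^{x_0,\bar\a}_T=\eta X^{x_0,\a}_T+(1-\eta)X^{x_0,\tilde\a}_T$ from (H.\ref{assum:lc_ns}\ref{item:lc_g}), will yield
\[
\eta J(\a;x_0)+(1-\eta)J(\tilde\a;x_0)\ge J(\eta\a+(1-\eta)\tilde\a;x_0)+\eta(1-\eta)\tfrac{\lambda}{2}\|\a-\tilde\a\|_{\cH^2}^2.
\]
For (ii), I will show $\dom J\neq\emptyset$ by taking a constant control $\a\equiv a^\ast$ with $a^\ast\in\dom\cR$ (nonempty because $\cR$ is proper); the standard $\cS^2$-estimate for $X^{x_0,\a}$, combined with the quadratic growth of $f_0$ and $g$ inherited from the Lipschitz-derivative bounds and $\cR(a^\ast)<\infty$, will give $J(\a;x_0)<\infty$. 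Conversely, the affine minorants $f_0(t,x,a)\ge -L(1+|x|+|a|)$, $g(x)\ge -L(1+|x|)$ (from the Lipschitz-derivative bounds at $0$) and an affine minorant $\cR(a)\ge\la v_0,a\ra-c_0$ of the proper l.s.c.\ convex function $\cR$, together with the $\cS^2/\cH^2$-bounds on $(X^{x_0,\a},\a)$, will give $J(\a;x_0)>-\infty$ for every $\a\in\cH^2(\sR^k)$.

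For (iii), let $\a^n\to\a$ in $\cH^2(\sR^k)$. Standard SDE stability will give $X^{x_0,\a^n}\to X^{x_0,\a}$ in $\cS^2(\sR^n)$, so along a subsequence the convergence is $\d t\otimes\d\sP$-pointwise and $|\a^n_t|+|X^{x_0,\a^n}_t|$ is uniformly integrable on $[0,T]\t\Om$. Because $f$ is a normal convex integrand (cf.\ Remark \ref{rmk:integrand}), it is $\d t\otimes\d\sP$-a.e.\ lower semicontinuous in $(x,a)$; applying Fatou's lemma to the nonnegative sequence $f(t,X^{x_0,\a^n}_t,\a^n_t)+C(1+|\a^n_t|+|X^{x_0,\a^n}_t|)$ and invoking continuity of $g$ together with the $L^2$-convergence of $X^{x_0,\a^n}_T$, I will obtain $\liminf_n J(\a^n;x_0)\ge J(\a;x_0)$, and the passage from subsequence to full sequence is the usual extraction argument. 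Granted (i)--(iii), I take a minimizing sequence $\{\a^n\}$ with $J(\a^n;x_0)\to m\coloneqq\inf J(\cdot;x_0)\in\sR$; the strong convexity with $\eta=\tfrac{1}{2}$ gives
\[
\tfrac{\lambda}{4}\|\a^n-\a^m\|_{\cH^2}^2\le\tfrac{1}{2}(J(\a^n;x_0)+J(\a^m;x_0))-J\bigl(\tfrac{\a^n+\a^m}{2};x_0\bigr)\le\tfrac{1}{2}(J(\a^n;x_0)+J(\a^m;x_0))-m\to 0,
\]
so $\{\a^n\}$ is Cauchy with a limit $\a^{x_0}\in\cH^2(\sR^k)$, and lower semicontinuity yields $J(\a^{x_0};x_0)\le m$; uniqueness is immediate from strict convexity. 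I expect step (iii) to be the main obstacle, because $\cR$ can be discontinuous on $\dom\cR$ and unbounded near its boundary (as in the relative-entropy case of Example \ref{example:relax}); the normal-convex-integrand structure of $f$ noted in Remark \ref{rmk:integrand} and the affine minorant of $\cR$ are precisely what will make Fatou's lemma applicable despite the lack of continuity.
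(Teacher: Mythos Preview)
Your approach is correct and mirrors the paper's: establish that $J(\cdot;x_0)$ is proper, lower semicontinuous, and $\lambda$-strongly convex, then conclude. The paper simply asserts these properties ``follow directly'' from (H.\ref{assum:lc_ns}) and the affine dynamics and then cites \cite[Lemma 2.33(ii)]{bonnans2013perturbation} for the minimizer, whereas you spell out the details and give the direct Cauchy-sequence argument; the content is the same.

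One small correction: your claimed affine minorant $f_0(t,x,a)\ge -L(1+|x|+|a|)$ is not justified, because $f_0$ is only assumed convex in $a$, not jointly in $(x,a)$. What the Lipschitz-derivative bound in (H.\ref{assum:lc_ns}\ref{item:f0R}) actually gives is a \emph{quadratic} lower bound $f_0(t,x,a)\ge -C(1+|x|^2+|a|^2)$. This does not damage your argument: for part~(ii) the quadratic term is still integrable since $X^{x_0,\a}\in\cS^2$ and $\a\in\cH^2$; for part~(iii) you should add $C(1+|\a^n_t|^2+|X^{x_0,\a^n}_t|^2)$ instead of the linear term before applying Fatou, and then use that $|\a^n|^2\to|\a|^2$ and $|X^{x_0,\a^n}|^2\to|X^{x_0,\a}|^2$ in $L^1([0,T]\times\Om)$ (which follows from $\cH^2$- and $\cS^2$-convergence via $\big||u|^2-|v|^2\big|\le|u-v|(|u|+|v|)$ and Cauchy--Schwarz). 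With this adjustment your Fatou step goes through unchanged.
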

\begin{proof}
The desired properties of 
$J$ follow directly from
 the corresponding properties of $f$, $g$ in 
 (H.\ref{assum:lc_ns})
and the fact that  \eqref{eq:lc_sde} has affine coefficients.
The  well-posedness  of minimizers 
%of \eqref{eq:lc}
then
follows from the  standard 
theory of
strongly convex 
minimization 
problems
on Hilbert spaces
(see e.g.~\cite[Lemma 2.33 (ii)]{bonnans2013perturbation}).
\end{proof}

We then proceed to study optimal feedback controls of \eqref{eq:lc}.
The classical control theory shows that 
under  suitable coercivity and convexity conditions,
the optimal open-loop control of \eqref{eq:lc} can be expressed in a feedback form,
i.e., there exists a  measurable function
$\psi:[0,T]\t \sR^n\to \sR^k$ such that 
$\a^{x_0} =\psi(t,X^{x_0,\a^{x_0}}_t)$
for  $\d \sP \otimes \d t$ a.e.~(see \cite{{nicole1987compactification}} for the case with controlled jump-diffusions and smooth costs
and \cite{haussmann1990existence} for the case with controlled diffusions
and nonsmooth costs).
However,
since these
non-constructive 
 proofs are based on a measurable selection theorem,
the resulting feedback policy $\psi$ may not be unique, 
and may be   unstable with respect to perturbations of the state dynamics.
%Moreover, 
%the non-constructive argument prevents us from implementing such a feedback strategy
%in practice.

In the subsequent analysis, 
we give a constructive proof of the existence of Lipschitz continuous feedback controls
by exploiting the linear-convex structure of the control problem
 \eqref{eq:lc}-\eqref{eq:lc_sde}.
Such a feedback control 
can be explicitly represented  as 
solutions of a suitable FBSDE,
and hence is Lipschitz stable with respect to  
perturbations of underlying models (see Theorem \ref{thm:feedback_stable}).

We first present the precise 
definitions of  feedback controls
and the associated state processes.

\begin{Definition}\l{def:fb}
Let $\cV$ be the following 
space of 
 feedback controls:
 \begin{equation}\l{eq:lipschitz_feedback}
\cV \coloneqq 
\left\{ 
\psi: [0,T]\t \sR^n\to \sR^k
\,\middle\vert\, 
%\|v\|_{\textrm{Lip}}
%\coloneqq 
%\sup_{
%\substack{
%(t,x,y)\in [0,T]\t \sR^n\t \sR^n,
%\\ x\not =y}
%}
%\frac{|v(t,x)-v(t,y)|}{|x-y|}
%<\infty.
%\\
\begin{aligned}
&\textnormal{$\psi$ is measurable and there exists $ C\ge 0$ such that
}
\\
&\textnormal{
for all $(t,x,y)\in [0,T]\t \sR^n\t \sR^n$,
 $|\psi(t,0)|\le C$
}
\\
&
\textnormal{
 and 
 $|\psi(t,x)-\psi(t,y)|\le C|x-y|$.
}
 \end{aligned}
\right\}
\end{equation}
For any given  $x_0\in \sR^n$
and  $\psi\in \cV$,
we say $X^{x_0,\psi}\in \cS^2(\sR^n)$ is the state process associated with 
$\psi$   if it satisfies the following dynamics:
\bb\l{eq:lc_sde_fb}
\d X_t =b(t,X_t,{\psi}(t,X_t))\,\d t+\sigma(t)\, \d W_t
+\int_{\sR^p_0}
\gamma(t,u)\, \tilde{N}(\d t,\d u),
, \q t\in [0,T],
\q X_0=x_0.
\ee
We say $\psi\in \cV$ is an optimal feedback control of \eqref{eq:lc}
if
it holds 
%for all $t\in [0,T]$  that 
for  $\d \sP \otimes \d t$ a.e.~that
$\a^{x_0}_t=\psi(t,X^{x_0,\psi}_t)$,
where $\a^{x_0}\in \cH^2(\sR^n)$ is 
the  optimal control 
of \eqref{eq:lc}. 

\end{Definition}

We then proceed to establish a maximum principle 
for feedback controls of the control problem \eqref{eq:lc} with non-smooth costs.
Let  $H:[0,T]\t \sR^n\t \sR^k\t \sR^n\to \sR\cup\{\infty\}$
and ${\phi}:[0,T]\t  \sR^n\t  \sR^n\to \sR^k$
such that for
all $(t,x,a,y)\in [0,T]\t \sR^n\t \sR^k \t \sR^n$,
\begin{align}\l{eq:hamiltonian}
H(t,x, a,y)&\coloneqq \la b(t,x,a), y\ra  +f(t,x,a),
\q
\phi(t,x,y)\coloneqq\argmin_{a\in \sR^k} H(t,x, a,y)\in \dom \cR.
%\l{eq:hamiltonian}
\end{align}
The following   lemma shows that the function  $\phi$
is well-defined and measurable.

\begin{Lemma}\l{lemma:measurable_phi}
 Suppose (H.\ref{assum:lc_ns}) holds.
Then the function ${\phi}:[0,T]\t \sR^n\t  \sR^n\to \sR^k$ 
defined  in \eqref{eq:hamiltonian}
is  measurable
and
satisfies 
for all  $t\in [0,T]$, $x,y\in \sR^n$ that 
\begin{align}
\begin{split}
\phi(t,x,y)
&=
\p_z f^* (t,x,-b_2(t)^\trans y),
\end{split}
\end{align}
where the function 
$f^*$
is   
defined  in 
\eqref{eq:conjugate}.

 \end{Lemma}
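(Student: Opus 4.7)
The plan is to reduce the minimization defining $\phi$ to the conjugate function $f^*$, use strong convexity to get uniqueness and differentiability, and then appeal to the normal convex integrand framework for measurability.

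First, I would exploit the affine structure of $b$ given in (H.\ref{assum:lc_ns}\ref{item:lc_linear}). For any fixed $(t,x,y)$, the part of $H(t,x,a,y)$ depending on $a$ equals $\langle a, b_2(t)^\trans y\rangle + f(t,x,a)$, while $\langle b_0(t)+b_1(t)x, y\rangle$ is an $a$-independent additive constant. Hence
\[
\inf_{a\in\sR^k} H(t,x,a,y) = \langle b_0(t)+b_1(t)x,y\rangle - f^*(t,x,-b_2(t)^\trans y),
\]
and the set of minimizers in $a$ coincides with the set of maximizers of $\langle a,-b_2(t)^\trans y\rangle - f(t,x,a)$, i.e., with $\p_z f^*(t,x,-b_2(t)^\trans y)$ in the subdifferential sense.

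Next, I would verify that this subdifferential is a singleton and identify it with the gradient. By (H.\ref{assum:lc_ns}\ref{item:f0R})--(\ref{item:lc_f}), $f(t,x,\cdot)$ is proper, lower semicontinuous, and $\lambda$-strongly convex on $\sR^k$ (properness follows because $\cR$ is proper and $f_0(t,x,\cdot)$ is finite valued). Standard conjugate duality then gives that $f^*(t,x,\cdot)$ is finite and differentiable on $\sR^k$ with a $\tfrac{1}{\lambda}$-Lipschitz gradient, and that
\[
\p_z f^*(t,x,z) = \argmax_{a\in\sR^k}\{\la a,z\ra-f(t,x,a)\}
\]
is a singleton for every $z$. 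Setting $z=-b_2(t)^\trans y$ shows the argmin defining $\phi(t,x,y)$ is attained at a unique point, which equals $\p_z f^*(t,x,-b_2(t)^\trans y)$. Since the argmin lies in $\dom f(t,x,\cdot)=\dom\cR$ (as $f_0$ is everywhere finite), the final inclusion $\phi(t,x,y)\in\dom\cR$ follows.

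Finally, for measurability I would invoke the normal convex integrand framework recalled in Remark \ref{rmk:integrand}. Since $f$ is a normal convex integrand on $[0,T]\times\sR^n\times\sR^k$, its partial conjugate $f^*$ in the last variable is again a normal convex integrand (\cite[Section 14]{rockafellar2009variational}); in particular $(t,x,z)\mapsto f^*(t,x,z)$ is jointly $\cB([0,T])\otimes\cB(\sR^n)\otimes\cB(\sR^k)$-measurable, and joint measurability of its (single-valued) subdifferential follows from the measurable selection applied to the graph of $\p_z f^*$, or equivalently from the fact that for each fixed $(t,x)$ the gradient $\nabla_z f^*(t,x,\cdot)$ is continuous and for each fixed $z$ it is measurable in $(t,x)$ (Carath\'eodory). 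Composing with the measurable map $(t,y)\mapsto -b_2(t)^\trans y$ yields measurability of $\phi$. The only mildly delicate point is this measurability step; once the normal-integrand structure of $f^*$ is in hand, everything else is a direct consequence of strong convexity and conjugate duality.
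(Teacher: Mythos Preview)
Your proof is correct and follows essentially the same approach as the paper: reduce the Hamiltonian minimization to the conjugate via the affine structure of $b$, use $\lambda$-strong convexity of $f(t,x,\cdot)$ to obtain differentiability and $1/\lambda$-Lipschitz continuity of $z\mapsto\partial_z f^*(t,x,z)$, and derive measurability of $\phi$ from the normal convex integrand structure of $f^*$ together with continuity in $z$. The paper's proof is slightly terser and cites specific theorems (\cite[Theorems 11.3, 11.8]{rockafellar2009variational} and \cite[Theorem E4.2.1]{hiriart2004fundamentals}), but the logical structure is the same as yours.
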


  \begin{proof}
Let
$f^*:[0,T]\t \sR^n\t \sR^k\to \sR\cup \{\infty\}$
be   the   function  
defined  in 
\eqref{eq:conjugate}.
Recall that 
for each $(t,x)\in [0,T]\t \sR^n$,
$f(t,x,\cdot)$ is
$\lambda$-strongly convex and lower semicontinuous.
Hence %\textcolor{blue}
{by \cite[Theorems 11.3 and 11.8]{rockafellar2009variational},}
$f^*(t,x,\cdot)$  is finite and differentiable on $\sR^k$,
and 
   $\p_zf^*(t,x,z)=\argmax_{a\in \dom \cR} \big(\la a, z\ra -f(t,x,a)\big)$
  for all $z\in \sR^k$.
  Moreover, %\textcolor{blue}
  {
  by \cite[Theorem E4.2.1]{hiriart2004fundamentals},}
  $\sR^k\ni z\mapsto \p_z f^*(t,x,z)\in \sR^k$ 
  is  $1/\lambda$-Lipschitz continuous.
Hence, from 
 the definition of $\phi$ and (H.\ref{assum:lc_ns}\ref{item:lc_linear}), 
for all  $t\in [0,T]$, $x,y\in \sR^n$,
\begin{align}\l{eq:phi_f*}
\begin{split}
\phi(t,x,y)
&=
\argmin_{a\in \sR^k}
\Big(\la b(t,x,a), y\ra +f(t,x,a)\Big)
=
\argmax_{a\in \sR^k}
\Big(\la a, -b_2(t)^\trans y\ra -f(t,x,a)\Big)
\\
&=
\p_z f^* (t,x,-b_2(t)^\trans y).
\end{split}
\end{align}
Note that the measurability of $f^*$  (see Remark \ref{rmk:integrand})
 implies that the derivative $\p_z f^*$
is  measurable, 
 which along with the continuity of $z\mapsto \p_z f^*(t,x,z)$ leads to the measurability of $\phi$.
\end{proof}

With the measurable function $\phi$ in hand, 
for each $(t,x)\in [0,T]\t \sR^n$, let us consider 
 the following coupled FBSDE on $[t,T]$: for all $s\in [t,T]$,
\begin{subequations}\l{eq:lc_fbsde}
\begin{alignat}{2}
\mathrm{d}X_s&=
b(s,X_s,\phi(s,X_s,Y_s))\, \d s
+\sigma(s)\, \d W_s
+\int_{\sR^p_0}
\gamma(s,u)\, \tilde{N}(\d s,\d u),
&&
\q X_t=x,
\l{eq:lc_fbsde_fwd}
\\
\mathrm{d}Y_s&
=-\p_x H(s,X_s,\phi(s,X_s,Y_s),Y_s)\,\d s
+Z_s\, \d W_s
+\int_{\sR^p_0}
M_s\, \tilde{N}(\d s,\d u),
&&
\q Y_T=\nabla g(X_T).
\end{alignat}
\end{subequations}
We say 
a tuple of processes 
 $(X^{t,x},Y^{t,x}, Z^{t,x},M^{t,x})
\in \sS(t,T)\coloneqq 
 \cS^2(t,T;\sR^n)\t \cS^2(t,T;\sR^n)\t \cH^2(t,T;\sR^{n\t d})
\t \cH^2_\nu(t,T;\sR^{n})$ 
  is a solution to 
\eqref{eq:lc_fbsde}
(on $[t,T]$ with initial condition $X^{t,x}_t=x$)
if 
it satisfies \eqref{eq:lc_fbsde} 
 $\sP$-almost surely.

The next lemma presents several important properties of the Hamiltonian $H$ and the function $\phi$
defined  in \eqref{eq:hamiltonian},
which are essential for  the well-posedness and stability of \eqref{eq:lc_fbsde}.

\begin{Lemma}\l{lemma:lipschitz_phi}
 Suppose (H.\ref{assum:lc_ns}) holds.
Let  ${\phi}:[0,T]\t \sR^n\t  \sR^n\to \sR^k$ 
be  the function
defined  in \eqref{eq:hamiltonian}.
Then
there exists a constant $C$
%depending only on  $\lambda, L$, 
such that for all 
$t\in [0,T]$ and  $(x,y),(x',y')\in  \sR^n\t  \sR^n$,
$|\phi(t,0, 0)|\le C$,
$|\phi(t,x,y)-\phi(t,x',y')|\le C(|x-x'|+|y-y'|)$
and 
\begin{align}\l{eq:lc_mono}
\begin{split}
&\la b(t,x,\phi(t,x,y))- b(t,x',\phi(t,x',y')),y-y'\ra
\\
&\q  +\la -\p_x H(t,x,\phi(t,x,y),y)+\p_x H(t,x',\phi(t,x',y'),y'),x-x'\ra 
\\
&\le -\lambda |\phi(t,x,y)-\phi(t,x',y')|^2,
\end{split}
 \end{align}
 with the constant $\lambda$ in  (H.\ref{assum:lc_ns}).

 \end{Lemma}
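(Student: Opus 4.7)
The plan is to establish the three properties in order, leveraging the explicit representation $\phi(t,x,y)=\partial_z f^*(t,x,-b_2(t)^\trans y)$ from Lemma \ref{lemma:measurable_phi} together with the linear-convex structure in (H.\ref{assum:lc_ns}) and the first-order optimality condition that characterizes $\phi$.

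For the Lipschitz estimate I would split $\phi(t,x,y)-\phi(t,x',y')$ as $[\phi(t,x,y)-\phi(t,x',y)]+[\phi(t,x',y)-\phi(t,x',y')]$ and bound the two increments separately. The second increment is immediate from the $1/\lambda$-Lipschitz continuity of $z\mapsto \partial_z f^*(t,x',z)$ (already noted in the proof of Lemma \ref{lemma:measurable_phi}) combined with the $L^\infty$-bound on $b_2$, which gives a bound of order $(L/\lambda)|y-y'|$. For the first increment, setting $a=\phi(t,x,y)$ and $a'=\phi(t,x',y)$, I would add the two optimality-plus-strong-convexity inequalities $f(t,x,a)+\tfrac{\lambda}{2}|a-a'|^2\le f(t,x,a')$ and $f(t,x',a')+\tfrac{\lambda}{2}|a-a'|^2\le f(t,x',a)$; the $\cR$-contributions cancel in the sum, and the $L$-Lipschitz continuity of $\partial_x f_0$ in $a$ then yields $|a-a'|\le (L/\lambda)|x-x'|$. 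For the bound $|\phi(t,0,0)|\le C$, I would fix once and for all some $a_*\in \dom \cR$ together with $g_*\in \partial \cR(a_*)$ (such a pair exists since any proper lower semicontinuous convex function has non-empty subdifferential on the relative interior of its effective domain), note that $v_*\coloneqq \partial_a f_0(t,0,a_*)+g_*\in \partial_a f(t,0,a_*)$ is bounded in magnitude by $L(1+|a_*|)+|g_*|$ uniformly in $t$, and combine the strong-convexity inequalities between $a_*$ and $\phi(t,0,0)$ with the optimality condition $0\in \partial_a f(t,0,\phi(t,0,0))$ to derive $\lambda|\phi(t,0,0)-a_*|\le |v_*|$.

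For the monotonicity inequality \eqref{eq:lc_mono} I would set $a=\phi(t,x,y)$ and $a'=\phi(t,x',y')$ and expand the left-hand side using the affine form of $b$ together with the identity $\partial_x H(t,\cdot,a,y)=b_1(t)^\trans y+\partial_x f_0(t,\cdot,a)$ (using that $\cR$ does not depend on $x$). The $b_1$-contributions cancel and the expression reduces to $\langle b_2(t)(a-a'),y-y'\rangle-\langle \partial_x f_0(t,x,a)-\partial_x f_0(t,x',a'),x-x'\rangle$. The crucial observation is that the optimality condition $0\in b_2(t)^\trans y+\partial_a f_0(t,x,a)+\partial \cR(a)$ from the definition of $\phi$ places $(\partial_x f_0(t,x,a),-b_2(t)^\trans y)$ in $\partial_{(x,a)} f(t,x,a)$, and likewise at $(x',a')$. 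Feeding these subgradient pairs into the mixed monotonicity estimate $\langle v_1-v_2,x-x'\rangle+\langle w_1-w_2,a-a'\rangle\ge \lambda|a-a'|^2$—which itself is obtained by combining the subgradient inequality with \eqref{eq:strong_convex} and letting $\eta\to 1^-$—yields the required bound \eqref{eq:lc_mono}.

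The hard part will be the last step: the non-smoothness of $\cR$ forces one to work with subdifferentials rather than gradients, and the cancellation producing the sharp constant $-\lambda|a-a'|^2$ depends on identifying precisely the subgradient of $f$ picked out by the definition of $\phi$. Once one recognizes that $-b_2(t)^\trans y$ is exactly the $a$-component of a joint subgradient of $f$ at $(x,\phi(t,x,y))$—and that the joint convexity plus $\lambda$-strong convexity in $a$ yield a mixed monotonicity with the correct coefficient—the proof assembles cleanly.
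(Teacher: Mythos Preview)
Your proposal is correct and follows essentially the same strategy as the paper's proof. One small slip to fix: in the $x$-Lipschitz step, the inequalities $f(t,x,a)+\tfrac{\lambda}{2}|a-a'|^2\le f(t,x,a')$ as written are not literally true, since $a=\phi(t,x,y)$ minimizes $H(t,x,\cdot,y)$ rather than $f(t,x,\cdot)$; the correct versions carry $H$ in place of $f$, but upon adding the two the linear-in-$a$ term $\la b_2(t)^\trans y,\cdot\ra$ cancels and you recover exactly the sum you need, so the argument is unaffected.

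For the monotonicity inequality \eqref{eq:lc_mono} the paper stays at the Hamiltonian level throughout: it first establishes the inclusion $(\p_x H,0)\in\widehat{\p}_{(x,a)}H$ at the minimizer via a subderivative (directional-liminf) argument, then rewrites the left-hand side as a telescoping difference of Hamiltonian values and applies the strong-convexity subgradient inequality for $H$ twice. Your route---expanding $b$ and $\p_x H$ explicitly, cancelling the $b_1$ contributions, and invoking the sum rule $\widehat{\p}_{(x,a)}(f_0+\cR)=\{\p_x f_0\}\times(\p_a f_0+\widehat{\p}\cR)$ directly---is equivalent but slightly more elementary, since it sidesteps the subderivative computation by exploiting the smooth-plus-convex decomposition of $f$ up front. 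For the bound on $|\phi(t,0,0)|$ the paper uses $a_*=\phi(t_0,0,0)$ at a fixed $t_0$ as the reference point (for which $-\p_a f_0(t_0,0,a_*)\in\widehat{\p}\cR(a_*)$ comes for free from optimality), whereas you pick an arbitrary $a_*$ in the relative interior of $\dom\cR$; both choices work.
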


  \begin{proof}

We start by  showing the  boundedness of $\phi(\cdot,0,0)$ by considering 
$a(t)\coloneqq (\p_z f^*) (t,0,0)$ for each $t\in [0,T]$,
where 
$f^*:[0,T]\t \sR^n\t \sR^k\to \sR$
is  
defined as in 
\eqref{eq:conjugate}.
The fact that $f(t,0,\cdot)$ is proper, lower semicontinuous and convex implies that 
$0\in \widehat{\p}_a f(t,0,a(t))$ for all $t\in [0,T]$,
where $\widehat{\p}_a f(t,0,a(t))$ is the subdifferential of $f(t,0,\cdot)$ at $a(t)$.
Note that $f_0(t,0,\cdot)$ and $\cR$ are proper, lower semicontinuous, and convex,
and $\dom \cR\subset \dom f_0(t,0,\cdot)=\sR^k$. 
Hence by %we can obtain from 
\cite[Corollary 10.9]{rockafellar2009variational},
 $\widehat{\p}_a f(t,0,a)=\p_a f_0(t,0,a)+\widehat{\p} \cR(a)$
for all $(t,a)\in [0,T]\t \sR^k$,
where $\widehat{\p} \cR(a)$ is the subdifferential of $\cR$ at $a$.
Now  fix an arbitrary $t_0\in [0,T]$ and set $a_0=a(t_0)$.
The fact that  $0\in \widehat{\p}_a f(t_0,0,a_0)$ implies that 
$-\p_a f_0(t_0,0,a_0)\in \widehat{\p} \cR(a_0)$
and hence 
 $\p_a f_0(t,0,a_0)-\p_a f_0(t_0,0,a_0)\in \widehat{\p}_a f(t,0,a_0)$
 for all $t\in [0,T]$.
By  the strong convexity condition \eqref{eq:strong_convex}, %we have 
for all $t\in [0,T]$, $\xi_1\in \widehat{\p}_a f(t,0,a_0)$ and $\xi_2\in \widehat{\p}_a f(t,0,a(t))$, 
$$
\lambda |a_0-a(t)|^2\le \la \xi_1-\xi_2, a_0-a(t)\ra \le  |\xi_1-\xi_2||a_0-a(t)|.
$$
Taking $\xi_1=\p_a f_0(t,0,a_0)-\p_a f_0(t_0,0,a_0)$ and $\xi_2=0$ in the above inequality
yields
\begin{align*}
 |a_0-a(t)|\le   |\p_a f_0(t,0,a_0)-\p_a f_0(t_0,0,a_0)|/\lambda \le C,
\end{align*}
by the linear growth of $\p_a f_0(t,0,\cdot)$.
This implies that 
$|(\p_z f^*) (t,0,0)|\le C$ for all $t\in [0,T]$,
which
 along with \eqref{eq:phi_f*} leads to  the  desired uniform boundedness of $\phi(\cdot,0,0)$.

We proceed to establish the Lipschitz continuity of $\phi$ with respect to $(x,y)$.
The $1/\lambda$-Lipschitz continuity of $\p_z f^*(t,x,\cdot)$
and the boundedness of $b_2$
imply that $\phi$ is Lipschitz continuous in $y$, uniformly with respect to $(t,x)$.
It remains to show the Lipschitz continuity of $\p_z f^*$ with respect to $x$,
which along with \eqref{eq:phi_f*} leads to the desired Lipchitz continuity of $\phi$. 
For any given $(t,z)\in [0,T]\t \sR^k$ and $x,x'\in \sR^n$, let $a=\p_z f^*(t,x,z)$ and $a'=\p_z f^*(t,x',z)$.
Then we have $z\in  \widehat{\p}_a f(t,x,a)$ and $z\in  \widehat{\p}_a f(t,x',a')$.
Moreover, by 
 the convexity of $f(t,x,\cdot)$ for all $(t,x)\in[0,T]\t \sR^n$
and  similar arguments as above,
 we can show  that
$z-\p_a f_0(t,x',a')+\p_a f_0(t,x,a')\in  \widehat{\p}_a f(t,x,a')$,
which together with the convexity condition \eqref{eq:strong_convex}
and $z\in  \widehat{\p}_a f(t,x,a)$
leads to 
\begin{align*}
\lambda |a'-a|\le |z-\p_a f_0(t,x',a')+\p_a f_0(t,x,a')-z|\le L|x-x'|,
\end{align*}
where we have used the $L$-Lipchitz continuity of $\p_a f_0(t,\cdot,\cdot)$.
This finishes the proof of the Lipschitz continuity of 
$\p_zf^*(t,\cdot,\cdot)$ and 
$\phi(t,\cdot,\cdot)$.

Finally, we establish the monotonicity condition \eqref{eq:lc_mono}.
By  
 (H.\ref{assum:lc_ns}\ref{item:lc_f}),
%we can obtain 
for all $t\in [0,T]$, $x,x'\in\sR^n, a,a'\in \sR^k,y\in \sR^n$,
the function $\sR^n\t \sR^k \ni(x,a)\mapsto H(t,x,a,y)\in \sR^n\cup \{\infty\}$
satisfies the same convexity condition \eqref{eq:strong_convex}
as the function $f$, 
and hence 
\begin{align}\l{eq:H_sub}
H(t,x',a',y)-H(t,x,a,y)
\ge \la \xi, x'-x,a'-a\ra+ \tfrac{\lambda}{2} | a'-a|^2 \q \fa \xi\in \widehat{\p}_{(x,a)} H(t,x,a,y),
\end{align}
where $ \widehat{\p}_{(x,a)}H(t,x,a,y)$ denotes  
the subdifferential of the function $H(t,\cdot,y)$ at $(x,a)$. 
Moreover,
for any given $t\in [0,T]$ and $x,y\in \sR^n$,
the definition of $\phi$ in \eqref{eq:hamiltonian}
 implies that 
$0\in  \widehat{\p}_a H(t,x,\phi(t,x,y),y)$,
where $ \widehat{\p}_a H(t, x,\phi(t, x,y),y)$ denotes 
the subdifferential of the function $ H(t, x,\cdot,y)$ at $\phi(t,x,y)$. 
{%\color{blue}
Now recall that for 
any Euclidean space $E$, convex function
$F:E\to \sR\cup\{\infty\}$ and $x\in \dom F$, 
$v\in \widehat{\p} F(x)$ if and only if 
$\liminf_{\tau \to 0,\tilde{w}\to w} \frac{F(x+\tau \tilde{w})-F(x)}{\tau }\ge\la v, w\ra $ for all $w\in E$
(see e.g., Exercise 8.4 and Proposition 8.12 in \cite{rockafellar2009variational}).
Thus,
 for any  $t\in [0,T]$ and $x,y\in \sR^n$, 
  $0\in  \widehat{\p}_a H(t,x,\phi(t,x,y),y)$ yields
for all $z\in \sR^k$,
\bb
\label{eq:subderivative_1}
\liminf_{\tau \to 0,\tilde{z}\to z}
\frac{H(t, x,\phi(t, x,y)+\tau \tilde{z},y)
-H(t, x,\phi(t, x,y),y)
}{\tau }\ge \la 0, z\ra =0.
\ee
Moreover, 
by
%\eqref{eq:hamiltonian},
 the convexity of $H$   and the
continuity of $\p_x H$ in $(x,a)$,
for any  $t\in [0,T]$ and $x,y,w\in \sR^n$ and $z\in \sR^k$, 
\begin{align}
\label{eq:subderivative_2}
\begin{split}
&\liminf_{\tau \to 0,(\tilde{w},\tilde{z})\to (w,z)}
\frac{H(t, x+\tau \tilde{w},\phi(t, x,y)+\tau \tilde{z},y)
-H(t, x,\phi(t, x,y)+\tau \tilde{z},y)
}{\tau }
\\
&\ge \liminf_{\tau \to 0,(\tilde{w},\tilde{z})\to (w,z)}
\frac{\la \p_x H(t, x,\phi(t, x,y)+\tau \tilde{z},y)
,\tau \tilde{w}\ra 
}{\tau }
\ge  \la \p_x H(t, x,\phi(t, x,y), y)
, {w}\ra,
\end{split}
\end{align}
provided that $\phi(t, x,y)+\tau \tilde{z}\in \dom \cR$
(cf.~\eqref{eq:hamiltonian}).  
Then for any  $t\in [0,T]$ and $x,y\in \sR^n$,
adding up 
\eqref{eq:subderivative_1} and 
\eqref{eq:subderivative_2} 
and using the fact that $\phi(t,x,y)\in \dom\cR$
give
for all $(w,z)\in \sR^n \t \sR^k$, 
\begin{align*}
&\liminf_{\tau \to 0,(\tilde{w},\tilde{z})\to (w,z)}
\frac{H(t, x+\tau \tilde{w},\phi(t, x,y)+\tau \tilde{z},y)
-H(t, x,\phi(t, x,y), y)
}{\tau }
\ge  \la \p_x H(t, x,\phi(t, x,y), y)
, {w}\ra + \la 0, z\ra,
\end{align*}
which implies 
\bb\l{eq:partialH_subH}
(\p_xH(t,x,\phi(t,x,y), y), 0)\subset  \widehat{\p} _{(x,a)}H(t,x,\phi(t,x,y), y).
\ee
}\!\!  
Hence for all 
$t\in [0,T]$,
$(x_1,y_1),(x_2,y_2)\in \sR^n\t \sR^n$, 
we can define  $a_1=\phi(t,x_1,y_1), a_2=\phi(t,x_2,y_2)$ and deduce that 
\begin{align*}
&\la b(t,x_1,a_1)- b(t,x_2,a_2),y_1-y_2\ra +\la -\p_x H(t,x_1,a_1,y_1)+\p_x H(t,x_2,a_2,y_2),x_1-x_2\ra 
\\
&=
H(t,x_1,a_1,y_1)-H(t,x_2,a_2,y_1)-\la \p_x H(t,x_1,a_1,y_1),x_1-x_2\ra 
\\
&\q -
\big(H(t,x_1,a_1,y_2)-H(t,x_2,a_2,y_2)-\la \p_x H(t,x_2,a_2,y_2),x_1-x_2\ra 
\big)
\\
&\le -\lambda |a_1-a_2|^2,
\end{align*}
which finishes the proof of the desired monotonicity condition.
\end{proof}

%Based on Lemma \ref{lemma:lipschitz_phi},
The following proposition shows that
 \eqref{eq:lc_fbsde} admits a unique solution, which is Lipschitz 
 continuous with respect to the initial state.
 The proof
 is based on the stability of 
 \eqref{eq:lc_fbsde} under the generalized 
 monotonicity condition \eqref{eq:lc_mono}
 (see    Lemma \ref{lemma:mono_stab}),
% whose detailed step is similar to 
%that of 
and follows 
\cite[Corollary 2.4]{reisinger2020path}
for the case without jumps.
%  and hence 
%omitted.

\begin{Proposition}\l{prop:fbsde_wp}
Suppose (H.\ref{assum:lc_ns}) holds.
Then
for any given  $(t,x)\in [0,T]\t \sR^n$, 
the FBSDE \eqref{eq:lc_fbsde} admits 
 a unique solution
 $(X^{t,x},Y^{t,x}, Z^{t,x},M^{t,x})\in \sS(t,T)$.
% on $[t,T]$ with the initial condition $X^{t,x_0}_t=x_0$.
% \item\l{item:fbsde_moment} 
 Moreover, there exists a constant $C$
% , depending only on   $\lambda, L$, 
such that
 for all  $t\in [0,T]$ and $x,x'\in \sR^n$, 
$\|(X^{t,x},Y^{t,x}, Z^{t,x},M^{t,x})\|_{\sS(t,T)}\le C(1+|x|)$
and 
$%\begin{align*}
\|(X^{t,x}-X^{t,x'},Y^{t,x}-Y^{t,x'},Z^{t,x}-Z^{t,x'},M^{t,x}-M^{t,x'})\|_{\sS(t,T)}\le C|x-x'|.
$
\end{Proposition}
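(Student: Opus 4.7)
The plan is to prove well-posedness and Lipschitz dependence on the initial state simultaneously, leveraging the monotonicity inequality \eqref{eq:lc_mono} and the Lipschitz continuity of $\phi$ established in Lemma \ref{lemma:lipschitz_phi}. The overall strategy is the standard method of continuation in the coupling parameter (as in Peng--Wu and Hu--Peng type arguments for coupled FBSDEs), adapted to accommodate the compensated Poisson integrals in \eqref{eq:lc_fbsde}. Because $\gamma$ is independent of $(X,Y)$ and $\sigma$ is uncontrolled, the jump and Brownian components of the forward equation act as fixed data, so the argument reduces in spirit to the pure-diffusion case of \cite[Corollary 2.4]{reisinger2020path}, with the backward jump martingale $\int_{\sR^p_0}M_s\tilde{N}(\d s,\d u)$ producing only zero-mean contributions in the a priori estimates.

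First I would derive the key a priori stability estimate. For two candidate solutions $(X^{t,x},Y^{t,x},Z^{t,x},M^{t,x})$ and $(X^{t,x'},Y^{t,x'},Z^{t,x'},M^{t,x'})$ in $\sS(t,T)$, I would apply Ito's formula to $\la X^{t,x}_s-X^{t,x'}_s,\, Y^{t,x}_s-Y^{t,x'}_s\ra$ on $[t,T]$ and take expectations. The forward noise coefficients $\sigma$ and $\gamma$ cancel in the difference, so the Brownian and compensated-Poisson quadratic-covariation terms vanish, while the remaining martingale increments have zero expectation. The drift integrand is precisely the left-hand side of \eqref{eq:lc_mono} and is therefore bounded above by $-\lambda|\phi(s,X^{t,x}_s,Y^{t,x}_s)-\phi(s,X^{t,x'}_s,Y^{t,x'}_s)|^2$; combined with the monotonicity of $\nabla g$ (which follows from convexity of $g$ in (H.\ref{assum:lc_ns}\ref{item:lc_g})) at the terminal time, this yields an $L^2$ bound on the difference of the feedback inputs. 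Feeding this into standard Lipschitz SDE estimates for the forward component, and standard BSDE estimates with jumps for the backward component, produces $\|(X^{t,x}-X^{t,x'},Y^{t,x}-Y^{t,x'},Z^{t,x}-Z^{t,x'},M^{t,x}-M^{t,x'})\|_{\sS(t,T)}\le C|x-x'|$. The linear growth bound follows by taking $x'=0$ and estimating the reference solution via the uniform bound on $|\phi(\cdot,0,0)|$ from Lemma \ref{lemma:lipschitz_phi} and the $L^2$-data bounds in (H.\ref{assum:lc_ns}\ref{item:lc_linear}).

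With this a priori stability in hand, existence follows via the continuation method: for $\rho\in[0,1]$ one interpolates between the decoupled and trivially solvable FBSDE at $\rho=0$ and the target FBSDE \eqref{eq:lc_fbsde} at $\rho=1$, noting that the monotonicity \eqref{eq:lc_mono} is preserved uniformly along the homotopy. The a priori estimate shows that the set of $\rho$ for which solvability holds for every initial datum is nonempty, open (by a local contraction argument in $\sS(t,T)$), and closed (by completeness of $\sS(t,T)$), hence equals $[0,1]$. Uniqueness is an immediate consequence of the Lipschitz stability estimate applied with $x=x'$. The main technical obstacle I anticipate is the rigorous justification of Ito's formula and the vanishing of martingale expectations for solutions merely in $\cS^2\t\cS^2\t\cH^2\t\cH^2_\nu$ with a jump component of infinite activity allowed by $\int_{\sR^p_0}\min(1,|u|^2)\,\nu(\d u)<\infty$; this is handled by a standard localisation argument combined with the Burkholder--Davis--Gundy inequality for jump processes, which is licensed by the Lipschitz continuity of $\phi$ and the $L^2$-integrability of $\gamma$ in $\nu(\d u)\,\d t$.
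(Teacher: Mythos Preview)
Your proposal is correct and matches the paper's approach. The paper does not give a self-contained proof but refers to the abstract stability Lemma \ref{lemma:mono_stab} (which packages exactly the It\^o-to-$\la \delta X,\delta Y\ra$ computation you describe, with the continuation parameter $\lambda_0$ already built in) together with \cite[Corollary 2.4]{reisinger2020path} for the continuation argument; your direct derivation of the a priori estimate and the open--closed continuation step reproduce these ingredients in the jump setting.
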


Now we are ready to  present 
the main result of this section, 
which constructs an optimal feedback control of \eqref{eq:lc}
 based on the Hamiltonian \eqref{eq:hamiltonian} and the solutions to the FBSDE \eqref{eq:lc_fbsde}.

\begin{Theorem}\l{thm:lc_fb}
Suppose (H.\ref{assum:lc_ns}) holds.
Let 
$\psi:[0,T]\t \sR^n\to \sR^k$ be the function 
defined as
\bb\l{eq:feedback}
\psi(t,x)\coloneqq \phi(t,x,Y^{t,x}_t), \q (t,x)\in [0,T]\t\sR^n,
\ee
where 
 the function ${\phi}$ is defined  in \eqref{eq:hamiltonian}.
 Then 
 there exists a constant $C$ such that 
$|\psi(t,0)|\le C$ and $|\psi(t,x)-\psi(t,x')|\le C|x-x'|$
  for all  $t\in [0,T]$, $x,x'\in \sR^n$.
Moreover,
for all $x_0\in \sR^n$,
$\psi$ is an optimal feedback control of
 \eqref{eq:lc}. 

\end{Theorem}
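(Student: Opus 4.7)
The plan has four parts. \emph{Step 1:} establish the Lipschitz bounds on $\psi$ by combining the regularity of $\phi$ from Lemma \ref{lemma:lipschitz_phi} with the regularity of $x\mapsto Y^{t,x}_t$ from Proposition \ref{prop:fbsde_wp}. Since the FBSDE \eqref{eq:lc_fbsde} on $[t,T]$ starts from the deterministic state $x$ and only involves increments of $W$ and $\tilde N$ on $[t,T]$, the initial value $Y^{t,x}_t$ is deterministic; then Proposition \ref{prop:fbsde_wp} gives $|Y^{t,x}_t - Y^{t,x'}_t|\le C|x-x'|$ and $|Y^{t,0}_t|\le C$, and composing with $\phi$ (which is Lipschitz in $(x,y)$ and bounded at $(0,0)$) yields the claimed estimates on $\psi$.

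\emph{Step 2:} identify the state process driven by $\psi$. Fix $x_0\in\sR^n$ and let $(X,Y,Z,M)$ be the unique solution of \eqref{eq:lc_fbsde} on $[0,T]$ with $X_0=x_0$. The standard flow property for FBSDEs under generalized monotonicity, obtained from uniqueness on $[t,T]$ with initial condition $X_t$, yields $Y_t = Y^{t,X_t}_t$ $\sP$-a.s., whence $\phi(t,X_t,Y_t)=\psi(t,X_t)$. Thus $X$ solves the closed-loop SDE \eqref{eq:lc_sde_fb} driven by the Lipschitz feedback $\psi$, and uniqueness of strong solutions of that SDE gives $X=X^{x_0,\psi}$, so the candidate open-loop control $\alpha^*_t\coloneqq\psi(t,X^{x_0,\psi}_t)$ coincides with $\phi(t,X_t,Y_t)\in\cH^2(\sR^k)$.

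\emph{Step 3:} show $\alpha^*$ is the (unique) minimizer. For arbitrary $\alpha\in\cH^2(\sR^k)$ with $J(\alpha;x_0)<\infty$, set $X^\alpha\coloneqq X^{x_0,\alpha}$ and note that $X^\alpha-X$ has no martingale part, since $\sigma$ and $\gamma$ are control-free. Applying It\^o's formula to $\la Y_s,X^\alpha_s-X_s\ra$, using the adjoint equation in \eqref{eq:lc_fbsde}, and taking expectations (the Brownian and Poisson stochastic integrals being true martingales thanks to the $\cH^2$ and $\cH^2_\nu$ estimates on $Z,M$ from Proposition \ref{prop:fbsde_wp}) yields
\begin{equation*}
\sE[\la Y_T, X^\alpha_T - X_T\ra]
= \sE\!\left[\int_0^T \!\big(\la Y_t, b(t, X^\alpha_t, \alpha_t) - b(t, X_t, \alpha^*_t)\ra - \la \p_x H(t, X_t, \alpha^*_t, Y_t), X^\alpha_t - X_t\ra\big)\, \d t\right].
\end{equation*}
Together with $Y_T=\nabla g(X_T)$ and the convexity $g(X^\alpha_T)-g(X_T)\ge\la Y_T,X^\alpha_T-X_T\ra$, the identity $H=\la b,y\ra+f$ gives
\begin{equation*}
J(\alpha;x_0)-J(\alpha^*;x_0) \ge \sE\!\left[\int_0^T \!\Big(H(t, X^\alpha_t, \alpha_t, Y_t) - H(t, X_t, \alpha^*_t, Y_t) - \la \p_x H(t, X_t, \alpha^*_t, Y_t), X^\alpha_t - X_t\ra\Big)\, \d t\right].
\end{equation*}
Plugging $\xi=(\p_x H(t,X_t,\alpha^*_t,Y_t),0)\in\widehat{\p}_{(x,a)}H(t,X_t,\alpha^*_t,Y_t)$, delivered by the inclusion \eqref{eq:partialH_subH}, into the strong convexity estimate \eqref{eq:H_sub} bounds the integrand from below by $\tfrac{\lambda}{2}|\alpha_t-\alpha^*_t|^2$; hence $J(\alpha;x_0)-J(\alpha^*;x_0)\ge\tfrac{\lambda}{2}\sE[\int_0^T|\alpha_t-\alpha^*_t|^2\,\d t]\ge 0$, and by uniqueness of the minimizer we conclude $\alpha^*=\a^{x_0}$.

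\emph{Main obstacle.} The central difficulty is Step 3 in the nonsmooth setting: since $f$ is only lower semicontinuous and convex (not differentiable) in $a$, the classical pointwise first-order condition $\p_a H=0$ is unavailable. The workaround is precisely the mixed subdifferential inclusion \eqref{eq:partialH_subH} already proved inside Lemma \ref{lemma:lipschitz_phi}, which transfers the optimality $0\in\widehat{\p}_a H$ and the smoothness of $H$ in $x$ into a single convex inequality \eqref{eq:H_sub} that can be paired against the $H$-difference. A secondary technical point is justifying the flow identity $Y_t=Y^{t,X_t}_t$ in Step 2 for the random initial state $X_t$; this follows from uniqueness of the FBSDE with random initial data, itself a direct consequence of the Lipschitz and monotonicity bounds established in Lemma \ref{lemma:lipschitz_phi} and used in the proof of Proposition \ref{prop:fbsde_wp}.
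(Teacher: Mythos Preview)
Your proof is correct and follows essentially the same route as the paper: regularity of $\psi$ from Lemma~\ref{lemma:lipschitz_phi} and Proposition~\ref{prop:fbsde_wp}, identification of $\alpha^*_t=\psi(t,X_t)$ via the flow/Markov property of the FBSDE, and verification by It\^o's formula combined with the subdifferential inclusion \eqref{eq:partialH_subH} and the convexity inequality \eqref{eq:H_sub}. The only cosmetic difference is that where you argue directly that $Y^{t,x}_t$ is deterministic and invoke uniqueness with random initial data for the flow identity, the paper bundles both facts into a single citation of the Markovian representation $Y^{t,x}_s=v(s,X^{t,x}_s)$ from \cite{li2014lp}.
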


\begin{proof}
We first analyze the mapping $[0,T]\t\sR^n\ni (t,x)\mapsto v(t,x)\coloneqq Y^{t,x}_t\in \sR^n$.
Note by Proposition \ref{prop:fbsde_wp},
for any given 
$(t,x)\in [0,T]\t \sR^n$, the solution to  
\eqref{eq:lc_fbsde} (with initial time $t$ and initial state $x$)  is  pathwise unique
and
 Lipschitz continuous with respect to the initial state $x\in \sR^n$.
Hence, 
it is well-known that 
(see e.g., Theorem 3.1 and Remarks 3.2-3.3
in \cite{li2014lp})
 that 
 the  map $v$ can be identified with a deterministic function 
  in the space $\cV$ and 
it holds for all $(t,x)\in [0,T]\t \sR^n$ that
$ \sP(\fa s\in [t,T], Y^{t,x}_s=v(s,X^{t,x}_s))=1$.
Thus, from the regularity of $\phi$ and $v$,
$|\psi(t,0)|\le C$ and $|\psi(t,x)-\psi(t,x')|\le C|x-x'|$ for all $x,x'\in \sR^n$,
i.e., $\psi$ is in the space $\cV$.

Now let   $x_0\in \sR^n$ be a given initial state
and 
 $\tilde{\a}\in \cA$ satisfy 
    for   $\d \sP \otimes \d t$ a.e.~that
  $\tilde{\a}_t=\phi(t,X^{0,x_0}_t,Y^{0,x_0}_t)$.
  Then %we have 
  for   $\d \sP \otimes \d t$ a.e.,
  $\tilde{\a}_t%=\phi(t,X^{0,x_0}_t,Y^{0,x_0}_t)
  =\phi(t,X^{0,x_0}_t,v(t,X^{0,x_0}_t))=\psi(t,X^{0,x_0}_t)
  $,
and  $X^{0,x_0}$ is the solution to  \eqref{eq:lc_sde}
controlled by    $\tilde{\a}$, because
$(X^{0,x_0},Y^{0,x_0})$ satisfy 
\eqref{eq:lc_fbsde_fwd}.
Since the control problem \eqref{eq:lc}
admits an unique optimal control in $\cH^2(\sR^k)$, 
 it suffices to show 
that $\tilde{\a}$ is  optimal.
By  \eqref{eq:partialH_subH}, 
    for   $\d \sP \otimes \d t$ a.e.,
$$
(\p_xH(t,X^{0,x_0}_t,\phi(t,X^{0,x_0}_t,Y^{0,x_0}_t),Y^{0,x_0}_t), 0)\subset  \widehat{\p} _{(x,a)}H(t,X^{0,x_0}_t,\phi(t,X^{0,x_0}_t,Y^{0,x_0}_t),Y^{0,x_0}_t).
$$
Then for any given  $\a\in \cH^2(\sR^n)$ with the state process $X^{x_0,\a}$ satisfying the controlled dynamics  \eqref{eq:lc_sde},  
by
the definition of $H$ in \eqref{eq:hamiltonian}, 
 (H.\ref{assum:lc_ns}\ref{item:lc_g}) and \eqref{eq:H_sub},
\begin{align*}
&J(\a;x_0)-J(\tilde{\a};x_0)
\\
&=
\sE\left[
g(X_T^{x_0,\a})-g(X^{0,x_0}_T)
+
\int_0^T 
(H(t,X^{x_0,\a}_t, \a_t,Y^{0,x_0}_t)
-H(t,X^{0,x_0}_t, \tilde{\a}_t,Y^{0,x_0}_t))\, \d t\right]
\\
&\q 
-\int_0^T \la b(t, X^{x_0,\a}_t,\a_t)-b(t, X^{0,x_0}_t,\tilde{\a}_t), Y^{0,x_0}_t \ra\, \d t\bigg]
\\
&\ge 
\sE\bigg[ \la \nabla g(X^{0,x_0}_T),X^{x_0,\a}_T-X^{0,x_0}_T\ra 
+\int_0^T \la \p_xH(t,X^{0,x_0}_t,\phi(t,X^{0,x_0}_t,Y^{0,x_0}_t),Y^{0,x_0}_t), X^{x_0,\a}_t-X^{0,x_0}_t \ra\, \d t
\\
&\q 
-\int_0^T \la b(t, X^{x_0,\a}_t,\a_t)-b(t, X^{0,x_0}_t,\tilde{\a}_t), Y^{0,x_0}_t \ra\, \d t\bigg]
=0,
\end{align*}
where  the last equality is by applying  It\^{o}'s formula to
the process 
 $(\la X^{x_0,\a}_t-X^{0,x_0}_t, Y^{0,x_0}_t\ra)_{t\ge 0}$
 and by the FBSDE \eqref{eq:lc_fbsde}.
That is, 
 $\psi\in \cV$ is an optimal feedback control of \eqref{eq:lc}.
\end{proof}

\subsection{Lipschitz stability of  optimal feedback controls and associated costs}
\l{sec:robust_fb}

In this section, we 
establish 
the Lipschitz stability of
the optimal feedback controls 
constructed in Theorem \ref{thm:lc_fb}
and their associated costs:
that is, they are
are Lipschitz continuous
with respect to the perturbation in the coefficients of \eqref{eq:lc_sde}.
Such a Lipschitz stability property is crucial for the subsequent analysis of learning algorithms.

More precisely, for any given $x_0\in \sR^n$,  we consider a perturbed control problem 
where the cost functions $f,g$ are the same as those in \eqref{eq:lc}, 
and  for each $\a\in \cH^2(\sR^n)$, the corresponding state dynamics  
satisfies the following  perturbed  dynamics: 
\bb\l{eq:lc_sde_per}
\d X_t =\tilde{b}(t,X_t,\a_t)\,\d t+\tilde{\sigma}(t)\, \d W_t
+\int_{\sR^p_0}
\tilde{\gamma}(t,u)\, \tilde{N}(\d t,\d u), \q t\in [0,T],
\q X_0=x_0,
\ee
whose coefficients satisfy the following assumption:
\begin{Assumption}\l{assum:lc_ns_per} 
$\tilde{b}:[0,T]\t  \sR^n\t \sR^k\to \sR^n$,
 $\tilde{\sigma}:[0,T]\to \sR^{n\t d}$
 and 
 $\tilde{\gamma}:[0,T]\t \sR^p_0\to \sR^{n}$
 satisfy
 (H.\ref{assum:lc_ns}\ref{item:lc_linear})
with the same  constant $L$, i.e.,
there exist 
measurable functions 
$(\tilde{b}_0,\tilde{b}_1,\tilde{b}_2):[0,T]\to \sR^{n}\t \sR^{n\t n}\t \sR^{n\t k}$ 
 such that
 $\tilde{b}(t,x,a)=\tilde{b}_0(t)+\tilde{b}_1(t)x+\tilde{b}_2(t)a$
 for all $(t,x,a)\in [0,T]\t  \sR^n\t \sR^k$
 and 
$
\|\tilde{b}_0\|_{L^2}+\|\tilde{b}_1\|_{L^\infty}+\|\tilde{b}_2\|_{L^\infty}+\|\tilde{\sigma}\|_{L^2}
+\big(\int_0^T\int_{\sR^p_0} |\tilde{\gamma}(t,u)|^2\,\nu(\d u)\d t\big)^{1/2}
\le L.$

\end{Assumption}

Under (H.\ref{assum:lc_ns}) and (H.\ref{assum:lc_ns_per}), 
 Theorem \ref{thm:lc_fb} ensures that
an optimal feedback control of the perturbed control problem can be obtained by 
\bb\l{eq:feedback_per}
[0,T]\t \sR^n \ni (t,x)\mapsto \tilde{\psi}(t,x)\coloneqq \tilde{\phi}(t,x,\tilde{Y}^{t,x}_t)\in \sR^k,
\ee
where 
$\tilde{\phi}:[0,T]\t  \sR^n\t  \sR^n\to \sR^k$
%is the pointwise minimizer of the  perturbed Hamiltonian
%  $\tilde{H}:[0,T]\t \sR^n\t \sR^k\t \sR^n\to \sR\cup\{\infty\}$:
satisfies for all $(t,x,a,y)\in [0,T]\t \sR^n\t \sR^k \t \sR^n$ that
 \begin{align}\l{eq:hamiltonian_per}
\tilde{\phi}(t,x,y)\coloneqq\argmin_{a\in \sR^k} \tilde{H}(t,x, a,y),
\q \tilde{H}(t, x, a,y)&\coloneqq \la \tilde{b}(t, x,a), y\ra +{f}(t,x,a),
\end{align}
and for each $(t,x)\in [0,T]\t \sR^n$, 
$(\tilde{X}^{t,x},\tilde{Y}^{t,x},\tilde{Z}^{t,x}, \tilde{M}^{t,x})\in \sS(t,T)$
 is the solution to the following perturbed FBSDE:
for all $s\in[t,T]$,
\bb\l{eq:lc_fbsde_per}
\begin{alignedat}{2}
\mathrm{d}X_s&=
\tilde{b}(s,X_s,\tilde{\phi}(s,X_s,Y_s))\, \d s
+\tilde{\sigma}(s)\, \d W_s
+\int_{\sR^p_0}
\tilde{\gamma}(s,u)\, \tilde{N}(\d s,\d u),
&&\q X_t=x,
\\
\mathrm{d}Y_s&
=-\p_x \tilde{H}(s,X_s,\tilde{\phi}(s,X_s,Y_s),Y_s)\,\d s
+Z_s\, \d W_s
+\int_{\sR^p_0}
M_s\, \tilde{N}(\d s,\d u),
&&
\q Y_T=\nabla {g}(X_T).
\end{alignedat}
\ee

The following theorem quantifies the difference of optimal feedback controls in terms of  the  magnitude of perturbations in the coefficients.

\begin{Theorem}\l{thm:feedback_stable}
Suppose  (H.\ref{assum:lc_ns}) and (H.\ref{assum:lc_ns_per})  hold.
Let 
${\psi}, \tilde{\psi}
:[0,T]\t  \sR^n\t  \sR^n\to \sR^k$ be the functions  defined  in 
 \eqref{eq:feedback} and  \eqref{eq:feedback_per}, respectively.
Then
 there exists a constant $C$ such that
 ${|\psi(t,x)-\tilde{\psi}(t,x)|}\le C(1+|x|)\cE_{\textrm{per}}$
 for all $(t,x)\in [0,T]\t \sR^n$,
 with the constant $\cE_{\textrm{per}}$ defined by
 \begin{align}\l{eq:E_per}
\begin{split}
\cE_{\textrm{per}}
&\coloneqq
\|b_0-\tilde{b}_0\|_{L^2}
+
\|b_1-\tilde{b}_1\|_{L^\infty}
+\|b_2-\tilde{b}_2\|_{L^\infty}
+\|\sigma-\tilde{\sigma}\|_{L^2}
\\
&\q 
+
\bigg(
\int_0^T\int_{\sR^p_0} |\gamma(t,u)-\tilde{\gamma}(t,u)|^2\,\nu(\d u)\d t
\bigg)^{1/2}.
\end{split}
\end{align}

\end{Theorem}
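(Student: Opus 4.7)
The plan is to reduce the claim to a stability estimate for the coupled FBSDEs \eqref{eq:lc_fbsde} and \eqref{eq:lc_fbsde_per} and then exploit the monotonicity structure already captured by Lemma~\ref{lemma:lipschitz_phi}. Writing $\psi(t,x)=\phi(t,x,Y^{t,x}_t)$ and $\tilde\psi(t,x)=\tilde\phi(t,x,\tilde Y^{t,x}_t)$, I would first split
\begin{equation*}
\psi(t,x)-\tilde\psi(t,x)
=\bigl[\phi(t,x,Y^{t,x}_t)-\phi(t,x,\tilde Y^{t,x}_t)\bigr]
+\bigl[\phi(t,x,\tilde Y^{t,x}_t)-\tilde\phi(t,x,\tilde Y^{t,x}_t)\bigr].
\end{equation*}
For the first bracket, Lemma~\ref{lemma:lipschitz_phi} gives an $|Y^{t,x}_t-\tilde Y^{t,x}_t|$-bound. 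For the second, the representation $\phi(t,x,y)=\partial_z f^*(t,x,-b_2(t)^\trans y)$ (Lemma~\ref{lemma:measurable_phi}) together with the $1/\lambda$-Lipschitz continuity of $\partial_z f^*$ in its last argument gives a bound by $C\|b_2-\tilde b_2\|_{L^\infty}|\tilde Y^{t,x}_t|$. Since Proposition~\ref{prop:fbsde_wp} (applied to $\tilde Y$) yields $|\tilde Y^{t,x}_t|\le C(1+|x|)$, the theorem will follow once I establish
\begin{equation*}
|Y^{t,x}_t-\tilde Y^{t,x}_t|\le C(1+|x|)\cE_{\textrm{per}}.
\end{equation*}

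To obtain this, I would set $\Delta X=X^{t,x}-\tilde X^{t,x}$, $\Delta Y=Y^{t,x}-\tilde Y^{t,x}$, $\Delta Z=Z^{t,x}-\tilde Z^{t,x}$, $\Delta M=M^{t,x}-\tilde M^{t,x}$, and decompose each drift via a pivot: write
\begin{equation*}
b(s,X^{t,x},\phi(s,X^{t,x},Y^{t,x}))-\tilde b(s,\tilde X^{t,x},\tilde\phi(s,\tilde X^{t,x},\tilde Y^{t,x}))
=\bigl(b(s,X^{t,x},\phi(\cdot))-b(s,\tilde X^{t,x},\phi(\cdot))\bigr)+R^X_s,
\end{equation*}
where $R^X_s$ collects the perturbation contributions of $b-\tilde b$ and of $\phi-\tilde\phi$ evaluated at $(\tilde X^{t,x},\tilde Y^{t,x})$; and similarly for the backward drift with remainder $R^Y_s$. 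Thanks to the representation formula for $\phi$ and $\tilde\phi$, the difference $\phi-\tilde\phi$ at a fixed argument is controlled by $\|b_2-\tilde b_2\|_{L^\infty}|\tilde Y^{t,x}|$, so that $R^X$ and $R^Y$ are pointwise dominated (using Cauchy–Schwarz and the a priori bound $\|(\tilde X^{t,x},\tilde Y^{t,x})\|_{\sS(t,T)}\le C(1+|x|)$ from Proposition~\ref{prop:fbsde_wp}) by a process whose $L^2$-norm is at most $C(1+|x|)\cE_{\textrm{per}}$. The diffusion and jump perturbations $\sigma-\tilde\sigma$ and $\gamma-\tilde\gamma$ enter $\Delta X$ only and contribute at the same order by definition of $\cE_{\textrm{per}}$.

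I would then apply It\^o's formula to $\la \Delta X_s,\Delta Y_s\ra$ on $[t,T]$ with $\Delta X_t=0$ and $\Delta Y_T=\nabla g(X^{t,x}_T)-\nabla g(\tilde X^{t,x}_T)$. The diagonal terms combine into
\begin{equation*}
\la b(s,X^{t,x},\phi)-b(s,\tilde X^{t,x},\phi(\tilde\cdot)),\Delta Y_s\ra
+\la -\p_x H(\cdot)+\p_x H(\tilde\cdot),\Delta X_s\ra,
\end{equation*}
which, by the monotonicity condition \eqref{eq:lc_mono}, is bounded above by $-\lambda|\phi(s,X^{t,x},Y^{t,x})-\phi(s,\tilde X^{t,x},\tilde Y^{t,x})|^2$; the off-diagonal cross terms involving $R^X$ and $R^Y$ are absorbed by Young's inequality, and the terminal contribution uses the $L$-Lipschitzness of $\nabla g$. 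Combined with a standard BSDE a priori estimate applied to the $\Delta Y$-equation (with driver $-\p_x H+\p_x\tilde H$) to bootstrap an $\sS^2$-bound from the $\cH^2$-bound on $\Delta X$, the usual Gronwall/fixed-point closure (as in Lemma~\ref{lemma:mono_stab} and \cite[Corollary 2.4]{reisinger2020path}, extended to the jump case exactly as in Proposition~\ref{prop:fbsde_wp}) yields
\begin{equation*}
\|\Delta X\|_{\cS^2}+\|\Delta Y\|_{\cS^2}+\|\Delta Z\|_{\cH^2}+\|\Delta M\|_{\cH^2_\nu}\le C(1+|x|)\cE_{\textrm{per}}.
\end{equation*}
Evaluating $\Delta Y$ at $s=t$, which is deterministic, gives the required bound.

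The main obstacle is the bookkeeping in Step 2: because $\phi$ depends on $b_2$ through $f^*$, perturbing the drift simultaneously perturbs the argmin, so the monotonicity inequality \eqref{eq:lc_mono} cannot be applied directly to the full difference. The pivot trick (freezing $\phi$ and only then picking up $\phi-\tilde\phi$ as an extra, $\cE_{\textrm{per}}$-small forcing term) is what makes \eqref{eq:lc_mono} applicable; verifying that this forcing is indeed $O((1+|x|)\cE_{\textrm{per}})$ in $L^2$ requires precisely the a priori estimate of Proposition~\ref{prop:fbsde_wp} and the $1/\lambda$-Lipschitzness of $\partial_z f^*$, which is the reason the linear-convex structure (rather than a general Pontryagin setting) is essential.
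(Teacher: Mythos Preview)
Your proposal is correct and follows essentially the same route as the paper: split $\psi-\tilde\psi$ via the pivot $\phi(t,x,\tilde Y^{t,x}_t)$, bound $\phi-\tilde\phi$ through the representation $\phi=\partial_z f^*(\cdot,-b_2^\trans y)$ and the $1/\lambda$-Lipschitzness of $\partial_z f^*$, and reduce to $\|Y^{t,x}-\tilde Y^{t,x}\|_{\cS^2}\le C(1+|x|)\cE_{\textrm{per}}$ via the FBSDE stability under monotonicity. The only cosmetic difference is that the paper invokes Lemma~\ref{lemma:mono_stab} as a black-box stability estimate (with the forcing terms being exactly your $R^X,R^Y$ and the noise perturbations), whereas you sketch its It\^o/Young's-inequality proof inline before citing it; the pivot, the use of Proposition~\ref{prop:fbsde_wp} for a priori bounds, and the final evaluation of the deterministic $Y^{t,x}_t-\tilde Y^{t,x}_t$ are identical.
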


\begin{proof}
Throughout this proof,
for each $(t,x)\in [0,T]\t \sR^n$, 
let 
$({X}^{t,x},{Y}^{t,x},{Z}^{t,x}, {M}^{t,x})\in \sS(t,T)$ and
$(\tilde{X}^{t,x},\tilde{Y}^{t,x},\tilde{Z}^{t,x}, \tilde{M}^{t,x})\in \sS(t,T)$
be the solutions to 
 \eqref{eq:lc_fbsde} and 
 \eqref{eq:lc_fbsde_per}, respectively,
 and
 let $C$ be a generic constant which is independent of 
$(t,x)\in [0,T]\t \sR^n$.
Then
by Proposition \ref{prop:fbsde_wp},
there exists 
%a constant 
$C\ge 0$ such that
for all $(t,x)\in [0,T]\t \sR^n$, 
$\|({X}^{t,x},{Y}^{t,x},Z^{t,x},M^{t,x})\|_{\sS(t,T)}\le C(1+|x|)$
and 
$ \|(\tilde{X}^{t,x},\tilde{Y}^{t,x},\tilde{Z}^{t,x},\tilde{M}^{t,x})\|_{\sS(t,T)}\le C(1+|x|)$.

We first
estimate the difference between 
$(X^{t,x},Y^{t,x},Z^{t,x},{M}^{t,x})$ and $(\tilde{X}^{t,x},\tilde{Y}^{t,x},\tilde{Z}^{t,x},\tilde{M}^{t,x})$
for a given $(t,x)\in [0,T]\t \sR^n$.
By  
Lemmas \ref{lemma:lipschitz_phi}
 and \ref{lemma:mono_stab},
\begin{align*}
 &\|({X}^{t,x}-\tilde{X}^{t,x},{Y}^{t,x}-\tilde{Y}^{t,x},{Z}^{t,x}-\tilde{Z}^{t,x},
 {M}^{t,x}-\tilde{M}^{t,x})\|_{\sS(t,T)}
 \\
 &\le C\bigg\{
\|b(\cdot,\tilde{X}^{t,x},\phi(\cdot,\tilde{X}^{t,x},\tilde{Y}^{t,x}))
-\tilde{b}(\cdot,\tilde{X}^{t,x},\tilde{\phi}(\cdot,\tilde{X}^{t,x},\tilde{Y}^{t,x}))
\|_{\cH^2}
\\
&\q 
+\|
\p_x H(\cdot,\tilde{X}^{t,x},\phi(\cdot,\tilde{X}^{t,x},\tilde{Y}^{t,x}),\tilde{Y}^{t,x})
-\p_x \tilde{H}(\cdot,\tilde{X}^{t,x},\tilde{\phi}(\cdot,\tilde{X}^{t,x},\tilde{Y}^{t,x}),\tilde{Y}^{t,x})\|_{\cH^2}
\\
&\q
+\|\sigma
-\tilde{\sigma}\|_{L^2}
+\bigg(
\int_0^T\int_{\sR^p_0} |\gamma(t,u)-\tilde{\gamma}(t,u)|^2\,\nu(\d u)\d t
\bigg)^{1/2}
\bigg\}.
\end{align*}
It remains to estimate the first two terms on the right-hand side of the above inequality. 
By  (H.\ref{assum:lc_ns}\ref{item:lc_linear}), %we have that
\begin{align*}
& \|b(\cdot,\tilde{X}^{t,x},\phi(\cdot,\tilde{X}^{t,x},\tilde{Y}^{t,x}))
-\tilde{b}(\cdot,\tilde{X}^{t,x},\tilde{\phi}(\cdot,\tilde{X}^{t,x},\tilde{Y}^{t,x}))
\|_{\cH^2}
\\
&\le
\|
b(\cdot,\tilde{X}^{t,x},\phi(\cdot,\tilde{X}^{t,x},\tilde{Y}^{t,x}))
-{b}(\cdot,\tilde{X}^{t,x},\tilde{\phi}(\cdot,\tilde{X}^{t,x},\tilde{Y}^{t,x}))
\|_{\cH^2} 
\\
&
\q +\|{b}(\cdot,\tilde{X}^{t,x},\tilde{\phi}(\cdot,\tilde{X}^{t,x},\tilde{Y}^{t,x}))
-\tilde{b}(\cdot,\tilde{X}^{t,x},\tilde{\phi}(\cdot,\tilde{X}^{t,x},\tilde{Y}^{t,x}))
\|_{\cH^2}
\\
&\le
\|b_2\|_{L^\infty}\|\phi(\cdot,\tilde{X}^{t,x},\tilde{Y}^{t,x})-\tilde{\phi}(\cdot,\tilde{X}^{t,x},\tilde{Y}^{t,x})\|_{\cH^2}
+
\|b_0-\tilde{b}_0\|_{L^2}+
\|b_1-\tilde{b}_1\|_{L^\infty}\|\tilde{X}^{t,x}\|_{\cH^2} 
\\
&\q 
+
\|b_2-\tilde{b}_2\|_{L^\infty}\|\tilde{\phi}(\cdot,\tilde{X}^{t,x},\tilde{Y}^{t,x})\|_{\cH^2}. 
\end{align*}
Note that 
by \eqref{eq:phi_f*},
 for all  $t\in [0,T]$ and $x,y\in \sR^n$, 
${\phi}(t,x,y)=(\p_z f^*) (t,x,-{b}_2(t)^\trans y)$ and
$\tilde{\phi}(t,x,y)=(\p_z f^*) (t,x,-\tilde{b}_2(t)^\trans y)$, 
where $f^*$ is the   function defined  in \eqref{eq:conjugate}.
Hence, %we can deduce 
from the $1/\lambda$-Lipschitz continuity of $\p_z f^*(t,x,\cdot)$ 
(see the proof of Lemma   \ref{lemma:measurable_phi}),
\begin{align}
\l{eq:phi-tilde_phi}
\|\phi(\cdot,\tilde{X}^{t,x},\tilde{Y}^{t,x})-\tilde{\phi}(\cdot,\tilde{X}^{t,x},\tilde{Y}^{t,x})\|_{\cH^2}
\le C \|b_2-\tilde{b}_2\|_{L^\infty}\|\tilde{Y}^{t,x}\|_{\cH^2} 
\le C (1+|x|)\cE_{\textrm{per}},
\end{align}
where  the last inequality
follows from the moment estimate of $\tilde{Y}^{t,x}$.  
Moreover, the regularity of $\tilde{\phi}$ (see Lemma  \ref{lemma:lipschitz_phi}) and the moment estimate of 
$(\tilde{X}^{t,x},\tilde{Y}^{t,x})$ imply that 
$\|\tilde{\phi}(\cdot,\tilde{X}^{t,x},\tilde{Y}^{t,x})\|_{\cH^2}\le C(1+|x|)$,
which shows that 
$
\|
b(\cdot,\tilde{X}^{t,x},\phi(\cdot,\tilde{X}^{t,x},\tilde{Y}^{t,x}))
-{b}(\cdot,\tilde{X}^{t,x},\tilde{\phi}(\cdot,\tilde{X}^{t,x},\tilde{Y}^{t,x}))
\|_{\cH^2} \le C (1+|x|)\cE_{\textrm{per}}.
$
By %further  using 
the definitions of $H$ and $\tilde{H}$,
the Lipschitz continuity of $\p_x f_0$ in  (H.\ref{assum:lc_ns}\ref{item:f0R})
and 
\eqref{eq:phi-tilde_phi},
\begin{align*}
 &\|
\p_x H(\cdot,\tilde{X}^{t,x},\phi(\cdot,\tilde{X}^{t,x},\tilde{Y}^{t,x}),\tilde{Y}^{t,x})
-\p_x \tilde{H}(\cdot,\tilde{X}^{t,x},\tilde{\phi}(\cdot,\tilde{X}^{t,x},\tilde{Y}^{t,x}),\tilde{Y}^{t,x})\|_{\cH^2}
\\
 &\le 
 \|
(b_1-\tilde{b}_1)^\trans\tilde{Y}^{t,x}\|_{\cH^2}
+
\|
\p_x f_0(\cdot,\tilde{X}^{t,x},{\phi}(\cdot,\tilde{X}^{t,x},\tilde{Y}^{t,x}))
-
\p_x f_0(\cdot,\tilde{X}^{t,x},\tilde{\phi}(\cdot,\tilde{X}^{t,x},\tilde{Y}^{t,x}))
\|_{\cH^2}
\\
&
\le C (1+|x|)\cE_{\textrm{per}}.
\end{align*}
Thus, we have proved the  stability estimate that 
$\|({X}^{t,x}-\tilde{X}^{t,x},{Y}^{t,x}-\tilde{Y}^{t,x},{Z}^{t,x}-\tilde{Z}^{t,x},
{M}^{t,x}-\tilde{M}^{t,x})\|_{\sS(t,T)}
\le C (1+|x|)\cE_{\textrm{per}}$.

We now establish the stability of feedback controls.
By 
\eqref{eq:phi_f*}
and the $1/\lambda$-Lipschitz continuity of $\p_z f^*(t,x,\cdot)$,
%we have 
% the definition of $\psi,\tilde{\psi}$
%and Lemma \ref{lemma:H_phi_diff}, 
%we can obtain
 for all $(t,x)\in [0,T]\t \sR^n$,
\begin{align*}
&|\psi(t,x)-\tilde{\psi}(t,x)|
=|
(\p_z f^*) (t,x,-{b}_2(t)^\trans Y^{t,x}_t)-
(\p_z f^*) (t,x,-\tilde{b}_2(t)^\trans \tilde{Y}^{t,x}_t))
|
\\
&\le
|{b}_2(t)^\trans Y^{t,x}_t-\tilde{b}_2(t)^\trans \tilde{Y}^{t,x}_t|/\lambda
\le
C(\|{b}_2-\tilde{b}_2\|_{L^\infty}| Y^{t,x}_t|+ |Y^{t,x}_t-\tilde{Y}^{t,x}_t|)
\\
&\le
C(\|{b}_2-\tilde{b}_2\|_{L^\infty} \| Y^{t,x}\|_{\cS^2}+ \|Y^{t,x}_t-\tilde{Y}^{t,x}_t\|_{\cS^2})
\le C (1+|x|)\cE_{\textrm{per}}.
\qedhere
\end{align*}
%which finishes the proof of the desired Lipschitz dependence.
\end{proof}

An important application of the Lipschitz stability of feedback controls (Theorem \ref{thm:feedback_stable}) 
is the analysis of model misspecification error of a given learning algorithm.
One essential component 
is to  
examine the performance of the feedback control $\tilde{\psi}$, computed based on the control problem \eqref{eq:lc}  
with 
the perturbed coefficients  $(\tilde{b},\tilde{\sigma},\tilde{\gamma},f,g) $,
on the true model with coefficients  $({b},{\sigma},{\gamma},f,g) $.
For any given $x_0\in \sR^n$,
implementing the feedback control $\tilde{\psi}$ 
on the original system \eqref{eq:lc_sde}
will lead to the sub-optimal cost:
\bb\l{eq:lc_per}
J(\tilde{\psi}; x_0)\coloneqq \sE\left[\int_0^T f(t,{X}^{x_0,\tilde{\psi}}_t,\tilde{\psi}(t,{X}^{x_0,\tilde{\psi}}_t))\, \d t+g({X}_T^{x_0,\tilde{\psi}})\right],
\ee
where ${X}^{x_0,\tilde{\psi}}\in \cS^2(\sR^n)$ is the 
state process (with coefficients $b$, $\sigma$ and $\gamma$) associated with $\tilde{\psi}$
(see  Definition  \ref{def:fb}).
%
%As an important application of the Lipschitz stability of feedback controls in Theorem \ref{thm:feedback_stable}, 
The following theorem shows that  the difference 
between this suboptimal cost $J(\tilde{\psi}; x_0)$ and the optimal cost $V$ in \eqref{eq:lc}
depends Lipschitz-continuously on the magnitude of perturbations in the coefficients.
%Such a stability result is essential for quantifying the model misspecification error of a learning algorithm
%(see Section \ref{sec:lc_learning}).

 \begin{Theorem}\label{thm:dynamics_stable}
  Suppose (H.\ref{assum:lc_ns}) and (H.\ref{assum:lc_ns_per})  hold.
Let ${\psi}\in \cV$ 
(resp.~$\tilde{\psi}\in \cV$)
be defined in \eqref{eq:feedback}
(resp.~\eqref{eq:feedback_per}),
and for each $x_0\in \sR^n$, let 
${X}^{x_0,{\psi}}\in \cS^2(\sR^n)$  (resp.~${X}^{x_0,\tilde{\psi}}\in \cS^2(\sR^n)$) be the 
state process \eqref{eq:lc_sde} associated with ${\psi}$ (resp.~$\tilde{\psi}$),
and let 
$V(x_0)$ (resp.~$J(\tilde{\psi}; x_0)$)
be defined in \eqref{eq:lc} (resp.~\eqref{eq:lc_per}).
Then there exists a constant $C$ such that for all 
$x_0\in \sR^n$, 
%we have 
$\|{X}^{x_0,{\psi}}-{X}^{x_0,\tilde{\psi}}\|_{\cS^2}\le C(1+|x_0|)\cE_{\textrm{per}}$
and 
$|V(x_0)-J(\tilde{\psi}; x_0)|\le C(1+|x_0|^2)\cE_{\textrm{per}}$,
  with the constant  $\cE_{\textrm{per}}$ defined in \eqref{eq:E_per}.
 \end{Theorem}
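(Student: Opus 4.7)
The plan is to prove the state bound first by a direct Gr\"onwall argument, and then derive the cost bound via a sandwich that exploits the mutual optimality of $\psi$ and $\tilde\psi$ together with a careful subgradient treatment of the nonsmooth penalty $\cR$.

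For the state estimate, I subtract the SDEs for $X^{x_0,\psi}$ and $X^{x_0,\tilde\psi}$; both are driven by the same $\sigma$ and $\gamma$, so the martingale parts cancel. The drift difference is split through the intermediate term $b(t, X^{x_0,\tilde\psi}_t, \psi(t, X^{x_0,\tilde\psi}_t))$ and controlled using (H.\ref{assum:lc_ns}\ref{item:lc_linear}), the Lipschitz property of $\psi,\tilde\psi\in\cV$, and the pointwise estimate $|\psi(t,x)-\tilde\psi(t,x)|\le C(1+|x|)\cE_{\textrm{per}}$ from Theorem \ref{thm:feedback_stable}. A standard moment bound $\sE[\sup_{t\le T}|X^{x_0,\tilde\psi}_t|^2]^{1/2}\le C(1+|x_0|)$, together with Gr\"onwall's inequality and the BDG inequality, then yields $\|X^{x_0,\psi}-X^{x_0,\tilde\psi}\|_{\cS^2}\le C(1+|x_0|)\cE_{\textrm{per}}$.

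For the cost bound, let $J_{\textrm{per}}(\phi; x_0)$ denote the cost of a feedback $\phi\in\cV$ evaluated along the perturbed dynamics \eqref{eq:lc_sde_per}. Optimality of $\psi$ for the true model and of $\tilde\psi$ for the perturbed model give the sandwich
\begin{equation*}
0\le J(\tilde\psi; x_0)-J(\psi; x_0)\le \bigl[J(\tilde\psi; x_0)-J_{\textrm{per}}(\tilde\psi; x_0)\bigr]+\bigl[J_{\textrm{per}}(\psi; x_0)-J(\psi; x_0)\bigr],
\end{equation*}
which reduces the task to bounding the ``fixed-feedback, two-dynamics'' differences $|J(\phi; x_0)-J_{\textrm{per}}(\phi; x_0)|$ for $\phi\in\{\psi,\tilde\psi\}$. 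In each such difference the trajectories $X^{x_0,\phi}$ and the corresponding perturbed-dynamics trajectory share an $\cS^2$-gap of order $(1+|x_0|)\cE_{\textrm{per}}$ by a Gr\"onwall estimate analogous to Step 1, and the contributions of $g$ and of the smooth part $f_0$ of $f$ are controlled by the Lipschitz gradient hypotheses (H.\ref{assum:lc_ns}\ref{item:lc_g})--(H.\ref{assum:lc_ns}\ref{item:f0R}) combined with Cauchy--Schwarz and the moment bounds.

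The main obstacle is the nonsmooth, possibly unbounded term $\cR$, which is not assumed Lipschitz on $\dom \cR$. I resolve this by exploiting the first-order optimality built into $\phi$ in \eqref{eq:hamiltonian}: since $\psi(t,x)$ minimizes $a\mapsto H(t,x,a,Y^{t,x}_t)$, one has $0\in\widehat{\p}_a H(t,x,\psi(t,x),Y^{t,x}_t)$, which reads
\begin{equation*}
\xi(t,x):=-\p_a f_0(t,x,\psi(t,x))-b_2(t)^{\trans}Y^{t,x}_t\in\widehat{\p}\cR(\psi(t,x)),
\end{equation*}
and analogously $\tilde\xi(t,x)\in\widehat{\p}\cR(\tilde\psi(t,x))$ with $\tilde b_2$ and $\tilde Y^{t,x}_t$. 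Since $x\mapsto Y^{t,x}_t$ is (deterministic and) Lipschitz with linear growth by Proposition \ref{prop:fbsde_wp}, and $\p_a f_0$ has linear growth, we obtain $|\xi(t,x)|+|\tilde\xi(t,x)|\le C(1+|x|)$. Convexity of $\cR$ then yields the two-sided estimate
\begin{equation*}
|\cR(\phi(t,X^{x_0,\phi}_t))-\cR(\phi(t,\tilde X^{x_0,\phi}_t))|\le C(1+|X^{x_0,\phi}_t|+|\tilde X^{x_0,\phi}_t|)\cdot|\phi(t,X^{x_0,\phi}_t)-\phi(t,\tilde X^{x_0,\phi}_t)|,
\end{equation*}
which together with the Lipschitz continuity of $\phi\in\cV$ and Cauchy--Schwarz against the $\cS^2$-moment bounds on $X^{x_0,\phi}$, $\tilde X^{x_0,\phi}$ and their difference delivers the $C(1+|x_0|^2)\cE_{\textrm{per}}$ factor, completing the bound on $|V(x_0)-J(\tilde\psi; x_0)|$.
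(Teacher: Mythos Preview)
Your proof is correct and the state estimate is essentially the same as in the paper. The cost estimate, however, follows a genuinely different route.

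The paper compares $J(\psi;x_0)$ and $J(\tilde\psi;x_0)$ \emph{directly along the true dynamics}, and handles the nonsmooth term through Lemma~\ref{lemma:f_psi}, which proves the pointwise bound
\[
|f(t,x,\psi(t,x))-f(t,x',\tilde\psi(t,x'))|\le C\big((1+|x|+|x'|)|x-x'|+(1+|x|^2+|x'|^2)\cE_{\textrm{per}}\big)
\]
via the Fenchel--Young identity $f(t,x,\p_zf^*(t,x,z))=\la z,\p_zf^*(t,x,z)\ra-f^*(t,x,z)$ and the regularity of the conjugate $f^*$. This yields a reusable deterministic estimate on the composite cost $f(t,\cdot,\psi(t,\cdot))$ that is applied once, along the two trajectories $X^{x_0,\psi}$ and $X^{x_0,\tilde\psi}$.

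Your sandwich $0\le J(\tilde\psi;x_0)-J(\psi;x_0)\le [J(\tilde\psi;x_0)-J_{\textrm{per}}(\tilde\psi;x_0)]+[J_{\textrm{per}}(\psi;x_0)-J(\psi;x_0)]$ instead reduces the problem to two ``same feedback, different dynamics'' comparisons, and controls $\cR$ through the explicit subgradient $\xi(t,x)=-\p_af_0(t,x,\psi(t,x))-b_2(t)^\trans Y^{t,x}_t\in\widehat\p\cR(\psi(t,x))$ furnished by the first-order condition, together with the linear growth of $v(t,x)=Y^{t,x}_t$. The two approaches are dual to one another: the paper reads the regularity of $\cR\circ\psi$ off the smoothness of $f^*$, while you read it off the linear growth of a selected subgradient. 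Your argument is slightly longer (two dynamics comparisons instead of one, and the introduction of $J_{\textrm{per}}$), but it avoids the conjugate-function machinery entirely and makes the role of the adjoint $Y^{t,x}_t$ in taming the nonsmoothness more transparent. The paper's approach, on the other hand, isolates a clean pointwise lemma (Lemma~\ref{lemma:f_psi}) that is independently useful.
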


 To prove Theorem \ref{thm:dynamics_stable},
 we first establish that
the  composition of  $f$ and the optimal feedback control
is Lipschitz continuous,
even though  the cost function $f$ is merely lower semicontinuous in the control variable (cf.~(H.\ref{assum:lc_ns}\ref{item:f0R})).
The proof is based on the     Fenchel-Young identity:
$$
f(t,x,\p_z f^*(t,x,z))=\la z, \p_z f^*(t,x,z)\ra- f^*(t,x,z)
\in \sR,
\q
\fa (t,x,z)\in [0,T]\t \sR^n\t \sR^k,
$$
 the regularity of $f^*$
 and  Theorem \ref{thm:feedback_stable}, and has been given in Appendix \ref{appendix:technical results}.

 \begin{Lemma}\l{lemma:f_psi}
  Suppose (H.\ref{assum:lc_ns}) and (H.\ref{assum:lc_ns_per})  hold.
Let ${\psi},\tilde{\psi}:[0,T]\t \sR^n\to \sR^k$ be the  functions
defined  in
\eqref{eq:feedback}
and 
\eqref{eq:feedback_per}, respectively.
  Then there exists a constant $C$ such that for all 
  $t\in [0,T]$, 
  $x,x'\in \sR^n$,
  $|f(t,x,\psi(t,x))-f(t,x',\tilde{\psi}(t,x'))|\le C\Big((1+|x|+|x'|)|x-x'|+(1+|x|^2+|x'|^2)\cE_{\textrm{per}}\Big)$,
  where the constant  $\cE_{\textrm{per}}$ is defined in \eqref{eq:E_per}.
 \end{Lemma}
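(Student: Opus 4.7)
The strategy is to combine the Fenchel--Young identity stated just before the lemma with the stability of the FBSDEs \eqref{eq:lc_fbsde} and \eqref{eq:lc_fbsde_per}. Introduce the auxiliary function
\begin{equation*}
\Psi(t,x,z)\coloneqq \la z,\p_z f^*(t,x,z)\ra-f^*(t,x,z),\qquad (t,x,z)\in[0,T]\t\sR^n\t\sR^k,
\end{equation*}
so that Fenchel--Young reads $\Psi(t,x,z)=f(t,x,\p_z f^*(t,x,z))$. Applying \eqref{eq:phi_f*} to both the original and the perturbed coefficients gives $\psi(t,x)=\p_z f^*(t,x,z_1)$ and $\tilde\psi(t,x')=\p_z f^*(t,x',z_2)$ with $z_1\coloneqq-b_2(t)^\trans Y^{t,x}_t$ and $z_2\coloneqq-\tilde b_2(t)^\trans \tilde Y^{t,x'}_t$, and hence $f(t,x,\psi(t,x))-f(t,x',\tilde\psi(t,x'))=\Psi(t,x,z_1)-\Psi(t,x',z_2)$. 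The argument then reduces to (i) controlling $z_1,z_2$ and (ii) an $(x,z)$-Lipschitz estimate on $\Psi$.

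\textbf{Step (i): controlling $z_1,z_2$.} By Proposition~\ref{prop:fbsde_wp} and (H.\ref{assum:lc_ns}\ref{item:lc_linear}) one has $|z_1|\le\|b_2\|_{L^\infty}|Y^{t,x}_t|\le C(1+|x|)$ and, analogously, $|z_2|\le C(1+|x'|)$. Splitting
\begin{equation*}
z_1-z_2=-b_2(t)^\trans(Y^{t,x}_t-Y^{t,x'}_t)-(b_2-\tilde b_2)(t)^\trans Y^{t,x'}_t-\tilde b_2(t)^\trans(Y^{t,x'}_t-\tilde Y^{t,x'}_t),
\end{equation*}
the Lipschitz continuity in $x$ of the map $x\mapsto Y^{t,x}_t=v(t,x)$ established in the proof of Theorem~\ref{thm:lc_fb} (via $v\in\cV$) gives $|Y^{t,x}_t-Y^{t,x'}_t|\le C|x-x'|$, and the FBSDE stability estimate derived in the proof of Theorem~\ref{thm:feedback_stable} yields $|Y^{t,x'}_t-\tilde Y^{t,x'}_t|\le C(1+|x'|)\cE_{\textrm{per}}$. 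Hence $|z_1-z_2|\le C|x-x'|+C(1+|x'|)\cE_{\textrm{per}}$.

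\textbf{Step (ii): $(x,z)$-Lipschitz estimate on $\Psi$.} Decompose $\Psi(t,x,z_1)-\Psi(t,x',z_2)=[\Psi(t,x,z_1)-\Psi(t,x',z_1)]+[\Psi(t,x',z_1)-\Psi(t,x',z_2)]$. For the $z$-increment, write $f^*(t,x',z_1)-f^*(t,x',z_2)=\int_0^1\la\p_z f^*(t,x',z_2+s(z_1-z_2)),z_1-z_2\ra\,\d s$ and use the $1/\lambda$-Lipschitz continuity of $\p_z f^*(t,x',\cdot)$ and the bound $|\p_z f^*(t,x',0)|\le C(1+|x'|)$, both coming from Lemma~\ref{lemma:measurable_phi} and the first part of the proof of Lemma~\ref{lemma:lipschitz_phi}; this yields $|\Psi(t,x',z_1)-\Psi(t,x',z_2)|\le C(1+|x'|+|z_1|+|z_2|)|z_1-z_2|$. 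For the $x$-increment, the bilinear contribution $\la z_1,\p_z f^*(t,x,z_1)-\p_z f^*(t,x',z_1)\ra$ is controlled by the $L/\lambda$-Lipschitz continuity in $x$ of $\p_z f^*$ shown in Lemma~\ref{lemma:lipschitz_phi}; the remaining piece $f^*(t,x,z_1)-f^*(t,x',z_1)$ is handled by an envelope-type comparison: since $\cR$ is $x$-independent, at the maximizer $a^*=\p_z f^*(t,x,z_1)\in\dom\cR$ one has $f^*(t,x,z_1)-f^*(t,x',z_1)\le f_0(t,x',a^*)-f_0(t,x,a^*)\le L(1+|x|+|x'|+|a^*|)|x-x'|$ by (H.\ref{assum:lc_ns}\ref{item:f0R}), and the reverse inequality is symmetric. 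Altogether $|\Psi(t,x,z_1)-\Psi(t,x',z_1)|\le C(1+|x|+|x'|+|z_1|)|x-x'|$.

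\textbf{Conclusion and main obstacle.} Inserting $|z_1|,|z_2|\le C(1+|x|+|x'|)$ and the bound on $|z_1-z_2|$ from Step~(i) into the two estimates of Step~(ii) gives the claimed inequality, the quadratic factor $(1+|x|^2+|x'|^2)\cE_{\textrm{per}}$ arising from the product $(1+|x|+|x'|)(1+|x'|)\cE_{\textrm{per}}$. The delicate point is the $x$-Lipschitz estimate for $f^*$: because $\cR$ is only proper, convex and lower semicontinuous, one cannot differentiate under the sup defining $f^*$, and it is precisely the splitting $f=f_0+\cR$ with an $x$-independent $\cR$ imposed in (H.\ref{assum:lc_ns}\ref{item:f0R}) that makes the envelope comparison at the maximizer $a^*$ legitimate and provides the correct linear growth in $|a^*|$, and thus in $|z_1|$, after substitution.
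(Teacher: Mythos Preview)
Your proposal is correct and follows essentially the same route as the paper's proof: both arguments pass through the Fenchel--Young identity to reduce the problem to estimating $\Psi(t,x,z)=\la z,\p_z f^*(t,x,z)\ra-f^*(t,x,z)$, use the $(x,z)$-Lipschitz regularity of $\p_z f^*$ from Lemmas~\ref{lemma:measurable_phi} and~\ref{lemma:lipschitz_phi}, and handle the $x$-increment of $f^*$ by the envelope comparison at the maximizer exploiting the $x$-independence of $\cR$. The only cosmetic difference is that the paper groups the terms as a three-way split of $\Psi(t,x_1,z_1)-\Psi(t,x_2,z_2)$ and bounds $\|Y^{t,x}-\tilde Y^{t,x'}\|_{\cS^2}$ directly, whereas you first separate the $x$- and $z$-increments of $\Psi$ and route $|z_1-z_2|$ through the intermediate point $Y^{t,x'}_t$; the ingredients and the resulting estimate are identical.
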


%
%The following theorem quantifies the difference of  value functions 
%in terms of the perturbation of coefficients.

\begin{proof}[Proof of Theorem \ref{thm:dynamics_stable}]
According to Theorems \ref{thm:lc_fb} and \ref{thm:feedback_stable},
there exists a constant $C$ such that 
  for all $t\in [0,T]$, $x,x'\in \sR^n$,
$|\psi(t,0)|+|\tilde{\psi}(t,0)|\le C$,
$|\psi(t,x)-\psi(t,x')|+
|\tilde{\psi}(t,x)-\tilde{\psi}(t,x')|\le C|x-x'|$,
and 
 ${|\psi(t,x)-\tilde{\psi}(t,x)|}\le C(1+|x|)\cE_{\textrm{per}}$.
%  Then, for any given $x_0\in \sR^n$, by
% a standard moment estimate of \eqref{eq:lc_sde_fb},
%   $\|{X}^{x_0,{\psi}}\|_{\cS^2}+\|{X}^{x_0,\tilde{\psi}}\|_{\cS^2}\le C(1+|x_0|)$,
% which along with 
% a stability estimate of \eqref{eq:lc_sde_fb} shows that 
%  \begin{align*}
%  \|{X}^{x_0,{\psi}}-{X}^{x_0,\tilde{\psi}}\|_{\cS^2}
% & \le C
% \bigg(\|b(\cdot,{X}^{x_0,\tilde{\psi}},{\psi}(\cdot,{X}^{x_0,\tilde{\psi}}))
%  -b(\cdot,{X}^{x_0,\tilde{\psi}},\tilde{\psi}(\cdot,{X}^{x_0,\tilde{\psi}}))\|_{\cH^2}
%  +\|\sigma-\tilde{\sigma}\|_{L^2}
%  \\
% &\q + \Big(
% \int_0^T\int_{\sR^p_0} |\gamma(t,u)-\tilde{\gamma}(t,u)|^2\,\nu(\d u)\d t
% \Big)^{1/2}
% \bigg)
%  \\
%  &
% \le C(\|{\psi}(\cdot,{X}^{x_0,\tilde{\psi}})
%  -\tilde{\psi}(\cdot,{X}^{x_0,\tilde{\psi}})\|_{\cH^2}
%  +\cE_{\textrm{per}})
%  \\
%  &
% \le 
% C(1+\|{X}^{x_0,\tilde{\psi}}\|_{\cH^2})\cE_{\textrm{per}}
% \le 
% C(1+|x_0|)\cE_{\textrm{per}}.
%  \end{align*}
Then, for any given $x_0\in \sR^n$, 
standard moment
and stability estimates of \eqref{eq:lc_sde_fb}
yield
  $\|{X}^{x_0,{\psi}}\|_{\cS^2}+\|{X}^{x_0,\tilde{\psi}}\|_{\cS^2}\le C(1+|x_0|)$ and 
 \begin{align*}
 \|{X}^{x_0,{\psi}}-{X}^{x_0,\tilde{\psi}}\|_{\cS^2}
& \le C
\|b(\cdot,{X}^{x_0,\tilde{\psi}},{\psi}(\cdot,{X}^{x_0,\tilde{\psi}}))
 -b(\cdot,{X}^{x_0,\tilde{\psi}},\tilde{\psi}(\cdot,{X}^{x_0,\tilde{\psi}}))\|_{\cH^2}
 \\
 &
\le C\|{\psi}(\cdot,{X}^{x_0,\tilde{\psi}})
 -\tilde{\psi}(\cdot,{X}^{x_0,\tilde{\psi}})\|_{\cH^2}
%  \\
%  &
\le 
C(1+\|{X}^{x_0,\tilde{\psi}}\|_{\cH^2})\cE_{\textrm{per}}
\le 
C(1+|x_0|)\cE_{\textrm{per}}.
 \end{align*}

We now proceed to estimate $|V(x_0)-\tilde{V}(x_0)|$
 for any given $x_0\in \sR^n$. 
By %using 
the mean value theorem,
(H.\ref{assum:lc_ns}\ref{item:lc_g})
and the Cauchy-Schwarz inequality, %we have that
 \begin{align*}
\sE[|g({X}^{x_0,{\psi}}_T)-g({X}^{x_0,\tilde{\psi}}_T)|]
&\le C\sE[|(1+|{X}^{x_0,{\psi}}_T|+|{X}^{x_0,\tilde{\psi}}_T|)|{X}^{x_0,{\psi}}_T-{X}^{x_0,\tilde{\psi}}_T|]
\\
&\le C(1+\|{X}^{x_0,{\psi}}_T\|_{L^2}+\|{X}^{x_0,\tilde{\psi}}_T\|_{L^2})\|{X}^{x_0,{\psi}}_T-{X}^{x_0,\tilde{\psi}}_T\|_{L^2}
\\
&\le
C(1+|x_0|^2)\cE_{\textrm{per}}.
 \end{align*}
 Moreover, %we can deduce 
 from Lemma \ref{lemma:f_psi} and the Cauchy-Schwarz inequality,  
 \begin{align*}
& \sE\left[\int_0^T |f(t,{X}^{x_0,{\psi}}_t,{\psi}(t,{X}^{x_0,{\psi}}_t))- f(t,{X}^{x_0,\tilde{\psi}}_t,\tilde{\psi}(t,{X}^{x_0,\tilde{\psi}}_t))|\, \d t\right]
 \\
 &\le
C \sE\left[\int_0^T 
\bigg(
(1+|{X}^{x_0,{\psi}}_t|+|{X}^{x_0,\tilde{\psi}}_t|)|{X}^{x_0,{\psi}}_t-{X}^{x_0,\tilde{\psi}}_t|
+(1+|{X}^{x_0,{\psi}}_t|^2+|{X}^{x_0,\tilde{\psi}}_t|^2)\cE_{\textrm{per}}
\bigg)\, \d t\right]
\\
 &\le
C \Big(
(1+\|{X}^{x_0,{\psi}}\|_{\cH^2}+\|{X}^{x_0,\tilde{\psi}}\|_{\cH^2})\|{X}^{x_0,{\psi}}-{X}^{x_0,\tilde{\psi}}\|_{\cH^2}
+(1+\|{X}^{x_0,{\psi}}\|_{\cH^2}^2+\|{X}^{x_0,\tilde{\psi}}\|_{\cH^2}^2)\cE_{\textrm{per}}
\Big)
\\
&\le C(1+|x_0|^2)\cE_{\textrm{per}}.
\end{align*}
Since $\psi$ is an optimal feedback control of \eqref{eq:lc}
with the initial state $x_0\in \sR^n$, 
%we can conclude 
the desired estimate $|V(x_0)-J(\tilde{\psi}; x_0)|\le C(1+|x_0|^2)\cE_{\textrm{per}}$ follows.
\end{proof}

\section{Regret analysis for  linear-convex reinforcement learning}
\l{sec:lc_learning}

The focus of this section is  the linear-convex reinforcement learning (RL) problem, 
where the drift coefficient of the state dynamics 
\eqref{eq:lc_sde}
is unknown to the controller,
and the objective is to control the system optimally
 while simultaneously 
 learning the dynamics.
We shall 
propose a greedy least-squares algorithm 
to solve such problems,
and 
show that the algorithm 
provides
a sublinear regret with
 high probability guarantees.
The analysis
of the regret bounds for the algorithm
 relies on 
the Lipschitz stability of  feedback controls  established in Section \ref{sec:lc_ctrl}.

\subsection{Reinforcement learning problem  and least-squares algorithm} 

The RL  problem goes as follows.
Let 
$x_0\in \sR^n$ be a given initial state
and 
$\theta^\star=(A^\star, B^\star)
\in \sR^{n\t (n+k)}$
be fixed but unknown constants,
 consider the following problem:
\bb\l{eq:lc_theta_star}
V(x_0;\theta^\star)=\inf_{\a\in \cH^2(\sR^k)} J^{\theta^\star}(\a;x_0),
\q \textnormal{with}\q 
J^{\theta^\star}(\a;x_0)=\sE\left[\int_0^T f(t,X^{x_0,\theta^\star,\a}_t,\a_t)\, \d t+g(X_T^{x_0,\theta^\star,\a})\right],
\ee
where for each $\a\in  \cH^2(\sR^k)$, the process $X^{x_0,\theta^\star,\a}$ satisfies the following controlled dynamics
 associated with the parameter $\theta^\star$: 
\bb\l{eq:lc_sde_theta_star}
\d X_t =(A^\star X_t+B^\star\a_t)\,\d t+\sigma\, \d W_t
+
\int_{\sR^p_0}
{\gamma}(u)\, \tilde{N}(\d t,\d u), \q t\in [0,T],
\q X_0=x_0,
\ee
with 
a
given constant 
$\sigma\in \sR^{n\t d}$
and 
given
functions 
${\gamma}:\sR^p_0\to \sR^{n}$,
$f:[0,T]\t \sR^n\t \sR^k\to\sR\cup \{\infty\}$ and $g:\sR^n\to\sR$.
If $\theta^\star=(A^\star, B^\star)$
were known, then \eqref{eq:lc_theta_star} is a control problem.

 It is clear that 
 \eqref{eq:lc_theta_star}-\eqref{eq:lc_sde_theta_star}
 is a special case of 
 \eqref{eq:lc}-\eqref{eq:lc_sde}
 with 
 $b(t,x,a)=A^\star x+B^\star a$,
 $\sigma(t)=\sigma$ and $\gamma(t,u)=\gamma(u)$ 
 for all $(t,x,a,u)\in [0,T]\t \sR^n\t \sR^k\t \sR^p_0$.
Hence, if $f$ and $g$ 
satisfy
(H.\ref{assum:lc_ns})
with for some  $L\ge 0$ and $\lambda>0$,
then
 \eqref{eq:lc_theta_star}-\eqref{eq:lc_sde_theta_star}
admits  an optimal feedback control $\psi^{\theta^\star}\in \cV$
as shown in Theorem \ref{thm:lc_fb}.
Note that 
to simplify  the presentation,
we assume  that   \eqref{eq:lc_sde_theta_star}
has time homogenous coefficients
%and  only drift coefficients are unknown
as in 
\cite{abbasi2011regret,mania2019certainty,%basei2020linear
 basei2020logarithmic},
but similar analysis can be performed if
%\textcolor{blue}
{the drift is a linear combination of given time-and-space-dependent basis functions with unknown weights}
%the coefficients are time inhomogeneous 
or 
the diffusion/jump coefficients
are also unknown. 

To solve  \eqref{eq:lc_theta_star}-\eqref{eq:lc_sde_theta_star}
with unknown $\theta^\star$,
in an episodic reinforcement learning framework, 
the controller improves their knowledge of %the underlying state dynamics 
 the parameter $\theta^\star$ %in \eqref{eq:lc_sde_theta_star}
through successive learning episodes.
In particular, 
for each episode $i\in \sN$, 
based on  her observations in the past episodes,
the controller executes a suitable control policy in $\psi_i\in \cV$,
whose associated state dynamics \eqref{eq:lc_sde_theta_star}
 leads to an expected cost $J^{\theta^\star}(\psi_i;x_0)$.
 To measure the performance of an learning  algorithm in this setting, 
one widely adopted criteria
 is the (expected) {regret} of the algorithm defined as follows
  (see e.g.~\cite{dann2017unifying,%basei2020linear
 basei2020logarithmic}): 
\begin{equation}\l{eq:regret}
R(N) = \sum_{i=1}^N \Big(J^{\theta^\star}(\psi_i;x_0) - V(x_0;\theta^\star)\Big),
\q \fa N\in\sN,
\end{equation}
where $N$ denotes 
 the total number of learning episodes.
Intuitively,
this regret 
  characterizes the cumulative loss from taking sub-optimal policies 
in all  episodes.

% In the following, we shall 
%propose an learning algorithm whose regret grows sublinearly with respect 
%to the number of episodes $N$.
%

To start, let us consider a greedy algorithm,
which  chooses
  the optimal feedback control based on the current estimation of the parameter,
  and provides a sublinear regret with respect 
to the number of episodes $N$.
More precisely, let $\theta=(A,B)\in \sR^{n\t (n+k)}$ be the current estimate of $\theta^\star$, then
the controller would 
exercise the optimal feedback control $\psi^\theta\in \cV$
defined  in Theorem \ref{thm:lc_fb}
for the control problem 
\eqref{eq:lc_theta_star}-\eqref{eq:lc_sde_theta_star}
with $\theta^\star$ replaced by $\theta$,
which leads to the state process
$X^{x_0,\theta}\in \cS^2(\sR^n)$ satisfying: % the following controlled SDE:
\bb\l{eq:lc_sde_theta}
\d X_t =(A^\star X_t+B^\star{\psi}^\theta(t,X_t))\,\d t+\sigma\, \d W_t
+
\int_{\sR^p_0}
\gamma(u)\, \tilde{N}(\d t,\d u),
 \q t\in [0,T],
\q X_0=x_0.
\ee
By  the martingale properties of stochastic integrals, we can then estimate 
 $\theta^\star$ based on the 
 process 
 $Z^{x_0,\theta}_t\coloneqq
\begin{pmatrix} X_t^{x_0,\theta} \\ \psi^\theta(t,X^{x_0,\theta}_t)\end{pmatrix}
$, $t\in [0,T]$, as follows:
 \bb\l{eq:theta_star_exp}
( \theta^\star)^\trans
=\bigg(
\sE\bigg[
\int_0^T
Z^{x_0,\theta}_t (Z^{x_0,\theta}_t)^\trans
\,\d t
\bigg]
\bigg)^{-1}
\sE\bigg[
\int_0^T
Z^{x_0,\theta}_t (\d X^{x_0,\theta}_t)^\trans
\bigg],
 \ee
 provided that 
%the matrix 
$ \sE\big[
\int_0^T
Z^{x_0,\theta}_t (Z^{x_0,\theta}_t)^\trans
\,\d t
\big]\in \sR^{(n+k)\t (n+k)}$ is invertible.
This motivates us to introduce an iterative procedure to estimate $\theta^{\star}$,
where the expectations in \eqref{eq:theta_star_exp} 
are replaced by 
 empirical averages over independent realizations.
More precisely, let 
$m\in \sN$ and 
$(X^{x_0,\theta,i}_t, \psi^\theta(t,X^{x_0,\theta,i}_t))_{t\in [0,T]}$, $i=1,\ldots,m$,
be trajectories of $m$ independent  realizations of the state and control processes,
we shall update the estimate $\theta$,
denoted by $\hat{\theta}$,
  according to 
\eqref{eq:theta_star_exp}:
\bb
\l{eq:theta_sum}
\hat{\theta}^\trans
\coloneqq 
\bigg(
\frac{1}{m}\sum_{i=1}^m
\int_0^T
Z^{x_0,\theta,i}_t (Z^{x_0,\theta,i}_t)^\trans
\,\d t
+\frac{1}{m}\sI_{n+k}
\bigg)^{-1}
\bigg(
\frac{1}{m}\sum_{i=1}^m
\int_0^T
Z^{x_0,\theta,i}_t (\d X^{x_0,\theta,i}_t)^\trans
\bigg),
\ee
where 
 $Z^{x_0,\theta,i}_t\coloneqq
\begin{pmatrix} X^{x_0,\theta,i}_t \\ \psi^\theta(t,X^{x_0,\theta,i}_t)
\end{pmatrix}
$
for all $t\in [0,T]$ and $i=1,\ldots,m$,
and 
$\sI$ is the $(n+k)\t (n+k)$ identity matrix
used to ensure the existence of the required matrix inverse.
This leads to the following greedy least-squares  (GLS) algorithm:

\begin{algorithm}[H]
  \caption{\textbf{Greedy least-squares  (GLS) algorithm}}
  \label{alg:greedy}
\begin{algorithmic}[1]
  \STATE \textbf{Input}: 
  Choose an initial estimation  
 $ \theta_0$ of $\theta^\star$
 and numbers of learning episodes $\{m_\ell\}_{\ell\in \sN\cup\{0\}}$. 
% the Number of episodes in each iteration $m_j>0$ for $j=0,1,\dots$.%exploration sequence $\sigma_j>0$ ($j\geq 0$).

 \FOR {$\ell=0, 1, \cdots$}
  \STATE 
  Obtain the  optimal feedback control $ \psi^{ \theta_\ell}$
  for \eqref{eq:lc_theta_star}-\eqref{eq:lc_sde_theta_star} with $\theta^\star= \theta_\ell$
  as in Theorem \ref{thm:lc_fb}.
 % \STATE Solve \eqref{eq:lc_fbsde} with $A^\star$, $B^\star$ and $\phi^\star$ replaced by $\hat{A}$, $\hat{B}$ and $\hat{\phi}$, respectively, to get $(\hat{X},\hat{Y},\hat{Z})$.
  \STATE  Execute the feedback control $\psi^{ \theta_\ell}$ for $m_\ell$ indepenent episodes, 
   and collect the trajectory data
   $(X^{x_0,\theta_\ell,i}_t, \psi^{\theta_\ell}(t,X^{x_0,\theta_\ell,i}_t))_{t\in [0,T]}$, $i=1,\ldots,m_\ell$.

    \STATE Obtain an updated estimation ${\theta}_{\ell+1}$ by
    using  \eqref{eq:theta_sum} and the $m_\ell$ trajectories collected above.
\ENDFOR 
\end{algorithmic}
\end{algorithm}

%\subsection{Standing assumptions and main results}
\subsection{Structural assumptions for learning problems}

In this section, we analyze the regret of Algorithm \ref{alg:greedy}
based on the 
following assumptions 
of the learning  problem \eqref{eq:lc_theta_star}-\eqref{eq:lc_sde_theta_star}.

\begin{Assumption}
\phantomsection
\l{assum:lc_rl}

%Assume the setting of (H.\ref{assum:lc_ns}) and 
%
\begin{enumerate}[(1)]
\item \l{item:fg}
Let $x_0\in \sR^n$,
$\theta^\star=(A^\star, B^\star)
\in \sR^{n\t (n+k)}$,
$\sigma\in \sR^{n\t d}$,
$\gamma:\sR^p_0\to \sR^n$,
$f:[0,T]\t \sR^n\t \sR^k\to\sR\cup \{\infty\}$
and $g:\sR^n\to\sR$ 
satisfy  (H.\ref{assum:lc_ns})
with some constants $L\ge 0$ and $\lambda>0$.
%
%
%\item \l{item:non_degenerate}
% $\theta^\star$ is identifiable, i.e.,
%the  optimal control 
% $\a^{x_0,\star}\in \cH^2(\sR^k)$ 
%and 
%  the  optimal
% state process $X^{x_0,\theta^\star,\a^{\star}}\in \cS^2(\sR^n)$
% of 
%\eqref{eq:lc_theta_star}-\eqref{eq:lc_sde_theta_star}
%(with initial state $x_0$ and parameter $\theta^\star$)
% satisfy the following linear independence condition:
%if $u_1\in \sR^n$ and  $u_2\in \sR^k$
%satisfy 
% $u_1^\trans X^{x_0,\theta^\star,\a^{\star}}_t +u_2^\trans  \a^{x_0,\star}_t=0$ for  
% $\d \sP \otimes \d t$ a.e.,
% then $u_1$ and $u_2$ are zero vectors.
 
\item \l{item:jump}
There exist  
 $\gamma_{\max}\ge 0$
 and
 $\vartheta\in [0,1]$
 such that 
 $\sup_{q\ge 2}
q^{-\vartheta}\big(
\int_{\sR^p_0}|{\gamma}(u)|^q\,\nu(\d u)
\big)^{1/q}\le \gamma_{\max}$.

\end{enumerate}
\end{Assumption}

\begin{Remark}
\label{rmk:rl_assum}
Condition 
(H.\ref{assum:lc_rl}\ref{item:fg}) 
implies that 
for each $\theta=(A,B)$, the control problem 
of \eqref{eq:lc_theta_star}-\eqref{eq:lc_sde_theta_star}
with $\theta^\star$ replaced by $\theta$
is a nonsmooth linear-convex control problem studied in Section 
\ref{sec:lipschitz_stable}.

Condition (H.\ref{assum:lc_rl}\ref{item:jump}) 
describes the large jumps of the pure jump process
$L_t\coloneqq \int_0^t \int_{\sR^p_0} \gamma(u)\, \tilde{N}(\d s,\d u)$, $t\in[0,T]$,
which 
enables
estimating the tail behaviour of the state process $X^\theta$,
and subsequently
quantifying the parameter estimation error of the least-squares estimator \eqref{eq:theta_sum}
(see Section \ref{sec:concentration}).
%
%ensures that the state variable $X^\theta_t$ is a sub-exponential random variable
%for all $t\in [0,T]$
%(see Proposition \ref{lemma:state_subexp}).
If
the jump coefficient $\gamma$ is bounded,
then one can easily  see 
from  $\int_{\sR^p_0}|{\gamma}(u)|^2\,\nu(\d u)<\infty$
that 
(H.\ref{assum:lc_rl}\ref{item:jump}) holds with $\vartheta=0$.
Another important case is when 
 $\gamma(u)=u$ for all $u\in \sR^p_0$,
under which the process $(L_t)_{t\in [0,T]}$ 
 is a  L\'{e}vy process   of pure jumps 
with L\'{e}vy measure $\nu(\d u)$.
In this case, 
(H.\ref{assum:lc_rl}\ref{item:jump}) holds with $\vartheta\in (0,1]$ 
if and only if 
$\big(\int_{\sR^p_0}|u|^q\,\nu(\d u)\big)^{1/q}\le \cO(q^\theta)$
as $q\to \infty$.

\end{Remark}

\begin{Assumption}
\label{assum:non_degenerate}
 $\theta^\star$ is identifiable, i.e.,
the  optimal control 
 $\a^{x_0,\star}\in \cH^2(\sR^k)$ 
and 
  the  optimal
 state process $X^{x_0,\theta^\star,\a^{\star}}\in \cS^2(\sR^n)$
 of 
\eqref{eq:lc_theta_star}-\eqref{eq:lc_sde_theta_star}
(with initial state $x_0$ and parameter $\theta^\star$)
 satisfy the following linear independence condition:
if $u_1\in \sR^n$ and  $u_2\in \sR^k$
satisfy 
 $u_1^\trans X^{x_0,\theta^\star,\a^{\star}}_t +u_2^\trans  \a^{x_0,\star}_t=0$ for  
 $\d \sP \otimes \d t$ a.e.,
 then $u_1$ and $u_2$ are zero vectors.
 
\end{Assumption}

Condition 
(H.\ref{assum:non_degenerate})
implies   that  the true parameter $\theta^\star$ can be uniquely identified
if we  observe sufficiently many trajectories of the optimal state and control processes
of \eqref{eq:lc_theta_star}-\eqref{eq:lc_sde_theta_star}.
% which ensures that
% \eqref{eq:theta_star_exp} is well-defined if the estimated parameter 
%$\theta$ is close to the true parameter $\theta^\star$
%(see Lemma \ref{Ubar_nondeg}).
Such a  self-exploration property  allows us to  design 
\emph{exploration-free} learning algorithms for \eqref{eq:lc_theta_star}-\eqref{eq:lc_sde_theta_star}.

%Such a condition 
%is naturally satisfied by most 
%finite-time horizon control problems since the optimal feedback controls 
%are typically time-dependent and linearly independent to the identity function (see 
%\cite{basei2020linear}
%for a sufficient condition of (H.\ref{assum:lc_rl}\ref{item:non_degenerate})
%in the linear-quadratic setting
%and a detailed discussion on  the self-exploration property
%of 
%finite-time horizon  stochastic control problems).

{%\color{blue}

The following proposition shows that 
 if the laws of the state processes are supported on the whole space,
then (H.\ref{assum:non_degenerate}) 
 is equivalent to a self-exploration property of 
the optimal feedback control.
The proof essentially 
follows the argument  of \cite[Lemma 6.1]{szpruch2021exploration},
%and  the  Girsanov theorem for jump-diffusion models in \cite{sorensen2020likelihood}.
and hence is omitted.

\begin{Proposition}
\label{prop:self_exploration}
Assume (H.\ref{assum:lc_rl}\ref{item:fg}).
Let
$\psi\in \cV$.
Assume that for all $t\in (0,T]$, and any 
open set $O\subset \sR^n$ with positive Lebesgue
measure, the  state process  $X^{\theta^\star,\psi}$ 
(defined by \eqref{eq:lc_sde_theta} with $\psi^\theta=\psi$) satisfies 
  that $\sP(\{\om\in \Om \mid X^{\theta^\star,\psi}_t(\om)\in O\}) >0$.
Then the following two statements are equivalent:
\begin{enumerate}[(a)]
\item\label{item:independent_open_loop}
if $u_1\in \sR^n$ and  $u_2\in \sR^k$
satisfy 
 $u_1^\trans X^{\theta^\star,\psi}_t +u_2^\trans \psi(t,X^{\theta^\star,\psi}_t)=0$ for  
 $\d \sP \otimes \d t$ a.e.,
 then $u_1$ and $u_2$ are zero vectors;
 \item\label{item:independent_closed_loop}
 if  $u_1\in \sR^n$ and $u_2\in \sR^k$ satisfy
$u_1^\trans x+u_2^\trans \psi(t,x)=0$
for almost every $(t,x)\in [0,T]\t \sR^n$,
then $u_1$ and $u_2$ are zero vectors.
\end{enumerate}
Consequently, 
suppose that 
(H.\ref{assum:lc_rl}\ref{item:fg}) holds
and 
 $\sigma \sigma^\trans$ is positive definite, 
then  (H.\ref{assum:non_degenerate})
holds if and only if 
the optimal feedback control  $\psi^{\theta^\star}$
of \eqref{eq:lc_theta_star} satisfies 
Item (b).

\end{Proposition}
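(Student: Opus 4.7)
The plan is to establish the equivalence (a) $\Leftrightarrow$ (b) via a Fubini-plus-continuity argument, and then to obtain the consequential claim by identifying (H.\ref{assum:non_degenerate}) with Item (a) for the optimal feedback control and verifying the support hypothesis by a Girsanov reduction. Throughout, for a given pair $(u_1,u_2)\in\sR^n\t\sR^k$, I would work with the single function $h(t,x)\coloneqq u_1^\trans x+u_2^\trans\psi(t,x)$, which, by virtue of $\psi\in\cV$, is Lipschitz in $x$ uniformly in $t$, and hence continuous in $x$ for every $t$.

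For (b)$\Rightarrow$(a), suppose the hypothesis of (a) holds, i.e.~$h(t,X^{\theta^\star,\psi}_t)=0$ for $\d\sP\otimes\d t$ a.e. By Fubini, for a.e.~$t\in (0,T]$ one has $h(t,X^{\theta^\star,\psi}_t)=0$ $\sP$-a.s. Fix such a $t$; if $h(t,\cdot)\not\equiv 0$ on $\sR^n$, spatial continuity yields an open ball $O\subset\sR^n$ on which $|h(t,\cdot)|$ is bounded below by a positive constant, and the support hypothesis gives $\sP(X^{\theta^\star,\psi}_t\in O)>0$, contradicting the almost-sure vanishing. Hence $h(t,\cdot)\equiv 0$ on $\sR^n$ for a.e.~$t$, so $h=0$ a.e.~on $[0,T]\t\sR^n$, and Item (b) forces $u_1=0,u_2=0$. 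For (a)$\Rightarrow$(b), assume $h=0$ a.e.~on $[0,T]\t\sR^n$; by Fubini and continuity of $h(t,\cdot)$, one upgrades this to $h(t,\cdot)\equiv 0$ on $\sR^n$ for a.e.~$t$, so $h(t,X^{\theta^\star,\psi}_t(\omega))=0$ for every $\omega$ and a.e.~$t$, i.e.~$\d\sP\otimes\d t$ a.e. Item (a) then yields $u_1=0,u_2=0$.

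For the consequential equivalence, I would first observe that under Theorem \ref{thm:lc_fb} the optimal open-loop control satisfies $\a^{x_0,\star}_t=\psi^{\theta^\star}(t,X^{x_0,\theta^\star,\a^\star}_t)$ for $\d\sP\otimes\d t$ a.e., so (H.\ref{assum:non_degenerate}) is precisely Item (a) applied to $\psi=\psi^{\theta^\star}$. It then suffices to verify the support hypothesis under positive definiteness of $\sigma\sigma^\trans$. The drift $b_{\mathrm{fb}}(t,x)\coloneqq A^\star x+B^\star\psi^{\theta^\star}(t,x)$ is Lipschitz with linear growth (by Theorem \ref{thm:lc_fb}), and since $\sigma$ is surjective, one can write $b_{\mathrm{fb}}=\sigma\, u$ with $u(t,x)\coloneqq\sigma^\trans(\sigma\sigma^\trans)^{-1}b_{\mathrm{fb}}(t,x)$ bounded on bounded sets. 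Standard Girsanov arguments (Novikov's condition follows from the linear growth of $u$ and the exponential moment bounds for $X$ on $[0,T]$) yield an equivalent measure $\tilde\sP\sim\sP$ under which $\tilde W_t\coloneqq W_t+\int_0^t u(s,X_s)\,\d s$ is Brownian motion and $N$ retains its compensator, so that
\[
X^{\theta^\star,\psi^{\theta^\star}}_t=x_0+\sigma\tilde W_t+L_t,\qquad L_t\coloneqq\int_0^t\!\!\int_{\sR^p_0}\!\gamma(u)\,\tilde N(\d s,\d u),
\]
with $\tilde W$ and $L$ independent under $\tilde\sP$. The Gaussian law of $\sigma\tilde W_t$ has a strictly positive density on $\sR^n$ by non-degeneracy, so its convolution with the law of $L_t$ also has strictly positive density, giving $\tilde\sP(X^{\theta^\star,\psi^{\theta^\star}}_t\in O)>0$ for every open $O$ and every $t\in(0,T]$; equivalence of measures transfers this to $\sP$, yielding the support hypothesis.

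The computations (a)$\Leftrightarrow$(b) are routine once continuity of $h(t,\cdot)$ is exploited; the only nontrivial step is the verification of the support hypothesis in the consequential claim. I expect the main subtlety there to be the rigorous application of Girsanov in the presence of jumps and of an unbounded (only linearly growing) Radon--Nikodym exponent, which one handles either by a localisation/stopping argument together with the $L^2$-bound on $X$, or by appealing to an off-the-shelf Girsanov theorem for jump-diffusions tailored to linear-growth drifts (see e.g.~the SDE literature cited in Section~\ref{sec:lc_ctrl}).
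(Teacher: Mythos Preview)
Your argument for the equivalence (a)$\Leftrightarrow$(b) via Fubini, spatial continuity of $h(t,\cdot)=u_1^\trans x+u_2^\trans\psi(t,x)$, and the full-support hypothesis is correct; the paper omits its own proof and refers to \cite[Lemma 6.1]{szpruch2021exploration}, which proceeds along essentially the same lines. Your identification of (H.\ref{assum:non_degenerate}) with Item (a) for $\psi^{\theta^\star}$ via Theorem \ref{thm:lc_fb} is also correct.

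The only point that needs sharpening is your assertion that ``Novikov's condition follows from the linear growth of $u$ and the exponential moment bounds for $X$'': with a linearly growing Girsanov integrand one needs $\sE\big[\exp\big(c\int_0^T|X_s|^2\,\d s\big)\big]<\infty$ for a \emph{specific} constant $c$ determined by the coefficients, and this is not guaranteed by (H.\ref{assum:lc_rl}\ref{item:fg}) alone---even in the pure-diffusion case such an exponential moment is finite only for sufficiently small $c$, and with jumps the state need not be sub-Gaussian at all. Your closing hedge is the correct repair: either invoke a Bene\v{s}-type criterion (equivalence of laws for two SDEs sharing the same non-degenerate diffusion and jump coefficients but different Lipschitz drifts, which holds without Novikov and extends to jump-diffusions), or run the localisation argument you sketch and use pathwise uniqueness to upgrade the local martingale to a true one. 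A route that sidesteps Girsanov entirely is to condition on the jump component---e.g.\ when $\nu(\sR^p_0)<\infty$, on the positive-probability event $\{N([0,t]\times\sR^p_0)=0\}$---and then appeal to strict positivity of the transition density for the resulting non-degenerate diffusion with Lipschitz drift; the infinite-activity case is handled by truncating small jumps and absorbing their compensator into the drift.
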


%The equivalence between Items 
%\ref{item:independent_open_loop}
%and 
%\ref{item:independent_closed_loop}
%has been shown in  \cite[Lemma 6.1]{szpruch2021exploration},
%and with 
%non-degenerate Brownian motion noise,
% the  laws of the state variables have full supports 
%can be deduced from the  Girsanov theorem 
%in \cite{sorensen2020likelihood}.

Proposition \ref{prop:self_exploration} allows for more explicit expressions of
 (H.\ref{assum:non_degenerate}). 
 For instance, as shown in \cite[Proposition 3.9]{basei2020logarithmic},
for  quadratic cost functions 
 $g=0$ and 
$f(t,x,a)=x^\trans Q x +a^\trans R a$
with   positive definite matrices
$Q$ and
 $R$, 
%(see e.g., \cite{duncan1999adaptive, abbasi2011regret,mania2019certainty, basei2020logarithmic}),
(H.\ref{assum:non_degenerate})
holds if and only if $B^\star $ in  \eqref{eq:lc_sde_theta_star} is  full column rank.
Alternatively,
by \cite[Proposition 3.3]{szpruch2021exploration},
if 
 \eqref{eq:lc_theta_star}-\eqref{eq:lc_sde_theta_star}
 has a   bounded  action set,
i.e., 
 $\cR$ in  (H.\ref{assum:lc_ns}\ref{item:f0R}) has a bounded domain
$\dom \cR$  (cf.~Example \ref{example:sparse}),
then  
(H.\ref{assum:non_degenerate})
 holds if and only if the range of $\psi^{\theta^\star}$ contains $k$ linearly independent vectors. %, or equivalently it linearly spans $\sR^k$
% as shown in \cite[Proposition 3.3]{szpruch2021exploration}.
%
%See Remark \ref{rmk:hyperparameter} for  validating 
%(H.\ref{assum:non_degenerate}) numerically,
%and for designing  algorithms without (H.\ref{assum:non_degenerate}).

We remark that 
 for general linear-convex learning problems without  
 (H.\ref{assum:non_degenerate}),
 an explicit exploration is necessary for learning \cite{szpruch2021exploration} .
 Instead of merely employing greedy polices as in Algorithm \ref{alg:greedy},
they
dedicate certain episodes to actively explore the environment with some exploration policy $\psi^e$ satisfying  
Proposition \ref{prop:self_exploration}
Item \ref{item:independent_closed_loop}.
The  numbers of exploration and exploitation episodes 
are then balanced based on the performance gap 
in Theorem \ref{thm:dynamics_stable}
and the finite-sample accuracy of the   parameter estimator. 
Note, however, this explicit exploration may yield 
larger  regrets for algorithm in \cite{szpruch2021exploration}  than that in Theorem \ref{thm:regret}.
 
%avoids the  hyperparameter $m_0$, but 

}

\subsection{Main results on sublinear regret bounds}

We now state the main result % of this section, 
which shows that 
the regret of 
Algorithm \ref{alg:greedy}  grows at most sublinearly 
with respect to the number of  episodes,
provided that 
%the least-squares estimator 
%\eqref{eq:theta_sum}
%satisfies a suitable concentration inequality,
%and
the  hyper-parameters ${\theta}_0$ and $\{m_j\}_{j\in \sN\cup \{0\}}$
are chosen properly.
In particular, we shall 
choose an initial guess $\theta_0$ of $\theta^\star$
which satisfies 
the identifiability condition in 
(H.\ref{assum:non_degenerate})
%(H.\ref{assum:lc_rl}\ref{item:non_degenerate}),
and we shall also 
 double
  the number of learning episodes
 between two successive updates of the estimation of $\theta^\star$,
 which is a  commonly used strategy (the so-called doubling trick)
in the design of online learning algorithms
 (see e.g.~\cite{
 basei2020logarithmic}).
The proof of this theorem is given in Section \ref{sec:proof_regret}.

To simplify the notation,
we introduce the following quantifies 
for each $x_0\in \sR^n$,
$\theta=(A,B) \in \sR^{n\t (n+k)}$
and $m\in \sN$:
\bb\l{eq:UV}
\begin{alignedat}{3}
\ol{U}^{x_0,\theta}
&\coloneqq\sE\bigg[
\int_0^T
Z^{x_0,\theta}_t (Z^{x_0,\theta}_t)^\trans
\,\d t
\bigg],
\q 
&
\ol{V}^{x_0,\theta}
&\coloneqq\sE\bigg[
\int_0^T
Z^{x_0,\theta}_t (\d X^{x_0,\theta}_t)^\trans
\bigg],
\\
U^{x_0,\theta,m}
&\coloneqq\frac{1}{m}\sum_{i=1}^m
\int_0^T
Z^{x_0,\theta,i}_t (Z^{x_0,\theta,i}_t)^\trans
\,\d t,
\q 
&
V^{x_0,\theta,m}
&\coloneqq\frac{1}{m}\sum_{i=1}^m
\int_0^T
Z^{x_0,\theta,i}_t (\d X^{x_0,\theta,i}_t)^\trans,
\end{alignedat}
\ee
where 
  $X^{x_0,\theta}\in \cS^2(\sR^n)$ is
the solution of \eqref{eq:lc_sde_theta},
 $(X^{x_0,\theta,i})_{i=1}^m$ are 
independent copies of $X^{x_0,\theta}$,
and 
 $Z^{x_0,\theta}$ 
and
$(Z^{x_0,\theta,i})_{i=1}^m$ are defined as in
\eqref{eq:theta_star_exp} and 
\eqref{eq:theta_sum}, respectively.
For any given  symmetric matrix $A$,
we denote by 
$\lambda_{\min}(A)$  the smallest eigenvalue 
of $A$.

\begin{Theorem}
\l{thm:regret}
Suppose (H.\ref{assum:lc_rl}\ref{item:fg})
and (H.\ref{assum:non_degenerate})
 hold.
Assume further that 
% $\theta_0$ is  identifiable as in 
% (H.\ref{assum:lc_rl}\ref{item:non_degenerate}),
$\lambda_{\min}(
\ol{U}^{x_0,\theta_0}
)>0$,
 and for any given bounded set $\cK\subset \sR^{n\t (n+k)}$,
 there exist
 constants  $C_1,C_2>0$ and $\b\ge 1$,
such that the following concentration inequality
holds 
 for all $\eps\ge 0$, $m\in \sN$ and $\theta\in\mathcal{K}$,
\begin{align}
\l{eq:concentration_beta}
\begin{split}
&\max\Big\{
\sP\big(\big|U^{x_0,\theta,m}-\ol{U}^{x_0,\theta}\big|\ge \eps \big),
\sP\big(\big|V^{x_0,\theta,m}-\ol{V}^{x_0,\theta}\big|\ge \eps \big)
\Big\}
\\
&\q \le 
C_2\exp\Big(-C_1\min
\Big\{
\frac{m\eps^2}{C_2^2},
\Big(
\frac{m\eps}{C_2}
\Big)^{\frac{1}{\beta}}
\Big\}\Big).
\end{split}
\end{align}
 Then
there exists a constant $C_0>0$, such that 
 for all $C\ge C_0$ and $\delta\in (0,1/4)$,
 if we set $m_0=C(-\ln \delta)^\b$ and $m_\ell=2^\ell m_0$ for all $\ell\in \sN$,
 then  the regret of Algorithm \ref{alg:greedy} 
  (cf.~\eqref{eq:regret})
 satisfies the following properties:
\begin{enumerate}[(1)]
 \item\l{item:regret_PAC} 
 It holds   with probability at least $1-4\delta$ that 
 $R(N)\le C'\big(
 \sqrt{N}\sqrt{ \ln {N}}
+\sqrt{-\ln\delta }
\sqrt{N}
+(-\ln\delta)^{\beta}\ln {N}
 \big)$ for all  $N\in \sN$,
where  $C'$ is  a constant  independent of $N$ and $\delta$. 
\item\l{item:regret_as}
 It holds   with probability 1 that  $R(N)=\cO(\sqrt{N\ln N})$ as $N\to \infty$.
 \end{enumerate}

\end{Theorem}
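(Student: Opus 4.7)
\textbf{Proof proposal for Theorem \ref{thm:regret}.}
The plan is to combine (i) a concentration-to-estimation-error transfer for the least-squares update \eqref{eq:theta_sum}, (ii) the performance-gap bound of Theorem \ref{thm:dynamics_stable} applied episode by episode, and (iii) a telescoping sum with the doubling schedule $m_\ell=2^\ell m_0$ and a union bound. The key algebraic observation is that $\d X^{x_0,\theta}_t = \theta^\star Z^{x_0,\theta}_t\,\d t + \sigma\,\d W_t + \int \gamma(u)\,\tilde N(\d t,\d u)$, so taking expectation and using the martingale property of the noise integrals yields the population identity $\ol V^{x_0,\theta}=\ol U^{x_0,\theta}(\theta^\star)^\trans$. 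Combining this with \eqref{eq:theta_sum} gives the decomposition
\begin{equation*}
\hat\theta^\trans_{\ell+1}-(\theta^\star)^\trans=\bigl(U^{x_0,\theta_\ell,m_\ell}+\tfrac1{m_\ell}\sI_{n+k}\bigr)^{-1}\!\Bigl[\bigl(V^{x_0,\theta_\ell,m_\ell}-\ol V^{x_0,\theta_\ell}\bigr)+\bigl(\ol U^{x_0,\theta_\ell}-U^{x_0,\theta_\ell,m_\ell}\bigr)(\theta^\star)^\trans-\tfrac1{m_\ell}(\theta^\star)^\trans\Bigr].
\end{equation*}
Inverting the concentration inequality \eqref{eq:concentration_beta} at failure level $p_\ell$ and using a union bound over the $U$- and $V$-bounds, we get, with probability $\ge 1-2p_\ell$, $|\theta_{\ell+1}-\theta^\star|\le \tilde C\bigl(\sqrt{(-\ln p_\ell)/m_\ell}+(-\ln p_\ell)^\beta/m_\ell+1/m_\ell\bigr)$, provided the operator norm of the regularized inverse is bounded, i.e.\ $\lambda_{\min}(\ol U^{x_0,\theta_\ell})$ is bounded below.

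To maintain this latter bound, I would argue inductively. By Theorem \ref{thm:feedback_stable}, the maps $\theta\mapsto X^{x_0,\theta}\in\cS^2(\sR^n)$ and $\theta\mapsto \psi^\theta(\cdot,X^{x_0,\theta}_\cdot)\in\cH^2(\sR^k)$ are Lipschitz on bounded sets, hence so is $\theta\mapsto \ol U^{x_0,\theta}$ in the operator norm. By (H.\ref{assum:non_degenerate}) we have $\lambda_{\min}(\ol U^{x_0,\theta^\star})>0$, so there is a neighborhood $\cN$ of $\theta^\star$ on which $\lambda_{\min}(\ol U^{x_0,\cdot})\ge \tfrac12\lambda_{\min}(\ol U^{x_0,\theta^\star})$. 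Choosing $m_0=C(-\ln\delta)^\beta$ with $C$ sufficiently large makes $\tilde C(\eps_0+1/m_0)$ smaller than the radius of $\cN$; a straightforward induction then shows $\theta_\ell\in\cN$ for all $\ell$. The base case $\theta_0$ uses the standing hypothesis $\lambda_{\min}(\ol U^{x_0,\theta_0})>0$ to run the first estimation step.

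With this in hand, set $p_\ell=\delta\cdot 2^{-(\ell+2)}$ so that $\sum_\ell 2p_\ell\le\delta$; after union-bounding over $U$ and $V$ and all $\ell$, we obtain an event $E$ of probability $\ge 1-4\delta$ on which every concentration bound holds. On $E$, Theorem \ref{thm:dynamics_stable} gives the per-episode regret $J^{\theta^\star}(\psi^{\theta_\ell};x_0)-V(x_0;\theta^\star)\le C(1+|x_0|^2)|\theta_\ell-\theta^\star|$. Using $m_\ell=2^\ell m_0$, $-\ln p_\ell\le -\ln\delta+C\ell$, and $L\approx\log_2(N/m_0)$ iterations to cover $N$ episodes, one has
\begin{equation*}
R(N)\lesssim m_0+\sum_{\ell=1}^{L}m_\ell\Bigl(\sqrt{\tfrac{-\ln\delta+\ell}{m_{\ell-1}}}+\tfrac{(-\ln\delta+\ell)^\beta}{m_{\ell-1}}\Bigr)=\sqrt{2}\sum_{\ell=1}^{L}\sqrt{m_\ell(-\ln\delta+\ell)}+2\sum_{\ell=1}^{L}(-\ln\delta+\ell)^\beta,
\end{equation*}
whose first (geometric) sum is dominated by its last term $\sqrt{m_L L}\lesssim \sqrt{N\ln N}+\sqrt{N(-\ln\delta)}$, and whose second sum is at most $C(-\ln\delta)^\beta\ln N+C(\ln N)^{\beta+1}$, the last being dominated by $\sqrt{N\ln N}$ for all $N$. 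This establishes part (1). For part (2), apply (1) with $\delta=1/k^2$ to obtain events $A_k$ with $\sP(A_k)\ge 1-4/k^2$ on which $R(N)\le C'(\sqrt{N\ln N}+\sqrt{N\ln k}+(\ln k)^\beta\ln N)$ for all $N$; since $\sP(\cup_k A_k)=1$ and the $\sqrt{N\ln N}$ term dominates as $N\to\infty$ for every fixed $k$, the almost-sure $\cO(\sqrt{N\ln N})$ rate follows.

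The main technical obstacle is the careful perturbation analysis of the regularized matrix inverse together with the bookkeeping needed to preserve $\lambda_{\min}(\ol U^{x_0,\theta_\ell})>0$ throughout the induction, since this eigenvalue lower bound both controls $|\theta_{\ell+1}-\theta^\star|$ and is itself what makes the estimator meaningful. Everything else reduces to summing the two regimes of the sub-Weibull concentration inequality against the doubling schedule.
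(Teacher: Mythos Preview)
Your proposal is correct and follows essentially the same route as the paper: Lemma~\ref{Ubar_nondeg} and Proposition~\ref{theta_conc} package your continuity/induction argument for the eigenvalue lower bound and your concentration-to-estimation transfer, and the paper then applies Theorem~\ref{thm:dynamics_stable} and sums over the doubling schedule exactly as you outline, finishing Item~\ref{item:regret_as} via Borel--Cantelli with $\delta_N=1/N^2$. Your error decomposition $\hat\theta^\trans-(\theta^\star)^\trans=(U+\tfrac1m\sI)^{-1}[(V-\ol V)+(\ol U-U)(\theta^\star)^\trans-\tfrac1m(\theta^\star)^\trans]$ is in fact slightly cleaner than the paper's, which instead bounds $\|(U+\tfrac1m\sI)^{-1}V-\ol U^{-1}\ol V\|_2$ by a triangle inequality requiring separate control of both inverses; your version needs only the one regularized inverse and gives the same conclusion.
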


The following theorem presents
a precise sublinear regret bound 
of
Algorithm \ref{alg:greedy}
 for the jump-diffusion model \eqref{eq:lc_sde_theta_star},
 depending on the jump sizes of the Poisson random measure.
The proof follows
 from 
 Theorem 
\ref{thm:regret} and
 Proposition \ref{prop:concentration_jump}.

\begin{Theorem}
\l{cor:regret_jump}
Suppose (H.\ref{assum:lc_rl}) 
and (H.\ref{assum:non_degenerate})
hold,
and 
% $\theta_0$ is  identifiable as in 
% (H.\ref{assum:lc_rl}\ref{item:non_degenerate}).
$\lambda_{\min}(
\ol{U}^{x_0,\theta_0}
)>0$.
 Then
there exists a constant $C_0>0$, such that 
 for all $C\ge C_0$ and $\delta\in (0,1/4)$,
 if we set $m_0=C(-\ln \delta)^{3+\vartheta}$ and $m_\ell=2^\ell m_0$ for all $\ell\in \sN$,
 then  the regret of Algorithm \ref{alg:greedy} 
  (cf.~\eqref{eq:regret})
 satisfies the following properties:
 \begin{enumerate}[(1)]
 \item%\l{item:regret_PAC} 
 It holds   with probability at least $1-4\delta$ that 
 $R(N)\le C'\big(
 \sqrt{N}\sqrt{ \ln {N}}
+\sqrt{-\ln\delta }
\sqrt{N}
+(-\ln\delta)^{3+\vartheta}\ln {N}
 \big)$ for all  $N\in \sN$,
where
$\vartheta$ is the constant in (H.\ref{assum:lc_rl}\ref{item:jump}) 
and
  $C'$ is  a constant  independent of $\vartheta, N$ and $\delta$. 
\item%\l{item:regret_as}
 It holds   with probability 1 that  $R(N)=\cO(\sqrt{N\ln N})$ as $N\to \infty$.
 \end{enumerate}

\end{Theorem}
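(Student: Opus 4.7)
My plan is to reduce Theorem \ref{cor:regret_jump} to the general regret bound of Theorem \ref{thm:regret}. The latter is conditional on the concentration inequality \eqref{eq:concentration_beta} holding with some exponent $\beta \ge 1$, and it is precisely the role of Proposition \ref{prop:concentration_jump} (cited in the statement) to verify this concentration with $\beta = 3 + \vartheta$, where $\vartheta \in [0,1]$ is the sub-Weibull index from (H.\ref{assum:lc_rl}\ref{item:jump}). Once \eqref{eq:concentration_beta} is established with this $\beta$, plugging it into Theorem \ref{thm:regret} yields both assertions of Theorem \ref{cor:regret_jump} verbatim, so the work consists in establishing this concentration uniformly on bounded sets of $\theta$.

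I would fix a bounded set $\mathcal{K} \subset \sR^{n\t(n+k)}$ and $\theta \in \mathcal{K}$. The model \eqref{eq:lc_sde_theta_star} with parameter $\theta$ satisfies (H.\ref{assum:lc_ns}) with constants depending only on $\mathcal{K}$, so Theorem \ref{thm:lc_fb} yields a feedback control $\psi^\theta \in \cV$ with Lipschitz constant and linear-growth bound uniform in $\theta \in \mathcal{K}$. The closed-loop dynamics \eqref{eq:lc_sde_theta} is therefore an SDE with Lipschitz drift, bounded diffusion, and jump coefficient $\gamma$ satisfying (H.\ref{assum:lc_rl}\ref{item:jump}), uniformly in $\theta$. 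By Kunita's inequality applied to the compensated Poisson integral, combined with the $L^q(\nu)$ bound on $\gamma$ in (H.\ref{assum:lc_rl}\ref{item:jump}), and Burkholder's inequality for the Brownian part, a Gr\"onwall-in-$q$ argument gives $\|\sup_{t \in [0,T]} |X^{x_0,\theta}_t|\|_{L^q} \le C q^{\max(1/2, \vartheta)}$, i.e.\ the state process is sub-Weibull with Orlicz index $\max(1/2,\vartheta)$. Since $\psi^\theta$ has linear growth with uniform constants, $Z^{x_0,\theta}$ inherits the same index.

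To obtain \eqref{eq:concentration_beta} for $U^{x_0,\theta,m} - \ol{U}^{x_0,\theta}$, each summand $\int_0^T Z_t (Z_t)^\trans \d t$ is a product of two sub-Weibull random variables, hence sub-Weibull of index $2\max(1/2,\vartheta) \le 1+\vartheta$, and the standard Bernstein-type concentration for i.i.d.\ sums yields \eqref{eq:concentration_beta} with $\beta = 1+\vartheta$ on this side. For $V^{x_0,\theta,m} - \ol{V}^{x_0,\theta}$, I would decompose $\d X^{x_0,\theta}_t$ into drift, Brownian, and Poisson components: the drift part reduces to a $U$-type estimate; the Brownian stochastic integral $\int_0^T Z_t \sigma^\trans \d W_t$ is handled by the Burkholder and Girsanov argument in Lemma \ref{lemma:stochatic_integral_diffusion}; and the Poisson stochastic integral is controlled by the convex concentration inequality for jump SDEs from \cite{ma2010transportation} together with Lemma \ref{lemma:stochastic_integral_jump}. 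After accounting for the product with the sub-Weibull factor $Z_t$ and integrating in time, the worst Orlicz index obtained is $3+\vartheta$, which dominates the $U$-side index $1+\vartheta$ and fixes the overall $\beta = 3+\vartheta$ in \eqref{eq:concentration_beta}.

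The main obstacle is tracking the Orlicz exponents cleanly through the successive stochastic-integration steps and products: each application of Burkholder/Kunita and each product by a sub-Weibull factor raises the Orlicz index, and one must verify the worst case is precisely $3+\vartheta$ so that Theorem \ref{thm:regret} applies with this $\beta$. Once the concentration is in hand, the conclusion is immediate: setting $m_0 = C(-\ln\delta)^{3+\vartheta}$ in Theorem \ref{thm:regret} gives the PAC regret bound in item (1), and the almost-sure $\cO(\sqrt{N\ln N})$ rate in item (2) follows from the Borel--Cantelli-type argument embedded in that theorem.
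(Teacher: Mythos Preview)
Your high-level reduction is exactly the paper's: Theorem \ref{cor:regret_jump} is an immediate corollary of Theorem \ref{thm:regret} once Proposition \ref{prop:concentration_jump} supplies the concentration inequality \eqref{eq:concentration_beta} with $\beta=3+\vartheta$, and the paper states it in precisely this one-line form.

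Where your sketch diverges is in how you propose to obtain the sub-Weibull tail on the state process. You suggest a Gr\"onwall-in-$q$ moment argument via Kunita/Burkholder to get $\|\sup_t|X^{x_0,\theta}_t|\|_{L^q}\le Cq^{\max(1/2,\vartheta)}$. This step is the weak point: the Burkholder constants in Lemma \ref{lemma:bdg} grow like $q^q$ (Brownian) and $q^{2q}$ (Poisson), so closing a Gr\"onwall loop in $L^q$ with these constants does not obviously yield the polynomial growth you claim; moreover even with bounded jumps ($\vartheta=0$) the state is only sub-exponential, not sub-Gaussian as your index would suggest. The paper avoids this difficulty entirely by invoking the convex concentration (transportation) inequality of \cite{ma2010transportation} via Lemma \ref{lemma:state_subexp}: this gives $\|\mathfrak{f}(X^{x_0,\theta})\|_{\Psi_1}\le C$ directly for every Lipschitz functional $\mathfrak{f}$ of the path, in particular for the time-$L^q$ norms $\mathfrak{f}^{(q)}(\rho)=(\int_0^T|\rho_t|^q\,\d t)^{1/q}$, uniformly in $q$. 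From there Lemmas \ref{lemma:deterministic_integral} and \ref{lemma:stochastic_integral_jump} push the deterministic products to $\Psi_{1/2}$ and the Poisson stochastic integrals to $\Psi_{1/(3+\vartheta)}$, which is the worst index and fixes $\beta=3+\vartheta$. So the transportation inequality is the key ingredient that makes the concentration argument go through cleanly; your direct moment route would need substantially more care to recover the same $\Psi_1$ input.
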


In the case 
where 
% the state process
 \eqref{eq:lc_sde_theta_star}
  is only driven by the Brownian motion,
we can
exploit the sub-Gaussianity of the state process
and 
obtain a shaper regret bound
based on 
 Theorem 
\ref{thm:regret} and
 Proposition \ref{prop:concentration_diffusion}.
%Such a regret bound improves \cite[Theorem 2]{basei2020linear}
%(established for LQ problems) in terms of its dependence on $\ln\delta$ and $\ln N$.

\begin{Theorem}
\l{cor:regret_diffusion}
Suppose (H.\ref{assum:lc_rl}) 
and (H.\ref{assum:non_degenerate})
hold
with $\gamma_{\max}=0$,
and 
% $\theta_0$ is  identifiable as in 
% (H.\ref{assum:lc_rl}\ref{item:non_degenerate}).
$\lambda_{\min}(
\ol{U}^{x_0,\theta_0}
)>0$.
 Then
there exists a constant $C_0>0$, such that 
 for all $C\ge C_0$ and $\delta\in (0,1/4)$,
 if we set $m_0=C(-\ln \delta)$ and $m_\ell=2^\ell m_0$ for all $\ell\in \sN$,
 then  the regret of Algorithm \ref{alg:greedy} 
  (cf.~\eqref{eq:regret})
 satisfies the following properties:
 \begin{enumerate}[(1)]
 \item
 It holds   with probability at least $1-4\delta$ that 
 $R(N)\le C'\big(
 \sqrt{N}\sqrt{ \ln {N}}
+\sqrt{-\ln\delta }
\sqrt{N}
+(-\ln\delta)\ln {N}
 \big)$ for all  $N\in \sN$,
where  $C'$ is  a constant  independent of $N$ and $\delta$. 
\item
 It holds   with probability 1 that  $R(N)=\cO(\sqrt{N\ln N})$ as $N\to \infty$.
 \end{enumerate}

\end{Theorem}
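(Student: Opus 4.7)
The plan is to derive Theorem \ref{cor:regret_diffusion} as a direct specialization of the general Theorem \ref{thm:regret}, by verifying its concentration hypothesis \eqref{eq:concentration_beta} with the sharp exponent $\beta=1$ in the pure-diffusion regime $\gamma_{\max}=0$. Once $\beta=1$ is established, plugging it into Theorem \ref{thm:regret} immediately gives the schedule $m_0=C(-\ln\delta)^{\beta}=C(-\ln\delta)$ and the regret bounds in items (1) and (2) exactly as stated; no further work beyond Theorem \ref{thm:regret} is needed. So the entire task reduces to proving the sub-exponential concentration inequality of Proposition \ref{prop:concentration_diffusion} for the empirical moment matrices $U^{x_0,\theta,m}$ and $V^{x_0,\theta,m}$ defined in \eqref{eq:UV}, uniformly in $\theta$ over bounded subsets $\cK\subset\sR^{n\t(n+k)}$.

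The mechanism for obtaining $\beta=1$ is the sub-Gaussianity of the state process. With $\gamma_{\max}=0$, the controlled dynamics \eqref{eq:lc_sde_theta} become a linear SDE driven only by Brownian motion with a Lipschitz feedback $\psi^\theta\in\cV$ (Theorem \ref{thm:lc_fb}). Standard Gronwall plus Burkholder--Davis--Gundy estimates, combined with the uniform-in-$\theta$ control of the Lipschitz constant of $\psi^\theta$ over $\theta\in\cK$ (which is what Theorem \ref{thm:lc_fb} yields, since its constants depend only on the model bounds), show that $X^{x_0,\theta}_t$ is uniformly sub-Gaussian in $t\in[0,T]$ and $\theta\in\cK$. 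Consequently $Z^{x_0,\theta}_t=(X^{x_0,\theta}_t,\psi^\theta(t,X^{x_0,\theta}_t))^\trans$ is uniformly sub-Gaussian as well. This is precisely the quantitative input supplied by Lemma \ref{lemma:stochatic_integral_diffusion}.

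Next, I would bound the single-sample Orlicz norms of the two types of integrals appearing in $U^{x_0,\theta,m}$ and $V^{x_0,\theta,m}$. The entries of $\int_0^T Z_t Z_t^\trans\d t$ are quadratic functionals of a sub-Gaussian process, hence sub-exponential (Orlicz exponent $1$). For $V^{x_0,\theta,m}$ one splits $\d X^{x_0,\theta}_t=(A^\star X_t+B^\star\psi^\theta(t,X_t))\d t+\sigma\, \d W_t$; the $\d t$ part becomes a sub-exponential bilinear functional, and the Brownian part $\int_0^T Z_t\sigma\,\d W_t$ can be handled by BDG together with the sub-Gaussian bound on $Z$ (possibly via a Girsanov step to align with Lemma \ref{lemma:stochatic_integral_diffusion}), again yielding a sub-exponential Orlicz bound. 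A Bernstein-type inequality for i.i.d.\ sums of sub-exponential random variables then gives, for each entry and each $m$,
\[
\sP\!\left(\left|U^{x_0,\theta,m}-\ol{U}^{x_0,\theta}\right|\ge \eps\right)
\le C_2\exp\!\Big(-C_1\min\{m\eps^2/C_2^2,\,m\eps/C_2\}\Big),
\]
and the same for $V^{x_0,\theta,m}$, uniformly in $\theta\in\cK$. This is exactly \eqref{eq:concentration_beta} with $\beta=1$.

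With the concentration hypothesis verified for $\beta=1$, Theorem \ref{thm:regret} applies verbatim: the high-probability bound in its item (1) becomes $R(N)\le C'(\sqrt{N\ln N}+\sqrt{-\ln\delta}\,\sqrt{N}+(-\ln\delta)\ln N)$, and item (2) gives the a.s.\ rate $R(N)=\cO(\sqrt{N\ln N})$, which is what the corollary claims. The main obstacle I anticipate is the uniformity-in-$\theta$ over bounded $\cK$: the Lipschitz constant of $\psi^\theta$ and the sub-Gaussian constants for $X^{x_0,\theta}$ must be controlled by the \emph{same} constants for all $\theta\in\cK$. This follows from tracking through Theorem \ref{thm:lc_fb} and the FBSDE estimates in Proposition \ref{prop:fbsde_wp}, since all bounds depend only on $L$ from (H.\ref{assum:lc_ns}) and on $\sup_{\theta\in\cK}|\theta|$, but it is the one nontrivial step; everything else is a clean substitution into Theorem \ref{thm:regret}.
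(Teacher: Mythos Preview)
Your proposal is correct and follows essentially the same route as the paper: reduce to Theorem \ref{thm:regret} by verifying the concentration hypothesis \eqref{eq:concentration_beta} with $\beta=1$, which is exactly the content of Proposition \ref{prop:concentration_diffusion}, and the argument you sketch (sub-Gaussian state $\Rightarrow$ sub-exponential entrywise integrals via Lemmas \ref{lemma:deterministic_integral} and \ref{lemma:stochatic_integral_diffusion} $\Rightarrow$ Bernstein-type bound via Lemma \ref{lemma:concentration_alpha_subWeibull}) matches the paper's proof. The only minor deviation is that the paper obtains the sub-Gaussianity of Lipschitz functionals of $X^{x_0,\theta}$ from a transportation inequality (\cite{djellout2004transportation}) rather than your Gronwall/BDG argument; both are valid here since the noise is additive, but the transportation route gives the uniform constant over $\theta\in\cK$ in one stroke.
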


{%\color{blue}

\begin{Remark}\label{rmk:hyperparameter}
The condition $\lambda_{\min}(
\ol{U}^{x_0,\theta_0}
)>0$
in Theorems 
\ref{cor:regret_jump}
and
\ref{cor:regret_diffusion}
ensures that 
the greedy policy $\psi^{\theta_0}$ explores the parameter space and improves the accuracy of parameter estimation. 
%As mentioned in Remark \ref{rmk:rl_assum},
By Proposition \ref{prop:self_exploration},
if  \eqref{eq:lc_sde_theta} has  nondegenerate Brownian noises,
then 
it suffices to choose $\theta_0$ such that 
the corresponding greedy policy  $\psi^{\theta_0}$ 
enjoys the exploration property
 stated in Item \ref{item:independent_closed_loop}.
%\eqref{exploration}.

The choice of  $m_0=C_0(-\ln \delta)^\beta$ 
along with (H.\ref{assum:non_degenerate})
%(H.\ref{assum:lc_rl}\ref{item:non_degenerate})
ensures that 
$(\theta_\ell)_{\ell\in \sN}$ stays close to $\theta^\star$
so that \eqref{eq:concentration_beta}
is applicable. 
Here
 $\delta$ is an arbitrarily small constant indicating  the agent's confidence   of the regret bound, 
and $C_0$ is a  constant depending  on 
the  exploration strength of $\psi^{\theta^\star}$,
namely the constant 
$\lambda_{\min}(
\ol{U}^{x_0,\theta^\star})>0$ 
(see    Section \ref{sec:proof_regret}).
Note that
to analyze algorithm regrets,
 it is common to 
 assume some a-priori information on the true parameter 
 and the 
  algorithm being 
  initialized 
   with sufficiently many 
learning episodes
(see e.g., \cite{dean2018regret}).
 Obtaining
an explicit dependence of $C_0$ on model parameters, however, could be challenging.
A practical strategy for validating  
(H.\ref{assum:non_degenerate})
%(H.\ref{assum:lc_rl}\ref{item:non_degenerate})
and  for choosing the initial episode $m_0$
is to ensure that the obtained  estimations
$(\theta_\ell)_{\ell\in \sN}$ 
remain bounded and  that
the resulting greedy policies $(\psi^{\theta_\ell})_{\ell\in \sN}$
satisfy
Proposition \ref{prop:self_exploration}
Item \ref{item:independent_closed_loop}.
Our numerical experiments in Section \ref{sec:numerical}
 demonstrate that 
the performance of Algorithm \ref{alg:greedy}  is stable 
with respect to $m_0$, and that
a  small $m_0$  in general suffices to  guarantee a sublinear regret. 

%Finally, we refer the reader to a recent work \cite{szpruch2021exploration}
% for general linear-convex learning problems without  
% (H.\ref{assum:non_degenerate}).
%% (H.\ref{assum:lc_rl}\ref{item:non_degenerate}).
% Instead of merely employing greedy polices as Algorithm \ref{alg:greedy},
% \cite{szpruch2021exploration} 
%dedicates certain episodes to actively explore the environment with some exploration policy $\psi^e$ satisfying  \eqref{exploration}.
%The precise number of exploration episodes 
%is chosen based on the performance gap 
%in Theorem \ref{thm:dynamics_stable}
%and the finite-sample accuracy of the corresponding  parameter estimator. 
%Due to the explicit exploration,
%the algorithm in \cite{szpruch2021exploration} 
%avoids the  hyperparameter $m_0$, but 
%the resulting regret may be larger than that in Theorem \ref{thm:regret}.

\end{Remark}
}
 
\subsection{Proofs of sublinear regret bounds}
This section is devoted to the proofs of 
%Theorem \ref{thm:regret}
%and Theorems \ref{cor:regret_jump}-\ref{cor:regret_diffusion}.
Theorem \ref{thm:regret}, \ref{cor:regret_jump} and \ref{cor:regret_diffusion}.

As we have seen in Theorems \ref{cor:regret_jump}-\ref{cor:regret_diffusion},
an essential step for estimating the regret of Algorithm \ref{alg:greedy}
is to establish the concentration inequality  \eqref{eq:concentration_beta}
for the least-squares estimator \eqref{eq:theta_sum}.
Compared to the classical learning problems with Brownian-motion-driven state dynamics
(see e.g.~\cite{
 basei2020logarithmic}),
 the presence of jumps in the state dynamics creates a crucial difficulty in quantifying the precise value 
of $\b$ in \eqref{eq:concentration_beta},
 since the state variable $X^{\theta}$ is in general not sub-Gaussian, and hence 
\eqref{eq:concentration_beta}
  does not hold with $\b=1$.

 In the subsequent analysis, 
 we overcome the above difficulty 
 by introducing a notation of sub-Weibull random variables 
 as in \cite{kuchibhotla2018moving}
 and 
 establishing that 
both deterministic and stochastic  integrals
preserve 
 sub-Weibull random variables
in Section \ref{sec:sub-Weibull}. 
We then show  in   Section \ref{sec:concentration}
that  \eqref{eq:theta_sum} behaves like sub-Weibull random variables and  \eqref{eq:concentration_beta}
 holds with some $\b\ge 1$, 
 provided that the jumps of the state dynamics are sub-exponential.
Finally, we prove the general regret result 
Theorem \ref{thm:regret} for Algorithm \ref{alg:greedy} 
in Section \ref{sec:proof_regret}.

\subsubsection{Step 1: Analysis of sub-Weibull random variables}
\l{sec:sub-Weibull}

The first step is to analyze
integrals of 
sub-Weibull random variables.
%which is essential for the subsequent analysis of
%the least-squares estimator \eqref{eq:theta_sum}
%in a  jump-diffusion setting.
%
We start by  recalling the precise definition
of sub-Weibull random variables
in terms of their Orlicz norms (see \cite{kuchibhotla2018moving}).
\begin{Definition}
\label{definition:sub-wellbull}
For every $\a>0$,
let  $\Psi_\a:[0,\infty)\to \sR$ such that  $\Psi_\a(x)=e^{x^\a}-1$ for all $x\ge 0$,
and 
let $\|\cdot\|_{\Psi_\a}$ be the corresponding 
$\Psi_\a$-Orlicz (quasi-)norm such that for any given  random variable $X$,
$$\|X\|_{\Psi_\a}\coloneqq \inf\left\{t>0\mid \sE\left[\Psi_\a\left(\tfrac{|X|}{t}\right)\right]\le 1\right\}.$$
Then 
a random variable $X$ is said to be sub-Weibull of order
$\a>0$, denoted by $X\in \subW(\a)$, if
$\|X\|_{\Psi_\a}<\infty$.
\end{Definition}

Note that  $\|\cdot\|_{\Psi_\a}$ is a norm if and only if $\a\ge 1$, as otherwise the triangle inequality does not hold.
%One can show that
%if $X$ is $\subW(\a)$ with some $\a>0$, then
%$\sP(|X|>t)\le 2\exp(-{t^\a}/{\|X\|^\a_{\Psi_\a}})$ for all $t\ge 0$.
Examples of sub-Weibull random variables include 
sub-Gaussian and 
sub-exponential  random variables,
which correspond to 
$\subW(2)$ and $\subW(1)$, respectively. 
We point out that 
the class of sub-Weibull random
variables is closed under multiplication and addition,
and for all $\a>0$, there exists a constant $C_\a$, depending only on $\a$, such that 
\bb\l{eq:sub_weibull_lp}
C_\a^{-1}\sup_{q\ge 1}q^{-1/\a}\|X\|_{L^q}
\le \|X\|_{\Psi_\a}
\le 
C_\a\sup_{q\ge 1}q^{-1/\a}\|X\|_{L^q}
\ee
for all random variables $X$
(see
\cite[Appendix A]{gotze2021concentration}  
for  a proof of these  properties).

We now present several important lemmas regarding the behavior of integrals of sub-Weibull random variables.
The first lemma shows that 
 deterministic integral of a product of sub-Weibull random
variables is still sub-Weibull. 
The proof is based on 
Definition \ref{definition:sub-wellbull}
and H\"{o}lder's inequality,
and is given in Appendix \ref{appendix:technical results}.

\begin{Lemma}\l{lemma:deterministic_integral}
For all $\a>0$ and  every stochastic  process $X,Y:\Om\t [0,T]\to \sR$,
$$
\left\|\int_0^T XY\, \d t\right\|_{\Psi_{\a/2}}
\le \left\|\bigg(\int_0^T |X|^2\, \d t\bigg)^{\frac{1}{2}}\right\|_{\Psi_{\a}}
 \left\|\bigg(\int_0^T |Y|^2\, \d t\bigg)^{\frac{1}{2}}\right\|_{\Psi_{\a}}.
$$
\end{Lemma}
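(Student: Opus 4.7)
The plan is to reduce the claim to a purely pointwise and product-type bound on Orlicz norms. First I would apply the Cauchy--Schwarz inequality in the time variable to obtain the pointwise estimate
\[
\left|\int_0^T X_t Y_t\,\d t\right|\le U\cdot V,
\qquad U\coloneqq \left(\int_0^T |X_t|^2\,\d t\right)^{\!1/2},\quad
V\coloneqq \left(\int_0^T |Y_t|^2\,\d t\right)^{\!1/2}.
\]
Since the $\Psi_{\a/2}$-Orlicz functional is monotone in $|\,\cdot\,|$, the claim reduces to showing that
\[
\|UV\|_{\Psi_{\a/2}}\le \|U\|_{\Psi_{\a}}\,\|V\|_{\Psi_{\a}}.
\]

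The second step is a direct verification using the variational definition of the norm. Set $s=\|U\|_{\Psi_{\a}}$ and $t=\|V\|_{\Psi_{\a}}$; the case $s=0$ or $t=0$ is trivial, and if either is infinite there is nothing to prove, so assume both finite and positive. By lower semicontinuity of the $\Psi_\a$-Orlicz functional (or by monotone convergence, approximating $s$ and $t$ from above), $\sE[\exp((U/s)^{\a})]\le 2$ and $\sE[\exp((V/t)^{\a})]\le 2$. I would then use the elementary Young-type inequality $ab\le \tfrac12(a^2+b^2)$ with $a=(U/s)^{\a/2}$, $b=(V/t)^{\a/2}$ to obtain
\[
\bigl(UV/(st)\bigr)^{\a/2}=(U/s)^{\a/2}(V/t)^{\a/2}\le \tfrac{1}{2}(U/s)^{\a}+\tfrac{1}{2}(V/t)^{\a}.
\]
Exponentiating and applying the Cauchy--Schwarz inequality in expectation yields
\[
\sE\!\left[\exp\bigl((UV/(st))^{\a/2}\bigr)\right]
\le \sE\!\left[e^{(U/s)^{\a}/2}\,e^{(V/t)^{\a}/2}\right]
\le \sqrt{\sE[e^{(U/s)^{\a}}]}\,\sqrt{\sE[e^{(V/t)^{\a}}]}\le 2.
\]
Hence $\sE[\Psi_{\a/2}(UV/(st))]\le 1$, which by definition of the Orlicz norm gives $\|UV\|_{\Psi_{\a/2}}\le st$, completing the proof.

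I do not foresee a real obstacle: the only subtle point is making sure the constants work out so that the right-hand side of the exponential bound is exactly $2$ (and not some larger constant that would force a universal multiplicative factor in front of the product of norms). Splitting $\frac{1}{2}(U/s)^{\a}+\frac{1}{2}(V/t)^{\a}$ into two halves and applying Cauchy--Schwarz is what achieves this, and using Young's inequality $ab\le a^p/p+b^q/q$ with $p=q=2$ is essential — any other split would introduce an extra factor and break the clean multiplicative form of the stated inequality. The rest is bookkeeping of definitions.
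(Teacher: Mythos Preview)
Your proposal is correct and follows essentially the same approach as the paper: Cauchy--Schwarz in the time variable, then Young's inequality $(U/s)^{\a/2}(V/t)^{\a/2}\le \tfrac12(U/s)^{\a}+\tfrac12(V/t)^{\a}$, then Cauchy--Schwarz in expectation to land exactly at $2$. The only cosmetic difference is that the paper normalizes to $\|U\|_{\Psi_\a}=\|V\|_{\Psi_\a}=1$ at the outset rather than carrying $s$ and $t$ through, and your remark about the infimum being attained (via monotone convergence) is a welcome clarification the paper leaves implicit.
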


The second lemma shows that 
 stochastic integrals 
 preserve the property of being 
 sub-Weibull random
variables. 
The proof is based on the equivalent characterization \eqref{eq:sub_weibull_lp}
of sub-Weibull random variables 
and   Burkholder's inequality,
whose details are given in  Appendix \ref{appendix:technical results}.

\begin{Lemma}\l{lemma:stochastic_integral_jump}
There exists  $C\ge 0$
such that 
 for all
$\sigma\in \sR^{d}$,
 $X\in \cS^2(\sR)$
and  every 
measurable function
$\gamma: \sR^p_0\to \sR$
satisfying  (H.\ref{assum:lc_rl}\ref{item:jump}), 
%that 
%\begin{enumerate}[(1)]
%\item\l{item:subexp_subexp}
%$\|\int_0^T X_t\sigma(t)^\trans\, \d W_t\|_{\Psi_1}\le CK \|(\int_0^T |X_t|^2\, \d t)^{1/2}\|_{\Psi_2}$.
%\item
%\l{item:subweb_subweb}
$\|\int_0^T X_t\sigma^\trans\, \d W_t\|_{\Psi_{1/2}}\le C |\sigma|\|(\int_0^T |X|^2\, \d t)^{\frac{1}{2}}\|_{\Psi_1}$
and 
$$
\left\|\int_0^T\int_{\sR^p_0} X_t\gamma(u)\,  \tilde{N}(\d t,\d u)\right\|_{\Psi_{1/(3+\vartheta)}}
\le C\gamma_{\max}
\bigg(
\sup_{p\ge 2}
\bigg\|
\bigg(\int_0^T|X_t|^q\d t 
\bigg)^{\frac{1}{q}}
\bigg\|_{\Psi_1}
\bigg),
$$
with the constants 
$\gamma_{\max}$ and $\vartheta$  in (H.\ref{assum:lc_rl}\ref{item:jump}).

%\end{enumerate}

\end{Lemma}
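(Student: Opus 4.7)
The strategy is to use the equivalence \eqref{eq:sub_weibull_lp} to translate sub-Weibull Orlicz bounds into $L^q$-moment bounds with explicit polynomial $q$-dependence, and then to apply Burkholder--Davis--Gundy (BDG) type martingale inequalities while carefully tracking the power of $q$ produced by each factor. Throughout, I would exploit that, for $Z\ge 0$, $\|Z\|_{\Psi_\alpha}\asymp\sup_{q\ge 1}q^{-1/\alpha}\|Z\|_{L^q}$, so that $L^q$ growth of order $q^{1/\alpha}$ is exactly what certifies membership in $\subW(\alpha)$.

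For the Brownian integral $I_T\coloneqq\int_0^T X_t\sigma^\trans\,\d W_t$, BDG for continuous martingales yields, for every $q\ge 2$, the sharp estimate $\|I_T\|_{L^q}\le C\sqrt{q}\,|\sigma|\,\|(\int_0^T|X_t|^2\,\d t)^{1/2}\|_{L^q}$. Applying \eqref{eq:sub_weibull_lp} to the quadratic-variation factor gives $\|(\int|X|^2\,\d t)^{1/2}\|_{L^q}\le Cq\,\|(\int|X|^2\,\d t)^{1/2}\|_{\Psi_1}$, and hence
\[
q^{-2}\|I_T\|_{L^q}\le C\,q^{-1/2}\,|\sigma|\,\bigl\|\bigl(\textstyle\int_0^T|X_t|^2\,\d t\bigr)^{1/2}\bigr\|_{\Psi_1},
\]
which is uniformly bounded in $q\ge 1$. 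A second application of \eqref{eq:sub_weibull_lp}, now to $I_T$, then yields the claimed $\Psi_{1/2}$ bound.

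For the compensated Poisson integral $J_T\coloneqq\int_0^T\!\int_{\sR^p_0}X_t\gamma(u)\,\tilde{N}(\d t,\d u)$, I would invoke a Kunita--Bichteler--Jacod type BDG inequality: for each $q\ge 2$,
\[
\|J_T\|_{L^q}\le C\,q\,\Bigl(\bigl\|\bigl(\textstyle\int_0^T\!\int|X_t\gamma(u)|^2\,\nu(\d u)\,\d t\bigr)^{1/2}\bigr\|_{L^q}+\bigl\|\bigl(\textstyle\int_0^T\!\int|X_t\gamma(u)|^q\,\nu(\d u)\,\d t\bigr)^{1/q}\bigr\|_{L^q}\Bigr).
\]
For the first (predictable-variation) summand, $\int|\gamma|^2\,\nu(\d u)\le C\gamma_{\max}^2$ by (H.\ref{assum:lc_rl}\ref{item:jump}) with $q=2$, so it factorises as $C\gamma_{\max}\|(\int X^2\,\d t)^{1/2}\|_{L^q}$, which via \eqref{eq:sub_weibull_lp} is at most $Cq\gamma_{\max}\|(\int X^2\,\d t)^{1/2}\|_{\Psi_1}$. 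For the second (large-jumps) summand, $(\int|\gamma|^q\,\nu(\d u))^{1/q}\le q^\vartheta\gamma_{\max}$ from (H.\ref{assum:lc_rl}\ref{item:jump}), combined with Fubini, gives the bound $Cq^\vartheta\gamma_{\max}\|(\int|X|^q\,\d t)^{1/q}\|_{L^q}\le Cq^{1+\vartheta}\gamma_{\max}\|(\int|X|^q\,\d t)^{1/q}\|_{\Psi_1}$. Bounding the inner $\Psi_1$-norm by a supremum and combining,
\[
\|J_T\|_{L^q}\le C\,q^{2+\vartheta}\,\gamma_{\max}\sup_{p\ge 2}\bigl\|\bigl(\textstyle\int_0^T|X_t|^p\,\d t\bigr)^{1/p}\bigr\|_{\Psi_1},
\]
so that $q^{-(3+\vartheta)}\|J_T\|_{L^q}$ is uniformly bounded in $q\ge 1$, and a final application of \eqref{eq:sub_weibull_lp} delivers the $\Psi_{1/(3+\vartheta)}$ bound.

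The main obstacle is establishing the jump-BDG inequality above with the correct $q$-dependence (an $O(q)$ BDG constant rather than the crude $O(q^{q/2})$) and cleanly separating the predictable-variation term from the large-jumps $L^q(\nu)$ term; this relies on Kunita's inequality for compensated Poisson integrals, or equivalently on a Girsanov-type change of measure normalising $\nu$ before the jump BDG is applied, as alluded to in the introduction. The exponent $3+\vartheta$ then emerges naturally as the sum of (i) one factor of $q$ from the BDG constant, (ii) one factor of $q$ from converting the integrand's $L^q$-moment to its $\Psi_1$-norm via \eqref{eq:sub_weibull_lp}, and (iii) the jump-size exponent $\vartheta$ from (H.\ref{assum:lc_rl}\ref{item:jump}).
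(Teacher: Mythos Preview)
Your strategy is the same as the paper's: translate Orlicz norms to $L^q$-moment growth via \eqref{eq:sub_weibull_lp}, apply Burkholder-type inequalities with explicit $q$-dependence, and translate back. The only substantive difference is the BDG constants you invoke. The paper does not cite a Kunita inequality with an $O(q)$ constant; instead it proves its own Burkholder inequality (Lemma~\ref{lemma:bdg}) by a recursive squaring argument, obtaining $C_q=(\sqrt{e/2}\,q)^q$ for the Brownian integral and $\tilde C_q=21e^q q^{2q}$ for the compensated Poisson integral. In $L^q$-norm terms these are $O(q)$ and $O(q^2)$ respectively, not your $O(\sqrt q)$ and $O(q)$. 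With the paper's $O(q^2)$ Poisson constant, the bookkeeping becomes $2+1+\vartheta=3+\vartheta$ (two powers from BDG, one from the $\Psi_1$ conversion, $\vartheta$ from the jump-size assumption), which is exactly the exponent in the statement; your accounting in the last paragraph adds up to $2+\vartheta$, so your explanation of ``where $3+\vartheta$ comes from'' is off by one relative to your own computation. If your $O(q)$ Kunita constant were available you would in fact prove the stronger $\Psi_{1/(2+\vartheta)}$ bound, but you have correctly flagged that establishing this constant is the nontrivial step, and the paper sidesteps it by accepting $O(q^2)$.
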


%\begin{Remark}
\l{rmk:bdg}
Lemma \ref{lemma:stochastic_integral_jump}
 focuses on the case where 
$(\int_0^T |X|^2\, \d t)^{{1}/{2}}\in \subW(1)\setminus \subW(2)$,
which is important for 
control problems whose  state dynamics
is
 driven by a Poisson random measure. 
Hence we establish  the sub-Weibull properties of the stochastic integrals 
by applying 
the Burkholder's inequality to
 estimate the growth of their $L^q$-norms,
  precise order of which depends on the constants 
$C_q$  and $\tilde{C}_q$ 
in the  inequalities 
\eqref{eq:bdg_brownian}
and
 \eqref{eq:bdg_poisson}.
% Since 
% the estimates of these constants may not be sharp, 
% it is unclear whether the orders of  
% the sub-Weibull estimates in Lemma 
% \ref{lemma:stochastic_integral_jump} is optimal. 

In the case where 
$(\int_0^T |X|^2\, \d t)^{{1}/{2}}\in  \subW(2)$,
we can establish the optimal  sub-Weibull order 
$\int_0^T X_t\sigma^\trans\, \d W_t\in  \subW(1)$.
Such a characeterization  is essential for obtaining a sharper regret bound
of Algorithm \ref{alg:greedy} 
when 
the state dynamics  
is  only driven by the Brownian motion.
The proof is based on the Girsanov theorem 
and is given in  Appendix \ref{appendix:technical results}.

%\end{Remark}

\begin{Lemma}\l{lemma:stochatic_integral_diffusion}
There exists $C\ge 0$
such that 
 for all
$\sigma\in \sR^{d}$
and
 $X\in \cS^2(\sR)$,
$\|\int_0^T X_t\sigma^\trans\, \d W_t\|_{\Psi_1}\le C |\sigma|\|(\int_0^T |X|^2\, \d t)^{\frac{1}{2}}\|_{\Psi_2}$.

%\end{enumerate}

\end{Lemma}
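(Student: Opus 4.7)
The plan is to combine a Girsanov-type exponential-supermartingale estimate with the sub-Gaussian hypothesis on the quadratic variation to control the moment generating function (MGF) of $M_T\coloneqq \int_0^T X_t\sigma^\trans\,\d W_t$, and then to convert this MGF bound into an Orlicz $\Psi_1$ estimate. Concretely, for each $\lambda\in\sR$ I would introduce the stochastic exponential
\[
\mathcal{E}^\lambda_t\coloneqq \exp\Big(\lambda\int_0^t X_s\sigma^\trans\,\d W_s-\tfrac{\lambda^2|\sigma|^2}{2}\int_0^t |X_s|^2\,\d s\Big),\quad t\in[0,T].
\]
As a nonnegative continuous local martingale, $\mathcal{E}^\lambda$ is a supermartingale by Fatou's lemma applied along a localizing sequence, so $\sE[\mathcal{E}^\lambda_T]\le 1$, which rearranges to the key estimate $\sE[\exp(\lambda M_T)]\le \sE[\exp(\tfrac{\lambda^2|\sigma|^2}{2}\int_0^T|X_s|^2\,\d s)]$. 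This is the point at which the Girsanov flavour enters: $\mathcal{E}^\lambda_T$ would be the Radon--Nikodym density of an equivalent measure under which $W-\lambda\int_0^\cdot X_s\sigma\,\d s$ is a Brownian motion, and the property $\sE[\mathcal{E}^\lambda_T]\le 1$ is exactly the ingredient used.

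Next, I would set $K\coloneqq \|(\int_0^T|X_s|^2\,\d s)^{1/2}\|_{\Psi_2}$, so by Definition \ref{definition:sub-wellbull} one has $\sE[\exp(\int_0^T|X_s|^2\,\d s/K^2)]\le 2$. For $|\lambda|\le \sqrt{2}/(K|\sigma|)$, the quantity $\alpha\coloneqq \lambda^2 K^2|\sigma|^2/2$ lies in $[0,1]$, so Jensen's inequality applied to the concave map $z\mapsto z^\alpha$ on $[0,\infty)$ yields
\[
\sE\Big[\exp\Big(\tfrac{\lambda^2|\sigma|^2}{2}\int_0^T|X_s|^2\,\d s\Big)\Big]=\sE\Big[\Big(\exp\Big(\tfrac{1}{K^2}\int_0^T|X_s|^2\,\d s\Big)\Big)^\alpha\Big]\le 2^\alpha\le 2.
\]
Combining this with Step 1 and Markov's inequality applied to $\pm M_T$, then choosing $\lambda=\pm\sqrt{2}/(K|\sigma|)$, gives the sub-exponential tail estimate $\sP(|M_T|\ge t)\le 4\exp(-\sqrt{2}\,t/(K|\sigma|))$ for all $t>0$. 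An elementary computation of the resulting $L^q$-norms via Stirling's formula, namely $\|M_T\|^q_{L^q}\le 4q\int_0^\infty t^{q-1}e^{-\sqrt{2}t/(K|\sigma|)}\,\d t$, together with the equivalent characterization \eqref{eq:sub_weibull_lp}, yields $\|M_T\|_{\Psi_1}\le C K|\sigma|$ as claimed.

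The only subtle point is verifying that $\mathcal{E}^\lambda$ is a genuine supermartingale, but this is immediate from its nonnegativity and therefore bypasses any Novikov-type integrability hypothesis on $X$. The reason the assumption is on the $\Psi_2$-norm (rather than $\Psi_1$) of the quadratic variation is that Step 1 exponentiates the \emph{square} of $\lambda$ against $\int_0^T|X|^2\,\d s$, so one effectively "loses a square" in the Orlicz exponent: starting from $\Psi_2$-control on $(\int_0^T|X|^2\,\d s)^{1/2}$ one arrives at the optimal $\Psi_1$-control on $M_T$, consistent with the sub-Gaussian-to-sub-exponential heuristic governing stochastic integrals against Brownian motion and sharper than what the Burkholder-based argument of Lemma~\ref{lemma:stochastic_integral_jump} would yield when applied in this setting.
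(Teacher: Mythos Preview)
Your argument contains a genuine gap at the ``rearrangement'' step. From $\sE[\mathcal{E}^\lambda_T]\le 1$, i.e.\ $\sE\big[\exp\big(\lambda M_T-\tfrac{\lambda^2|\sigma|^2}{2}\int_0^T|X_s|^2\,\d s\big)\big]\le 1$, you cannot conclude that $\sE[\exp(\lambda M_T)]\le \sE\big[\exp\big(\tfrac{\lambda^2|\sigma|^2}{2}\int_0^T|X_s|^2\,\d s\big)\big]$. The two random variables inside the expectation are not independent, and there is no general inequality of the form $\sE[A/B]\le 1\Rightarrow \sE[A]\le\sE[B]$ for positive $A,B$. Everything downstream of this step (the Jensen bound, the tail estimate, the $L^q$ computation) is fine, but it rests on an inequality you have not established.

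The fix is precisely what the paper does: insert a Cauchy--Schwarz split. Writing $Q=\int_0^T|X_s\sigma|^2\,\d s$, one has
\[
\sE[\exp(\lambda M_T)]=\sE\big[\exp(\lambda M_T-\lambda^2 Q)\exp(\lambda^2 Q)\big]
\le \sE\big[\exp(2\lambda M_T-2\lambda^2 Q)\big]^{1/2}\,\sE\big[\exp(2\lambda^2 Q)\big]^{1/2}.
\]
The first factor equals $\sE[\mathcal{E}^{2\lambda}_T]^{1/2}\le 1$, and the second is controlled by the $\Psi_2$ hypothesis for $|\lambda|$ small. Your instinct to invoke only the supermartingale property of the stochastic exponential (rather than verifying Novikov, as the paper does) is actually a legitimate simplification---it works once applied to $\mathcal{E}^{2\lambda}$ after the Cauchy--Schwarz step---but it does not let you bypass the splitting itself.
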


\subsubsection{Step 2: Concentration inequalities for the least-squares estimator}
\l{sec:concentration}

Based on the fact that    sub-Weibull properties
are preserved 
under algebraic and integral operations
as shown in Section \ref{sec:sub-Weibull},
we now quantify  the precise tail behavior of the least-squares estimator \eqref{eq:theta_sum}, 
namely the constant $\b$ in \eqref{eq:concentration_beta},
for the jump-diffusion model \eqref{eq:lc_sde_theta_star}.

We start by establishing the sub-exponential properties of Lipschitz functionals
of the state process $X^{\theta}$
driven by both  Brownian motions
and  Poisson random measures
as in \eqref{eq:lc_sde_theta}.
The proof follows as a special case of 
 \cite{ma2010transportation}
and is given in Appendix \ref{appendix:technical results}.

\begin{Lemma}
\l{lemma:state_subexp}
Suppose 
(H.\ref{assum:lc_rl})
%(H.\ref{assum:lc_rl}\ref{item:fg}\ref{item:jump})
%and 
%(H.\ref{assum:lc_rl}\ref{item:jump})
holds. 
Let $K\in \sR$ and  $\theta=(A,B) \in \sR^{n\t (n+k)}$ 
satisfy $|\theta|\le K$. 
Then there exists  $C\ge 0$,
depending only on $K$, $T$ and the constants 
in 
(H.\ref{assum:lc_rl}),
%(H.\ref{assum:lc_rl}\ref{item:fg}\ref{item:jump}),
 such that for all  $x_0\in \sR^n$
and for every  Lipschitz continuous function
$\mathfrak{f}:(\sD([0,T];\sR^n),d_\infty)\to \sR$,
%with  $\|\mathfrak{f}\|_{\textnormal{Lip}}=1$,
the solution $X^{x_0,\theta}$ of \eqref{eq:lc_sde_theta}
satisfies
$\|\mathfrak{f}(X^{x_0,\theta})\|_{\Psi_1}\le C(\|\mathfrak{f}\|_{\textnormal{Lip}}+|\sE[\mathfrak{f}(X^{x_0,\theta})]|)$,
where 
$\sD([0,T];\sR^n)$ is the space of $\sR^n$-valued
 c\`adl\`{a}g functions on $[0,T]$
 endowed with the 
uniform metric
$d_\infty$, 
and
 $\|\mathfrak{f}\|_{\textnormal{Lip}}$ 
 is the Lipschitz constant 
 of $\mathfrak{f}$.
(cf.~Lemma \ref{lemma:sde_transportation}).

\end{Lemma}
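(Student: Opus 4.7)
The plan is to derive the $\Psi_1$-Orlicz bound from a transportation-cost inequality for the law of $X^{x_0,\theta}$ on the c\`adl\`ag path space $(\sD([0,T];\sR^n), d_\infty)$, supplied by the auxiliary Lemma \ref{lemma:sde_transportation} which specialises \cite{ma2010transportation} to the present setting. The dual Bobkov--G\"otze form of such a $T_1$-inequality converts it into a sub-exponential deviation bound for Lipschitz functionals, from which the claimed bound follows by standard tail-versus-moment estimates.

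First I would verify that, uniformly in $\theta$ with $|\theta|\le K$, the coefficients of \eqref{eq:lc_sde_theta} fulfil the hypotheses of Lemma \ref{lemma:sde_transportation} with constants depending only on $K$, $T$ and the data in (H.\ref{assum:lc_rl}). The drift $x\mapsto Ax+B\psi^\theta(t,x)$ is Lipschitz uniformly in $\theta$ because $\psi^\theta$ inherits from Theorem \ref{thm:lc_fb} a Lipschitz constant that depends only on $(L,\lambda,T)$; the diffusion coefficient $\sigma$ is constant; and the jump kernel $\gamma$ satisfies the exponential integrability $\int_{\sR^p_0}(e^{\eta|\gamma(u)|}-1-\eta|\gamma(u)|)\,\nu(\d u)<\infty$ for small $\eta>0$, which follows from expanding the exponential as a power series and invoking (H.\ref{assum:lc_rl}\ref{item:jump}) with $\vartheta\le 1$ together with $\int|\gamma|^2\d\nu<\infty$. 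Lemma \ref{lemma:sde_transportation} then yields, in its dual formulation, an exponential-moment bound
\[
\sE\bigl[\exp\bigl(\xi\mathfrak{g}(X^{x_0,\theta})\bigr)\bigr]\le \exp\bigl(\xi\sE[\mathfrak{g}(X^{x_0,\theta})]+C\|\mathfrak{g}\|_{\textnormal{Lip}}\xi\bigr), \q \fa \xi\in[0,\xi_0/\|\mathfrak{g}\|_{\textnormal{Lip}}],
\]
for every Lipschitz $\mathfrak{g}:\sD([0,T];\sR^n)\to\sR$, with $C,\xi_0>0$ depending only on the data and on $K$, $T$.

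Optimizing $\xi$ in Markov's inequality then yields the sub-exponential tail bound $\sP(|\mathfrak{g}(X^{x_0,\theta})-\sE[\mathfrak{g}(X^{x_0,\theta})]|\ge t)\le 2\exp(-ct/\|\mathfrak{g}\|_{\textnormal{Lip}})$ for all $t\ge 0$, which upon integration gives $\|\mathfrak{g}(X^{x_0,\theta})-\sE[\mathfrak{g}(X^{x_0,\theta})]\|_{L^q}\le Cq\|\mathfrak{g}\|_{\textnormal{Lip}}$ for every $q\ge 1$. By the equivalent characterization \eqref{eq:sub_weibull_lp}, this is $\|\mathfrak{g}(X^{x_0,\theta})-\sE[\mathfrak{g}(X^{x_0,\theta})]\|_{\Psi_1}\le C\|\mathfrak{g}\|_{\textnormal{Lip}}$. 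Since $\|\cdot\|_{\Psi_1}$ is a genuine norm and $\|c\|_{\Psi_1}=|c|/\ln 2$ for any deterministic constant $c$, applying this to $\mathfrak{g}=\mathfrak{f}$ and invoking the triangle inequality on the splitting $\mathfrak{f}(X^{x_0,\theta})=(\mathfrak{f}(X^{x_0,\theta})-\sE[\mathfrak{f}(X^{x_0,\theta})])+\sE[\mathfrak{f}(X^{x_0,\theta})]$ delivers the claim.

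The main obstacle lies in the first step: one must match the precise hypotheses of Lemma \ref{lemma:sde_transportation} to the present set-up so that the resulting transportation inequality has constants depending only on $K$, $T$ and the data of (H.\ref{assum:lc_rl}), and in particular \emph{not} on $\theta$ beyond the uniform bound $|\theta|\le K$. This uniformity is essential later when the least-squares iterates $(\theta_\ell)_\ell$ are shown to remain in a bounded set. Obtaining the sub-exponential order (rather than a weaker sub-Weibull order of some $\a<1$) is guaranteed by the assumption $\vartheta\le 1$, which ensures that $|\gamma|$ has a genuine exponential moment under $\nu$; under $\vartheta>1$ the same scheme would only produce $\subW(1/\vartheta)$ tails, illustrating why the restriction $\vartheta\in[0,1]$ in (H.\ref{assum:lc_rl}\ref{item:jump}) is sharp for the present statement.
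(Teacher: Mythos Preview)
Your proposal is correct and follows essentially the same route as the paper: verify via Theorem~\ref{thm:lc_fb} that the drift $x\mapsto A^\star x+B^\star\psi^\theta(t,x)$ is uniformly Lipschitz, check from (H.\ref{assum:lc_rl}\ref{item:jump}) that $\int(e^{\lambda|\gamma|}-\lambda|\gamma|-1)\,\d\nu<\infty$ for small $\lambda$, invoke Lemma~\ref{lemma:sde_transportation} for an exponential-moment bound on the centred Lipschitz functional, and finish with the triangle inequality for $\|\cdot\|_{\Psi_1}$. The only cosmetic differences are that the paper writes the mgf bound in the quadratic form $\exp(C\lambda^2\|\mathfrak{f}\|_{\textnormal{Lip}}^2)$ for $|\lambda|\le 1/(C\|\mathfrak{f}\|_{\textnormal{Lip}})$ and appeals directly to \cite[Proposition~2.7.1(v)]{vershynin2018high}, whereas you record the (weaker but equivalent on the restricted range) linear form and pass through tail bounds and $L^q$-moments via \eqref{eq:sub_weibull_lp}.
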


We now  characterize
the parameter $\beta$ in 
the  concentration inequality
\eqref{eq:concentration_beta}
based on Lemmas \ref{lemma:deterministic_integral}, \ref{lemma:stochastic_integral_jump}
and  \ref{lemma:state_subexp}.

% for the random variables 
%in the least-squares estimator \eqref{eq:theta_sum}.
%and consequently an order $\cO(\sqrt{N})$ regret bound of Algorithm \ref{alg:greedy}.

\begin{Proposition}\l{prop:concentration_jump}
Suppose 
(H.\ref{assum:lc_rl})
%(H.\ref{assum:lc_rl}\ref{item:fg}\ref{item:jump})
%and 
%(H.\ref{assum:lc_rl}\ref{item:jump})
holds
and 
let 
%$x_0\in \sR^n$
%and 
$\cK\subset \sR^{n\t (n+k)}$
be a bounded set.
Then
 there exist constants  $C_1,C_2\ge 0$
such that 
%the inequality
\eqref{eq:concentration_beta}
holds
for all $\eps\ge 0$, $m\in \sN$ and $\theta\in \cK$
 with $\beta=3+\vartheta$,
where $\vartheta$ is the constant in (H.\ref{assum:lc_rl}\ref{item:jump}).

\end{Proposition}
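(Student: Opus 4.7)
My plan is to establish \eqref{eq:concentration_beta} by writing each scalar entry of $U^{x_0,\theta,m}-\ol{U}^{x_0,\theta}$ and $V^{x_0,\theta,m}-\ol{V}^{x_0,\theta}$ as a centered empirical average of $m$ i.i.d.~random variables, showing that each summand is sub-Weibull of an appropriately small order, and then invoking the standard Bernstein-type concentration inequality for i.i.d.~sub-Weibull sums. The jump component of the $V$-term will be the slowest to concentrate and will dictate the exponent $\beta=3+\vartheta$.

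I would start with a uniform regularity step. Since $\cK$ is bounded, Theorem \ref{thm:lc_fb} furnishes a constant $C_\cK$ such that, for every $\theta\in\cK$, the induced feedback control $\psi^\theta\in\cV$ satisfies $|\psi^\theta(t,0)|\le C_\cK$ and is $C_\cK$-Lipschitz in $x$ uniformly in $t$. Consequently, each coordinate of $Z^{x_0,\theta,i}_t=(X^{x_0,\theta,i}_t,\psi^\theta(t,X^{x_0,\theta,i}_t))^\trans$ is a uniformly Lipschitz functional of $X^{x_0,\theta,i}$ on $(\sD([0,T];\sR^n),d_\infty)$. Lemma \ref{lemma:state_subexp} then delivers a uniform bound $\|\sup_{t\in[0,T]}|Z^{x_0,\theta,i}_t|\|_{\Psi_1}\le C$, with $C$ depending only on $\cK$, $x_0$ and the constants in (H.\ref{assum:lc_rl}). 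This is the basic uniform sub-exponential control from which everything else follows.

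For $U^{x_0,\theta,m}-\ol{U}^{x_0,\theta}$, each entry $(j,l)$ is the centered mean of $m$ i.i.d.~copies of $\int_0^T Z^{x_0,\theta}_j Z^{x_0,\theta}_l\,\d t$; Lemma \ref{lemma:deterministic_integral} combined with the uniform $\subW(1)$-bound of $(\int_0^T|Z_\bullet|^2\,\d t)^{1/2}$ then shows each summand is $\subW(1/2)$ with uniformly bounded Orlicz norm. For $V^{x_0,\theta,m}-\ol{V}^{x_0,\theta}$, substituting \eqref{eq:lc_sde_theta} decomposes each $(j,l)$-entry of the summand into three pieces: a drift integral $\int_0^T Z^i_j(A^\star X^i+B^\star\psi^\theta(t,X^i))_l\,\d t$, a Brownian stochastic integral $\int_0^T Z^i_j\sigma_l^\trans\,\d W_t$, and a compensated Poisson integral $\int_0^T\!\int_{\sR^p_0}Z^i_j\gamma_l(u)\,\tilde N(\d t,\d u)$. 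The drift term is $\subW(1/2)$ exactly as in the $U$-analysis; the Brownian term is $\subW(1/2)$ by the first bound of Lemma \ref{lemma:stochastic_integral_jump}; and the jump term is $\subW(1/(3+\vartheta))$ by the second bound of the same lemma, after noting that $\sup_{q\ge 2}\|(\int_0^T|Z^i_j|^q\,\d t)^{1/q}\|_{\Psi_1}\le T^{1/2}\|\sup_t|Z^i_j(t)|\|_{\Psi_1}<\infty$. The whole summand is therefore $\subW(1/(3+\vartheta))$, uniformly in $\theta\in\cK$ and in the indices $i,j,l$.

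To conclude, I would invoke the standard Bernstein-type deviation inequality for centered i.i.d.~sub-Weibull sums (see e.g.~\cite{kuchibhotla2018moving}): if $\xi_1,\dots,\xi_m$ are i.i.d.~with $\sE[\xi_1]=0$ and $\|\xi_1\|_{\Psi_\a}\le K$, then
\begin{equation*}
\sP\Big(\big|\tfrac{1}{m}\textstyle\sum_{i=1}^m\xi_i\big|\ge\eps\Big)\le C'\exp\Big(-c'\min\big\{\tfrac{m\eps^2}{K^2},\big(\tfrac{m\eps}{K}\big)^{\a}\big\}\Big),\qquad \eps\ge 0.
\end{equation*}
Applying this with $\a=1/(3+\vartheta)$ (since $\subW(1/2)\subset\subW(1/(3+\vartheta))$ for $\vartheta\in[0,1]$) to every entry, and taking a union bound over the finitely many entries of the two matrices, yields \eqref{eq:concentration_beta} with $\beta=3+\vartheta$. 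The main technical obstacle I foresee is ensuring that all Orlicz norms bounding the summands are controlled \emph{uniformly in} $\theta\in\cK$; this is exactly what the Lipschitz estimate in Theorem \ref{thm:lc_fb} combined with the uniform sub-exponential transportation inequality of Lemma \ref{lemma:state_subexp} provides.
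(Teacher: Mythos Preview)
Your proposal is correct and follows essentially the same route as the paper: you decompose the matrix entries into drift, Brownian and jump integrals, use Theorem~\ref{thm:lc_fb} for uniform-in-$\theta$ Lipschitz control of $\psi^\theta$, invoke Lemma~\ref{lemma:state_subexp} for the $\subW(1)$ bound on the state, apply Lemmas~\ref{lemma:deterministic_integral} and~\ref{lemma:stochastic_integral_jump} to obtain a uniform $\subW(1/(3+\vartheta))$ bound on each summand, and finish with the sub-Weibull Bernstein inequality (which is exactly Lemma~\ref{lemma:concentration_alpha_subWeibull}). The only cosmetic difference is that you control $\sup_{q\ge 2}\|(\int_0^T|Z_j|^q\,\d t)^{1/q}\|_{\Psi_1}$ by passing through $\|\sup_t|Z_j(t)|\|_{\Psi_1}$, whereas the paper applies Lemma~\ref{lemma:state_subexp} directly to the Lipschitz functionals $\rho\mapsto(\int_0^T|\rho_{\ell,t}|^q\,\d t)^{1/q}$ with Lipschitz constant uniform in $q$; both are valid (your constant should be $\max(1,T^{1/2})$ rather than $T^{1/2}$, but this is immaterial).
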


\begin{proof}

Throughout this proof, 
let  $\theta$ be a given constant satisfying 
$|\theta|\le K$ for some  $K\ge 0$.
For notational simplicity,
we 
shall 
omit the dependence 
on $(x_0,\theta)$ in the subscripts
of all random variables,
and 
denote by $C_2$ a generic constant, 
which
 is independent of $m$ and the precise value of $\theta$,
and depends possibly on $K$, $x_0$,
the constants in 
%(H.\ref{assum:lc_rl}\ref{item:fg}\ref{item:jump})
(H.\ref{assum:lc_rl})
and the  dimensions.

Note that 
for each $i=1,\ldots, m$, 
the entries of $\int_0^T
Z^{i}_t (Z^{i}_t)^\trans
\,\d t$
are one of the three cases:
\begin{equation}
\l{eq:ZZ_term}
\int_0^T
X^{i}_{\ell,t}
X^{i}_{j,t}\, \d t,
\q
\int_0^T
X^{i}_{\ell,t}
\psi^\theta(t,X^{i}_t)_j\, \d t,
\q
\int_0^T
\psi^\theta(t,X^i_t)_{\ell}
\psi^\theta(t,X^i_t)_{j}\, \d t
\end{equation}
where $X^i_{\ell,t}$   and 
$ \psi^\theta(t,X^i_t)_{\ell}$
are 
 the $\ell$th-entry 
of 
$X^i_{\ell,t}$
and
$ \psi^\theta(t,X^i_t)_{\ell}$, respectively.
Similarly, the entries of 
$\int_0^T
Z^{i}_t (\d X^{i}_t)^\trans
$  are
one of the two cases:
\begin{align}
%\begin{split}
&\int_0^T
X^{i}_{\ell,t} (A^\star X^i_t)_j\,\d t+
\int_0^TX^{i}_{\ell,t} (B^\star{\psi}^\theta(t,X^i_t))_j\,\d t
+\int_0^TX^{i}_{\ell,t} \sigma_j\, \d W^i_t
+\int_0^T 
\int_{\sR^p_0}X^{i}_{\ell,t} \gamma(u)_j\, \tilde{N}^i(\d t,\d u),
\nb
\\
&
\int_0^T
\psi^\theta(t,X^i_t)_{\ell}(A^\star X^i_t)_j\,\d t+
\int_0^T\psi^\theta(t,X^i_t)_{\ell} (B^\star{\psi}^\theta(t,X^i_t))_j\,\d t
+\int_0^T\psi^\theta(t,X^i_t)_{\ell} \sigma_j\, \d W^i_t
\l{eq:ZdX_term}
\\
&\q \q 
+\int_0^T\int_{\sR^p_0}\psi^\theta(t,X^i_t)_{\ell}\gamma(u)_j\, \tilde{N}^i(\d t,\d u),
\nb
\end{align}
where $\sigma_j$ is the $j$-th row of $\sigma\in \sR^{n\t d}$,
$\gamma_j$ is the $j$-th entry of the function 
$\gamma:\sR^p_0\to \sR^n$, 
$(W^i)_{i=1}^m$ are $m$-independent $d$-dimensional Brownian motion,
and $(\tilde{N}^i)_{i=1}^m$ are 
$m$-independent 
compensated Poisson random measures.
By 
the definitions of
 $U^{x_0,\theta,m}, V^{x_0,\theta,m}$ in \eqref{eq:UV},
and
 the inequality that 
$\sP(|\sum_{i=1}^\ell X_i|\ge \eps)
\le \sum_{i=1}^\ell
\sP(|X_i|\ge \eps/\ell)$
for all
$\ell\in \sN$ and 
 random variables $(X_i)_{i=1}^\ell$, 
 it suffices to obtain a concentration inequality for each term 
 in 
 \eqref{eq:ZZ_term} and  \eqref{eq:ZdX_term}.

%By virtue of  the condition that
Since $|\theta|\le K$, 
%we can obtain  from 
by
Theorem  \ref{thm:lc_fb},
there exists  $C_2\ge 0$
such that 
$|\psi^\theta(t,0)|\le C_2$ and $|\psi^\theta(t,x)-\psi^\theta(t,x')|\le C_2|x-x'|$
  for all  $t\in [0,T]$, $x,x'\in \sR^n$.
  Then standard moment estimates of 
\eqref{eq:lc_sde_theta} 
(with the initial condition $x_0$) shows that 
$\|X^i\|_{\cS^2(\sR^n)}\le C_2$ for all $i=1,\ldots, m$,
with a constant $C_2$ depending on $x_0$.  
Then,
for each 
$q\ge 2$,
$\ell=1,\ldots, n$
and $j=1,\ldots k$, 
we consider the functions 
${\mathfrak{f}}^{(q)}_{\ell}, 
\ol{\mathfrak{f}}^{(q)}_j:
(\sD([0,T];\sR^n),d_\infty)\to \sR$
satisfying for all $\rho\in 
\sD([0,T];\sR^n)$ that 
$\mathfrak{f}^{(q)}_\ell(\rho)=\big(\int_0^T|\rho_{\ell,t}|^q\d t 
\big)^{\frac{1}{q}}
$
and 
$\ol{\mathfrak{f}}^{(q)}_j(\rho)=\big(\int_0^T|\psi^\theta(t,\rho_{t})_j|^q\d t 
\big)^{\frac{1}{q}}
$,
where $\rho_{\ell,t}$ is the $\ell$th component of $\rho_t$
and $\psi^\theta(t,\rho_{t})_j$ is the $j$th component of $\psi^\theta(t,\rho_{t})$.
One can easily show that 
$\mathfrak{f}^{(q)}_\ell(0)=0$
and 
$
|\ol{\mathfrak{f}}^{(q)}_j(0)|,
\|\mathfrak{f}^{(q)}_\ell\|_{\textnormal{Lip}}, 
\|\ol{\mathfrak{f}}^{(q)}_j\|_{\textnormal{Lip}} \le C$,
which along with 
Lemma \ref{lemma:state_subexp}
implies that 
$
\|\big(\int_0^T|X^i_{\ell,t}|^q\,\d t 
\big)^{\frac{1}{q}}
\|_{\Psi_1}\le C
$
and 
$
\|
\big(\int_0^T
|\psi^\theta(t,X^i_t)_{j}|^q\,\d t
\big)^{\frac{1}{q}}
\|_{\Psi_1}\le C
$,
uniformly with respect to $i,\ell,j,q, \theta$.
Hence, we can obtain
from Lemmas \ref{lemma:deterministic_integral} and \ref{lemma:stochastic_integral_jump}
 a uniform bound for the 
$\|\cdot\|_{\Psi_{1/(3+\vartheta)}}$-norms
of all the terms in   \eqref{eq:ZZ_term} and  \eqref{eq:ZdX_term}.

Consequently, 
we can deduce the desired concentration inequality
by 
applying Lemma \ref{lemma:concentration_alpha_subWeibull}
(with 
$\a=1/(3+\vartheta)$, 
$N=m$ and $\eps'=m\eps$)
to each component of 
the zero-mean random varables
$\big(\int_0^T
Z^{i}_t (Z^{i}_t)^\trans
\,\d t-\ol{U}\big)_{i=1}^m$
and 
$
\big(
\int_0^T
Z^{i}_t (\d X^{i}_t)^\trans
-\ol{V}\big)_{i=1}^m
$.
\end{proof}

The following proposition improves the concentration inequality
in Proposition \ref{prop:concentration_jump}
for the case without jumps.

\begin{Proposition}\l{prop:concentration_diffusion}
Suppose 
(H.\ref{assum:lc_rl})
%(H.\ref{assum:lc_rl}\ref{item:fg}\ref{item:jump})
holds
with
$\gamma_{\max}=0$
and 
let 
%$x_0\in \sR^n$
%and 
$\cK\subset \sR^{n\t (n+k)}$
be a bounded set.
Then 
there exist
 constants  $C_1,C_2\ge 0$
such that 
\eqref{eq:concentration_beta}
holds
for all $\eps\ge 0$, $m\in \sN$ and $\theta\in  \cK$
 with $\beta=1$.

\end{Proposition}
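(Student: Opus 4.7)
The plan is to adapt the argument of Proposition \ref{prop:concentration_jump} to exploit the sub-Gaussianity (rather than mere sub-exponentiality) of Lipschitz path-functionals of $X^{x_0,\theta}$ in the pure-diffusion regime, thereby sharpening the tail exponent $\beta=3+\vartheta$ to $\beta=1$.

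The first step is to upgrade Lemma \ref{lemma:state_subexp} when $\gamma_{\max}=0$ to the statement
\[
\|\mathfrak{f}(X^{x_0,\theta})\|_{\Psi_2}\le C\bigl(\|\mathfrak{f}\|_{\textnormal{Lip}}+|\sE[\mathfrak{f}(X^{x_0,\theta})]|\bigr),
\]
for every Lipschitz $\mathfrak{f}:(\sD([0,T];\sR^n),d_\infty)\to\sR$, uniformly in $\theta\in\cK$. Under $\gamma_{\max}=0$ the dynamics \eqref{eq:lc_sde_theta} is a Brownian SDE whose drift $x\mapsto A^\star x+B^\star \psi^\theta(t,x)$ is Lipschitz uniformly in $\theta\in\cK$ by Theorem \ref{thm:lc_fb}; Gr\"onwall's inequality applied to two solutions driven by two different Brownian paths then shows that $W\mapsto X^{x_0,\theta}(W)$ is Lipschitz in the uniform metric with a constant depending only on $T$ and $\cK$. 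Combined with Gaussian concentration on Wiener space (the $T_2$ transportation inequality for the Brownian law, i.e.\ the pure-diffusion specialization of the convex concentration of \cite{ma2010transportation}), this delivers the stated $\Psi_2$ bound.

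Applying this enhancement to $\mathfrak{f}^{(q)}_\ell(\rho)=(\int_0^T|\rho_{\ell,t}|^q\,\d t)^{1/q}$ and $\ol{\mathfrak{f}}^{(q)}_j(\rho)=(\int_0^T|\psi^\theta(t,\rho_t)_j|^q\,\d t)^{1/q}$, whose Lipschitz norms on path space and expectations are bounded independently of $q\ge 2$ and $\theta\in\cK$ (thanks to Theorem \ref{thm:lc_fb} and standard moment estimates for $X^{x_0,\theta}$), gives
\[
\sup_{q\ge 2}\Bigl\|\Bigl(\int_0^T|X^i_{\ell,t}|^q\,\d t\Bigr)^{1/q}\Bigr\|_{\Psi_2}+\sup_{q\ge 2}\Bigl\|\Bigl(\int_0^T|\psi^\theta(t,X^i_t)_j|^q\,\d t\Bigr)^{1/q}\Bigr\|_{\Psi_2}\le C,
\]
uniformly in $i,\theta$. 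I would then decompose the entries of $U^{x_0,\theta,m}-\ol{U}^{x_0,\theta}$ and $V^{x_0,\theta,m}-\ol{V}^{x_0,\theta}$ exactly as in \eqref{eq:ZZ_term} and \eqref{eq:ZdX_term} (with the Poisson terms absent). The deterministic quadratic integrals in \eqref{eq:ZZ_term} lie in $\Psi_1$ by Lemma \ref{lemma:deterministic_integral} with $\a=2$, and the Brownian stochastic integrals in \eqref{eq:ZdX_term} lie in $\Psi_1$ by Lemma \ref{lemma:stochatic_integral_diffusion} together with the previous display, all with $\Psi_1$ norms uniform in $\theta\in\cK$. Invoking Lemma \ref{lemma:concentration_alpha_subWeibull} with $\a=1$ on the centered i.i.d.\ sum over $i=1,\ldots,m$ (after the rescaling $\eps'=m\eps$ used in the proof of Proposition \ref{prop:concentration_jump}) yields the Bernstein-type tail
\[
C_2\exp\Bigl(-C_1\min\bigl\{m\eps^2/C_2^2,\;m\eps/C_2\bigr\}\Bigr),
\]
for each matrix entry; a union bound over the finitely many entries absorbs a constant factor into $C_2$ and produces \eqref{eq:concentration_beta} with $\beta=1$.

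The main obstacle is the very first step, namely upgrading Lemma \ref{lemma:state_subexp} from $\Psi_1$ to $\Psi_2$ when $\gamma_{\max}=0$. The $\Psi_1$ bound of \cite{ma2010transportation} holds in full generality via convex concentration, but the $\Psi_2$ strengthening requires the genuinely Gaussian concentration available only in the no-jump regime; it is precisely this gain that lets $\beta$ collapse from $3+\vartheta$ to $1$, after which the remainder of the argument is a direct specialization of what was already carried out for Proposition \ref{prop:concentration_jump}.
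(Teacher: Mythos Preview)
Your proposal is correct and follows essentially the same approach as the paper: upgrade Lemma \ref{lemma:state_subexp} from $\Psi_1$ to $\Psi_2$ via the Gaussian $T_2$ transportation inequality for the diffusion (the paper cites \cite{djellout2004transportation} for this step), then combine Lemma \ref{lemma:deterministic_integral} with $\a=2$ and Lemma \ref{lemma:stochatic_integral_diffusion} to place all entries of \eqref{eq:ZZ_term} and \eqref{eq:ZdX_term} (with the Poisson terms dropped) in $\Psi_1$, and conclude with Lemma \ref{lemma:concentration_alpha_subWeibull} at $\a=1$. The only superfluous step is your uniform-in-$q$ bound on $\mathfrak{f}^{(q)}_\ell$ and $\ol{\mathfrak{f}}^{(q)}_j$; in the pure-diffusion case only $q=2$ is needed, since Lemma \ref{lemma:stochatic_integral_diffusion} (unlike Lemma \ref{lemma:stochastic_integral_jump}) involves no higher moments.
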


\begin{proof}
We first refine the result of  Lemma \ref{lemma:state_subexp}
and prove 
Lipschitz functionals
of 
 the state process $X^{x_0,\theta}$ is sub-Gaussian. 
%Since (H.\ref{assum:lc_rl}\ref{item:jump}) holds
% with $\gamma_{\max}=0$,
% we have $\gamma(u)=0$, $\nu$-a.e.~on  $ \sR^p_0$. 
By \cite[Theorem 1.1 and Corollary 4.1]{djellout2004transportation}, 
there exists  $C\ge 0$
such that 
for all $x_0\in \sR^n$ and 
  for 
 every 
  Lipschitz continuous function
$\mathfrak{f}:(\sD([0,T];\sR^n),d_\infty)\to \sR$
with $\|\mathfrak{f}\|_{\textnormal{Lip}}\le 1$,
$\sE\big[\exp\big(\lambda(\mathfrak{f}(X^{x_0,\theta})-\sE[\mathfrak{f}(X^{x_0,\theta})])\big)\big]
\le 
\exp\big(C^2\lambda^2\big)
$
 for all $\lambda>0$,
which along with
 \cite[Proposition 2.5.2 (v)]{vershynin2018high}
 implies that 
 $\|\mathfrak{f}(X^{x_0,\theta})-\sE[\mathfrak{f}(X^{x_0,\theta})]\|_{\Psi_2}\le C$
for some constant $C$,
uniformly with respect to $x_0\in \sR^n$, $\theta\in \cK$ and 
$\mathfrak{f}:(\sD([0,T];\sR^n),d_\infty)\to \sR$
satisfying $\|\mathfrak{f}\|_{\textnormal{Lip}}\le 1$.
Then, we can deduce from the fact that 
$\|\cdot\|_{\Psi_2}$ is a norm 
that 
$\|\mathfrak{f}(X^{x_0,\theta})\|_{\Psi_2}\le C(\|\mathfrak{f}\|_{\textnormal{Lip}}+|\sE[\mathfrak{f}(X^{x_0,\theta})]|)$
for all  $x_0\in \sR^n$, $\theta\in \cK$ and 
Lipschitz continuous functions 
$\mathfrak{f}$.

We then proceed along
the proof of
Proposition \ref{prop:concentration_jump}.
For each
 $i=1,\ldots, m$, 
all  entries of $\int_0^T
Z^{i}_t (Z^{i}_t)^\trans
\,\d t$
are given  in  \eqref{eq:ZZ_term},
and 
all  entries of 
$\int_0^T
Z^{i}_t (\d X^{i}_t)^\trans
$  are
given by
(cf.~\eqref{eq:ZdX_term}):
\begin{align}\l{eq:ZdX_term_diffusion}
\begin{split}
&\int_0^T
X^{i}_{\ell,t} (A^\star X^i_t)_j\,\d t+
\int_0^TX^{i}_{\ell,t} (B^\star{\psi}^\theta(t,X^i_t))_j\,\d t
+\int_0^TX^{i}_{\ell,t} \sigma_j\, \d W^i_t,
\\
&
\int_0^T
\psi^\theta(t,X^i_t)_{\ell}(A^\star X^i_t)_j\,\d t+
\int_0^T\psi^\theta(t,X^i_t)_{\ell} (B^\star{\psi}^\theta(t,X^i_t))_j\,\d t
+\int_0^T\psi^\theta(t,X^i_t)_{\ell} \sigma_j\, \d W^i_t,
\end{split}
\end{align}
for all 
$\ell=1,\ldots, n$
and
$j=1,\ldots, k$,
where we have 
omitted the dependence 
on $(x_0,\theta)$ in the subscripts
for notational simplicity. 
Hence,
by following the same argument as in Proposition \ref{prop:concentration_jump},
we can show there exists a constant $C$, such that 
for all 
$i=1,\ldots, m$, 
$\ell=1,\ldots, n$, 
$j=1,\ldots, k$
and $\theta\in \cK$,
we have 
 $
\|\big(\int_0^T|X^i_{\ell,t}|^2\,\d t 
\big)^{\frac{1}{2}}
\|_{\Psi_2}\le C
$
and 
$
\|
\big(\int_0^T
|\psi^\theta(t,X^i_t)_{j}|^2\,\d t
\big)^{\frac{1}{2}}
\|_{\Psi_2}\le C
$.
Then, we can obtain
from Lemmas \ref{lemma:deterministic_integral} and \ref{lemma:stochatic_integral_diffusion}
 a uniform bound for the 
$\|\cdot\|_{\Psi_{1}}$-norms
of all entires of 
$\int_0^T
Z^{i}_t (Z^{i}_t)^\trans
\,\d t$
and $\int_0^T
Z^{i}_t (\d X^{i}_t)^\trans
$.
Consequently, 
we can 
apply Lemma \ref{lemma:concentration_alpha_subWeibull}
(with $\a=1$, $N=m$ and $\eps'=m\eps$)
to each entry of 
$\big(\int_0^T
Z^{i}_t (Z^{i}_t)^\trans
\,\d t-\ol{U}\big)_{i=1}^m$
and 
$
\big(
\int_0^T
Z^{i}_t (\d X^{i}_t)^\trans
-\ol{V}\big)_{i=1}^m
$,
and deduce the desired concentration inequality.
\end{proof}

\subsubsection{Step 3: Proof of general regret bounds}
\l{sec:proof_regret}

After demonstrating how to verify 
\eqref{eq:concentration_beta}
based on the precise jump sizes in the state dynamics,
it remains to  establish
the general regret result in 
Theorem \ref{thm:regret}
under the assumption that 
\eqref{eq:concentration_beta}
holds for 
some $\b\ge 1$.

We start by showing that under (H.\ref{assum:lc_rl}\ref{item:fg}) and (H.\ref{assum:non_degenerate}),
the expression 
\eqref{eq:theta_star_exp} is well-defined 
if  $\theta$ is a sufficiently accurate estimation of the true parameter $\theta^\star$.

\begin{Lemma}\label{Ubar_nondeg} 
Suppose (H.\ref{assum:lc_rl}\ref{item:fg}) and (H.\ref{assum:non_degenerate}) hold.
Then there exist constants  $\eps_0>0$
and $\tau_0>0$, such that 
for all 
$\theta\in \mathcal{K}_0\coloneqq\{\theta\in \sR^{n\t (n+k)}\mid |\theta-\theta^\star|\leq \eps_0\}$, 
we have 
$\lambda_{\min}(
\ol{U}^{x_0,\theta}
)\ge \tau_0$,
where 
$\ol{U}^{x_0,\theta}$ 
is defined as in
\eqref{eq:UV}
and $\lambda_{\min}(A)$ is the smallest eigenvalue 
of a symmetric matrix $A$.

 \end{Lemma}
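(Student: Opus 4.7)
The plan is to combine the identifiability hypothesis (H.\ref{assum:non_degenerate}) at $\theta=\theta^\star$ with a local Lipschitz continuity of the map $\theta\mapsto \ol{U}^{x_0,\theta}$, the latter being provided by the stability results of Section \ref{sec:robust_fb}. First, at $\theta=\theta^\star$ the greedy dynamics \eqref{eq:lc_sde_theta} coincides with the optimally controlled system, so that by Theorem \ref{thm:lc_fb}, $\psi^{\theta^\star}(t,X^{x_0,\theta^\star}_t)=\a^{x_0,\star}_t$ and $X^{x_0,\theta^\star}_t=X^{x_0,\theta^\star,\a^\star}_t$ for $\d\sP\otimes\d t$ a.e. Consequently, for every $u=(u_1^\trans,u_2^\trans)^\trans\in\sR^{n+k}$,
\[
u^\trans \ol{U}^{x_0,\theta^\star}u
=\sE\!\left[\int_0^T \big(u_1^\trans X^{x_0,\theta^\star,\a^\star}_t + u_2^\trans \a^{x_0,\star}_t\big)^2\, \d t \right]\ge 0,
\]
and (H.\ref{assum:non_degenerate}) forces this to be strictly positive whenever $u\ne 0$. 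Since $u\mapsto u^\trans \ol{U}^{x_0,\theta^\star}u$ is continuous on the compact unit sphere of $\sR^{n+k}$, one obtains a uniform bound $\lambda_{\min}(\ol{U}^{x_0,\theta^\star})\ge 2\tau_0'$ for some $\tau_0'>0$.

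Second, I will show that $\theta\mapsto \ol{U}^{x_0,\theta}$ is locally Lipschitz around $\theta^\star$. Viewing \eqref{eq:lc_sde_theta} with parameter $\theta=(A,B)$ as a perturbation of the dynamics with parameter $\theta^\star=(A^\star,B^\star)$ (with unchanged $\sigma$ and $\gamma$), the perturbation modulus in \eqref{eq:E_per} reduces to $\cE_{\textrm{per}}\le C|\theta-\theta^\star|$. Theorem \ref{thm:feedback_stable} then yields $|\psi^{\theta}(t,x)-\psi^{\theta^\star}(t,x)|\le C(1+|x|)|\theta-\theta^\star|$, and Theorem \ref{thm:dynamics_stable} gives $\|X^{x_0,\theta}-X^{x_0,\theta^\star}\|_{\cS^2}\le C(1+|x_0|)|\theta-\theta^\star|$, uniformly in a bounded neighbourhood of $\theta^\star$. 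Combined with the uniform Lipschitz and linear growth bounds on $\psi^\theta$ from Theorem \ref{thm:lc_fb} (valid on a bounded set of $\theta$), a triangle inequality decomposition
\[
|\psi^\theta(\cdot,X^{x_0,\theta})-\psi^{\theta^\star}(\cdot,X^{x_0,\theta^\star})|
\le C|X^{x_0,\theta}-X^{x_0,\theta^\star}|+C(1+|X^{x_0,\theta^\star}|)|\theta-\theta^\star|
\]
delivers $\|Z^{x_0,\theta}-Z^{x_0,\theta^\star}\|_{\cH^2}\le C(1+|x_0|)|\theta-\theta^\star|$, together with $\|Z^{x_0,\theta}\|_{\cH^2}+\|Z^{x_0,\theta^\star}\|_{\cH^2}\le C(1+|x_0|)$.

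Applying the identity $aa^\trans-bb^\trans=a(a-b)^\trans+(a-b)b^\trans$ with $a=Z^{x_0,\theta}_t$ and $b=Z^{x_0,\theta^\star}_t$, and Cauchy--Schwarz, then give
\[
|\ol{U}^{x_0,\theta}-\ol{U}^{x_0,\theta^\star}|\le C(1+|x_0|)^2|\theta-\theta^\star|.
\]
By Weyl's inequality for symmetric matrices, $|\lambda_{\min}(\ol{U}^{x_0,\theta})-\lambda_{\min}(\ol{U}^{x_0,\theta^\star})|$ is bounded by the same quantity, so picking $\eps_0\le \tau_0'/(C(1+|x_0|)^2)$ yields $\lambda_{\min}(\ol{U}^{x_0,\theta})\ge \tau_0'=:\tau_0$ on $\cK_0$. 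The main step is the local Lipschitz continuity of $\ol{U}^{x_0,\theta}$ in $\theta$, which is where one must carefully merge the feedback stability of Theorem \ref{thm:feedback_stable} with the state-process stability of Theorem \ref{thm:dynamics_stable}; the rest is a standard compactness and perturbation argument.
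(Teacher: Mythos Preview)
Your proof is correct and follows essentially the same approach as the paper: establish $\lambda_{\min}(\ol{U}^{x_0,\theta^\star})>0$ from (H.\ref{assum:non_degenerate}), then use the stability results of Theorems \ref{thm:lc_fb}, \ref{thm:feedback_stable}, and \ref{thm:dynamics_stable} to control $\theta\mapsto \ol{U}^{x_0,\theta}$ near $\theta^\star$ and conclude via continuity of the minimum eigenvalue. The only difference is that you track constants and prove a quantitative local Lipschitz bound on $\ol{U}^{x_0,\theta}$ (then invoke Weyl's inequality), whereas the paper is content with mere continuity of $\theta\mapsto \ol{U}^{x_0,\theta}$ and of $\lambda_{\min}$; both arguments rest on exactly the same ingredients.
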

\begin{proof}
Since $\ol{U}^{x_0,\theta^\star}$ is positive semidefinite,
we shall prove  $\lambda_{\min}(
\ol{U}^{x_0,\theta^\star})>0$
by assuming that 
 $\lambda_{\min}(\ol{U}^{x_0,\theta^\star})=0$.
Then we see 
  there exists a non-zero vector $u=\begin{pmatrix}
  u_1\\ u_2
  \end{pmatrix}\in\mathbb{R}^{n+k}$
 with $u_1\in\mathbb{R}^n$ and $u_2\in\mathbb{R}^k$,
 such that 
 $u^\trans \ol{U}^{x_0,\theta^\star}u=0$.
By  the definition of 
$\ol{U}^{x_0,\theta^\star}$ in \eqref{eq:UV},
we can deduce that 
$\sE[\int_0^T|u^\trans Z^{x_0,\theta^\star}_t |^2\,\d t]=0$,
which along with 
the definition of  $Z^{x_0,\theta^\star}_t$ in \eqref{eq:theta_star_exp}
implies for
 $\d \sP \otimes \d t$ a.e.~that 
 $u_1^\trans X^{x_0,\theta^\star,\a^{\star}}_t +u_2^\trans  \a^{x_0,\star}_t=0$.  
This contradicts to (H.\ref{assum:non_degenerate}),
%(H.\ref{assum:lc_rl}\ref{item:non_degenerate}),
which leads to the desired inequality that $\lambda_{\min}(
\ol{U}^{x_0,\theta^\star})>0$.

 We then show that 
 the map 
%  $ \sR^{n\t (n+k)}\ni \theta\mapsto  Z^{x_0,\theta} (Z^{x_0,\theta})^\trans\in \cH^2(\sR^{(n+k)\t (n+k)})$ 
$ \sR^{n\t (n+k)}\ni \theta\mapsto  \ol{U}^{x_0,\theta}\in \sR$ 
 is continuous. 
 Theorem 
 \ref{thm:dynamics_stable} shows that 
  the map $ \sR^{n\t (n+k)}\ni \theta\mapsto  X^{x_0,\theta} \in \cH^2(\sR^{n})$ is continuous. 
Moreover, Theorems \ref{thm:lc_fb} 
and  \ref{thm:feedback_stable}
imply  that 
 there exists a constant $C\ge 0$, such that for all $\theta\in \sR^{n\t (n+k)}$ satisfying $|\theta-\theta^\star|\le 1$,
  $t\in [0,T]$ and $x,x'\in \sR^n$,
   we have  that 
   $|\psi^\theta(t,0)|\le C$,
$|\psi^\theta(t,x)-\psi^\theta(t,x')|\le C|x-x'|$
and 
   ${|\psi^\theta(t,x)-\psi^{\theta^\star}(t,x)|}\le C(1+|x|)|\theta-\theta^\star|$,
from which we can deduce that 
\begin{align*}
|\psi^\theta(t,x)-\psi^{\theta^\star}(t,x')|
&\le 
|\psi^\theta(t,x)-\psi^{\theta^\star}(t,x)|+|\psi^{\theta^\star}(t,x)-\psi^{\theta^\star}(t,x')|
\\
&\le 
 C(1+|x|)|\theta-\theta^\star|+ C|x-x'|.
\end{align*}
 Hence, %we have
 for all $\theta\in \sR^{n\t (n+k)}$ with $|\theta-\theta^\star|\le 1$,
%  that 
  \begin{align*}
\|\psi^\theta(\cdot,X^{x_0,\theta})-\psi^{\theta^\star}(\cdot,X^{x_0,\theta^\star})\|_{\cH^2}
&\le 
 C(1+\|X^{x_0,\theta}\|_{\cH^2})|\theta-\theta^\star|+ C\|X^{x_0,\theta}-X^{x_0,\theta^\star}\|_{\cH^2},
\end{align*}
  which along with the continuity of the map
$\sR^{n\t (n+k)}\ni\theta\mapsto  X^{x_0,\theta}\in \cH^2(\sR^n) $
implies that 
   the map
$\sR^{n\t (n+k)}\ni\theta\mapsto  \psi^\theta(\cdot,X^{x_0,\theta})\in \cH^2(\sR^k) $
is continuous.
% Since the entires of 
% $Z^{x_0,\theta} (Z^{x_0,\theta})^\trans$ involve only the products of
% $X^{x_0,\theta}$ and 
% $\psi^\theta(\cdot,X^{x_0,\theta})$, we can conclude
Since the entires of 
$\ol{U}^{x_0,\theta}$ involve only the expectations of products of
$X^{x_0,\theta}$ and 
$\psi^\theta(\cdot,X^{x_0,\theta})$,
 the desired continuity of 
% $ \sR^{n\t (n+k)}\ni \theta\mapsto  Z^{x_0,\theta} (Z^{x_0,\theta})^\trans\in \cH^2(\sR^{(n+k)\t (n+k)})$
the map
$ \sR^{n\t (n+k)}\ni \theta\mapsto  \ol{U}^{x_0,\theta}\in \sR$ 
follows.

Finally, by  the continuity of the minimum eigenvalue function, 
%we can  conclude that 
clearly
$\sR^{n\t (n+k)}\ni \theta\mapsto \lambda_{\min}(
\ol{U}^{x_0,\theta})\in \sR$
is continuous, which along with 
the fact that 
 $\lambda_{\min}(
\ol{U}^{x_0,\theta^\star})>0$
leads to the desired result.
 \end{proof}

 We then  quantify the estimation error of the least-squares estimator
\eqref{eq:theta_sum}
by assuming the concentration inequality \eqref{eq:concentration_beta}
holds for the compact set  $\cK_0$  in 
Lemma \ref{Ubar_nondeg}.

\begin{Proposition}\label{theta_conc}

Suppose (H.\ref{assum:lc_rl}\ref{item:fg})
and (H.\ref{assum:non_degenerate})
 hold.
Let $\cK_0$  be the  set in 
Lemma \ref{Ubar_nondeg}.
Assume further that
%for any given bounded set $\cK\subset \sR^{n\t (n+k)}$,
 there exist
 constants  $C_1,C_2>0$ and $\b\ge 1$
such that \eqref{eq:concentration_beta}
holds 
 for all $\eps\ge 0$, $m\in \sN$ and $\theta\in\mathcal{K}_0$.
% that there exist
% constants  $C_1,C_2>0$ and $\b\ge 1$,
%such that the following concentration inequality
%holds 
% for all $\eps\ge 0$, $m\in \sN$ and $\theta\in\mathcal{K}_0$,
%\begin{align}
%\l{eq:concentration_beta}
%\begin{split}
%&\max\Big\{
%\sP\big(\big|U^{x_0,\theta,m}-\ol{U}^{x_0,\theta}\big|\ge \eps \big),
%\sP\big(\big|V^{x_0,\theta,m}-\ol{V}^{x_0,\theta}\big|\ge \eps \big)
%\Big\}
%\\
%&\q \le 
%C_2\exp\Big(-C_1\min
%\Big\{
%\frac{m\eps^2}{C_2^2},
%\Big(
%\frac{m\eps}{C_2}
%\Big)^{\frac{1}{\beta}}
%\Big\}\Big),
%\end{split}
%\end{align}
%where   $\mathcal{K}_0$ is the set in 
%Lemma \ref{Ubar_nondeg}
%and
% $U^{x_0,\theta,m}, V^{x_0,\theta,m}$ are defined in \eqref{eq:UV}.
%
Then there exist constants  $\bar{C}_1,\bar{C}_2\ge 0$,
%independent of  $m$ and $\theta$, 
such that 
for all
$\theta\in \cK_0$ and
 $\delta\in (0,1/2)$, if 
$m\geq \bar{C}_1(-\ln\delta )^\beta$, 
 then we have with probability at least $1-2\delta$ that 
\bb\l{eq:parameter_estimation}
|\hat{\theta}-\theta^\star|\leq \bar{C}_2\bigg(\sqrt{\frac{-\ln\delta}{m}}+\frac{(-\ln\delta)^{\beta}}{m}+\frac{(-\ln\delta)^{2\beta}}{m^2}\bigg),
\ee
where 
$\hat{\theta}$ denotes the transpose of the left-hand side of \eqref{eq:theta_sum}
associated with $\theta$.
% (computed from $\theta$). 
\end{Proposition}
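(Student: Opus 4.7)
The plan is to exploit the normal-equations interpretation of the least-squares estimator and then perturbation-analyse the inverse. First, applying $\sE[\int_0^T Z^{x_0,\theta}_t \cdot\,]$ to both sides of \eqref{eq:lc_sde_theta} and using that $\int_0^t X^{x_0,\theta}_s\sigma\,\d W_s$ and $\int_0^t\int_{\sR^p_0} X^{x_0,\theta}_s \gamma(u)\,\tilde N(\d s,\d u)$ are zero-mean martingales, together with $X^{x_0,\theta}_s(\theta^\star)^\trans Z^{x_0,\theta}_s = (A^\star X_s+B^\star\psi^\theta(s,X_s))$, yields the population identity $\ol V^{x_0,\theta}=\ol U^{x_0,\theta}(\theta^\star)^\trans$. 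Combined with Lemma \ref{Ubar_nondeg} (so $\lambda_{\min}(\ol U^{x_0,\theta})\ge \tau_0$ uniformly on $\cK_0$), this gives $(\theta^\star)^\trans=(\ol U^{x_0,\theta})^{-1}\ol V^{x_0,\theta}$.

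Next I would set $E_U:=U^{x_0,\theta,m}-\ol U^{x_0,\theta}$, $E_V:=V^{x_0,\theta,m}-\ol V^{x_0,\theta}$, $\widehat U:=U^{x_0,\theta,m}+\tfrac1m\sI_{n+k}$ and $\Delta:=\widehat U-\ol U^{x_0,\theta}=E_U+\tfrac1m\sI_{n+k}$, so that $\hat\theta^\trans-(\theta^\star)^\trans=\widehat U^{-1}\bigl(E_V-\Delta(\theta^\star)^\trans\bigr)$. Applying the concentration inequality \eqref{eq:concentration_beta} with $\eps_\delta:=C\bigl(\sqrt{-\ln\delta/m}+(-\ln\delta)^\beta/m\bigr)$ (the value of $\eps$ at which both branches of the $\min$ in \eqref{eq:concentration_beta} attain $-\ln(\delta/C_2)/C_1$) and a union bound produces an event $\cA_\delta$ with $\sP(\cA_\delta)\ge 1-2\delta$ on which $|E_U|\vee|E_V|\le \eps_\delta$. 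The hypothesis $m\ge \bar C_1(-\ln\delta)^\beta$, for $\bar C_1$ sufficiently large, forces $\eps_\delta+1/m\le \tau_0/2$, hence $\|\Delta\|\le \tau_0/2$ on $\cA_\delta$; a Neumann argument then shows $\widehat U$ is invertible with $\|\widehat U^{-1}\|\le 2/\tau_0$ and
\[
\widehat U^{-1}=\ol U^{-1}-\ol U^{-1}\Delta\,\widehat U^{-1}.
\]

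Substituting this expansion back gives, on $\cA_\delta$, a first-order piece $\ol U^{-1}(E_V-\Delta(\theta^\star)^\trans)$ bounded by $C(\eps_\delta+1/m)$ and a second-order remainder $-\ol U^{-1}\Delta\,\widehat U^{-1}(E_V-\Delta(\theta^\star)^\trans)$ bounded by $C(\eps_\delta+1/m)^2$. Expanding the square and absorbing mixed terms via the inequality $ab\le \tfrac12(a^2+b^2)$ yields a bound of the form $\sqrt{-\ln\delta/m}+(-\ln\delta)^\beta/m+(-\ln\delta)^{2\beta}/m^2$, where the contribution $-\ln\delta/m$ produced by $(\sqrt{-\ln\delta/m})^2$ is absorbed into $\sqrt{-\ln\delta/m}$ using $m\ge\bar C_1(-\ln\delta)^\beta\ge -\ln\delta$, while $1/m\le (-\ln\delta)^\beta/m$ and $(\eps_\delta/m+1/m^2)$ are dominated by the third term once $\beta\ge 1$.

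The main obstacle is the first step: extracting the correct $\eps_\delta$ from \eqref{eq:concentration_beta} requires inverting the $\min$ carefully in both regimes and verifying that the threshold $m\ge \bar C_1(-\ln\delta)^\beta$ is simultaneously large enough to (i) make the Neumann perturbation of $\ol U^{-1}$ legitimate with the \emph{uniform} lower bound $\tau_0$ from Lemma \ref{Ubar_nondeg}, (ii) ensure the regularization error $1/m$ is absorbed, and (iii) render the two branches of \eqref{eq:concentration_beta} comparable. The subsequent algebra is bookkeeping of three intrinsic scales — $\sqrt{-\ln\delta/m}$ (sub-Gaussian), $(-\ln\delta)^\beta/m$ (sub-Weibull tail plus regularization), and the quadratic remainder — and its reduction to the three terms stated in \eqref{eq:parameter_estimation}.
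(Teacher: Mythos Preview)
Your proposal is correct and follows essentially the same approach as the paper. Both arguments invert the concentration bound \eqref{eq:concentration_beta} to obtain a high-probability deviation $\delta_m\asymp\sqrt{-\ln\delta/m}+(-\ln\delta)^\beta/m$ for $E_U,E_V$, use Lemma \ref{Ubar_nondeg} and the condition $m\ge\bar C_1(-\ln\delta)^\beta$ to guarantee $\lambda_{\min}(\widehat U)\ge\tau_0/2$, and then perform a matrix perturbation argument. The only cosmetic difference is the decomposition: the paper splits $\widehat U^{-1}V-\ol U^{-1}\ol V$ via the triangle inequality and the identity $A^{-1}-(A+B)^{-1}=(A+B)^{-1}BA^{-1}$, arriving at $\tfrac{2}{\tau_0^2}(\tfrac1m+\delta_m)(\|\ol V\|+\delta_m)+\tfrac{\delta_m}{\tau_0}$, whereas you first rewrite $\hat\theta^\trans-(\theta^\star)^\trans=\widehat U^{-1}(E_V-\Delta(\theta^\star)^\trans)$ and then expand $\widehat U^{-1}$ by Neumann; both produce a first-order piece $O(\delta_m+1/m)$ and a second-order piece $O((\delta_m+1/m)^2)$, from which the three terms in \eqref{eq:parameter_estimation} are read off identically.
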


\begin{proof}
Throughout the proof, 
let $\delta\in(0,1/2)$ and $\theta\in \cK_0$ be fixed
and let   $\|\cdot\|_2$ be the matrix norm induced by Euclidean norms.
The invertibility of    $\ol{U}^{x_0,\theta}$  (see Lemma \ref{Ubar_nondeg}) 
implies that 
\eqref{eq:theta_star_exp} is well-defined, 
which along with \eqref{eq:theta_sum} 
leads to %the estimate  that 
\begin{equation}
\label{est}
\begin{split}
\|\hat{\theta}-\theta^\star\|_2& = \|(U^{x_0,\theta,m} + \tfrac{1}{m} \sI)^{-1}V^{x_0,\theta,m}-(\ol{U}^{x_0,\theta})^{-1}\ol{V}^{x_0,\theta}\|_2 \\
&\leq\|(U^{x_0,\theta,m} +  \tfrac{1}{m} \sI)^{-1}-(\ol{U}^{x_0,\theta})^{-1}\|_2 \|V^{x_0,\theta,m}\|_2 \\
&\quad+ \|(\ol{U}^{x_0,\theta})^{-1}\|_2 \|V^{x_0,\theta,m}-\ol{V}^{x_0,\theta}\|_2.
\end{split}
\end{equation}

We now estimate each term in the right-hand side of \eqref{est}.
By  Lemma \ref{Ubar_nondeg}, %we have  that
$\lambda_{\min}(\ol{U}^{x_0,\theta})\ge \tau_0$
for some $\tau_0>0$,
which implies that 
  $\|(\ol{U}^{x_0,\theta})^{-1}\|_2\leq 1/\tau_0$.
  Moreover, %
 by setting the right-hand side of \eqref{eq:concentration_beta} to be $\delta$, 
we can deduce 
 with probability at least $1-2\delta$ that 
 $|U^{x_0,\theta,m}-\ol{U}^{x_0,\theta}| \leq \delta_m$ 
 and 
 $|V^{x_0,\theta,m}-\ol{V}^{x_0,\theta}| \leq \delta_m$
 with the constant $\delta_m$ given by 
 \bb\l{eq:delta_m}
 \delta_m\coloneqq
 \max \left\{
\bigg(\frac{C_2^2}{C_1m}\ln\bigg(\frac{C_2}{\delta}\bigg)\bigg)^{\frac{1}{2}}, \frac{C_2}{m} \left(\dfrac{1}{C_1}\ln\bigg(\frac{C_2}{\delta}\bigg)\right)^{\beta}\right\},
 \ee
 where we have assumed without loss of generality that 
 $C_2\ge 1$.
 
 Let 
  $m$ be a sufficiently large constant satisfying 
 $\delta_m+1/m\le \tau_0/2$.
The fact that  $\|\cdot\|_2\le |\cdot|$ indicates
 with probability at least $1-2\delta$ that 
$ \|U^{x_0,\theta,m} +  \tfrac{1}{m} \sI-\ol{U}^{x_0,\theta}\|_2
 \le \frac{1}{m}+\delta_m\le \tfrac{\tau_0}{2}$,
 which in turn yields
 \begin{align*}
\lambda_{\min}( U^{x_0,\theta,m}+\tfrac{1}{m}\sI)
\ge 
\lambda_{\min}( \ol{U}^{x_0,\theta})
-\|U^{x_0,\theta,m} +  \tfrac{1}{m} \sI-\ol{U}^{x_0,\theta}\|_2
\ge \tfrac{\tau_0}{2},
 \end{align*}
 or equivalently $\|(U^{x_0,\theta,m}+\tfrac{1}{m}\sI)^{-1}\|_2\leq 2/\tau_0$.  
Then, since
$A^{-1}-(A+B)^{-1}=(A+B)^{-1}BA^{-1}$
for all nonsingular matrices $A$ and $A+B$,
we have with probability at least $1-2\delta$ that, 
\begin{equation*}
\begin{split}
\|(U^{x_0,\theta,m} +  \tfrac{1}{m} \sI)^{-1}-(\ol{U}^{x_0,\theta})^{-1}\|_2
&=
\|(\ol{U}^{x_0,\theta}+U^{x_0,\theta,m} +  \tfrac{1}{m} \sI-\ol{U}^{x_0,\theta})^{-1}-(\ol{U}^{x_0,\theta})^{-1}\|_2 
\\
& \le
 \|(U^{x_0,\theta,m}+ \tfrac{1}{m}\sI)^{-1}\|_2
\|(\ol{U}^{x_0,\theta})^{-1}\|_2
\|(U^{x_0,\theta,m} + \textstyle \frac{1}{m} \sI)-\ol{U}^{x_0,\theta}\|_2\\
& \leq \tfrac{2}{\tau_0^2}(\tfrac{1}{m}+\delta_m),
\end{split}
\end{equation*}
which along with
 the inequality that 
$
\|V^{x_0,\theta,m}\|_2 \le  \|\ol{V}^{x_0,\theta}\|_2 + |V^{x_0,\theta,m} - \ol{V}^{x_0,\theta}|
$
allows us to derive the following estimate from \eqref{est}:
\begin{align*}
\|\hat{\theta}-\theta^\star\|_2
\le 
\tfrac{2}{\tau_0^2}(\tfrac{1}{m}+\delta_m)
 ( \|\ol{V}^{x_0,\theta}\|_2 +\delta_m)
 +\tfrac{\delta_m}{\tau_0}.
\end{align*}
Note that $\|\ol{V}^{x_0,\theta}\|_2 $
is uniformly bounded for all $\theta\in \cK_0$
by the compactness of $\cK_0$
and the continuity of the map $\theta\mapsto 
\ol{V}^{x_0,\theta}$
(cf.~Lemma \ref{Ubar_nondeg}).
Thus, 
by  the condition that $\b\ge 1$ 
and the definition of $\delta_m$ in
\eqref{eq:delta_m},
we see that  there exists a constant $\bar{C}_2$,
depending only on
 $C_1,C_2$, $\beta$, $\tau_0$,
 and the constants in  (H.\ref{assum:lc_rl}\ref{item:fg}),
such that
the desired estimate \eqref{eq:parameter_estimation}
holds 
 with probability at least $1-2\delta$,
 provided that 
$m$ satisfies   $\delta_m+1/m\le \tau_0/2$.
Since $\beta\ge 1$ and $\delta\le 1/2$,  we see there exists 
$\bar{C}_1\ge 0$, independent of $m$, $\delta$ and $\theta$,
such that the  inequality  \eqref{eq:parameter_estimation}
holds for  all $m$ satisfying 
  $m\ge \bar{C}_1(-\ln \delta)^\b$.
%  which finishes the proof of the desired statement.
 %
\end{proof}

Now we are ready to present the proof of Theorem \ref{thm:regret}.

\begin{proof}[Proof of Theorem \ref{thm:regret}]
We start by proving Item \ref{item:regret_PAC}.
Then, by  
the assumptions  that 
$\lambda_{\min}(
\ol{U}^{x_0,\theta_0})>0$ and 
\eqref{eq:concentration_beta}
holds for $\cK=\ol{\cK}_0\coloneqq\cK_0\cup\{\theta_0\}$
with $\cK_0$ from Lemma \ref{Ubar_nondeg},
we can  extend  Proposition \ref{theta_conc} to show that  
\eqref{eq:parameter_estimation} holds for all $\theta\in \ol{\cK}_0$, 
 $\delta\in (0,1/2)$ and $m\geq \bar{C}_1(-\ln\delta )^\beta$,
with some constants  $\bar{C}_1,\bar{C}_2\ge 1$
depending on $\ol{\cK}_0$.
In the subsequent analysis,
we   fix   $\delta\in (0,1/4)$
and 
for all  $\ell\in \sN\cup\{0\}$,
we define   $\delta_\ell=2^{-\ell}\delta$, 
and 
let $\theta_{\ell+1}$ be generated by
 using  \eqref{eq:theta_sum} with   $m=m_\ell$ and $\theta=\theta_\ell$.
 We shall specify the precise choice of $m_0$ later.

In the sequel, 
we assume without loss of generality that $\eps_0/(3\bar{C}_2)\le 1$ and 
$\bar{C}_2/\eps_0\ge \bar{C}_1$,
where $\eps_0>0$ is the constant in the definition of $\cK_0$ (see Lemma \ref{Ubar_nondeg}).
We first show that 
there exists  $\hat{C}_0>0$, independent of $\delta$,
such that if $m_0\ge  \hat{C}_0(-\ln \delta)^\b$, then 
for all $\ell \in \sN\cup\{0\}$,
\bb\l{eq:m_0_hatC_0}
\bar{C}_2\bigg(\sqrt{\frac{-\ln\delta_\ell}{m_\ell}}+\frac{(-\ln\delta_\ell)^{\beta}}{m_\ell}+\frac{(-\ln\delta_\ell)^{2\beta}}{m^2_\ell}\bigg)
\le \eps_0.
\ee
%where $\eps_0>0$ is the constant in the definition of $\cK_0$ (see Lemma \ref{Ubar_nondeg}).
%Let us assume without loss of generality that $\eps_0/(3\bar{C}_2)\le 1$.
By the assumption that 
$\eps_0/(3\bar{C}_2)\le 1$,
 it suffices to show that  for all $\ell\in \sN\cup\{0\}$,
$
-\ln\delta_\ell/m_\ell\le 
( \eps_0/(3\bar{C}_2))^2$ 
and ${(-\ln\delta_\ell)^{\beta}}/{m_\ell}\le \eps_0/(3\bar{C}_2)$.
Given  $\beta\ge 1$ and $\delta_\ell< 1/4$,  it suffices to ensure 
$m_\ell\ge C(-\ln\delta_\ell)^{\beta}$ for all $\ell\in \sN\cup\{0\}$, where $C$ is a sufficiently large constant independent of $\delta$ and $\ell$. 
By  the definitions of $(\delta_\ell)_{\ell\in \sN}$
and $(m_\ell)_{\ell\in \sN}$ and the fact that $\delta< 1/4$,
 the desired condition 
 can be achieved by choosing $m_0\ge \hat{C}_0(-\ln \delta)^\b$, for a   sufficiently large constant $\hat{C}_0$ satisfying
\begin{align*}
\sup_{\ell\in  \sN\cup\{0\}, \delta\in(0,\frac{1}{4})}
\frac{ (-\ln (2^{-\ell}\delta))^{\beta}}{2^\ell(-\ln \delta)^\b}
=
\sup_{\ell\in  \sN\cup\{0\},\delta\in(0,\frac{1}{4})}
2^{-\ell} \Big(\frac{\ell\ln 2}{-\ln \delta} +1\Big)^{\beta}
\le 
\sup_{\ell\in  \sN\cup\{0\}}
2^{-\ell} \Big(\frac{\ell}{2} +1\Big)^{\beta}
\le \hat{C}_0<\infty.
\end{align*}

Now we choose  $m_0\ge \max( \hat{C}_0, \bar{C}_1)(-\ln \delta)^\b$,
%which in particular ensures  
%\eqref{eq:parameter_estimation}  for all $\theta\in\ol{\cK}_0$, 
% $\delta\in (0,1/2)$ and $m\ge m_0$.
 and show by induction that 
for all $k\in \sN\cup\{0\}$, it holds  with probability  at least $1-2\sum_{\ell=0}^{k-1}\delta_\ell$ that 
 $\theta_\ell\in \ol{\cK}_0$ for all $\ell=0,\ldots, k$ and
\bb\l{eq:theta_k}
|{\theta}_{k}-\theta^\star|_2\leq
\begin{cases}
|\theta_0-\theta^\star|_2, & k=0,\\
 \bar{C}_2\bigg(\sqrt{\frac{-\ln\delta_k}{m_k}}+\frac{(-\ln\delta_k)^{\beta}}{m_k}+\frac{(-\ln\delta_k)^{2\beta}}{m_k^2}\bigg),
 & k\in \sN.
 \end{cases}
\ee
The statement clearly holds for $k=0$.
Now suppose that the induction statement holds for some $k\in  \sN\cup\{0\}$.
%Given that
Conditioning on $\theta_k\in\ol{\cK}_0$, 
%which holds with probability  at least $1-2\sum_{\ell=0}^{k-1}\delta_\ell$
%by the induction hypothesis,
we can apply
\eqref{eq:parameter_estimation} with 
$\theta=\theta_k$,
$\delta=\delta_k<1/2$ and 
$m=m_{k}\ge  \bar{C}_1(-\ln\delta_k )^\beta$
(see \eqref{eq:m_0_hatC_0} and 
$\bar{C}_2/\eps_0\ge \bar{C}_1$),
and deduce 
  with probability   
 at least $1-2\delta_k$
 that
  \eqref{eq:theta_k} holds for the index $k+1$,
which along with  \eqref{eq:m_0_hatC_0}
shows that $\theta_{k+1}\in \cK_0\subset \ol{\cK}_0$.
 Since  the induction hypothesis implies that $\theta_k\in\ol{\cK}_0$ holds with probability  at least $1-2\sum_{\ell=0}^{k-1}\delta_\ell$, one can deduce that the induction statement also holds $k+1$.
 
 The above induction argument shows 
 that if $m_0=C(-\ln \delta)^\b$
 for any constant $C\ge C_0\coloneqq \max( \hat{C}_0, \bar{C}_1)$, then
%we have 
with   probability  at least $1-2\sum_{\ell=0}^{\infty}\delta_\ell=1-4\delta$,
 $\theta_k\in \ol{\cK}_0$ and 
\eqref{eq:theta_k} holds
for all $k\in \sN\cup\{0\}$. 
Now let us  assume such a setting,
and observe that 
the $i$-th trajectory is generated with control $\psi^{\theta_\ell}$
if $i\in (\sum_{j=0}^{\ell-1} m_j,\sum_{j=0}^{\ell}m_j]=( m_0(2^\ell-1), m_0(2^{\ell+1}-1)]$ for $\ell\in \sN\cup \{0\}$
(cf.~Algorithm \ref{alg:greedy}).
Then we can 
 apply Theorem \ref{thm:dynamics_stable} 
%(note that \eqref{eq:lc_per} agrees with 
%$J^{\theta^\star}(\psi_i;x_0)$ in the present setting)
and deduce for all $N\in \sN$
that 
\begin{align}
%\begin{split}
R(N)&
\le \sum_{\ell=0}^{\lceil \log_2(\frac{N}{m_0}+1)\rceil-1 } m_\ell \Big(J^{\theta^\star}(\psi^{\theta_\ell};x_0) - V(x_0;\theta^\star)\Big)
\leq
C' \sum_{\ell=0}^{\lceil \log_2(\frac{N}{m_0}+1)\rceil-1 } m_\ell |\theta_\ell-\theta^\star|
\nb\\
&\le C'm_0+C'\sum_{\ell=1}^{\lceil \log_2(\frac{N}{m_0}+1)\rceil-1 } \bigg( \sqrt{(-\ln\delta_\ell)m_\ell}+(-\ln\delta_\ell)^{\beta}\Big(1+\frac{(-\ln\delta_\ell)^{\beta}}{m_\ell}\Big)\bigg)
\nb\\
&\le
C'(-\ln \delta)^\b+C'\sum_{\ell=1}^{\lceil \log_2(\frac{N}{m_0}+1)\rceil-1 } \bigg( \sqrt{(-\ln\delta_\ell)m_\ell}+(-\ln\delta_\ell)^{\beta}\bigg),
\l{eq:R_N}
%\end{split}
\end{align}
where 
we have denoted by
$C'$ a generic constant independent of $\ell, N, \delta$,
and used 
 the fact that 
 ${(-\ln\delta_\ell)^{\beta}}/{m_\ell}\le C'$ for the last inequality (cf.~the choice of $\hat{C}_0$).
We then derive an upper bound  of \eqref{eq:R_N}.
By  virtue of the inequality that 
% $$\sqrt{(-\ln\delta_\ell)m_\ell}
% =\sqrt{(\ell \ln 2-\ln\delta)2^\ell m_0}
% \le C'(\sqrt{\ell 2^\ell} \sqrt{m_0}+\sqrt{(-\ln\delta) m_0}\sqrt{2}^\ell),
% \q \fa \ell\in \sN,
% $$
% we have  
% \begin{align*}
% \begin{split}
% &\sum_{\ell=1}^{\lceil \log_2(\frac{N}{m_0}+1)\rceil-1 }
% \sqrt{(-\ln\delta_\ell)m_\ell}
% \le 
% C'\Big(
% \sqrt{ \ln {N}}\sqrt{m_0}
% \sqrt{2}^{ \log_2(\frac{N}{m_0}+1)}
% +\sqrt{(-\ln\delta) m_0}\sqrt{2}^{ \log_2(\frac{N}{m_0}+1)}
% \Big)
% \\
% &\q \q =
% C'\big(
% \sqrt{ \ln {N}}
% +\sqrt{-\ln\delta }\big)
% \sqrt{N+(-\ln \delta)^\b}
% \le 
% C'\big(
% \sqrt{ \ln {N}}
% +\sqrt{-\ln\delta }\big)
% \big(
% \sqrt{N}+(-\ln \delta)^{\b/2}\big).
% \end{split}
% \end{align*}
$\sqrt{(-\ln\delta_\ell)m_\ell}
=\sqrt{(\ell \ln 2-\ln\delta)2^\ell m_0}
\le C'\sqrt{(\ell-\ln\delta) m_0}\sqrt{2}^\ell$
for all $\ell\in \sN$,
we have  
\begin{align*}
\begin{split}
&\sum_{\ell=1}^{\lceil \log_2(\frac{N}{m_0}+1)\rceil-1 }
\sqrt{(-\ln\delta_\ell)m_\ell}
\le 
C'
\sqrt{(\ln {N}-\ln\delta) m_0}
\sqrt{2}^{ \log_2(\frac{N}{m_0}+1)}
\le
C'
\sqrt{(\ln {N}-\ln\delta) 
(N+(-\ln\delta)^\b)
}.
\end{split}
\end{align*}
Moreover, 
by  
$\ln\delta_\ell=-\ell\ln2+\ln \delta$ and 
H\"{o}lder's inequality, %we have 
 \begin{align*}
\begin{split}
\sum_{\ell=1}^{\lceil \log_2(\frac{N}{m_0}+1)\rceil-1 }
(-\ln\delta_\ell)^{\beta}
\le 
\sum_{\ell=1}^{C' \ln {N}}
C'((\ell\ln2)^\b+(-\ln \delta)^\b)
\le
C'\big(
(\ln {N})^{\b+1}
+\ln {N}(-\ln\delta)^{\beta}\big).
\end{split}
\end{align*}
Consequently,  from \eqref{eq:R_N}, $\b\ge 1$
and 
the inequality
$\sqrt{x+y}\le \sqrt{x}+\sqrt{y}$
for all $x,y\ge 0$,
it is clear 
for all $N\in \sN$, 
$R(N)\le C'\big(
\sqrt{N}\sqrt{ \ln {N}}
+\sqrt{-\ln\delta }
\sqrt{N}
+(-\ln\delta)^{\beta}\ln {N}\big)$
for some constant $C'$ independent of $\b$ and $N$,
which finishes the proof of Item \ref{item:regret_PAC}. 

We are ready to show Item \ref{item:regret_as}.
For each $N\in \sN\cap[3,\infty)$, we define $\delta_N=1/N^2$ and  the event
$A_N=\{R(N)> C'(
\sqrt{N}\sqrt{ \ln {N}}
+\sqrt{-\ln\delta_N }
\sqrt{N}
+(-\ln\delta_N)^{\beta}\ln {N})\}$.   Item  \ref{item:regret_PAC} shows that 
$\sum_{N=3}^\infty\sP(A_N)\le 4\sum_{N=3}^\infty \delta_N<\infty$.
Hence,  from the
 Borel-Cantelli lemma, 
$\sP(\lim\sup_{N\to \infty} A_N)=0$,
which along with the definition of $\delta_N$
 implies the desired conclusion.
\end{proof}

{%\color{blue}
\section{Extension: RL problems with controlled diffusion}
\label{sec:ctrl_diffusion}

In this section, we extend our framework to analyze the {%\color{blue}
regret order}
%non-asymptotic  performance 
of learning algorithms for   general continuous-time RL problems,
whose state dynamics involves 
controlled diffusion. 
To simplify the presentation, 
we
focus on    entropy-regularized problems 
studied in
\cite{wang2019exploration,vsivska2020gradient,reisinger2021regularity,tang2021exploratory}
and 
outline the essential steps of the argument.

For each $\theta=(A,B)\in \sR^{n\t (n+k)}$, define $V(\cdot;\theta):[0,T]\t \sR^n\to \sR$  by 
\bb\label{eq:lc_entropy}
V(t,x;\theta)\coloneqq \inf_{\a\in \cH^2(\sR^k)} 
\sE\left[\int_t^T 
 f(s,X^{t,x,\a}_s,\a_s)
\, \d s+g(X_T^{t,x,\a})\right],
\q \fa (t,x)\in [0,T]\t \sR^n, 
\ee
  where for each $\a\in \cH^2(\sR^k)$,
  $X^{t,x,\a}\in \cS^2(\sR^n)$ satisfies 
 the  controlled dynamics: 
\bb\l{eq:lc_sde_entropy}
\d X_s =(AX_s+B\a_s)\,\d s+\sigma(s,X_s,\a_s)\, \d W_s, \q s\in [t,T],
\q X_t=x.
\ee 
The functions 
$f:[0,T]\t \sR^n\t \sR^k\to \sR\cup\{\infty\}$ and 
  $\sigma:[0,T]\t \sR^n\t \sR^k\to \sR^{n\t d} $ are such that 
  for all $(t,x)\in [0,T]\t \t \sR^n$
  and $a=(a_i)_{i=1}^k\in \sR^k$,  
 \begin{align}\l{eq:lc_entropy_coefficient}
  f(t,x,a)= \sum_{i=1}^k \ol{f}_i(t,x)a_i +\cR_{\textrm{en}}(a), 
  \q 
  \sigma(t,x,a)\sigma(t,x,a)^\trans= 
   \sum_{i=1}^k
   \ol{ \sigma}_i (t,x)\ol{\sigma}_i (t,x)^\trans a_i,
\end{align}
where for each $i=1,\ldots k$,  $\ol{f}_i:[0,T]\t \sR^n\to \sR$, $\ol{\sigma}_i:[0,T]\t \sR^n\to \sR^{n\t d}$ are  some given   functions and 
$\cR_{\textrm{en}}:\sR^k\to \sR\cup\{\infty\}$
is  Shannon's entropy 
 function (cf.~Example \ref{example:relax}) such that 
\bb\label{eq:entropy}
\cR_{\textrm{en}}(a)=
\begin{cases}
\sum_{i=1}^k a_i\ln (a_i), & a\in \Delta_k\coloneqq\{
a\in [0,1]^k\mid 
\sum_{i=1}^k a_i=1\},\\
\infty, & a\in \sR^k\setminus \Delta_k.
\end{cases}
\ee
To avoid needless technicalities, we assume $(\ol{f}_i)_{i=1}^k$, $(\ol{\sigma}_i)_{i=1}^k$ and $g$ to be  bounded and  sufficiently regular as in Proposition \ref{prop:stability_entropy}.
%This implies the value function $V(\cdot; \theta)$ is well-defined for each $\theta$. 

Note that  
\eqref{eq:entropy} restricts  control processes to those taking values in $\Delta_k$.
Hence, if $\ol{\sigma}_\ell(t,x)\equiv \ol{\sigma}$ for some $\ol{\sigma}\in \sR^{n\t d}$, then
\eqref{eq:lc_entropy}-\eqref{eq:lc_sde_entropy}
is a special case of the linear-convex model 
studied in Sections \ref{sec:lipschitz_stable}-\ref{sec:lc_learning}.
 Consequently, 
Theorem \ref{thm:regret} can be applied to study 
%non-asymptotic 
the {%\color{blue} 
regret order} of GLS algorithms for 
 \eqref{eq:lc_entropy}-\eqref{eq:lc_sde_entropy}
with given initial time and state $(t,x)\in [0,T]\t \sR^n$ but with unknown 
parameter $\theta$.

To analyze %non-asymptotic performance 
the {%\color{blue} 
regret order} of learning algorithms  with general 
$\sigma$, 
a crucial step is to extend  Theorem \ref{thm:dynamics_stable} and 
quantify the  performance     of  a greedy policy 
from an incorrect model.
The fact that control affects the diffusion coefficients 
 complicates the stability analysis of   optimal feedback controls 
(i.e., Theorem \ref{thm:feedback_stable}) for  \eqref{eq:lc_entropy}-\eqref{eq:lc_sde_entropy}. 
The following proposition 
  proves a linear performance gap under the 
 condition that the value function $V(t,x;\theta)$ in \eqref{eq:lc_entropy} is sufficiently regular in $t, x$ and $\theta$.
Recall 
that the
  first and second-order derivatives
of a sufficiently regular value function 
   can be represented by
solutions to the associated FBSDE \eqref{eq:lc_fbsde}
(see e.g., \cite[Theorem 4.1, p.~250]{yong1999stochastic}). Hence  
the linear performance gap can  also be established by assuming sufficient regularity of  the solution process $(Y^{t,x},Z^{t,x})$ to \eqref{eq:lc_fbsde}, whose details are omitted here.

\begin{Proposition}
\label{prop:stability_entropy}
For each $\theta\in \sR^{n\t (n+k)}$, let 
$V(\cdot;\theta):[0,T]\t \sR^n\to \sR$ be defined by 
\eqref{eq:lc_entropy}.
Suppose that    $(\ol{f}_i)_{i=1}^k, (\ol{\sigma}_i)_{i=1}^k \subset  C^{0,1}([0,T]\t \sR^n)$, $g\in C^1(\sR^n)$,
and
 there exists  $\mathfrak{m}:[0,\infty)\to [0,\infty)$ such that for all
$(t,x)\in [0,T]\t \sR^n$ and 
 $\theta,\theta'\in \sR^{n\t (n+k)}$, 
 $\f{\p}{\p t}V(\cdot; \theta)$ is continuous,
  $\|V(\cdot; \theta)\|_{C^{0,3}([0,T]\t \sR^n)}\le \mathfrak{m}(|\theta|)$, 
\bb
 |\nabla_x V(t,x;\theta)-\nabla_x V(t,x;\theta')|
 +|\operatorname{Hess}_x V(t,x;\theta)-\operatorname{Hess}_x V(t,x;\theta')|
 \le (\mathfrak{m}(|\theta|)+\mathfrak{m}(|\theta|'))|\theta-\theta'|(1+|x|).
\ee
Then for all $\theta\in \sR^{n\t (n+k)}$, there exists $\psi^\theta\in \cV$ such that 
\begin{enumerate}[(1)]
\item \label{item:feedback_entropy}
$\psi^\theta$ is an optimal feedback control of 
\eqref{eq:lc_entropy}-\eqref{eq:lc_sde_entropy} satisfying 
for all $x_0\in \sR^n$ and  $\theta\in \sR^{n\t (n+k)}$, $V (0,x_0;\theta)=J(\psi^\theta;x_0,\theta)$,
 where for each 
 $\psi\in \cV$,
$$
J(\psi;x_0,\theta)\coloneqq\sE\left[\int_0^T f(t,X^{x_0,\theta, \psi}_t,\psi(t,X^{x_0,\theta, \psi}_t))\, \d t+g(X^{x_0,\theta, \psi}_T)\right],
$$
and  
$X^{x_0,\theta, \psi}\in \cS^2(\sR^n)$ satisfies  the following dynamics: 
$$
  \d X_t =
  \theta 
\begin{psmallmatrix}
X_t
\\
\psi(t,X_t)
\end{psmallmatrix}
\,\d t+
\sigma(t,X_t,\psi(t,X_t))
 \d W_t,
\q t\in[0,T], 
\q 
X_0=x_0,
$$
\item \label{item:performance_gap_entropy}
for all $x_0\in \sR^n$ and $R\ge 0$  there exists a constant $C$ such that 
for all $\theta,\theta'\in \sR^{n\t (n+k)}$ with $|\theta|,|\theta'|\le R$,
$$|J(\psi^{\theta'};x_0,\theta)-J(\psi^{\theta};x_0,\theta)|\le C|\theta'-\theta|.$$
\end{enumerate}
\end{Proposition}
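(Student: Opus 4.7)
The plan is to exploit the softmax structure of the optimal feedback induced by the entropy regulariser, together with the HJB equation for $V(\cdot;\theta)$, to prove optimality first and then a linear (in fact quadratic) performance gap. Using the linear-in-$a$ drift, the affine-in-$a$ diffusion covariance \eqref{eq:lc_entropy_coefficient}, and the entropy term \eqref{eq:entropy}, the Hamiltonian reads
\[
H(t,x,a;\theta,p,M) = \la Ax,p\ra + \sum_{i=1}^k a_i\, h_i(t,x;\theta,p,M) + \cR_{\textrm{en}}(a),
\]
where $h_i(t,x;\theta,p,M) \coloneqq B_i^\trans p + \tfrac12\tr(\ol{\sigma}_i\ol{\sigma}_i^\trans M) + \ol{f}_i(t,x)$ and $B_i$ is the $i$-th column of $B$. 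Minimising over $a\in\Delta_k$ has the closed-form softmax solution $a_i^\star=e^{-h_i}/\sum_j e^{-h_j}$. Setting $p=\p_x V(t,x;\theta)$ and $M=\operatorname{Hess}_x V(t,x;\theta)$, the hypothesised $C^{0,3}$-bound on $V(\cdot;\theta)$ ensures that $h_i$ is bounded and Lipschitz in $x$, so that the resulting $\psi^\theta$ is Lipschitz in $x$ with constants depending on $\mathfrak{m}(|\theta|)$, giving $\psi^\theta\in\cV$. Item \ref{item:feedback_entropy} then follows from a standard verification argument: since $\p_t V(\cdot;\theta)$ is continuous and $V(\cdot;\theta)$ is a classical solution of the associated HJB equation, applying Itô's formula to $V(t,X^{x_0,\theta,\psi}_t;\theta)$ for any admissible feedback $\psi\in\cV$ and taking expectations gives $V(0,x_0;\theta)\le J(\psi;x_0,\theta)$, with equality exactly when $\psi=\psi^\theta$.

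For Item \ref{item:performance_gap_entropy}, the idea is to apply Itô's formula to $V(t,X_t;\theta)$ along $X=X^{x_0,\theta,\psi^{\theta'}}$, take expectations, and invoke the HJB for $V(\cdot;\theta)$ plus the optimality of $\psi^\theta$ in its Hamiltonian, yielding
\[
J(\psi^{\theta'};x_0,\theta)-V(0,x_0;\theta) \;=\; \sE\int_0^T \Delta H_t\,\d t,
\]
with $\Delta H_t \coloneqq H(t,X_t,\psi^{\theta'}(t,X_t);\theta,\p_x V,\operatorname{Hess}_x V) - H(t,X_t,\psi^{\theta}(t,X_t);\theta,\p_x V,\operatorname{Hess}_x V)$, where the derivatives of $V$ are evaluated at $(t,X_t;\theta)$. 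Since $\psi^\theta(t,X_t)$ is the interior minimiser of $a\mapsto H(t,X_t,a;\theta,\p_x V,\operatorname{Hess}_x V)$ on $\Delta_k$, a second-order Taylor expansion yields $0\le \Delta H_t \le C\,|\psi^{\theta'}(t,X_t)-\psi^{\theta}(t,X_t)|^2$, where the constant $C$ comes from the diagonal Hessian $\operatorname{diag}(1/a_i)$ of $\cR_{\textrm{en}}$ at an interior point; the required interior bound on $\psi^\theta$ follows from the softmax formula together with the uniform $L^\infty$ bound on $h_i$ for $|\theta|\le R$.

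The remaining ingredient is a Lipschitz-in-$\theta$ estimate for the feedback. Differentiating the softmax formula and using the hypothesised $\theta$-stability
\[
|\p_x V(t,x;\theta)-\p_x V(t,x;\theta')|+|\operatorname{Hess}_x V(t,x;\theta)-\operatorname{Hess}_x V(t,x;\theta')|\le (\mathfrak{m}(|\theta|)+\mathfrak{m}(|\theta'|))(1+|x|)|\theta-\theta'|,
\]
one obtains $|\psi^\theta(t,x)-\psi^{\theta'}(t,x)|\le C(1+|x|)|\theta-\theta'|$ uniformly for $|\theta|,|\theta'|\le R$. Standard moment estimates for the closed-loop SDE, using the linear-in-$X$ drift and the uniformly bounded diffusion (since $\psi^{\theta'}(t,X_t)\in\Delta_k$ and $\ol{\sigma}_i$ are bounded), give $\sE\sup_{t\in[0,T]}|X_t|^2\le C(1+|x_0|^2)$. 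Combining these with the quadratic Hamiltonian bound yields the sharper estimate $|J(\psi^{\theta'};x_0,\theta)-J(\psi^{\theta};x_0,\theta)|\le C(1+|x_0|^2)|\theta-\theta'|^2$, which implies the claimed linear bound on the bounded set $\{|\theta|,|\theta'|\le R\}$. The main obstacle is controlling the Hessian of $\cR_{\textrm{en}}$ in the quadratic bound on $\Delta H_t$: one must verify that $\psi^\theta(t,x)$ stays uniformly in the interior of $\Delta_k$ on $\{|\theta|\le R\}$, which is where the full $C^{0,3}$-regularity of $V(\cdot;\theta)$ (and not merely $C^{0,2}$) is used, via the required bounds on $\operatorname{Hess}_x V$ that enter $h_i$.
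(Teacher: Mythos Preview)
Your argument for Item \ref{item:feedback_entropy} is essentially the same as the paper's: both identify $\psi^\theta$ via the softmax representation $\psi^\theta(t,x)=\nabla\cR_{\textrm{en}}^*(-h(t,x;\theta))$, verify $\psi^\theta\in\cV$ from the Lipschitz continuity of $\nabla\cR_{\textrm{en}}^*$ and the assumed regularity of $V$, and conclude optimality by the standard verification theorem.

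For Item \ref{item:performance_gap_entropy} your route differs genuinely from the paper's. The paper does \emph{not} use the Hamiltonian-gap identity; instead it invokes the Fenchel--Young identity to write $f(t,x,\psi^\theta(t,x))$ in closed form (replacing the nonsmooth $\cR_{\textrm{en}}$ by the smooth $\cR_{\textrm{en}}^*$), from which it reads off directly that $(t,x,\theta)\mapsto f(t,x,\psi^\theta(t,x))$ is Lipschitz in $(x,\theta)$ with linear growth in $x$, and then imports the argument of Theorem \ref{thm:dynamics_stable} (stability of the closed-loop state plus Lipschitz cost) to obtain a \emph{linear} bound $C|\theta-\theta'|$. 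Your approach---It\^o's formula along $X^{x_0,\theta,\psi^{\theta'}}$, the HJB equation for $V(\cdot;\theta)$, and a second-order Taylor expansion of $a\mapsto H$ at the interior minimiser $\psi^\theta$---is correct and buys you the sharper \emph{quadratic} gap $C(1+|x_0|^2)|\theta-\theta'|^2$, at the price of needing a uniform interior bound on the softmax (equivalently, a uniform upper bound on $\operatorname{diag}(1/a_i)$ along the segment between $\psi^\theta$ and $\psi^{\theta'}$). That interior bound is legitimate here because $h_i$ is uniformly bounded for $|\theta|\le R$, but note your final remark slightly misattributes the role of the $C^{0,3}$ assumption: the interior bound needs only boundedness of $\operatorname{Hess}_x V$ (i.e.\ $C^{0,2}$), whereas $C^{0,3}$ is what makes $h_i$ Lipschitz in $x$ and hence $\psi^\theta\in\cV$. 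The paper's approach avoids this interior-bound step entirely via Fenchel--Young, which is why it transfers more readily to other divergences (cf.\ Example \ref{example:relax}) but yields only the linear rate.
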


Proposition \ref{prop:stability_entropy} relies on the regularity and Lipschitz stability of the value function $V$.  For instance, 
if all coefficients are bounded and sufficiently smooth, and $\sigma$ satisfies 
the uniform parabolicity condition, 
then 
 $C^{2+\a}$ regularity results for fully nonlinear parabolic PDEs (see e.g., the  Evan-Kryolv theorem in \cite[Theorems 6.4.3 and 6.4.4, p.~301]{krylov1987nonlinear}) and 
a bootstrap argument would ensure that for any given $\theta$, the  function $V(\cdot, \theta)$  is continuously differentiable in $t$ and three-time continuously differentiable in $x$. 
Due to  the unbounded drift coefficient of \eqref{eq:lc_sde_entropy},
the  boundedness in the 
$C^{0,3}([0,T]\t \sR^n)$-norm and the locally Lipschitz continuity of $V$ in $\theta$ follow from  an extension of the Schauder estimate  (see e.g., \cite{krylov2009elliptic}) to nonlinear parabolic equations with unbounded coefficients in the
whole space.

With Proposition \ref{prop:stability_entropy},
we can then quantify  %non-asymptotic 
the regrets of  GLS algorithms  (see Algorithm \ref{alg:greedy})
for  \eqref{eq:lc_entropy}-\eqref{eq:lc_sde_entropy}
with unknown drift parameter $\theta$ and known diffusion coefficient $\sigma$. 
By the boundedness of $\sigma$ and the regularity of $\psi^\theta$, % and  \cite{ma2010transportation},
one can prove 
Proposition \ref{prop:concentration_diffusion} in the present setting. 
Hence,
Theorem \ref{thm:regret} (with $\beta=1$ in \eqref{eq:concentration_beta}) shows that Algorithm \ref{alg:greedy} enjoys a sublinear regret as shown in Theorem \ref{cor:regret_diffusion}.

\begin{proof}[Proof of Proposition \ref{prop:stability_entropy}]
For any given $\theta=(A,B)\in \sR^{n\t (n+k)}$,
the regularity of $V(\cdot;\theta)$ and 
 \cite[Proposition 3.5, p.~182]{yong1999stochastic} imply that $V(\cdot;\theta)$ is the unique classical solution to 
 the associated HJB equation. That is,  for all $(t,x)\in [0,T)\t \sR^d$, 
$$
\tfrac{\p }{\p t}V(t,x)+
\inf_{a\in \Delta_k}
\Big(
\tfrac{1}{2}\tr(\sigma(t,x,a)\sigma(t,x,a)^\trans \operatorname{Hess}_x V(t,x))+
\la Ax+Ba, \nabla_x V(t,x) \ra+f(t,x,a)\Big)=0,
$$
and $V(T,x)=g(x)$ for all $x\in \sR^d$.
By \eqref{eq:lc_entropy_coefficient}, for all $(t,x)\in [0,T]\t \sR^n$, 
\begin{align*}
\psi^\theta(t,x)
&\coloneqq \arg\min_{a\in \Delta_k}
\big(
\tfrac{1}{2}\tr(\sigma(t,x,a)\sigma(t,x,a)^\trans \operatorname{Hess}_x V(t,x;\theta))+
\la Ba, \nabla_x V(t,x;\theta) \ra+f(t,x,a)\big)
\\
&=
\nabla\cR_{\textrm{en}}^*\big(
-\tfrac{1}{2}\tr(\ol{\sigma}(t,x)\ol{\sigma}(t,x)^\trans \operatorname{Hess}_x V(t,x;\theta))
-B^\trans \nabla_x V(t,x;\theta)-\ol{f}(t,x) \big), 
\end{align*}
where for all $z\in \sR^k$, 
$\cR_{\textrm{en}}^*(z)=\sup_{a\in \Delta_k}(\la a,z\ra -\cR_{\textrm{en}}(a))= \ln\sum_{i=1}^k\exp(z_i)$,
and 
$$
\tr(\ol{\sigma}(t,x)\ol{\sigma}(t,x)^\trans \operatorname{Hess}_x V(t,x;\theta))
=\begin{pmatrix}
\tr(\ol{\sigma}_1(t,x)\ol{\sigma}_1(t,x)^\trans \operatorname{Hess}_x V(t,x;\theta))
\\
\vdots
\\
\tr(\ol{\sigma}_k(t,x)\ol{\sigma}_k(t,x)^\trans \operatorname{Hess}_x V(t,x;\theta))
\end{pmatrix},
\q
 \ol{f}(t,x)=\begin{pmatrix}
 \ol{f}_1(t,x)
 \\
 \vdots
\\
 \ol{f}_k(t,x)
\end{pmatrix}.
$$ 
The Lipschitz continuity of $\nabla \cR_{\textrm{en}}^*$ and the regularity assumptions imply that 
$\psi^\theta\in \cV$ and the corresponding state process $X^{x_0,\theta,\psi^\theta}$ is well defined.
Then a standard verification argument 
(see e.g.,
\cite[Theorem 6.6, p.~278]{yong1999stochastic}) shows $\psi^\theta$ is an optimal feedback control and finishes the proof of 
Item \ref{item:feedback_entropy}.

To prove Item \ref{item:performance_gap_entropy},
Fix $x_0\in \sR^n$ and $R\ge 0$  and let $C$ be a generic constant independent of $\theta$.
Note that 
the Fenchel-Young identity gives that 
$ \cR_{\textrm{en}}^*(\nabla \cR_{\textrm{en}}^*(z))=\la z, \nabla \cR_{\textrm{en}}^*(z)\ra -\cR_{\textrm{en}}^*(z)$ for all $z\in \sR^k$,
which along with \eqref{eq:lc_entropy_coefficient} implies that  
for all $(t,x,\theta)\in [0,T]\t \sR^n\t \sR^{n\t (n+k)}$,
\begin{align*}
f(t,x,\psi^\theta(t,x))
&=-\la \tfrac{1}{2}\tr(\ol{\sigma}(t,x)\ol{\sigma}(t,x)^\trans \operatorname{Hess}_x V(t,x;\theta))
+B^\trans \nabla_x V(t,x;\theta),\psi^\theta(t,x)\ra 
\\
&\q
-\cR_{\textrm{en}}^*(-\tfrac{1}{2}\tr(\ol{\sigma}(t,x)\ol{\sigma}(t,x)^\trans \operatorname{Hess}_x V(t,x;\theta))
-B^\trans \nabla_x V(t,x;\theta)-\ol{f}(t,x)).
\end{align*}
By the regularity assumptions of the coefficients and the function $V$, 
for all $t\in [0,T]$, $x,x'\in \sR^n$ and 
$\theta,\theta'\in \sR^{n\t (n+k)}$ with $|\theta|,|\theta'|\le R$, there exists $C\ge 0$ such that 
\begin{align*}
|\psi^{\theta'}(t,x')-\psi^{\theta}(t,x)|+
|f(t,x',\psi^{\theta'}(t,x'))-f(t,x,\psi^\theta(t,x))|\le 
C\big(|x-x'|+(1+|x'|+|x|)|\theta-\theta'|\big).
\end{align*}
 Proceeding along the lines of 
the proof of 
Theorem \ref{thm:dynamics_stable}
leads to the desired estimate in Item \ref{item:performance_gap_entropy}.
\end{proof}
 }

{%\color{blue} 

\section{Numerical experiments}
\label{sec:numerical}
%This section will test the theoretical findings and  Algorithm \ref{alg:greedy} through numerical experiments.

In this section, we test the theoretical findings and  Algorithm \ref{alg:greedy} through numerical experiment on
 a three-dimensional  LQ RL problem considered in \cite{dean2018regret,dean2020sample}.
Our experiments show  the convergence of the least-squares estimations  to  the true parameters as the number of episodes increases,  as well as the sublinear cumulative regret as indicated  in Theorem \ref{cor:regret_diffusion}.
Moreover, it confirms that
the state coefficient $A^\star$ is easier to learn than the control coefficient $B^\star$, consistent with the observations in
 \cite{dean2020sample}. 
Our numerical result shows that a rough estimation of the control parameter $B^\star$ is 
often 
sufficient to design a nearly optimal  feedback control, and that the Algorithm \ref{alg:greedy} is robust with respect to the initial batch size $m_0$.

\paragraph{Problem setup.}
We consider a three-dimensional LQ RL problems over the time horizon $[0,T]$ with $T=1.5$,
where the linear state dynamics \eqref{eq:lc_sde_theta_star}
has the initial state $x_0$ and  unknown coefficients $\theta^\star=(A^\star, B^\star)\in \sR^{3\t (3+3)}$ chosen as in 
\cite{dean2018regret,dean2020sample}:
\[ A^\star=\left[ \begin{array}{ccc}
1.01 & 0.01 & 0\\
0.01 & 1.01 & 0.01\\
0 & 0.01 & 1.01
\end{array} \right], \;
B^\star = \sI_3, \;\sigma=\sI_3,\; \gamma\equiv 0,\; x_0=0,
\]
with $\sI_3$ being the $3\times3$ identity matrix,
and  the cost functional \eqref{eq:lc_theta_star}
involves   quadratic functions 
$g\equiv 0$ and
 $f(t,x,a)=(x^\trans  Q x+a^\trans R a)/2$,
with
$Q=0.1 \sI_3 $ and $R= \sI_3$.
As mentioned in \cite{dean2018regret,dean2020sample},
this state  dynamics corresponds to a marginally unstable graph Laplacian system where adjacent
nodes are weakly connected, 
which arises naturally from consensus and distributed averaging problems.
Since 
the cost  
penalizes 
 the control inputs more than the states,
 it is essential to learn the unstable components of $A^\star$ 
 and perform control on these components
 in order to  achieve an optimal cost. 
Note that this LQ RL problem 
satisfies (H.\ref{assum:lc_rl}); 
see the last paragraph of Remark \ref{rmk:rl_assum}.

The numerical experiments are coded using   Python.
 Algorithm \ref{alg:greedy} is initialized with   $m_0=4$
and the initial guess 
$A_0=\left[\begin{smallmatrix} 
1.6243 & -0.6118 & -0.5282\\
-1.0730 & 0.8654 & -2.3015\\
1.7448 & -0.7612 &  0.3190
\end{smallmatrix}\right]
$
and 
$B_0=\left[\begin{smallmatrix} 
-0.2494 & 1.4621& -2.0601\\
-0.3224 & -0.3841& 1.1338\\
-1.0999 & -0.1724 & -0.8779
\end{smallmatrix}\right]
$,
whose entries are  sampled independently from the standard  normal distribution.
%Our experiments with  different choices  of $m_0$ and $( A_0, B_0)$
%show that 
%the growth rate of the algorithm regret is very robust with respect to these hyperparameters,
%whose details are  omitted  here for brevity.
For each $\ell\in \sN\cup\{0\}$,
given the current estimate $\theta_\ell=(A_\ell,B_\ell)$ 
of $\theta^\star$,     
classical LQ control theory 
(see e.g.,~\cite{yong1999stochastic})
shows  that
solutions to 
 \eqref{eq:lc_fbsde}
can be found analytically via   Riccati equations,
and 
the greedy policy $\psi^{\theta_\ell}$
is given by $\psi^{\theta_\ell}(t,x)=   -R^{-1}B^\trans P^{\theta_\ell}_t x$, 
where $P^{\theta_\ell}$ is the unique positive semidefinite solution to  
\begin{equation}
\label{riccati1}
%\begin{cases}
\tfrac{\d}{\d t}P_t + A_\ell^\trans P_t + P_tA_\ell  - P_t(B_\ell R^{-1}B_\ell^\trans)P_t + Q=0, \; t \in (0,T);
\quad 
P_T=0.
%\end{cases}
\end{equation}
We  solve  \eqref{riccati1} numerically via a high-order Runge-Kutta method
on a uniform time grid  with stepsize $T/100$, 
and then 
 simulate  $m_\ell=2^\ell m_0$ independent trajectories of the  state dynamics 
\eqref{eq:lc_sde_theta}
(controlled by $\psi^{\theta_\ell}$)
using the Euler-Maruyama method on the same time grid. 
To estimate  statistical properties 
of the algorithm regret \eqref{eq:regret},
we 
 execute  Algorithm \ref{alg:greedy}  for  100 independent runs,
 where 
among different executions,
the observed state trajectories are simulated based on independent Brownian motion increments. 

\paragraph{Performance with $m_0=4$.} 
 Figure \ref{fig:3d}
 exhibits 
the  performance of Algorithm \ref{alg:greedy} 
 for this  LQ-RL problem,
 where the solid lines and the shallow areas 
 indicate the sample mean and the 95\% confidence interval over 100 repeated experiments. The numerical results indicate  that  algorithm \ref{alg:greedy} manages to  learn the parameters over time while incurring a desirable sublinear regret, which is consistent with our theoretical result in Theorem \ref{cor:regret_diffusion}. More precisely,
 \begin{itemize}
     \item  Figure \ref{fig:est_err_3d} presents
 the 
  logarithmic relative error
of the estimate $(A_\ell,B_\ell)$ 
  (in the Frobenius norm)
after the $\ell$-th  update
for $\ell\in \{0,\ldots, 10\}$. 
One can observe that  the  estimate $(A_\ell,B_\ell)_{\ell}$ converge to the true parameter 
 $(A^\star,B^\star)$ as the number of episodes increases.
 Our experiment shows that 
it is much easier to learn the state coefficient $A^\star$ than the control coefficient $B^\star$,
which is consistent with the observation in 
 \cite{dean2020sample}
for other adaptive control schemes. 
\item  Figure \ref{fig:subopt_3d} presents the relative error between
  the  expected cost
$ J^{\theta^\star}(\psi^{\theta_\ell};x_0) $ 
and the optimal  expected cost
$ J^{\theta^\star}(\psi^{\theta^\star};x_0) $. 
One can see that  
a rough estimate of the control parameter $B^\star$ is 
often 
sufficient to design a nearly optimal  feedback control.
In particular, after the 10-th update ($\ell=10$), although 
the relative approximation errors of $A_\ell$ and $B_\ell$ are 2.7\% and 24.9\%, respectively,
the cost of  $\psi^{\theta_\ell}$
 approximates the optimal cost accurately with a  relative error 0.6\%. 
 \item  Figure \ref{fig:regret_3d} presents the cumulative regret over episodes. One can see that the small performance gap results in a slowly growing algorithm regret. In fact, 
performing a linear regression for logarithms
of expected regret and episode
 shows that the  regret after the $N$-th episode is of the magnitude 
$\cO(N^{0.34})$,
which is slightly better than the theoretical upper bound in Theorem \ref{cor:regret_diffusion}. 
 \end{itemize}

% These results clearly demonstrate the robust performance of Algorithm \ref{alg:greedy}, even with a small initial episode $m_0$  and an initial  guess 
%   $(A_0,B_0)$ that are  far from the true parameters. 

\begin{figure}[H]
\centering
\begin{subfigure}{0.32\textwidth}
    \includegraphics[trim=18 5 30 25, clip,  width=\textwidth]{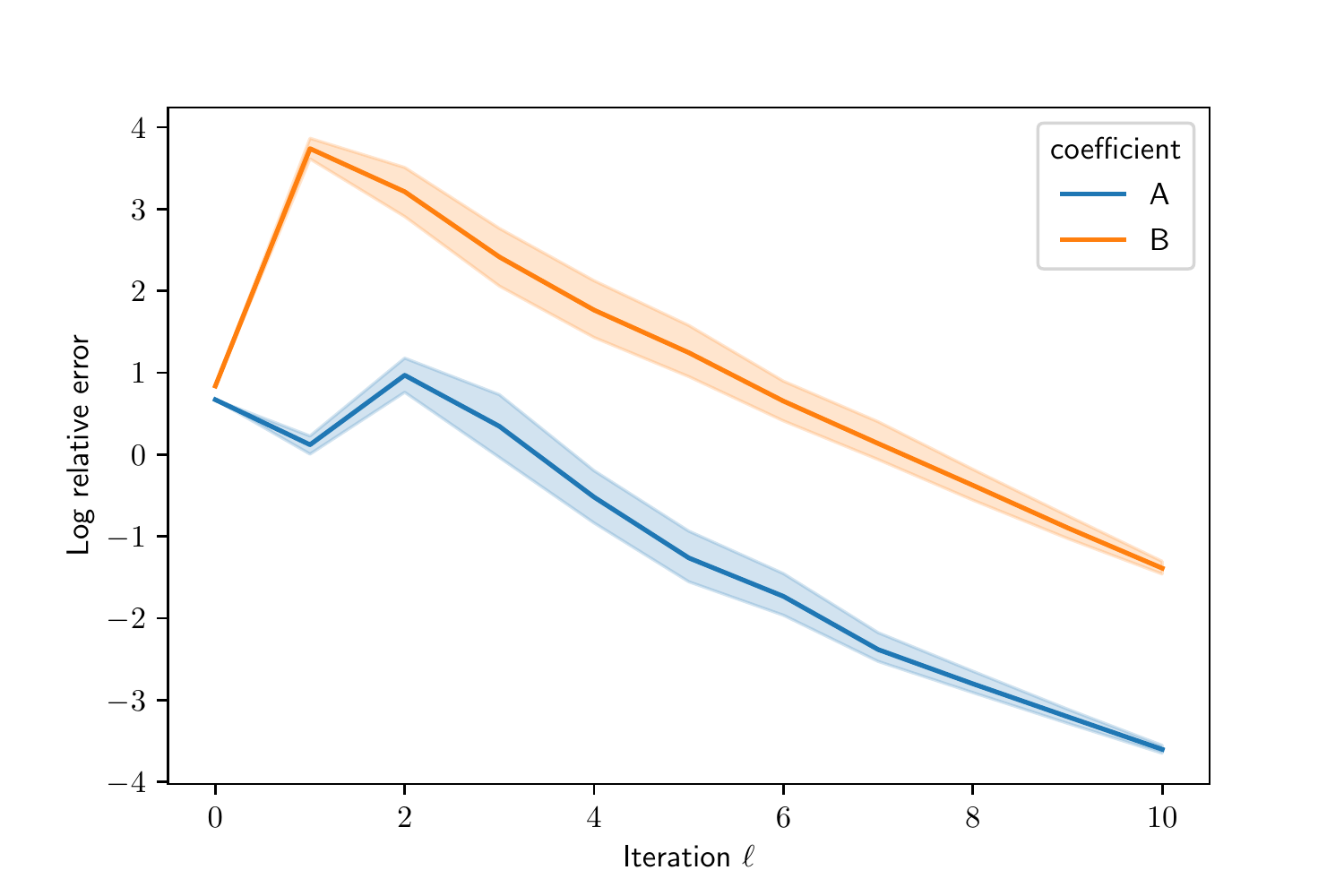}
    \caption{Parameter estimation errors.}
    \label{fig:est_err_3d}
\end{subfigure}
%\hfill
\begin{subfigure}{0.32\textwidth}
    \includegraphics[trim=18 5 30 25, clip, width=\textwidth]{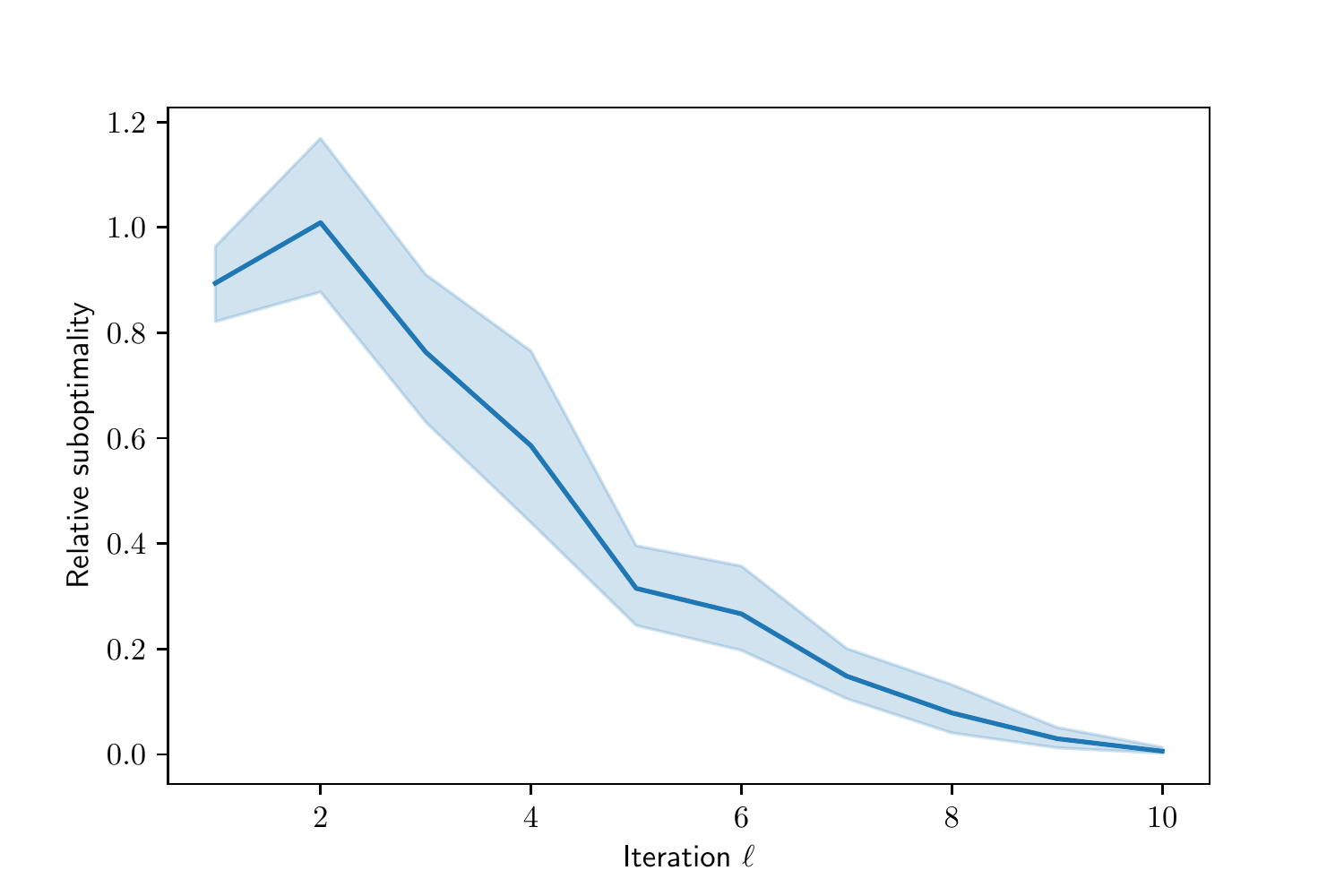}
    \caption{Cost suboptimality gap.}
    \label{fig:subopt_3d}
\end{subfigure}
%\hfill
\begin{subfigure}{0.32\textwidth}
    \includegraphics[trim=18 5 30 25, clip,  width=\textwidth]{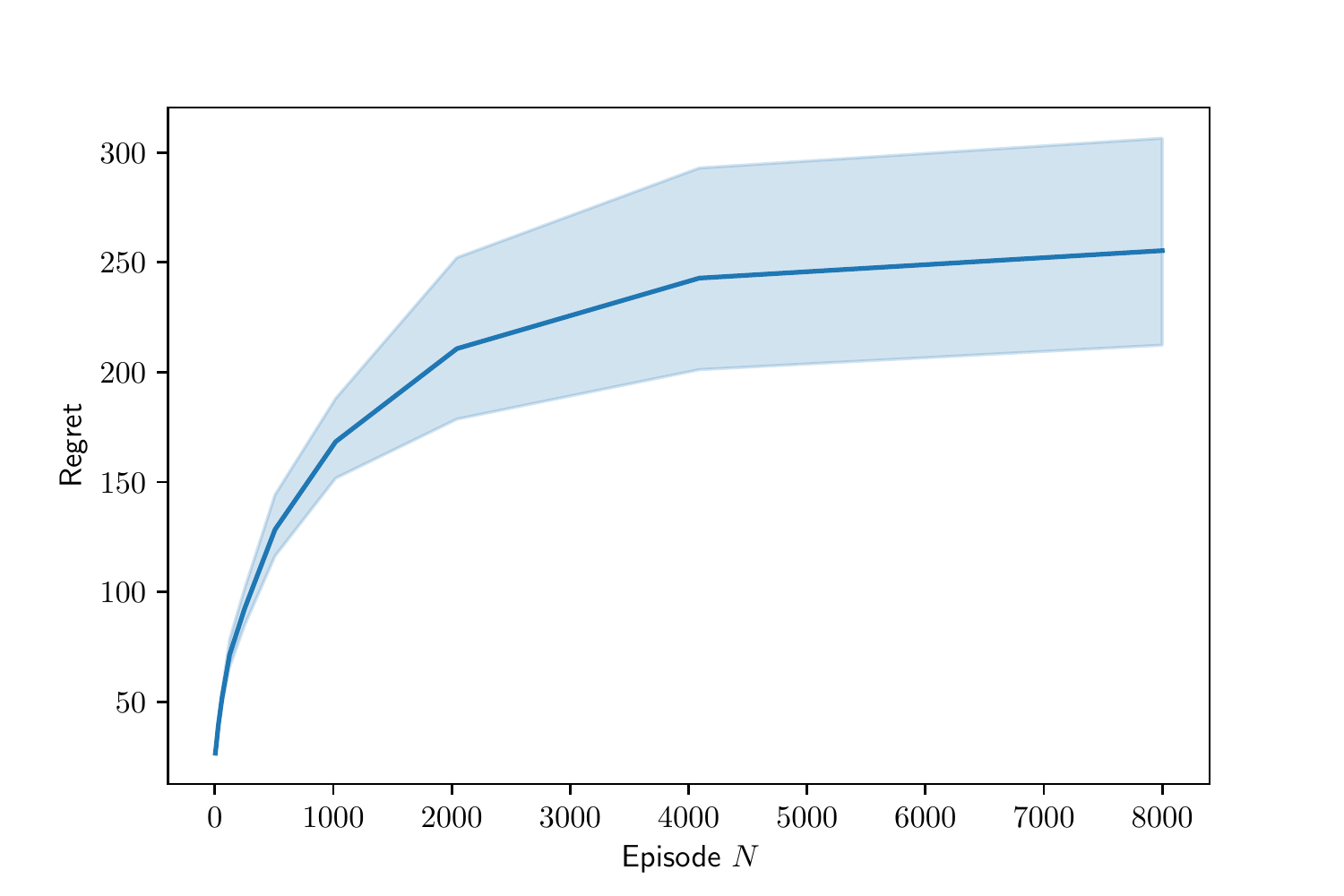}
    \caption{Algorithm regret.}
    \label{fig:regret_3d}
\end{subfigure}
        
\caption{Performance of Algorithm \ref{alg:greedy} for the LQ-RL problem ($m_0=4$).}
\label{fig:3d}
\end{figure}

% \begin{figure}[H]
% \centering
% \begin{subfigure}{0.32\textwidth}
%     \includegraphics[trim=18 5 30 25, clip,  width=\textwidth]{figures/AB_3d_N_8000_m0_2_new.pdf}
%     \caption{Parameter estimation errors.}
%     \label{fig:est_err_3d_m0_2}
% \end{subfigure}
% %\hfill
% \begin{subfigure}{0.32\textwidth}
%     \includegraphics[trim=18 5 30 25, clip, width=\textwidth]{figures/subopt_3d_N_8000_m0_2_new}
%     \caption{Cost suboptimality gap.}
%     \label{fig:subopt_3d_m0_2}
% \end{subfigure}
% %\hfill
% \begin{subfigure}{0.32\textwidth}
%     \includegraphics[trim=18 5 30 25, clip,  width=\textwidth]{figures/regret_3d_N_8000_m0_2_new.pdf}
%     \caption{Algorithm regret.}
%     \label{fig:regret_3d_m0_2}
% \end{subfigure}
        
% \caption{Performance of Algorithm \ref{alg:greedy} for the LQ-RL problem ($m_0=2$).}
% \label{fig:3d_m0_2}
% \end{figure}
\paragraph{Robustness with respect to the initial batch size $m_0$.}
We next demonstrate the robustness of  Algorithm \ref{alg:greedy} by performing computations with  $m_0=1$  and 
fixing  other settings as above. 
 The results are shown in Figure \ref{fig:3d_m0_1}.
 Note that the smaller  initial batch size $m_0$  %leads to more frequent parameter updates, and 
 makes the learning more challenging. % as suggested by our theoretical results \ref{??}. 
 By comparing  the results
 against those with $m_0=4$, one can see that our algorithm is robust and performs well with the small $m_0$. In particular, we see that 
 \begin{itemize}
     \item 
Estimating parameters with fewer sample 
trajectories
leads to larger  parameter estimation errors  with suboptimality gaps,
especially for the first few iterations.
It also leads to a wider range of $(A_\ell, B_\ell)_{\ell}$ 
 among different algorithm executions
and hence 
 a larger variance of the algorithm regret.
 \item  As the number of episodes increases, the estimate $(A_\ell,B_\ell)_{\ell}$ converge to the true parameter $(A^\star,B^\star)$ and the suboptimality gap quickly converges to $0$, see Figures \ref{fig:est_err_3d_m0_1} and \ref{fig:subopt_3d_m0_1}.
 The algorithm regret  grows  sublinearly (see Figure \ref{fig:regret_3d_m0_1}),
 and the  regret after the $N$-th episode is of the magnitude $\cO(N^{0.51})$.
 This confirms the theoretical results in Theorem \ref{cor:regret_diffusion} even for a small $m_0$.
 \end{itemize}

% Since at the first several iterations, the numbers of sample trajectories used to update parameters are quite limited, we observe huge increase in both parameter estimation errors and cost suboptimality gap, which are larger compared to the case of $m_0=4$. Such limited samples at beginning leads to a wider range of $A_\ell$ and $B_\ell$ and therefore a larger variance on the algorithm regret. However, it is still obvious to see in Figures \ref{fig:est_err_3d_m0_1} and \ref{fig:subopt_3d_m0_1} that as the number of episodes increases, the estimate $(A_\ell,B_\ell)_{\ell}$ converge to the true parameter $(A^\star,B^\star)$ and the suboptimality gap converges to $0$. We can also observe a sub-linear regret in Figure \ref{fig:regret_3d_m0_1}. In particular, after 12-th update ($\ell=12$), the relative approximation errors of $A_\ell$ and $B_\ell$ are 3.00\% and 18.3\% respectively; the cost of  $\psi^{\theta_\ell}$
%  approximates the optimal cost accurately with a relative error 1.6\%, which is slightly larger compared to the case where $m_0=4$. Performing a linear regression for logarithms
% of expected regret and episode
%  shows that the  regret after the $N$-th episode is of the magnitude $\cO(N^{0.51})$. These results further demonstrate the robustness of Algorithm \ref{alg:greedy}, given its good performance of dealing with large perturbations due to the choice of smaller $m_0$.
 
\begin{figure}[H]
\centering
\begin{subfigure}{0.32\textwidth}
    \includegraphics[trim=18 5 30 25, clip,  width=\textwidth]{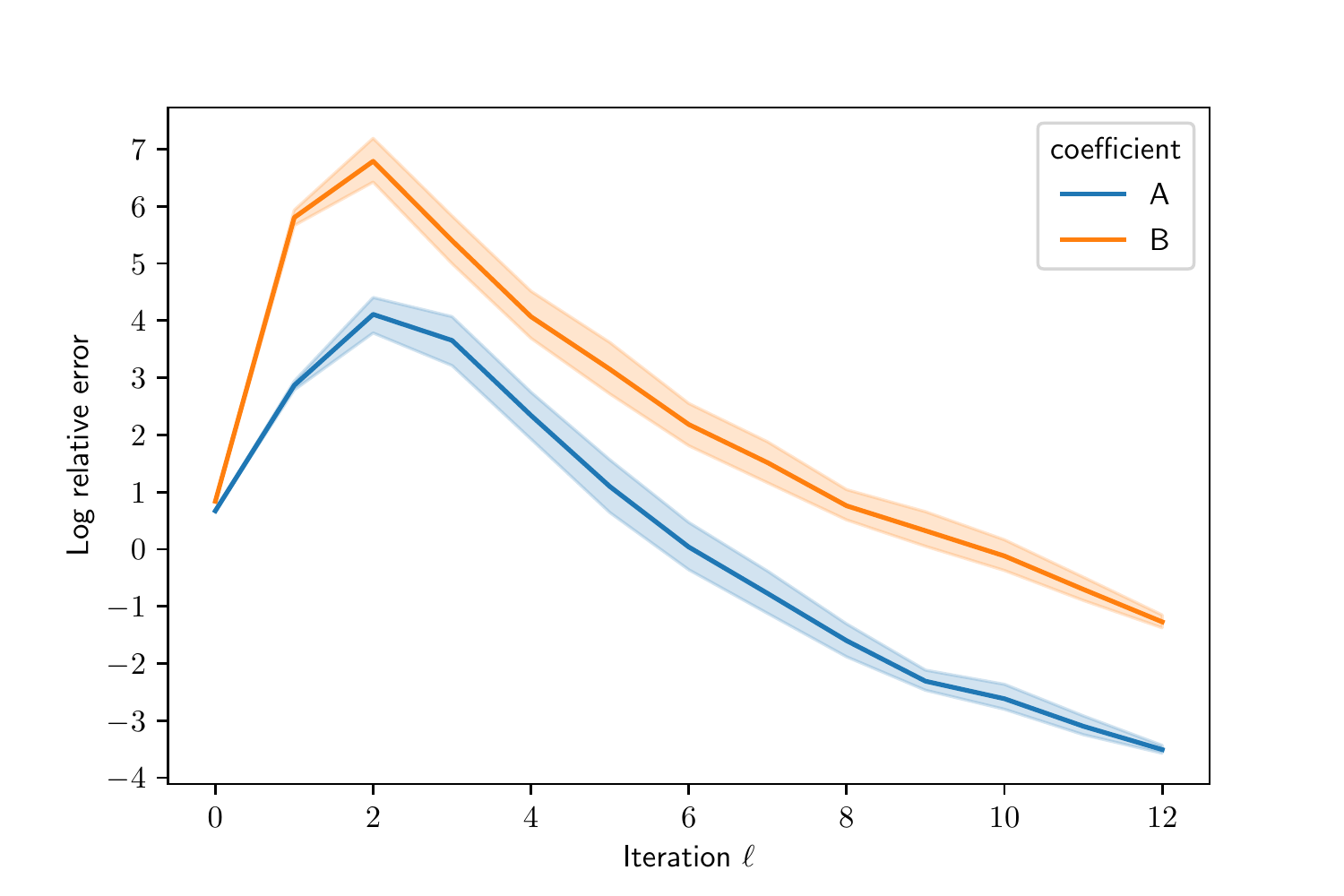}
    \caption{Parameter estimation errors.}
    \label{fig:est_err_3d_m0_1}
\end{subfigure}
%\hfill
\begin{subfigure}{0.32\textwidth}
    \includegraphics[trim=18 5 30 25, clip, width=\textwidth]{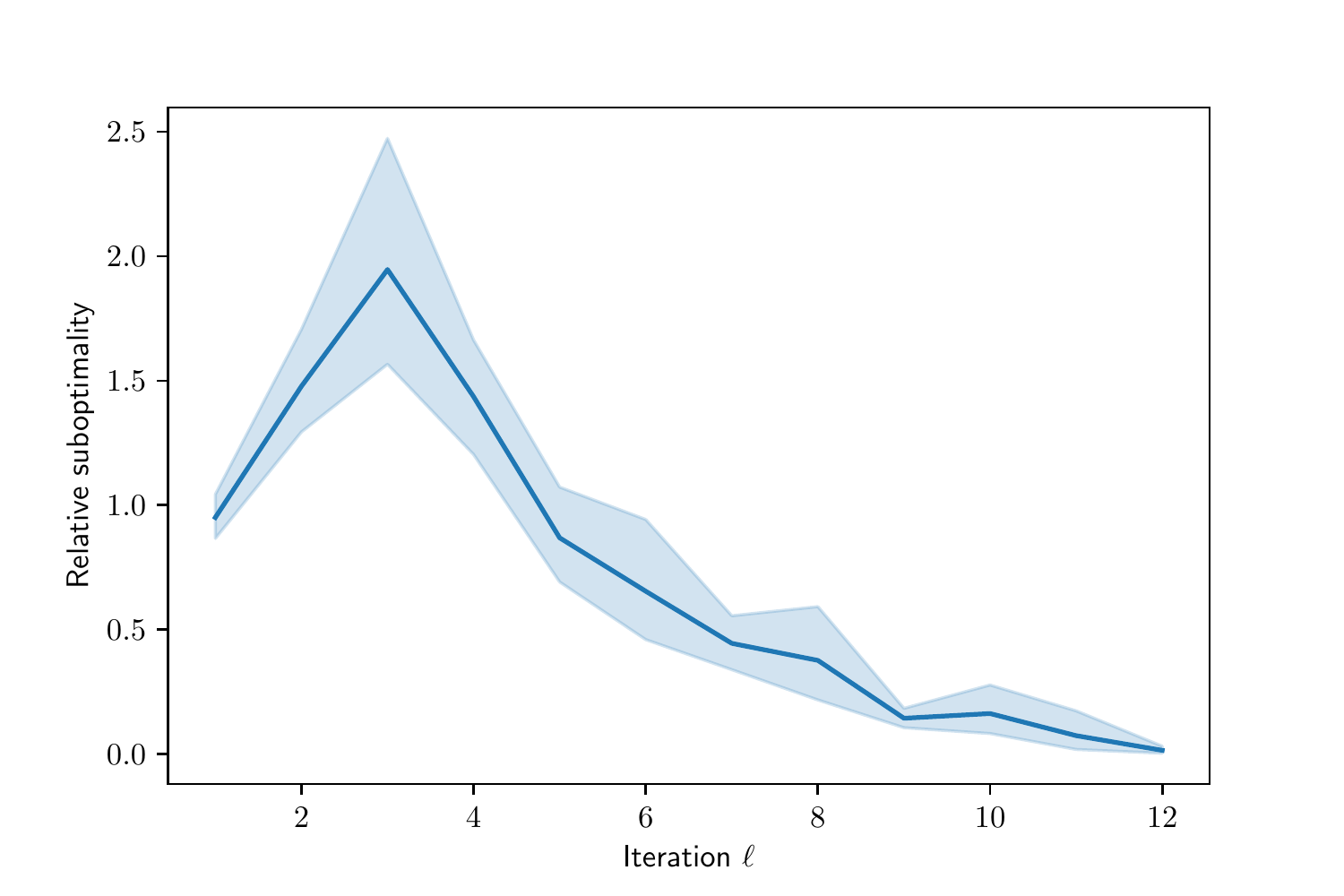}
    \caption{Cost suboptimality gap.}
    \label{fig:subopt_3d_m0_1}
\end{subfigure}
%\hfill
\begin{subfigure}{0.32\textwidth}
    \includegraphics[trim=18 5 30 25, clip,  width=\textwidth]{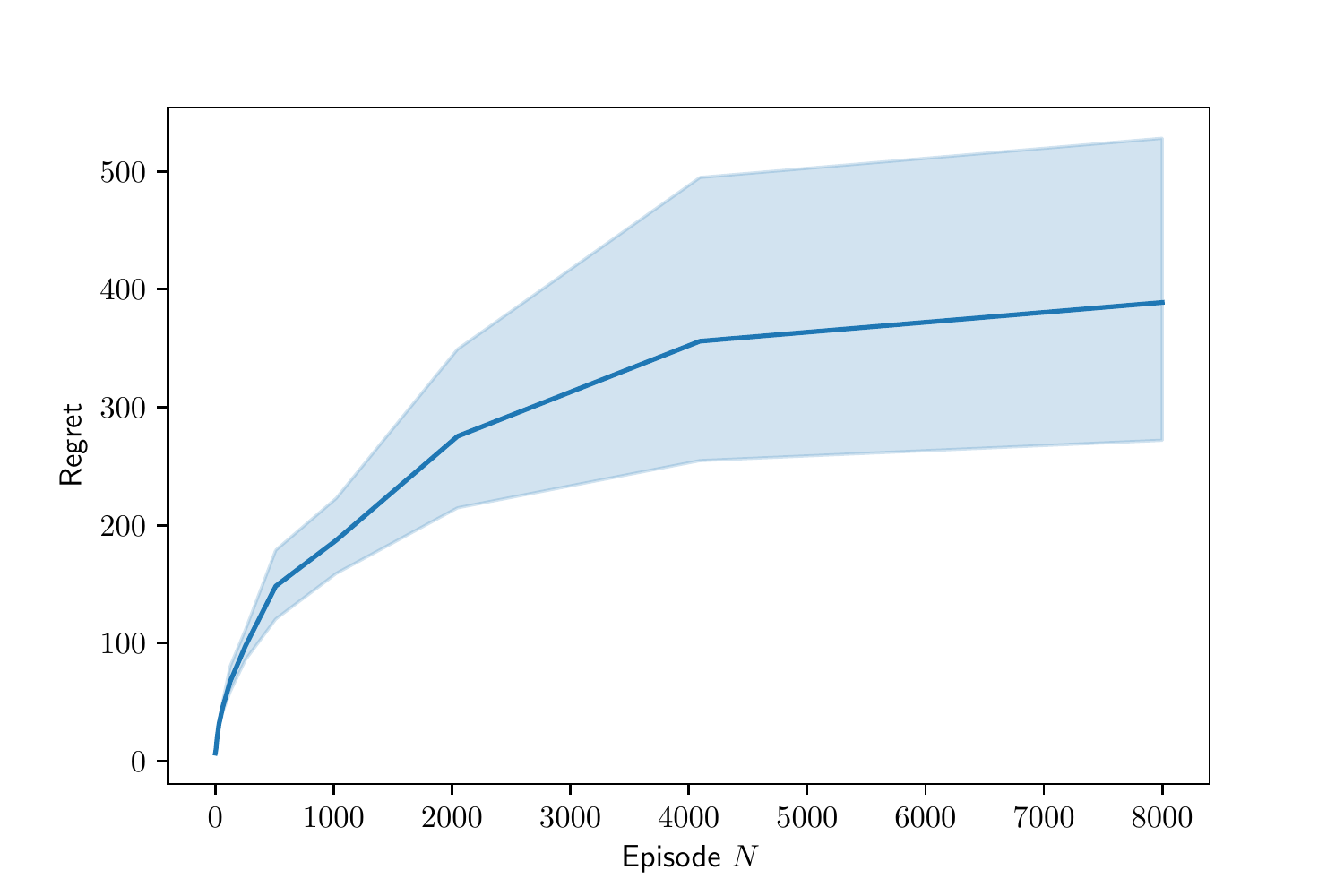}
    \caption{Algorithm regret.}
    \label{fig:regret_3d_m0_1}
\end{subfigure}
        
\caption{Performance of Algorithm \ref{alg:greedy} for the LQ-RL problem ($m_0=1$).}
\label{fig:3d_m0_1}
\end{figure}

}
\begin{appendices}

\section{Preliminaries}

Here, we collect some fundamental  results which are used for our analysis.

We start with  a stability result for coupled FBSDEs 
under a generalized monotonicity condition, 
which is crucial for our stability analysis of feedback controls. 
For any given 
  $t\in [0,T]$
 and $\lambda\in [0,1]$,
 we consider the following FBSDE 
 defined on $[t,T]$:
for $s\in [t,T]$,
\begin{subequations}\l{eq:moc}
\begin{alignat}{2}
\d X_s&=(\lambda \bar{b}(s,X_s,Y_s)+\cI^b_s)\,\d s +
\bar{\sigma}(s)\, \d W_s
+\int_{\sR^p_0}
\bar{\gamma}(s,u)\, \tilde{N}(\d s,\d u),
&&
\q X_t=\xi,
\l{eq:moc_sde}
\\
\d Y_s&=-(\lambda \bar{f}(s,X_s,Y_s)+\cI^f_s)\,\d t+Z_s\,\d W_s
+\int_{\sR^p_0}
M_s\, \tilde{N}(\d s,\d u),
&&
\q Y_T=\lambda \bar{g}(X_T)+\cI^g,
\l{eq:moc_bsde}
\end{alignat}
\end{subequations}
with given 
$\xi\in L^2(\cF_t;\sR^n)$,
$(\cI^b,\cI^f)\in \cH^2(\sR^n\t\sR^{n})$, $\cI^g\in L^2(\cF_T;\sR^m)$
and measurable functions
$\bar{\sigma}:[0,T]\to \sR^{n\t d}$,
$\bar{\gamma}:[0,T]\t \sR^p_0\to \sR^{n}$, 
  $\bar{b},\bar{f}:[0,T]\t \sR^n\t \sR^n\to\sR^n$
  and  $\bar{g}:\sR^n\to \sR^n$.

\begin{Lemma}\l{lemma:mono_stab}
Let $K\ge 0$, for each $i\in \{1,2\}$, let 
$\bar{b}_i,\bar{f}_i:[0,T]\t \sR^n\t \sR^n\to\sR^n$, $\bar{g}_i:\sR^n\to \sR^n$
satisfy
$\int_0^T(|\bar{b}_i(t,0,0)|^2+|\bar{f}_i(t,0,0)|^2) \,\d t<\infty$
and 
  for all $t\in [0,T]$ that 
$\bar{f}_i(t,\cdot), \bar{g}_i$ are $K$-Lipschitz continuous,
let 
$\bar{\sigma}_i:[0,T]\to \sR^{n\t d}$ satisfy $\int_0^T|\bar{\sigma}_i(t)|^2\,\d t<\infty$
and let 
$\bar{\gamma}_i:[0,T]\t \sR^p_0\to \sR^{n}$ satisfy $\int_0^T\int_{\sR^p_0} |\bar{\gamma}_i(t,u)|^2\,\nu(\d u)\d t<\infty$.
Assume further that  there exists $\tau>0$ 
and a measurable function $\eta:[0,T]\t \sR^n\t \sR^n\t \sR^n\t \sR^n\to [0,\infty)$
such that for all 
$t\in [0,T], (x,y),(x',y')\in  \sR^n\t  \sR^n$,
\begin{align}
\la \bar{b}_1(t,x,y)- \bar{b}_1(t,x',y'),y-y'\ra
&  +\la -\bar{f}_1(t,x,y)+\bar{f}_1(t,x',y'),x-x'\ra 
\le -\tau \eta(t,x,y,x',y'),
\l{eq:moc_mono}
\\
|\bar{b}_1(t,x,y)- \bar{b}_1(t,x',y')|
&\le K(|x-x'|+ \eta(t,x,y,x',y')),
\l{eq:lipschitz_eta}
\\
\la  \bar{g}(x)-\bar{g}(x'),x-x'\ra
\l{eq:moc_g}
&\ge 0.
 \end{align}
Then there exists $C>0$, depending only on $T, K, \lambda$ and the dimensions, such that 
 for all
 $t\in [0,T]$,
  $\lambda_0\in [0,1]$,
 $i\in \{1,2\}$,
for every   
$(X_i,Y_i, Z_i,M_i)\in  \cS^2(t,T;\sR^n) \t \cS^2(t,T;\sR^n) \t \cH^2(t,T;\sR^{n\t d})\t \cH^2_\nu(t,T;\sR^{n})$
satisfying \eqref{eq:moc}
with 
$\lambda=\lambda_0$,
   $(\bar{b},\bar{\sigma},\bar{\gamma},\bar{f},\bar{g})
   =(\bar{b}_i,\bar{\sigma}_i,\bar{\gamma}_i,\bar{f}_i,\bar{g}_i)$,
$\xi=\xi_i \in L^2(\cF_t;\sR^n)$,
$\cI^g=\cI^g_i \in L^2(\cF_T;\sR^n)$
and
$(\cI^b,\cI^f)=(\cI^b_i,\cI^f_i)\in \cH^2(\sR^n\t\sR^n)$, we have that 
 \begin{align}\l{eq:mono_stab}
 \begin{split}
 &\|X_1-{X}_2\|_{\cS^2}^2+ \|Y_1-{Y}_2\|_{\cS^2}^2+ \|Z_1-{Z}_2\|_{\cH^2}^2
 + \|M_1-{M}_2\|_{\cH^2_\nu}^2
 \\
& \le C\Big\{\|\xi_1-{\xi_2}\|_{L^2}^2+
\|\lambda_0 (\bar{g}_1({X}_{2,T})-\bar{g}_2({X}_{2,T}))+\cI^g_1-{\cI}^g_2\|_{L^2}^2
+\|\bar{\sigma}_1-\bar{\sigma}_2\|_{\cH^2}^2+\|\bar{\gamma}_1-\bar{\gamma}_2\|_{\cH^2_\nu}^2
\\
&\quad
+\|\lambda_0 (\bar{b}_1(\cdot,{X}_2,{Y}_2)-\bar{b}_2(\cdot,{X}_2,{Y}_2))
+\cI^b_1-{\cI}^b_2\|_{\cH^2}^2
\\
&\quad
+\|\lambda_0 (\bar{f}_1(\cdot,{X}_2,{Y}_2)-\bar{f}_2(\cdot,{X}_2,{Y}_2))
+\cI^f_1-{\cI}^f_2\|_{\cH^2}^2
\Big\}.
\end{split}
  \end{align}

\end{Lemma}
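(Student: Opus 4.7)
The plan is to apply It\^o's formula to the bilinear process $\langle \Delta X_s, \Delta Y_s\rangle$ on $[t,T]$, with $\Delta X := X_1 - X_2$ and $\Delta Y, \Delta Z, \Delta M$ defined analogously, exploit the two monotonicity conditions \eqref{eq:moc_mono} and \eqref{eq:moc_g}, and couple the outcome with the classical $L^2$ stability estimates for the forward SDE \eqref{eq:moc_sde} and the backward BSDE \eqref{eq:moc_bsde}. To isolate the ``diagonal'' piece on which monotonicity applies, I would introduce the residuals
\[
\cJ^b_s := \lambda_0(\bar b_1-\bar b_2)(s,X_{2,s},Y_{2,s})+\cI^b_{1,s}-\cI^b_{2,s},
\]
with $\cJ^f_s$ and $\cJ^g$ defined analogously from $(\bar f_1,\bar f_2)$ and $(\bar g_1,\bar g_2)$, so that the drift of $\Delta X$ splits as $\lambda_0[\bar b_1(s,X_1,Y_1)-\bar b_1(s,X_2,Y_2)]+\cJ^b_s$, and the driver and terminal value of $\Delta Y$ split analogously.

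First I would take expectations of It\^o applied to $\langle \Delta X,\Delta Y\rangle$ on $[t,T]$: the stochastic integrals are true martingales by the $\cS^2, \cH^2, \cH^2_\nu$ hypotheses, and the drift-driver cross-pairing on the common map $(\bar b_1,\bar f_1)$ produces, via \eqref{eq:moc_mono}, the negative contribution $-\lambda_0\tau\,\sE[\int_t^T\eta_s\,\d s]$; the terminal pairing $\sE[\langle\Delta X_T,\Delta Y_T\rangle]$ decomposes as $\lambda_0\sE[\langle\Delta X_T,\bar g_1(X_{1,T})-\bar g_1(X_{2,T})\rangle]+\sE[\langle\Delta X_T,\cJ^g\rangle]$, whose first summand is nonnegative by \eqref{eq:moc_g}. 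Rearranging and absorbing each remaining inner product (with $\cJ^b,\cJ^f$ and the covariation residuals $\bar\sigma_1-\bar\sigma_2$ and $\bar\gamma_1-\bar\gamma_2$) via Young's inequality gives, for arbitrary $\epsilon>0$,
\[
\lambda_0\tau\,\sE\Bigl[\int_t^T\eta_s\,\d s\Bigr]\leq \epsilon\bigl(\|\Delta X\|_{\cS^2}^2+\|\Delta Y\|_{\cS^2}^2+\|\Delta Z\|_{\cH^2}^2+\|\Delta M\|_{\cH^2_\nu}^2\bigr)+C_\epsilon\,\mathcal{D},
\]
where $\mathcal{D}$ denotes the data right-hand side of \eqref{eq:mono_stab}.

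Next, a standard Gronwall-BDG bound for \eqref{eq:moc_sde}, combined with \eqref{eq:lipschitz_eta}, produces
\[
\|\Delta X\|_{\cS^2}^2\leq C\Bigl(\|\xi_1-\xi_2\|_{L^2}^2+\sE\Bigl[\int_t^T\eta_s^{\,2}\,\d s\Bigr]+\|\cJ^b\|_{\cH^2}^2+\|\bar\sigma_1-\bar\sigma_2\|_{L^2}^2+\|\bar\gamma_1-\bar\gamma_2\|_{\cH^2_\nu}^2\Bigr),
\]
and the usual $L^2$ estimate for BSDEs with jumps and $K$-Lipschitz driver (apply It\^o to $|\Delta Y|^2$, use Young plus Gronwall to swallow the $|\Delta Y|$ dependence in the driver) gives
\[
\|\Delta Y\|_{\cS^2}^2+\|\Delta Z\|_{\cH^2}^2+\|\Delta M\|_{\cH^2_\nu}^2\leq C\bigl(\|\Delta X\|_{\cS^2}^2+\|\cJ^g\|_{L^2}^2+\|\cJ^f\|_{\cH^2}^2\bigr).
\]
Chaining the three inequalities and taking $\epsilon$ small enough to absorb the $\cS^2$, $\cH^2$ and $\cH^2_\nu$ terms on the right into the left yields \eqref{eq:mono_stab}, with a constant depending only on $T,K,\tau$ and the dimensions; the degenerate case $\lambda_0=0$ is immediate since the forward SDE and backward BSDE decouple and only classical $L^2$ stability is needed.

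The main technical obstacle is the mismatch between the $\sE[\int_t^T\eta_s\,\d s]$ dividend delivered by the It\^o-monotonicity step and the $\sE[\int_t^T\eta_s^{\,2}\,\d s]$ demanded by the forward $L^2$ estimate. In the intended applications of this lemma (cf.~\eqref{eq:lc_mono}) the gap $\eta$ is naturally a squared quantity, so the monotonicity actually supplies the $L^2$-control of the underlying control gap that the forward estimate requires; aligning the two degrees of homogeneity and then applying Young's inequality one final time closes the feedback loop between the forward and backward components, after which the chaining described above yields \eqref{eq:mono_stab}.
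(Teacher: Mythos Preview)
Your proof follows essentially the same route as the paper's: apply It\^o's formula to $\langle \Delta X,\Delta Y\rangle$, split off the ``diagonal'' piece so that \eqref{eq:moc_mono} and \eqref{eq:moc_g} yield the $-\lambda_0\tau\int\eta$ term, absorb the residuals by Young's inequality with a free parameter $\epsilon$, then combine with the standard forward and backward $L^2$ stability estimates and choose $\epsilon$ small to close. This is exactly the paper's argument, step for step.

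On the homogeneity point you flag: the paper does \emph{not} carry $\int\eta^2$ in its forward estimate; it writes directly
\[
\|\Delta X\|_{\cS^2}^2\le C\Bigl(\lambda_0\,\sE\Bigl[\int_t^T\eta_s\,\d s\Bigr]+\text{RHS}\Bigr),
\]
which is what one obtains if \eqref{eq:lipschitz_eta} is read as $|\bar b_1-\bar b_1'|^2\le K(|x-x'|^2+\eta)$. In the intended applications (cf.~\eqref{eq:lc_mono}) this is exactly what holds, since there $\eta=|\phi-\phi'|^2$ and $\bar b_1(t,x,y)=b(t,x,\phi(t,x,y))$ is Lipschitz in $\phi$, so $|\bar b_1-\bar b_1'|^2\le C(|x-x'|^2+|\phi-\phi'|^2)=C(|x-x'|^2+\eta)$. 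Your diagnosis of the mismatch and your proposed fix (``$\eta$ is naturally a squared quantity'') are therefore exactly right and coincide with what the paper is implicitly using; the paper simply asserts the $\int\eta$ forward bound without spelling this out.
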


\begin{proof}
Throughout this proof, let $C$ be a generic constant depending only on $T$,  $K$, $\lambda$ and the dimensions,
 let $t\in[0,T]$,  $\lambda_0\in [0,1]$,
let
$(\delta X,\delta Y,\delta  Z,\delta  M)
=(X_1-X_2,Y_1-Y_2, Z_1-Z_2,M_1-M_2)$,
$\delta \xi=\xi_1-{\xi}_2$,
$\delta \sigma=\bar{\sigma}_1-\bar{\sigma}_2$,
$\delta \gamma=\bar{\gamma}_1-\bar{\gamma}_2$,
$\delta \cI^g=\cI^g_1-{\cI}^g_2$,
and
for each $s\in [t,T]$ 
let  $\delta\cI^b_s=\cI^b_{1,s}-{\cI}^b_{2,s}$,
 $\delta\cI^f_s=\cI^f_{1,s}-{\cI}^f_{2,s}$,
  $\bar{b}_1(\Theta_{1,s})=\bar{b}_1(t,X_{1,s},Y_{1,s})$,
    $\bar{b}_1(\Theta_{2,s})=\bar{b}_1(t,X_{2,s},Y_{2,s})$
    and
    $\bar{b}_2(\Theta_{2,s})=\bar{b}_2(t,X_{2,s},Y_{2,s})$.
Similarly, we introduce the notation
$\bar{f}_1(\Theta_{1,s}), \bar{f}_1(\Theta_{2,s}), \bar{f}_2(\Theta_{2,s})$ for  $s\in [t,T]$.

By applying It\^{o}'s formula to $(\la Y_{1,s}-Y_{2,s},X_{1,s}-X_{2,s}\ra)_{s\in [t,T]} $, we can obtain 
from \eqref{eq:moc}
that  
\begin{align*}
&\sE[\la \lambda_0(\bar{g}_1(X_{1,T})-\bar{g}_2(X_{2,T}))+\delta \cI^g,\delta X_{T}\ra-\la \delta Y_{t}, \delta \xi\ra]
\\
&=\sE\bigg[\int_t^T
\bigg(\la \lambda_0(\bar{b}_1(\Theta_{1,s})-\bar{b}_2(\Theta_{2,s}))+\delta \cI^b_s, \delta Y_{s}\ra 
%\la \lambda_0(\sigma(\Theta_t)-\bar{\sigma}(\bar{\Theta}_t))+\delta \cI^\sigma_t, G^*( Z_t-\bar{Z}_t)\ra
%\\
%&\quad 
-
\la \lambda_0(\bar{f}_1(\Theta_{1,s})-\bar{f}_2(\Theta_{2,s}))+\delta \cI^f_s, \delta X_{s}\ra
\\
&\quad 
+\la \delta \sigma(s), \delta Z_{s}\ra
+\int_{\sR^p_0}\la \delta \gamma(s,u), \delta M_{s}\ra\, \nu(\d u)\bigg)\, \d s
\bigg]
\\
&\le \sE\bigg[\int_t^T
\bigg(
-\lambda_0 \tau\eta(s, X_{1,s}, Y_{1,s},X_{2,s},Y_{2,s})
+
\la \lambda_0(\bar{b}_1(\Theta_{2,s})-\bar{b}_2(\Theta_{2,s}))+\delta \cI^b_s, \delta Y_{s}\ra 
\\
&\quad 
-
\la \lambda_0(\bar{f}_1(\Theta_{2,s})-\bar{f}_2(\Theta_{2,s}))+\delta \cI^f_s, \delta X_{s}\ra
+\la \delta \sigma(s), \delta Z_{s}\ra
+\int_{\sR^p_0}\la \delta \gamma(s,u), \delta M_{s}\ra\, \nu(\d u)\bigg)\, \d s
\bigg],
\end{align*}
where for the last inequality, we have added and subtracted the terms $\lambda_0\bar{b}_1(\Theta_{2,s})$
and $-\lambda_0\bar{f}_1(\Theta_{2,s})$,
and applied \eqref{eq:moc_mono}.
Then, we can further deduce from \eqref{eq:moc_g} that
\begin{align*}
&
\lambda_0 \tau\sE\bigg[\int_t^T
\eta(s, X_{1,s}, Y_{1,s},X_{2,s},Y_{2,s})\, \d s\bigg]
\\
&\le
-\sE[\la \lambda_0(\bar{g}_1(X_{2,T})-\bar{g}_2(X_{2,T}))+\delta \cI^g,\delta X_{T}\ra-\la \delta Y_{t}, \delta \xi\ra]
\\
&\q 
+ \sE\bigg[\int_t^T
\bigg(
\la \lambda_0(\bar{b}_1(\Theta_{2,s})-\bar{b}_2(\Theta_{2,s}))+\delta \cI^b_s, \delta Y_{s}\ra 
-
\la \lambda_0(\bar{f}_1(\Theta_{2,s})-\bar{f}_2(\Theta_{2,s}))+\delta \cI^f_s, \delta X_{s}\ra
\\
&\quad 
+\la \delta \sigma(s), \delta Z_{s}\ra
+\int_{\sR^p_0}\la \delta \gamma(s,u), \delta M_{s}\ra\, \nu(\d u)\bigg)\, \d s
\bigg],
\end{align*}
from which we can apply Young's inequality and obtain for all $\eps>0$ that
\begin{align*}
&
\lambda_0 \sE\bigg[\int_t^T
\eta(s, X_{1,s}, Y_{1,s},X_{2,s},Y_{2,s})\, \d s\bigg]
\\
&\le
\eps\Big(\|\delta X_{T}\|_{L^2}^2+\|\delta Y_{t}\|_{L^2}^2
+\|\delta X\|_{\cH^2}^2+\|\delta Y\|_{\cH^2}^2
+
\|\delta Z\|_{\cH^2}^2+\|\delta M\|_{\cH^2_\nu}^2
\Big)+C\textrm{RHS}/\eps,
\end{align*}
where \textrm{RHS} denotes the terms at the right-hand side of \eqref{eq:mono_stab}.

By 
\eqref{eq:lipschitz_eta} and a
standard stability estimate of \eqref{eq:moc_sde}, we can deduce that 
\begin{align}\l{eq:moc_sde_estimate}
% \begin{split}
% \|\delta X\|_{\cS^2}^2
% &\le C\bigg(
% \|\delta \xi\|_{L^2}^2
% +\lambda_0 \sE\bigg[\int_t^T
% \eta(s, X_{1,s}, Y_{1,s},X_{2,s},Y_{2,s})\, \d s\bigg]
% +\|\delta{\sigma}\|_{\cH^2}^2+\|\delta{\gamma}\|_{\cH^2_\nu}^2
% \bigg)
% \\
% &\le 
% \eps\Big(\|\delta Y_{t}\|_{L^2}^2
% +\|\delta Y\|_{\cH^2}^2
% +
% \|\delta Z\|_{\cH^2}^2+\|\delta M\|_{\cH^2_\nu}^2
% \Big)+C\textrm{RHS}/\eps
% \end{split}
% \end{align}
\begin{split}
\|\delta X\|_{\cS^2}^2
&\le C\bigg(
\lambda_0 \sE\bigg[\int_t^T
\eta(s, X_{1,s}, Y_{1,s},X_{2,s},Y_{2,s})\, \d s\bigg]
+\textrm{RHS}
\bigg)
\\
&\le 
\eps C\Big(\|\delta Y_{t}\|_{L^2}^2
+\|\delta Y\|_{\cH^2}^2
+
\|\delta Z\|_{\cH^2}^2+\|\delta M\|_{\cH^2_\nu}^2
\Big)+C\textrm{RHS}/\eps
\end{split}
\end{align}
for all small enough $\eps>0$. Moreover, by  the Lipschitz continuity of $\bar{f}_1$, $\bar{g}_1$ and  the stability estimate of \eqref{eq:moc_bsde} 
(see e.g.~\cite[Proposition A4]{quenez2013bsdes}), we have that 
\begin{align*}
&\|\delta Y\|_{\cS^2}^2
+\|\delta Z\|_{\cH^2}^2+\|\delta M\|_{\cH^2_\nu}^2
\\
&\le C\Big(
\|\lambda_0 (\bar{g}_1(X_{1,T})-\bar{g}_2(X_{2,T}))+\delta \cI^g\|_{L^2}^2
+ 
\|\lambda_0 (\bar{f}_1(\cdot,X_{1},Y_{2})-\bar{f}_2(\cdot,X_{2},Y_{2}))+\delta \cI^f\|^2_{\cH^2}
\Big)
\\
&\le C\Big(
\|\delta X\|^2_{\cS^2}+
\|\lambda_0 (\bar{g}_1(X_{2,T})-\bar{g}_2(X_{2,T}))+\delta \cI^g\|_{L^2}^2
+ 
\|\lambda_0 (\bar{f}_1(\cdot,X_{2},Y_{2})-\bar{f}_2(\cdot,X_{2},Y_{2}))+\delta \cI^f\|^2_{\cH^2}
\Big)
\\
&\le C\textrm{RHS},
\end{align*}
where  we have applied \eqref{eq:moc_sde_estimate} with a sufficiently small $\eps$ for the last inequality.
This completes the desired stability estimate.
\end{proof}

We then present
 a version of Burkholder's inequality 
for the $\|\cdot\|_{L^q}$-norm of stochastic integrals,
which not only extends %\cite[Corollary 2.1]{breton2019integrability}
 \cite[Corollary 2.2]{breton2019integrability}
to 
stochastic integrals with respect to
general Poisson random
measures  on $[0,T]\t \sR^p_0$,
but also improves the bounding constants
there
%in \cite[Corollary 2.1]{breton2019integrability}
with a sharper dependence on the index $q$.
 
\begin{Lemma}\l{lemma:bdg}
% Let $v:\Om\t [0,T]\to \sR^d$ and $w:\Om\t [0,T]\t \sR^p_0\to \sR$ be adapted processes such that
% $\sE[\int_0^T |v|^2\,\d t]<\infty$ and $\sE[\int_0^T\int_{\sR^p_0} |w(t,u)|^2\,\nu(\d u)\d t]<\infty$. Then 
%  for any $q\ge 2$,
For all 
$v\in \cH^2(0,T;\sR^d)$,
$w\in \cH^2_\nu(0,T;\sR)$
and $q\ge 2$,
we have
\begin{align}
\sE\bigg[\bigg|\int_0^T v^\trans_t\,\d W_t\bigg|^q\bigg]
&\le C_q\sE\bigg[
\bigg(
\int_0^T |v_t|^2\,\d t
\bigg)^{q/2}\bigg],
\l{eq:bdg_brownian}
\\
\sE\bigg[\bigg|\int_0^T\int_{\sR^p_0} w(t,u)\,\tilde{N}(\d t, \d u)\bigg|^q\bigg]
&\le \tilde{C}_q
\bigg(
\sE\bigg[\int_0^T\int_{\sR^p_0} |w(t,u)|^{q}\,\nu(\d u)\d t 
\bigg]
\nb
\\
&\q +
\sE\bigg[\bigg(\int_0^T\int_{\sR^p_0} |w(t,u)|^{2}\,\nu(\d u)\d t 
\bigg)^{q/2}\bigg]
\bigg),
\l{eq:bdg_poisson}
\end{align}
where $C_q=(\sqrt{e/2}q)^q$ 
and $\tilde{C}_q=
21{e}^{q}
q^{ 2q}$.
\end{Lemma}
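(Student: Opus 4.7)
My plan is to prove both inequalities by applying It\^o's formula to the power function $\phi(x)=|x|^q$, which is $C^2$ for $q\ge 2$, and then to close the resulting recursive estimate via a combination of Young's inequality and Doob's maximal inequality, tightening the dependence on $q$ compared to the treatment in \cite{breton2019integrability}.

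For \eqref{eq:bdg_brownian}, set $M_t=\int_0^t v_s^\trans\,\d W_s$. Since $\phi''(x)\le q(q-1)|x|^{q-2}$, It\^o's formula together with the martingale property of the stochastic integral (justified by standard stopping-time truncation) gives
\bb
\sE[|M_T|^q]\le \tfrac{q(q-1)}{2}\sE\bigg[\sup_{t\in[0,T]}|M_t|^{q-2}\int_0^T|v_t|^2\,\d t\bigg].
\ee
Young's inequality with conjugate exponents $p=q/(q-2)$ and $p'=q/2$, together with a scaling parameter $\eps>0$, separates the two factors; Doob's inequality then bounds $\sE[\sup_t|M_t|^q]$ by $(q/(q-1))^q\sE[|M_T|^q]\le e\,\sE[|M_T|^q]$ for $q\ge 2$. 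Choosing $\eps$ so that exactly half of $\sE[|M_T|^q]$ is absorbed into the left-hand side yields \eqref{eq:bdg_brownian} with the explicit constant $C_q=(\sqrt{e/2}\,q)^q$.

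For \eqref{eq:bdg_poisson}, set $N_t=\int_0^t\int_{\sR^p_0}w(s,u)\,\tilde N(\d s,\d u)$. Applying It\^o's formula for pure-jump semimartingales and compensating the jump measure produces
\bb
\sE[|N_T|^q]=\sE\bigg[\int_0^T\!\!\int_{\sR^p_0}\Big(|N_{s^-}+w|^q-|N_{s^-}|^q-q|N_{s^-}|^{q-2}N_{s^-}w\Big)\,\nu(\d u)\,\d s\bigg].
\ee
The elementary inequality $\big||a+b|^q-|a|^q-q|a|^{q-2}ab\big|\le c_q(|a|^{q-2}|b|^2+|b|^q)$, valid for all $a,b\in\sR$ and $q\ge 2$ with $c_q$ polynomial in $q$ times $2^q$, then splits the right-hand side into two pieces: the $|w|^q$ piece is exactly the first summand in \eqref{eq:bdg_poisson}; the $|N_{s^-}|^{q-2}|w|^2$ piece is treated exactly as in the Brownian case, by pulling out $\sup_s|N_s|^{q-2}$, applying Young's inequality with exponents $q/(q-2)$ and $q/2$, invoking Doob, and absorbing the resulting $\sE[\sup_s|N_s|^q]$ into the left-hand side, thereby producing the $\sE[(\int\!\!\int|w|^2\,\nu\,\d s)^{q/2}]$ contribution.

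The main obstacle is the bookkeeping of the explicit numerical constant $\tilde C_q=21\,e^q q^{2q}$. Each step --- the elementary inequality (contributing $c_q=O(2^q q^2)$), the Young parameter (which must be chosen as a negative power of $q$ to keep the coefficient of $\sE[\sup_s|N_s|^q]$ below $1/2$), the Doob absorption, and the combination of $L^q$ and $L^2$ terms --- injects an exponential-in-$q$ or polynomial-in-$q^q$ factor. Combining these multiplicative contributions and tightening them to the stated numerical values is routine but delicate; no new conceptual ingredient beyond the It\^o/Young/Doob triple is needed.
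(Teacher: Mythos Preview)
Your approach for \eqref{eq:bdg_brownian} is a standard re-derivation of Burkholder's inequality and is a legitimate alternative to the paper's one-line citation of \cite[Theorem~4.2.12]{bichteler2002stochastic}. The It\^o/Young/Doob triple gives a constant $C_q$ with $C_q^{1/q}=O(q)$, which is what the downstream application (Lemma~\ref{lemma:stochastic_integral_jump}) actually uses; getting the \emph{exact} numerical value $(\sqrt{e/2}\,q)^q$ is more delicate than you indicate (for instance, Doob's constant $(q/(q-1))^q$ equals $4$ at $q=2$, not $e$), but this is cosmetic.

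For \eqref{eq:bdg_poisson}, however, there is a genuine gap: the single-step It\^o/Young/Doob argument cannot deliver the stated constant $\tilde C_q=21\,e^q q^{2q}$. Tracing your absorption step, with $c_q\asymp q^2 2^{q}$ from the elementary Taylor inequality, you must take $\eps\asymp c_q^{-1}$ in order to absorb $c_q\,\eps\,D_q\,\sE[\sup_s|N_s|^q]$ into the left-hand side. The surviving coefficient on $\sE\big[(\int\!\!\int|w|^2\,\nu\,\d s)^{q/2}\big]$ is then of order
\[
c_q\cdot \eps^{-(q-2)/2}\;\asymp\; c_q^{\,q/2}\;\asymp\; (q^2 2^{q})^{q/2}\;=\;q^{q}\,2^{q^2/2},
\]
so that $\tilde C_q^{1/q}$ grows like $2^{q/2}$, i.e.\ exponentially in $q$. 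The lemma, by contrast, requires $\tilde C_q^{1/q}=O(q^2)$, and this polynomial growth is exactly what is exploited in the proof of Lemma~\ref{lemma:stochastic_integral_jump} to conclude that the Poisson stochastic integral lies in $\subW(1/(3+\vartheta))$. With your constant, no finite sub-Weibull order would follow.

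The paper avoids this blow-up by a \emph{dyadic iteration} of Burkholder's inequality, following \cite{breton2019integrability}: one applies the general-martingale Burkholder bound to $K^{(r)}_t=\int_0^t\!\int w(s,u)^r\,\tilde N(\d s,\d u)$, obtaining a bound on $\sE[|K^{(r)}|^q]$ in terms of $\sE[|K^{(2r)}|^{q/2}]$ plus a compensator term, and iterates $n=\lfloor\log_2 q\rfloor$ times until the exponent drops below $2$. The resulting constant is a product $\prod_{j=1}^n C_{q/2^{j-1}}$, and since the exponents $q/2^{j-1}$ form a geometric series summing to at most $2q$, one gets $\prod_j (q/2^{j-1})^{q/2^{j-1}}\le q^{2q}$ rather than anything super-exponential. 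This recursion, not the It\^o/Young/Doob mechanism, is the conceptual ingredient you are missing for the Poisson case.
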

\begin{proof}
Recall that Burkholder's inequality in 
\cite[Theorem 4.2.12]{bichteler2002stochastic} shows 
for all $1\le q<\infty$ and for every local martingale $(M_t)_{t\in [0,T]}$ that 
$\sE[|M^*_T|^q]\le C_q\sE[[M,M]_T^{q/2}]$,
where 
$M^*_T=\sup_{t\in [0,T]}|M_t|$,
$[M,M]$ is the quadratic variation of $M$,
and $C_q=(\sqrt{10q})^q$ for $q\in [1,2)$ and $C_q=(\sqrt{e/2}q)^q$ for $q\in [2,\infty)$.
Hence we can  obtain
\eqref{eq:bdg_brownian}  by setting $M_t=\int_0^t v^\trans_s\,\d W_s$ for all $t\in [0,T]$,
whose  quadratic variation process is given by  
$[M,M]_T=\int_0^T |v|^2\,\d t$.

We  proceed  to establish \eqref{eq:bdg_poisson}
by following the arguments of %\cite[Lemma 2]{breton2019integrability}
\cite[Lemma 2.1]{breton2019integrability}.
For all $r\ge 1$ and $t\in [0,T]$, let $K^{(r)}_t=\int_0^t\int_{\sR^p_0} w(s,u)^r\,\tilde{N}(\d s, \d u)$.
For any given $r\ge 1$ and $q\ge 2$, we can obtain from Burkholder's inequality
and $\tilde{N}(\d t,\d u)=N(\d t,\d u)-\nu(\d u)\d t$
  that 
\begin{align*}
\sE[|(K^{(r)})^*_T|^q]
&\le {C_q} \sE\bigg[\bigg(\int_0^T\int_{\sR^p_0} |w(t,u)|^{2r}\,{N}(\d t, \d u)\bigg)^{q/2}\bigg]
\\
&= {C_q} \sE\bigg[\bigg(\int_0^T\int_{\sR^p_0} |w(t,u)|^{2r}\,\tilde{N}(\d t, \d u)
+\int_0^T\int_{\sR^p_0} |w(t,u)|^{2r}\,\nu(\d u)\d t 
\bigg)^{q/2}\bigg]
\\
&\le 2^{\frac{q}{2}-1}{C_q} 
\sE[|(K^{(2r)})^*_T|^{q/2}]
+2^{\frac{q}{2}-1}{C_q} 
\sE\bigg[\bigg|\int_0^T\int_{\sR^p_0} |w(t,u)|^{2r}\,\nu(\d u)\d t 
\bigg|^{q/2}\bigg].
\end{align*}
%where 
%for the last inequality 
%we have used the fact that 
%$|(x+y)/2|^{l}\le (|x|^{l}+|y|^{l})/2$ for all $l\ge 1$ and $x,y\in \sR$.
Hence,  recursively applying the above estimate yields for all $q\ge 2$ and 
$n\in \sN$ with $q/2^{n-1}\ge 2$ that 
\begin{align}\l{eq:bdg_poisson_induction}
\begin{split}
\sE\bigg[\bigg|\int_0^T\int_{\sR^p_0} w(t,u)\,\tilde{N}(\d t, \d u)\bigg|^q\bigg]
&\le 
\bigg(
\prod_{j=1}^n 2^{\frac{q}{2^j}-1}{C_{\frac{q}{2^{j-1}}}} 
\bigg)
\sE[|(K^{(2^n)})^*_T|^{q/2^n}]
\\
&
\q
+
\sum_{k=1}^n
\bigg(
\prod_{j=1}^k 2^{\frac{q}{2^j}-1}{C_{\frac{q}{2^{j-1}}}} 
\bigg)
\sE\bigg[\bigg|\int_0^T\int_{\sR^p_0} |w(t,u)|^{2^k}\,\nu(\d u)\d t 
\bigg|^{q/2^k}\bigg].
\end{split}
\end{align}
Now let $q\ge 2$ be fixed and set $n=\lfloor \log_2 q\rfloor$ such that 
$q\in [2^n,2^{n+1})$.
Since $q/2^n\in [1,2)$, 
the constant $C_{q/2^n}$ in Burkholder's inequality
satisfies $C_{q/2^n}\le 20$,
from which
%, by  proceeding along the lines of 
we can show that (see \cite[Lemma 2.1]{breton2019integrability}):
\begin{align*}
\sE[|(K^{(2^n)})^*_T|^{q/2^n}]
&
\le 20
\sE\bigg[\int_0^T\int_{\sR^p_0} |w(t,u)|^q\,\nu(\d u)\d t 
\bigg].
\end{align*}
Moreover, by  proceeding along the lines of  \cite[Corollary 2.2]{breton2019integrability},
we   
obtain for all $k=1,\ldots, n$ that 
\begin{align*}
&\sE\bigg[\bigg|\int_0^T\int_{\sR^p_0} |w(t,u)|^{2^k}\,\nu(\d u)\d t 
\bigg|^{q/2^k}\bigg]
\\
&
\le 
\sE\bigg[\int_0^T\int_{\sR^p_0} |w(t,u)|^{q}\,\nu(\d u)\d t 
\bigg]
+
\sE\bigg[\bigg|\int_0^T\int_{\sR^p_0} |w(t,u)|^{2}\,\nu(\d u)\d t 
\bigg|^{q/2}\bigg].
\end{align*}
Hence, we can deduce  from \eqref{eq:bdg_poisson_induction} that 
\begin{align*}
\sE\bigg[\bigg|\int_0^T\int_{\sR^p_0} w(t,u)\,\tilde{N}(\d t, \d u)\bigg|^q\bigg]
&\le 
21
\sum_{k=1}^{\lceil \log_2 q\rceil}
\bigg(
\prod_{j=1}^k 2^{\frac{q}{2^j}-1}{C_{\frac{q}{2^{j-1}}}} 
\bigg)
\bigg(
\sE\bigg[\int_0^T\int_{\sR^p_0} |w(t,u)|^{q}\,\nu(\d u)\d t 
\bigg]
\\
&\q +
\sE\bigg[\bigg|\int_0^T\int_{\sR^p_0} |w(t,u)|^{2}\,\nu(\d u)\d t 
\bigg|^{q/2}\bigg]
\bigg).
\end{align*}
We now obtain an upper bound of the constant 
$21\sum_{k=1}^{\lfloor \log_2 q\rfloor}
\left(
\prod_{j=1}^k 2^{\frac{q}{2^j}-1}{C_{\frac{q}{2^{j-1}}}} 
\right)$ as follows:
\begin{align*}
&21\sum_{k=1}^{\lfloor \log_2 q\rfloor}
\bigg(
\prod_{j=1}^k 2^{\frac{q}{2^j}-1}{C_{\frac{q}{2^{j-1}}}} 
\bigg)
= 
21\sum_{k=1}^{\lfloor \log_2 q\rfloor}\prod_{j=1}^k
2^{\frac{q}{2^j}-1}
\bigg(\sqrt{\frac{e}{2}}
\frac{q}{2^{j-1}}
\bigg)^\frac{q}{2^{j-1}}
\\
&
\le 
21
\bigg(\sum_{k=1}^{\lfloor \log_2 q\rfloor} 2^{-k}\bigg)
{e}^{\sum_{j=1}^{\lfloor\log_2 q\rfloor}\frac{q}{2^{j}}}\prod_{j=1}^{\lfloor \log_2 q\rfloor}
\bigg(
\frac{q}{2^{j-1}}
\bigg)^\frac{q}{2^{j-1}}
\le
21
{e}^{q}
2^{ \sum_{j=1}^{\lfloor \log_2 q\rfloor}
\frac{q}{2^{j-1}}\log_2
(
\frac{q}{2^{j-1}}
)
}
\\
&
\le
21
{e}^{q}
2^{ \sum_{j=1}^{\lfloor \log_2 q\rfloor}
\frac{q}{2^{j-1}}\log_2 q
}
\le
21
{e}^{q}
2^{ 2q\log_2 q
}
=
21
{e}^{q}
q^{ 2q}\coloneqq \tilde{C}_q,
\end{align*}
which leads  us to  the desired conclusion.
\end{proof}

The following lemma  
estimates the tail behaviors  of solutions to  SDEs 
with jumps.
The result 
has been established 
in 
Lemma 2.1 and Theorem 2.8 
of \cite{ma2010transportation}
for SDEs with time homogenous coefficients
and 
bounded Lipschitz continuous functions $\mathfrak{f}$
via Malliavin Calculus,
which can be extended 
to SDEs with time inhomogeneous coefficients
and unbounded $\mathfrak{f}$
(via Fatou's lemma) 
in a straightforward manner.

\begin{Lemma}\l{lemma:sde_transportation}
Let $T\ge 0$ and $b:[0,T]\t \sR^n\to \sR^n$, $\sigma :[0,T]\to \sR^{n\t d}$, 
$\gamma:[0,T]\t \sR^p_0\to \sR^{n}$ 
be measurable functions 
such that 
there exist $K,\sigma_{\max}\ge 0$ and a measurable function 
$\bar{\gamma}:\sR^p_0\to \sR$
satisfying for all $(t,u)\in [0,T]\t \sR^p_0$, $x,x'\in \sR^n$ that 
$|b(t,0)|\le K$, $|b(t,x)-b(t,x')|\le K|x-x'|$,
$|\sigma(t)|\le \sigma_{\max}$
and
$ |\gamma(t,u)|\le \bar{\gamma}(u)$, $\nu$-a.e..~Let $\b:[0,\infty)\to [0,\infty]$ be   defined by 
$\b(\lambda)\coloneqq \int_{\sR^p_0} \left(e^{\lambda \bar{\gamma}(u)}-\lambda \bar{\gamma}(u)-1\right)\,\nu(\d u)$
 for any $\lambda\ge 0$.
 Assume  that 
$\b(\lambda)<\infty$ for some $\lambda> 0$.

Then there exists a constant $C>0$, depending only on $K$ and $T$, such that 
for all $x\in \sR^n$, the  unique solution $X^x\in \cS^2(\sR^n)$  to the following SDE
$$%\bb\l{eq:sde_subexp}
\d X_t =b(t,X_t)\,\d t+\sigma(t)\, \d W_t
+\int_{\sR^p_0}
\gamma(t,u)\, \tilde{N}(\d t,\d u)
, \q t\in [0,T],
\q X_0=x
$$%\ee
satisfies for every Lipschitz continuous function
$\mathfrak{f}:(\sD([0,T];\sR^n),d_\infty)\to \sR$ 
that 
\bb\l{eq:f_subexp}
\sE\left[e^{\lambda(\mathfrak{f}(X^x)-\sE[\mathfrak{f}(X^x)])}\right]
\le 
e^{C\eta\big(C\lambda \|\mathfrak{f}\|_{\textnormal{Lip}}\big)}
\q \fa \lambda>0,
\ee
where 
$\sD([0,T];\sR^n)$ is the space of $\sR^n$-valued
 c\`adl\`{a}g functions on $[0,T]$,
$d_\infty$ is   the uniform metric defined by
 $d_\infty(\rho_1,\rho_2)\coloneqq\sup_{t\in [0,T]}|\rho_1(t)-\rho_2(t)|$
 for any $\rho_1,\rho_2\in \sD([0,T];\sR^n)$,
 $\|\mathfrak{f}\|_{\textnormal{Lip}}$ 
 is the constant defined by
 $\|\mathfrak{f}\|_{\textnormal{Lip}}\coloneqq
\sup_{\rho_1\not=\rho_2}\f{|\mathfrak{f}(\rho_1)-\mathfrak{f}(\rho_2)|}{d_\infty(\rho_1,\rho_2)}$,
and $\eta:[0,\infty)\to [0,\infty]$ is the function defined by
$\eta(\lambda) \coloneqq \b(\lambda)+\sigma_{\max}^2\lambda^2/2$
for any $\lambda\ge 0$.
\end{Lemma}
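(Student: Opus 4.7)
The plan is to reduce the statement to the setting of \cite{ma2010transportation} by (i) time-space augmentation to deal with the time dependence of $b,\sigma,\gamma$, and (ii) a truncation plus Fatou argument to remove the boundedness assumption on $\mathfrak{f}$. We first fix Lipschitz $b, \sigma, \gamma$ satisfying the hypotheses and a Lipschitz $\mathfrak{f}:(\sD([0,T];\sR^n),d_\infty)\to\sR$.

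First, I would handle the time inhomogeneity by augmenting the state. Set $\tilde{X}_t=(t,X^x_t)\in\sR^{1+n}$ and define
\[
\tilde{b}(\tilde{x})=(1,b(s,y)),\q \tilde{\sigma}(\tilde{x})=\begin{pmatrix}0\\ \sigma(s)\end{pmatrix},\q \tilde{\gamma}(\tilde{x},u)=\begin{pmatrix}0\\\gamma(s,u)\end{pmatrix}
\]
for $\tilde{x}=(s,y)\in[0,T]\t\sR^n$, with $s$-arguments frozen at $T$ beyond the horizon. Then $\tilde{X}$ solves a time-homogeneous SDE with coefficients of the same Lipschitz and boundedness constants (up to an additive change by $T$), so Lemma 2.1 and Theorem 2.8 of \cite{ma2010transportation} apply to $\tilde{X}$. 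Any Lipschitz functional $\mathfrak{f}$ of $X^x$ can be viewed as a Lipschitz functional $\tilde{\mathfrak{f}}$ of $\tilde{X}$ with the same Lipschitz constant, because the time coordinate is deterministic.

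Second, I would reduce to bounded Lipschitz $\mathfrak{f}$ by truncation. Define $\mathfrak{f}_n\coloneqq (\mathfrak{f}\wedge n)\vee(-n)$, which is bounded, Lipschitz with $\|\mathfrak{f}_n\|_{\textnormal{Lip}}\le\|\mathfrak{f}\|_{\textnormal{Lip}}$, and converges pointwise to $\mathfrak{f}$. The Ma--Wang estimates give, for each $n$,
\[
\sE\bigl[e^{\lambda(\mathfrak{f}_n(X^x)-\sE[\mathfrak{f}_n(X^x)])}\bigr]\le e^{C\eta(C\lambda\|\mathfrak{f}\|_{\textnormal{Lip}})}\q\fa \lambda>0,
\]
with a constant $C$ depending only on $K,T$ and independent of $n$. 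It remains to pass to the limit. Integrability of $\mathfrak{f}(X^x)$ follows from $|\mathfrak{f}(X^x)|\le|\mathfrak{f}(0)|+\|\mathfrak{f}\|_{\textnormal{Lip}}\sup_{t\in[0,T]}|X^x_t|$ and standard $L^p$ moment bounds for SDEs with jumps satisfying the exponential integrability $\b(\lambda)<\infty$ (this in particular gives all polynomial moments of the Poisson integral). Hence $\sE[\mathfrak{f}_n(X^x)]\to\sE[\mathfrak{f}(X^x)]$ by dominated convergence, while Fatou's lemma yields
\[
\sE\bigl[e^{\lambda(\mathfrak{f}(X^x)-\sE[\mathfrak{f}(X^x)])}\bigr]\le\liminf_{n\to\infty}\sE\bigl[e^{\lambda(\mathfrak{f}_n(X^x)-\sE[\mathfrak{f}_n(X^x)])}\bigr]\le e^{C\eta(C\lambda\|\mathfrak{f}\|_{\textnormal{Lip}})},
\]
which is \eqref{eq:f_subexp}.

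The main obstacle is verifying that the Ma--Wang estimates apply uniformly in the time-homogenized setting with the specific dependence on $\|\mathfrak{f}\|_{\textnormal{Lip}}$, $\sigma_{\max}$ and $\bar{\gamma}$ claimed in \eqref{eq:f_subexp}. The strategy used there is a Clark--Ocone/Malliavin representation combined with a Herbst-type exponential martingale argument: one bounds the Malliavin derivatives $D^W_t X$ and $D^N_{t,u} X$ via Gr\"onwall (producing the $e^{CT}$-type constant $C$ via the Lipschitz constant $K$), then feeds these bounds into the exponential moment estimate, where the Brownian part contributes the quadratic term $\sigma_{\max}^2\lambda^2/2$ and the Poisson part contributes $\b(\lambda)$ inside the function $\eta$. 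Time inhomogeneity does not affect this argument since the Malliavin derivatives depend only on pathwise Lipschitz constants of the coefficients, and truncating $\mathfrak{f}$ keeps its Lipschitz constant under control. Once the uniform bound is secured, the two reductions above yield the stated inequality in full generality.
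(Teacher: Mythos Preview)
Your proposal follows the same route the paper indicates: the paper does not give a detailed proof of this lemma but simply cites Lemma~2.1 and Theorem~2.8 of \cite{ma2010transportation} (stated for time-homogeneous coefficients and bounded Lipschitz $\mathfrak{f}$) and remarks that the extension to time-inhomogeneous coefficients and unbounded $\mathfrak{f}$ is straightforward, the latter via Fatou's lemma---precisely your truncation-plus-Fatou argument in step~(ii).

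One caution on your step~(i): the time-augmentation trick requires the augmented coefficients $\tilde{\sigma}(\tilde{x})=(0,\sigma(s))^\trans$ and $\tilde{\gamma}(\tilde{x},u)=(0,\gamma(s,u))^\trans$ to be Lipschitz in the new state variable $\tilde{x}=(s,y)$, but the hypotheses only assume $\sigma$ and $\gamma$ are \emph{measurable} in $t$, so the Ma--Wang assumptions do not hold for $\tilde{X}$ as written. The direct route you sketch in your final paragraph---rerunning the Malliavin/Gr\"onwall bounds on $D^W_t X$ and $D^N_{t,u}X$ for time-dependent but state-independent $\sigma,\gamma$, where only the pointwise bounds $|\sigma(t)|\le\sigma_{\max}$ and $|\gamma(t,u)|\le\bar{\gamma}(u)$ enter---is the correct fix and is what the paper has in mind by ``straightforward extension.''
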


The next lemma presents a concentration inequality for 
the sum of 
independent sub-Weibull 
random variables, which follows directly from
Theorem 3.1 and Proposition A3 in
\cite{kuchibhotla2018moving}.

\begin{Lemma}\l{lemma:concentration_alpha_subWeibull}
Let 
$\a\in (0,1]$, 
$N\in \sN$ and $X_1, \ldots, X_N\in \subW(\a)$ be independent    random variables
satisfying $\sE[X_i]=0$ for all $i=1,\ldots, N$.
Then there exists a constant $C\ge 0$, depending only on $\a$, such that 
%for all $\eps'\ge 0$,
$$
\sP\bigg(\bigg|\sum_{i=1}^N X_i\bigg|\ge \eps' \bigg)\le 
2\exp\bigg(-C\min
\left\{
\frac{(\eps')^2}{\sum_{i=1}^N \|X_i\|^2_{\Psi_\a}},
\bigg(
\frac{\eps'}{\max_{i} \|X_i\|_{\Psi_\a}}
\bigg)^{\a}
\right\}\bigg),
\q
\fa \eps'\ge 0.
$$

\end{Lemma}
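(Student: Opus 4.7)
My plan is to use the moment method combined with the sub-Weibull characterization via $L^q$-norm growth, following the general approach of \cite{kuchibhotla2018moving}.

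First, I exploit the equivalence \eqref{eq:sub_weibull_lp}: for each $i$, the sub-Weibull property $X_i\in \subW(\a)$ is equivalent to the moment bound $\|X_i\|_{L^q}\le C_\a q^{1/\a}\|X_i\|_{\Psi_\a}$ for all $q\ge 1$. This reduces the task to controlling the $L^q$-norm of the sum $S_N=\sum_i X_i$ of independent mean-zero random variables, with sharp dependence on $q$.

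Next, I apply a Rosenthal-type moment inequality for independent mean-zero random variables with explicit polynomial $q$-dependence, of the form $\|S_N\|_{L^q}^q \le K^q\big(q^{q/2} v^q + q^q \sum_{i=1}^N \|X_i\|_{L^q}^q\big)$, where $v^2=\sum_i \|X_i\|_{L^2}^2$ and $K$ is an absolute constant. Inserting the sub-Weibull moment bound from the first step and using $\|X_i\|_{L^2}\le C_\a\|X_i\|_{\Psi_\a}$, this control becomes explicit in terms of $V^2:=\sum_i\|X_i\|_{\Psi_\a}^2$ and $B:=\max_i\|X_i\|_{\Psi_\a}$, with an additional factor $q^{1/\a}$ entering the second term through the conversion $\|X_i\|_{L^q}\le C_\a q^{1/\a}\|X_i\|_{\Psi_\a}$.

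Finally, I apply Markov's inequality $\sP(|S_N|\ge \eps')\le (\eps')^{-q}\|S_N\|_{L^q}^q$ and optimize over $q\ge 2$. Two regimes appear naturally: when $\eps'$ is small relative to the threshold determined by $V$ and $B$, the optimal choice $q\sim \eps'^2/V^2$ produces a Gaussian-type exponent $-c\eps'^2/V^2$; when $\eps'$ is large, the optimal choice $q\sim(\eps'/B)^{\a}$ produces the heavy-tailed exponent $-c(\eps'/B)^{\a}$. Splitting the contributions to $\|S_N\|_{L^q}$ and taking the minimum over the two regimes yields the $\min$-form exponent in the statement, with the final constant depending only on $\a$.

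The main obstacle will be establishing the Rosenthal-type inequality with the correct $q$-growth of the constants, and carrying out the two-regime optimization so that the resulting constant depends only on $\a$ and no other parameter. Both of these are precisely the content of Theorem~3.1 and Proposition~A.3 of \cite{kuchibhotla2018moving}, which we invoke directly; a self-contained argument would follow the truncation-plus-moment scheme sketched above.
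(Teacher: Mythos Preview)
Your proposal is correct and takes essentially the same approach as the paper: the paper does not give an independent proof but simply states that the lemma ``follows directly from Theorem 3.1 and Proposition A3 in \cite{kuchibhotla2018moving},'' which is exactly the reference you invoke at the end. Your additional sketch of the moment-method argument behind those results is accurate and adds useful context, but the core justification is identical.
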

%\begin{proof}
%By using 
%Theorem 3.1 and Proposition A3 in 
%\cite{kuchibhotla2018moving},
%we see there exists a constant $C\ge 0$, depending only on $\a$,
%such that for all $t\ge 0$,  
%$$
%\sP\bigg(\bigg|\sum_{i=1}^N X_i\bigg|\ge C(D_1 \sqrt{t}+D_2 t^{1/\a})\bigg)\le 2e^{-t}
%$$
%with $D_1\coloneqq (\sum_{i=1}^N \|X_i\|^2)^{1/2}$
% and $D_2\coloneqq \max_{i}\|X_i\|_{{\Psi_\a}}$.
%
%\end{proof}

\section{Proofs of Lemmas \ref{lemma:f_psi},
\ref{lemma:deterministic_integral},
\ref{lemma:stochastic_integral_jump},
\ref{lemma:stochatic_integral_diffusion},
\ref{lemma:state_subexp}
} 
\l{appendix:technical results}

 \begin{proof}[Proof of Lemma \ref{lemma:f_psi}]
 We start by establishing the regularity of 
 $\sR^n\t \sR^k\ni (x,z)\mapsto f(t,x,\p_z f^*(t,x,z))\in \sR\cup\{\infty\}$ for a given $t\in [0,T]$.
 Observe that  for all $(t,x)\in [0,T]\t \sR^n$, 
   $f(t,x,\cdot)$ is proper, convex, and lower semicontinuous,
   which  
along with the   Fenchel-Young identity
implies
\bb\l{eq:conv_max}
f(t,x,\p_z f^*(t,x,z))=\la z, \p_z f^*(t,x,z)\ra- f^*(t,x,z)
\in \sR,
\q
\fa (t,x,z)\in [0,T]\t \sR^n\t \sR^k.
\ee
 Given $t\in [0,T]$ and $(x_1,z_1),(x_2,z_2)\in \sR^n\t \sR^k$, 
by \eqref{eq:conv_max}, 
 \begin{align*}%\l{eq:ff*}
&| f(t,x_1,\p_z f^*(t,x_1,z_1))-f(t,x_2,\p_z f^*(t,x_2,z_2))|
\\
 &\le| \la z_1, \p_z f^*(t,x_1,z_1)\ra -\la z_2, \p_z f^*(t,x_2,z_2) \ra |
 +| f^*(t,x_1,z_1)- f^*(t,x_2,z_1)|
 \\
& \q +| f^*(t,x_2,z_1)- f^*(t,x_2,z_2)|.
 \end{align*}
We now estimate all the terms on the right hand side of the above inequality. 
By  the Lipchitz continuity and local boundedness of  $\p_zf^*(t,\cdot)$
(see the proof of Lemma  \ref{lemma:lipschitz_phi}), we can 
obtain the following upper bound for the first and third terms: 
 \begin{align}\l{eq:ff*_step1}
 \begin{split}
&| \la z_1, \p_z f^*(t,x_1,z_1)\ra -\la z_2, \p_z f^*(t,x_2,z_2) \ra |
  +| f^*(t,x_2,z_1)- f^*(t,x_2,z_2)|
\\
&\le | \la z_1-z_2, \p_z f^*(t,x_1,z_1)\ra| +|\la z_2, \p_z f^*(t,x_1,z_1)-\p_z f^*(t,x_2,z_2) \ra |
\\
  &\q +| f^*(t,x_2,z_1)- f^*(t,x_2,z_2)|
  \\
  &\le C(1+|x_1|+|x_2|+|z_1|+|z_2|)(|x_1-x_2|+|z_1-z_2|),
 \end{split}
 \end{align}
 where  the last inequality is by  the mean value theorem. 
 Moreover, by applying \eqref{eq:conv_max}
to $ f^*(t,x_1,z_1)$
and by
the definition of $f^*(t,x_2,z_1)$  in \eqref{eq:conjugate},
 (H.\ref{assum:lc_ns}\ref{item:f0R}), the linear growth of $\p_z f^*(t,\cdot)$,
% we have that 
 \begin{align}\l{eq:ff*_step2}
 \begin{split}
 f^*(t,x_1,z_1)- f^*(t,x_2,z_1)
&\le \la z_1, \p_z f^*(t,x_1,z_1)\ra-f(t,x_1,\p_z f^*(t,x_1,z_1))
\\
&\q -( \la z_1, \p_z f^*(t,x_1,z_1)\ra-f(t,x_2,\p_z f^*(t,x_1,z_1)))
\\
&= -f_0(t,x_1,\p_z f^*(t,x_1,z_1))+f_0(t,x_2,\p_z f^*(t,x_1,z_1))
\\
&\le C(1+|x_1|+|x_2|+|z_1|)|x_1-x_2|.
 \end{split}
 \end{align}
Then, by interchanging the roles of $x_1,x_2$ in \eqref{eq:ff*_step2} and  taking account of \eqref{eq:ff*_step1},
we can obtain the following estimate  for all $t\in [0,T]$, $(x_1,z_1),(x_2,z_2)\in \sR^n\t \sR^k$:
\begin{align*}
&| f(t,x_1,\p_z f^*(t,x_1,z_1))-f(t,x_2,\p_z f^*(t,x_2,z_2))|
\\
&\le C(1+|x_1|+|x_2|+|z_1|+|z_2|)(|x_1-x_2|+|z_1-z_2|).
\end{align*}

Therefore, by %using 
 \eqref{eq:phi_f*}, \eqref{eq:feedback}
and 
\eqref{eq:feedback_per}, 
%we have 
for all  $t\in [0,T]$ and $x,x'\in \sR^n$,
\begin{align*}
&|f(t,x,\psi(t,x))-f(t,x',\tilde{\psi}(t,x'))|
=|f(t,x,\phi(t,x,Y^{t.x}_t))-f(t,x',\tilde{\phi}(t,x',\tilde{Y}^{t.x'}_t))|
\\
&=
|f(t,x,\p_z f^* (t,x,-b_2(t)^\trans Y^{t.x}_t))
-f(t,x',\p_z f^*(t,x',-\tilde{b}_2(t)^\trans \tilde{Y}^{t.x'}_t))|
\\
&\le C(1+|x|+|x'|+|b_2(t)^\trans Y^{t.x}_t|+|\tilde{b}_2(t)^\trans \tilde{Y}^{t.x'}_t|)(|x-x'|+|b_2(t)^\trans Y^{t.x}_t-\tilde{b}_2(t)^\trans \tilde{Y}^{t.x'}_t|)
\\
&\le C(1+|x|+|x'|+\| Y^{t.x}\|_{\cS^2}+\| \tilde{Y}^{t.x'}\|_{\cS^2})
\\
 &\q \times
(|x-x'|+\|b_2-b_2\|_{L^\infty} \|Y^{t.x}\|_{\cS^2}+\| Y^{t.x}-\tilde{Y}^{t.x'}\|_{\cS^2})
\\
&\le
C(1+|x|+|x'|)
(|x-x'|+\cE_{\textrm{per}} (1+|x|)),
\end{align*}
which along with Young's inequality leads to the desired conclusion.
 \end{proof}
 
 \begin{proof}[Proof of Lemma \ref{lemma:deterministic_integral}]
It suffices to show the statement for 
processes $X,Y$ such that 
$\|X\|_{L^2(0,T)},
\|Y\|_{L^2(0,T)}\in \subW(\a)$
with $\|X\|_{L^2(0,T)}\coloneqq (\int_0^T |X|^2\, \d t)^{\frac{1}{2}}$
and $\|Y\|_{L^2(0,T)}\coloneqq (\int_0^T |Y|^2\, \d t)^{\frac{1}{2}}$,
as otherwise the right-hand side of the inequality would be infinity. 
Since $\|\cdot\|_{\Psi_{\a}}$ is a quasi-norm for any $\a>0$, we shall assume without loss of generality that 
$\|\|X\|_{L^2(0,T)}\|_{\Psi_{\a}}=\|\|Y\|_{L^2(0,T)}\|_{\Psi_{\a}}=1$.
Then, we can deduce from H\"{o}lder's inequality and Young's inequality that 
\begin{align*}
&\sE\bigg[\exp\bigg(\bigg|\int_0^T XY\, \d t\bigg|^\frac{\a}{2}\bigg)\bigg]
\le 
\sE\bigg[\exp\bigg(
\bigg|
\|X\|_{L^2(0,T)}\|Y\|_{L^2(0,T)}\bigg|^\frac{\a}{2}\bigg)\bigg]
\\
&\le \sE\bigg[\exp\bigg(\frac{1}{2}\|X\|^\a_{L^2(0,T)}+\frac{1}{2}\|Y\|^\a_{L^2(0,T)}\bigg)\bigg]
=\sE\bigg[\exp\bigg(\frac{1}{2}\|X\|^\a_{L^2(0,T)}\bigg)\exp\bigg(\frac{1}{2}\|Y\|^\a_{L^2(0,T)}\bigg)\bigg]
\\
&\le
\bigg(\sE\bigg[\exp\bigg(\|X\|^\a_{L^2(0,T)}\bigg)\bigg]\bigg)^\frac{1}{2}
\bigg(\sE\bigg[\exp\bigg(
\|Y\|^\a_{L^2(0,T)}\bigg)\bigg]\bigg)^\frac{1}{2}
\le 2,
\end{align*}
which implies that $\|\int_0^T XY\, \d t\|_{\Psi_{\a/2}}\le 1$ and finishes the proof.
\end{proof}

\begin{proof}[Proof of Lemma \ref{lemma:stochastic_integral_jump}]

Note that 
\eqref{eq:sub_weibull_lp}
and 
H\"{o}lder's inequality suggest that 
it suffices to estimate the growth of $\|\cdot\|_{L^q}$-norms 
of the stochastic integrals for $q\ge 2$.
Hence, by  
\eqref{eq:bdg_brownian},  there exists a constant $C$ such that for all $q\ge 2$,
\begin{align*}
q^{-2}\bigg\|\int_0^T X_t\sigma^\trans\, \d W_t\bigg\|_{L^q}
&\le q^{-2}Cq \bigg\|
\bigg(
\int_0^T |X_t\sigma|^2\,\d t
\bigg)^{\frac{1}{2}}\bigg\|_{L^q}
%\le CKq^{-1} \bigg\|
%\bigg(
%\int_0^T |X_t\sigma(t)|^2\,\d t
%\bigg)^{\frac{1}{2}}\bigg\|_{L^q}
\le 
 C|\sigma|\sup_{q\ge 1}
 \bigg(q^{-1} \bigg\|
\bigg(
\int_0^T |X_t|^2\,\d t
\bigg)^{\frac{1}{2}}\bigg\|_{L^q}
\bigg)
\\
&\le 
 C|\sigma| \bigg\|
\bigg(
\int_0^T |X_t|^2\,\d t
\bigg)^{\frac{1}{2}}\bigg\|_{\Psi_1},
\end{align*}
which along with  \eqref{eq:sub_weibull_lp} 
leads to the  desired estimate for 
$\|\int_0^T X_t\sigma^\trans\, \d W_t\|_{\Psi_{1/2}}$.

Similarly, by  \eqref{eq:bdg_poisson},  there exists a constant $C$ satisfying for all $q\ge 2$ that
\begin{align*}
&\bigg\|\int_0^T\int_{\sR^p_0} X_t\gamma(u)\,\tilde{N}(\d t, \d u)\bigg\|_{L^q}
\\
&\le 
Cq^{2}
\bigg\{
\bigg(
\sE\bigg[\int_0^T\int_{\sR^p_0} |X_t\gamma(u)|^{q}\,\nu(\d u)\d t 
\bigg]
\bigg)^{\frac{1}{q}}
+
\bigg(
\sE\bigg[\bigg(\int_0^T\int_{\sR^p_0} |X_t\gamma(u)|^{2}\,\nu(\d u)\d t 
\bigg)^{\frac{q}{2}}\bigg]
\bigg)^{\frac{1}{q}}
\bigg\}
\\
&\le 
Cq^{2}
\bigg\{
\bigg(
\int_{\sR^p_0} |\gamma(u)|^{q}\,\nu(\d u)
\sE\bigg[\int_0^T|X_t|^q\d t 
\bigg]
\bigg)^{\frac{1}{q}}
+
\bigg(\int_{\sR^p_0} |\gamma(u)|^{2}\,\nu(\d u)
\bigg)^{\frac{1}{2}}
\bigg(
\sE\bigg[\bigg(\int_0^T|X_t|^2\, \d t
\bigg)^{\frac{q}{2}}\bigg]
\bigg)^{\frac{1}{q}}
\bigg\}
\\
&\le 
Cq^{2}
\bigg\{
\bigg(
\int_{\sR^p_0} |\gamma(u)|^{q}\,\nu(\d u)
\bigg)^{\frac{1}{q}}
\bigg\|
\bigg(\int_0^T|X_t|^q\d t 
\bigg)^{\frac{1}{q}}
\bigg\|_{L^q}
+
\bigg(\int_{\sR^p_0} |\gamma(u)|^{2}\,\nu(\d u)
\bigg)^{\frac{1}{2}}
\bigg\|\bigg(\int_0^T|X_t|^2\, \d t 
\bigg)^{\frac{1}{2}}\bigg\|_{L^q}
\bigg\}.
\end{align*}
Hence by   (H.\ref{assum:lc_rl}\ref{item:jump}) 
and
\eqref{eq:sub_weibull_lp},
 for all $q\ge 2$,
\begin{align*}
&q^{-(3+\vartheta)}\bigg\|\int_0^T\int_{\sR^p_0} X_t\gamma(u)\,\tilde{N}(\d t, \d u)\bigg\|_{L^q}
\\
&\le 
C
\bigg(
\sup_{q\ge 2}
q^{-\vartheta}
\bigg(
\int_{\sR^p_0} |\gamma(u)|^{q}\,\nu(\d u)
\bigg)^{\frac{1}{q}}
\bigg)
\bigg\{
q^{-1}\bigg\|
\bigg(\int_0^T|X_t|^q\d t 
\bigg)^{\frac{1}{q}}
\bigg\|_{L^q}
+
q^{-1}
\bigg\|\bigg(\int_0^T|X_t|^2\, \d t 
\bigg)^{\frac{1}{2}}\bigg\|_{L^q}
\bigg\}
\\
&\le 
C
\gamma_{\max}
\bigg(
\bigg\|
\bigg(\int_0^T|X_t|^q\d t 
\bigg)^{\frac{1}{q}}
\bigg\|_{\Psi_1}
+
\bigg\|\bigg(\int_0^T|X_t|^2\, \d t 
\bigg)^{\frac{1}{2}}\bigg\|_{\Psi_1}
\bigg).
\end{align*}
Therefore,
 taking the
supremum over $q\ge 2$ in the above inequality
leads to 
the desired estimate 
of
$\|\int_0^T\int_{\sR^p_0} X_t\gamma(u)\,\tilde{N}(\d t, \d u)\|_{\Psi_{1/(3+\vartheta)}}$
from \eqref{eq:sub_weibull_lp}.
\end{proof}

\begin{proof}[Proof of Lemma \ref{lemma:stochatic_integral_diffusion}]

Let us assume without loss of generality that
$|\sigma|>0$ and 
  $\tau\coloneqq \|(\int_0^T |X_t|^2\, \d t)^{1/2}\|_{\Psi_2}<\infty$,
which 
implies that 
$\|(\int_0^T 2|X_t\sigma|^2\, \d t)^{1/2}\|_{\Psi_2}\le \sqrt{2}|\sigma|\tau$.
Then, by  the characterization of sub-Gaussian random variable in \cite[Proposition 2.5.2(iii)]{vershynin2018high},
there exists  $C\ge 0$ such that 
$$
\sE\left[\exp\bigg(2\lambda^2 \int_0^T |X_t\sigma|^2\, \d t\bigg)\right]
\leq \exp(2C^2\lambda^2|\sigma|^2\tau^2)<\infty
\q \fa 
|\lambda|\le \frac{1}{{\sqrt{2}C|\sigma|\tau}}.
$$
Hence, 
it holds for all $|\lambda|\le 1/({\sqrt{2}C|\sigma|\tau})$ that 
the  process
 $(M_{\lambda,t})_{t\in [0,T]}$ 
 defined by:
 $$
M_{\lambda,t} \coloneqq \exp\left(\int_0^t 2\lambda X_s \sigma^\trans \d W_s-\frac{1}{2} \int_0^t 4\lambda^2 |X_s \sigma|^2 \d s\right)
\q \fa t\in [0,T]
$$
  is a martingale,
   since 
 Novikov's condition is satisfied, 
which 
implies that $\sE[M_{\lambda,T}]=1$.
Thus, 
for any given $|\lambda|\le 1/(\sqrt{2}C|\sigma|\tau)$,
by the Cauchy-Schwarz inequality,
\begin{align*}
&\sE\bigg[\exp\bigg(\lambda\int_0^T  X_t \sigma^\trans \d W_t\bigg)\bigg]
\\
&=
\sE\bigg[\exp\bigg(\int_0^T  \lambda X_t \sigma^\trans \d W_t
-\frac{(2\lambda)^2}{4} \int_0^T |X_t \sigma|^2\,\d t\bigg)
\exp\bigg(\frac{(2\lambda)^2}{4} \int_0^T  |X_t \sigma|^2\,\d t\bigg)\bigg]
\\
&\le 
\sE[M_{\lambda,T}]^{1/2}\sE\bigg[\exp\bigg(2{\lambda^2}\int_0^T  |X_t \sigma|^2\,\d t\bigg)\bigg]^{1/2}
\le   \exp(C^2\lambda^2|\sigma|^2\tau^2),
\end{align*}
which along with the fact that 
$\sE[\int_0^T  X_t \sigma^\trans \d W_t]=0$ and 
 the characterization of sub-exponential random variable
\cite[Proposition 2.7.1(v)]{vershynin2018high} yields that 
$\|\int_0^T  X_t \sigma^\trans \d W_t\|_{\Psi_1}\leq C|\sigma|\tau$.
\end{proof}

\begin{proof}[Proof of Lemma \ref{lemma:state_subexp}]
Throughout this proof, let 
$x_0\in \sR^n$ and $\theta\in \sR^{n\t (n+k)}$ 
be given constants satisfying 
$|\theta|\le K$,
and let $C$ be a generic constant
depend on $K$, $T$ and the constants 
in 
(H.\ref{assum:lc_rl}),
%(H.\ref{assum:lc_rl}\ref{item:fg}\ref{item:jump}),
but independent of $x_0$ and $\theta$.
 
By  \eqref{eq:lc_sde_theta},
we see that   the process $X^{x_0,\theta}$ satisfies the SDE:
$$
\d X_t =b^\theta(t,X_t)\,\d t+\sigma\, \d W_t
+\int_{\sR^p_0}
\gamma(u)\, \tilde{N}(\d t,\d u),
 \q t\in [0,T],
\q X_0=x_0,
$$
where 
$b^\theta(t,x)=A^\star x+B^\star{\psi}^\theta(t,x)$ for all $(t, x)\in [0,T]\t \sR^n$.
The definition of the feedback control $\psi^\theta$,
(H.\ref{assum:lc_rl}\ref{item:fg}) 
and 
Theorem  \ref{thm:lc_fb}
show  that 
there exists  $C\ge 0$
such that 
$|\psi^\theta(t,0)|\le C$ and $|\psi^\theta(t,x)-\psi^\theta(t,x')|\le C|x-x'|$
  for all  $t\in [0,T]$, $x,x'\in \sR^n$,
  which implies the same properties for the function $b^\theta$.
  Then, by  Lemma \ref{lemma:sde_transportation}, 
  for 
 every 
  Lipschitz continuous function
$\mathfrak{f}:(\sD([0,T];\sR^n),d_\infty)\to \sR$, 
$\sE\big[\exp\big(\lambda(\mathfrak{f}(X^{x_0,\theta})-\sE[\mathfrak{f}(X^{x_0,\theta})])\big)\big]
\le 
\exp\big({C\eta\big(C\lambda \|\mathfrak{f}\|_{\textnormal{Lip}}\big)}\big)
$
 for all $\lambda>0$,
with the function 
$\eta:[0,\infty]\to [0,\infty]$ defined by:
\bb\l{eq:eta_subexp}
\eta(\lambda) \coloneqq
 \int_{\sR^p_0} \left(e^{\lambda {\gamma}(u)}-\lambda {\gamma}(u)-1\right)\,\nu(\d u)+\frac{\sigma^2}{2}\lambda^2
\q \fa \lambda> 0.
\ee
By   (H.\ref{assum:lc_rl}\ref{item:jump})
 and 
 Stirling's approximation
 $q !\ge (q/e)^q$ for all $q\ge 2$,
we have  for each $\lambda\in [0,1/(2\gamma_{\max}e))$,
 \begin{align*}
 &\int_{\sR^p_0} \left(e^{\lambda {\gamma}(u)}-\lambda {\gamma}(u)-1\right)\,\nu(\d u)
 \\
&=\int_{\sR^p_0}\sum_{q= 2}^\infty\frac{|\lambda {\gamma}(u)|^q}{q !}\, \nu (\d u)
=\sum_{q= 2}^\infty\frac{\lambda ^q}{q !}\int_{\sR^p_0}|{\gamma}(u)|^q \, \nu (\d u)
\le \sum_{q= 2}^\infty\frac{\lambda ^q}{q !}\gamma_{\max}^qq^{\vartheta q}
\\
&\le \sum_{q= 2}^\infty\frac{(\lambda \gamma_{\max} e) ^q}{q^{(1-\vartheta)q}}
\le \frac{(\lambda \gamma_{\max} e)^2}{1-\lambda \gamma_{\max} e}
% \le (\lambda \gamma_{\max} e)^2(1+\lambda \gamma_{\max} e)
\le 2(\lambda \gamma_{\max} e)^2,
 \end{align*}
 which implies 
 for all $0\le \lambda \le 1/C$
 and $\mathfrak{f}:(\sD([0,T];\sR^n),d_\infty)\to \sR$
satisfying $\|\mathfrak{f}\|_{\textnormal{Lip}}\le 1$
  that 
 $\sE\big[\exp\big(\lambda(\mathfrak{f}(X^{x_0,\theta})-\sE[\mathfrak{f}(X^{x_0,\theta})])\big)\big]
\le 
\exp(C^2\lambda^2).
$
Replacing $\mathfrak{f}$  with $-\mathfrak{f}$ shows that 
the same estimate holds for all 
for all $|\lambda| \le 1/C$,
which,
along with the characterization of sub-exponential random variable in 
 \cite[Proposition 2.7.1(v)]{vershynin2018high}, 
leads to
$\|\mathfrak{f}(X^{x_0,\theta})-\sE[\mathfrak{f}(X^{x_0,\theta})]\|_{\Psi_1}\le C$
for some constant $C$,
uniformly with respect to $x_0\in \sR^n$, $|\theta|\le K$ and 
$\mathfrak{f}:(\sD([0,T];\sR^n),d_\infty)\to \sR$
satisfying $\|\mathfrak{f}\|_{\textnormal{Lip}}\le 1$.

Since $\|\cdot\|_{\Psi_1}$ is a norm
and $\|\sE[\mathfrak{f}(X^x)]\|_{\Psi_1}\le |\sE[\mathfrak{f}(X^x)]|/\ln 2$,
%we have that 
$\|\mathfrak{f}(X^{x_0,\theta})\|_{\Psi_1}\le C(1+|\sE[\mathfrak{f}(X^{x_0,\theta})]|)$
for all $\mathfrak{f}$ with $\|\mathfrak{f}\|_{\textnormal{Lip}}\le 1$.
The estimate for a general Lipschitz continuous 
function $\mathfrak{f}$ 
follows by considering 
$\mathfrak{f}/\|\mathfrak{f}\|_{\textnormal{Lip}}$
and by using the fact that 
$\|\cdot\|_{\Psi_1}$ is a norm.
\end{proof}

\end{appendices}

\bibliographystyle{siam}

\bibliography{lc_rl.bib}

\end{document}